 \documentclass[reqno,11pt]{amsart}
\usepackage{amssymb,amsmath,amsthm,}
  \usepackage{a4wide} 
\usepackage{amscd}
\usepackage{amsfonts}
\usepackage{amssymb,mathrsfs}
\usepackage{latexsym}
\usepackage{color}
\usepackage{esint}

\usepackage{graphicx}
\graphicspath{ {images/} }

\usepackage[makeroom]{cancel}

\setcounter{MaxMatrixCols}{10}

  \textheight = 8.48 in

\newtheorem{theorem}{Theorem}

\newtheorem{definition}{Definition}
\newtheorem{lemma}{Lemma}
\newtheorem{proposition}[theorem]{Proposition}
\newtheorem{remark}{Remark}

\let\e=\varepsilon
\let\d=\delta

\let\p=\partial

\let\o=\omega

\numberwithin{equation}{section}

\let\hide\iffalse
\let\unhide\fi

\newcommand{\R}{\mathbb{R}}
\renewcommand{\P}{\mathbf{P}}
\renewcommand{\S}{\mathbb{S}}

\newcommand{\be}{\begin{equation}}
\newcommand{\bm}{\begin{multline}}
\newcommand{\ee}{\end{equation}}
\newcommand{\dd}{\mathrm{d}}

\newcommand{\f}{\frac}
\newcommand{\pp}{\mathfrak{p}}
\newcommand{\T}{\mathbb{T}}

\newcommand{\Bes}{\begin{eqnarray*}}
\newcommand{\Ees}{\end{eqnarray*}}
\newcommand{\Be}{\begin{equation} }
\newcommand{\Ee}{\end{equation}}
\newcommand{\Bs}{\begin{split}}

	\usepackage{mathtools}

\pagestyle{plain}  
 

 \makeatletter
\def\munderbar#1{\underline{\sbox\tw@{$#1$}\dp\tw@\z@\box\tw@}}
\makeatother

\def\p{\partial}

\def\O{\T^2}
\def\R{\mathbb{R}}
\def\d{\mathrm{d}}
\def\B{\begin{equation}}
\def\E{\end{equation}}
\def\BN{\begin{eqnarray*}}
\def\EN{\end{eqnarray*}}

\begin{document}
 \date{ \today
 }

\title{Vorticity Convergence from Boltzmann to 2D incompressible Euler equations below Yudovich class}

 \author{Chanwoo Kim}
 \address{Department of Mathematics, University of Wisconsin-Madison, Madison, WI, United States, 53706; 
 	email: ckim.pde@gmail.com 
 }
 
  \author{Joonhyun La}
 \address{ Department of Mathematics, Stanford University, Stanford, CA, United States, 94305; \newline email:joonhyun@stanford.edu
 }

 
\maketitle


\begin{abstract} 
It is challenging to perform a multiscale analysis of mesoscopic systems exhibiting singularities at the macroscopic scale. In this paper, we study the hydrodynamic limit of the Boltzmann equations
\Be\label{Boltzmann}
\mathrm{St} \p_t F  +  v\cdot \nabla_x F  = \f{1}{\mathrm{Kn}} Q(F ,F )  
\Ee  
 toward the singular solutions of 2D incompressible Euler equations whose vorticity is unbounded
  \Be
\begin{split}\label{Euler_eqtn}
	\p_t u + u \cdot \nabla_x u + \nabla_x p = 0,\ \
	\text{div }u =0.
\end{split}
\Ee
 We obtain a microscopic description of the singularity through the so-called kinetic vorticity and understand its behavior in the vicinity of the macroscopic singularity. As a consequence of our new analysis, we settle affirmatively an open problem of the hydrodynamic limit toward Lagrangian solutions of the 2D incompressible Euler equation whose vorticity is unbounded ($\o \in L^\pp$ for any fixed $1 \leq \pp < \infty$). Moreover, we prove the convergence of kinetic vorticities toward the vorticity of the Lagrangian solution of the Euler equation. In particular, we obtain the rate of convergence when the vorticity blows up moderately in $L^\pp$ as $\pp \rightarrow \infty$ (localized Yudovich class).


\hide	
	The scaled Boltzmann equation with the \textit{Strouhal number} $\mathrm{St}$ and the \textit{Knudsen number} $\mathrm{Kn}$ is given by 
\Be\label{Boltzmann}
\mathrm{St} \p_t F  +  v\cdot \nabla_x F  = \f{1}{\mathrm{Kn}} Q(F ,F ) .
\Ee  
Through a scale of 
\Be\label{scale}
\mathrm{St}=\e = \mathrm{Ma}  \  \ \text{and} \ \ 
\mathrm{Kn} =\kappa \e  \ \ \text{with} \ 
\kappa= \kappa(\e) \rightarrow 0 \  \text{as} \   \e \rightarrow 0, 
\Ee
we construct a family of Boltzmann solution for (\ref{Boltzmann}) converging to incompressible Euler equation
\begin{align}
\p_t   {u} _E+   {u}  \cdot \nabla_x  {u}_E  +\nabla_x p_E  =0, \ 
\nabla_x \cdot   {u}_E  =0
 \ \ \text{in} \ \ \O, 
\label{Euler} 
\end{align} 
\unhide
\end{abstract}

\bigskip

 \noindent\textit{\textbf{Introduction. }}One of the fundamental questions in the area of partial differential equations is the Hilbert’s sixth problem, seeking a unified theory of the gas dynamics including different levels of descriptions from a mathematical standpoint by connecting the mesoscopic Boltzmann equations to the macroscopic
fluid models that arise in formal limits. The Boltzmann equation is a fundamental model of kinetic theory for dilute collections of gas particles,
which undergo elastic binary collisions. The dimensionless form of the equation is given as integro-differential equation \eqref{Boltzmann}, where $F(t,x,v)\geq 0$ is a density distribution of particles on the phase space. Here, the \textit{Strouhal number} and \textit{Knudsen number} are denoted by $\mathrm{St}$ and $\mathrm{Kn}$, which are a ratio of the characteristic length to the characteristic time and a ratio of mean free path to the characteristic length respectively. 

The effect of binary collision between particles is described by $Q(F,F)$, which takes various forms of the nonlocal-in-velocity operator depending on the nature of particles and its intermolecular interaction (\cite{CIP}). An intrinsic equilibrium, satisfying $Q( \cdot,  \cdot) = 0$, is given by the so-called local Maxwellian associated with $(R,U,\Theta) \in \R_+ \!\times\! \R^3 \!\times\! \R_+$
\Be\label{Maxwellian}
M_{R, U ,  \Theta} (v): = \f{R}{(2\pi \Theta)^{ {3} / {2}}}\exp \left\{- \frac{|v-U|^2}{2 \Theta}\right\}.
\Ee 
The collision operator enjoys so-called the collision invariance: 
$\int Q(F,G)   \begin{bmatrix} 1 & v  & |v|^2
\end{bmatrix}  \dd v=0$ for arbitrary $F,G$.
The celebrated Boltzmann’s H-theorem (entropy $H = \int F \ln F \dd v$) reveals the entropy dissipation: $\int Q(F,F) \ln F \dd v \leq 0 $. In this paper, we consider most basic hard-sphere collision cross section:  
\Be\label{Q}
\begin{split}
 Q(F,G) (v) = \frac{1}{2} \int_{\R^3} \int_{\S^2}   |(v-v_*) \cdot \sigma| &\{ 
 F(v^\prime) G(v^\prime_*) + G(v^\prime) F(v^\prime_*)  \\
 &
 - F(v ) G(v_* ) - G(v ) F(v_* ) 
\}\dd \sigma \dd v_*, \end{split}\Ee
where postcollision velocities are denoted by $v^\prime = v- ((v-v_*) \cdot  \sigma)  \sigma$ and $v_*^\prime = v_*+ ((v-v_*) \cdot  \sigma)  \sigma$.

Besides $\mathrm{St}$ and $\mathrm{Kn}$, we introduce the \textit{Mach number} $\mathrm{Ma}$ as a size of fluctuations of $F$ around the global Maxwellian $M_{1,0,1} (v)$ of the reference state $(1, 0, 1)$. Relations between $\mathrm{St}, \mathrm{Kn}$ and $\mathrm{Ma}$ are important. Naturally, $\mathrm{Ma}$ is bounded above by $\mathrm{St}/c$ where $c$ is denoted by the speed of sound. On the other hand, the famous \textit{Reynolds number} $\mathrm{Re}$ appears as a ratio between the Knudsen number and Mach number through the von Karman relation: $1/\mathrm{Re} = \mathrm{Kn}/\mathrm{Ma}$. By passing $\mathrm{Kn}$ to zero and choosing different $\mathrm{St} (\mathrm{Kn})$ and $\mathrm{Ma}(\mathrm{Kn})$ as functions of $\mathrm{Kn}$, we can formally derive various PDEs of macroscopic variables. 
\hide\Be
\begin{bmatrix}
	\rho (t,x)\\
	{u} (t,x)\\
	\theta  (t,x)
\end{bmatrix} 
= \lim_{\mathrm{Ma} \rightarrow 0}
\f{1}{\mathrm{Ma}}\int_{\R^3}
\{F  (t,x,v) -M_{1,0,1}  \}
\begin{bmatrix}
	1 \\
	{v}\\ 
	\frac{|v|^2-3}{\sqrt{6}}
\end{bmatrix}   
\dd v  
.\label{hydro_limit}
\Ee\unhide
 Formally the incompressible Euler limit can be realized in the following scaling of large Reynolds number limit
 \Be\label{scale}
 \mathrm{St}=\e = \mathrm{Ma}  \  \ \text{and} \ \ 
 \mathrm{Kn} =\kappa \e  \ \ \text{with} \ 
 \kappa= \kappa(\e) \rightarrow 0 \  \text{as} \   \e \rightarrow 0. 
 \Ee
 
  In the diffusive scaling, the same scaling of \eqref{scale} with $\kappa=1$, the corresponding macroscopic PDE is the incompressible Navier-Stokes-Fourier system. This scaling problem is better understood as a singular perturbation in \eqref{Boltzmann} is \textit{milder} than our case \eqref{scale} (see \cite{EGKM2,Guo2006,Golse,GS2004} and references therein). In \cite{EGKM2}, Esposito-Guo-Kim-Marra establish a uniform bound of a perturbation $f$ in $F = M_{1,0,1} + \e f \sqrt{M_{1,0,1}}$ without a priori information of the fluid solutions, and hence they derive (actually construct) a strong solution of incompressible Navier-Stokes-Fourier system for both steady and unsteady cases in the presence of boundary. One of key ingredients is to obtain an $L^6_x$ ($\hookleftarrow H^1_x$ in 3D) control of $f$, by realizing a hidden elliptic equations of the bulk velocity part of $f$ in 
  \Be\label{MMSB}
   v \cdot \nabla_x f  \sim \frac{1}{\e} Lf \ \  \   \ \  \  \textit{(macro-micro scale balance)}
  \Ee
 for a linearized operator $L$ of $Q$. Unfortunately, a uniform bound of $f$ in the Euler scaling seems not feasible even in 2D without a priori information of solutions of the incompressible Euler equations, due to additional singularity in both macro-micro scale balance and nonlinear perturbation, which are major obstacles in our analysis.

 The regularity of fluid solutions plays a crucial rule in the multiscale analysis in the Euler scaling \eqref{scale}, which has been revealed differently in a modulated entropy inequality by Saint-Raymond \cite{SR}, and an asymptotic expansion by Jang-Kim \cite{JK2020}. This effect appears as an growth in the microscopic scale (see  \eqref{Lipschitz}), which resembles the famous Beale-Kato-Majda result \cite{BKM1984}. For a spatially Lipschitz continuous velocity field, Saint-Raymond proves in \cite{SR} a hydrodynamic limit toward such solutions of the incompressible Euler equations \eqref{Euler_eqtn}. It has been an \textit{open problem} to study the hydrodynamic limit toward solutions of the Euler equations, which are not spatially Lipschitz continuous such as vortex sheet solutions. Due to the transport feature of 2D Euler equations, such singular solutions have been well-understood. For compactly supported initial vorticities in $L^\pp$ for $1 < \pp < \infty$, global existence theory was first proved by DiPerna-Majda in \cite{DipMa}. Using the so-called concentration-cancellation, the result was extended for a finite measure with distinguished sign by Delort in \cite{Delort}, and $L^1$ vorticities by Vecchi-Wu in \cite{VW}. Recently, Bohun-Bouchut-Crippa construct Lagrangian solutions of $\o \in L^1$ in \cite{BBC} using a stability estimate of \cite{BC2013}.

\hide
passage for both limits mathematically in various physical situations. The first aim supports this
objective in the hydrodynamic limits when free interfaces occur naturally: (i) The vortex patch
solutions are solutions to the incompressible Euler equation with special type of initial data, whose
vorticity is an indicator function of bounded subset with smooth boundary.
\unhide

\hide
This scale is known for the one to derive the incompressible Euler system \eqref{up} and 
\begin{align}
	\p_t \theta + u \cdot \nabla_x \theta =0 , \ \ 
	\theta (t,x) + \rho (t,x)= C&  \ \    \text{for some constant $C$}.\notag
\end{align}  
In order to focus on the velocity equation we set initial data $\theta_0(x)= 0$ and $\rho_0(x)= 0$ so that 
\Be\label{rho_theta=0}
\theta(t,x) \equiv 0, \ \ \rho(t,x) \equiv 0 \ \ \text{for all } t\geq0.
\Ee

Entropy

L

Spectral gap 

$\Gamma$
\unhide

\subsubsection*{\textbf{A. Main Theorems}} We recall the main object of this paper: the scaled Boltzmann equation \eqref{Boltzmann} of the scaling \eqref{scale}  
	\Be\label{eqtn_F}
	\e \p_t F^\e  + {v}\cdot \nabla_x F^\e  = \f{1}{\kappa \e } Q(F^\e ,F^\e ) \ \ \text{in} \ [0,T] \times \T^2 \times \R^3.
	\Ee
In this paper we set that the spatial variables and velocity variables belong to 2D periodic domain and 3D whole space respectively:
 \begin{align}
 x &= (x_1,x_2) \in 	\T^2  := \left[- \frac{1}{2},\frac{1}{2}\right] \times\left[- \frac{1}{2},\frac{1}{2}\right]  \ \text{with the periodic boundary},
 \label{T2} \\
 v &= (\munderbar{v}, v_3) :=(v_1,v_2,v_3) \in \R^3   .
 \label{bar v} 
\end{align} 
The existence and uniqueness of the Boltzmann equation with fixed scaling have been extensively studied in \cite{GrSt, Guo2002, Guo2003}; initial-boundary value problem \cite{Guo2010, KL2018_1, KL2018_2}; singularity formation \cite{K2011}; boundary regularity estimate \cite{GKTT2017, CKL2019}; non-equilibrium steady states \cite{EGKM}. For the weak solution contents, we refer to \cite{DiL_B, GS2004} and the reference therein.

As the main quantities in the hydrodynamic limit, we are interested in the following observables and their convergence toward the counterparts in fluid: 
\begin{definition}[Boltzmann's macroscopic velocity and vorticity]
\Be\begin{split}\label{vorticity_B}
	u^\e_B(t,x) &= \frac{1}{\e} \int_{\R^3}   (F^\e(t,x,v) - M_{1,0,1} (v)) \munderbar{v}  \dd v,\\
	\o^\e_B(t,x) &:=  \nabla^\perp \cdot u^\e_B(t,x)
	= \Big(-\frac{\p}{\p x_2}, \frac{\p}{\p x_1}\Big) \cdot u^\e_B(t,x) .
\end{split}	\Ee 
\end{definition}

In 2D, the incompressible Euler equations \eqref{Euler_eqtn} has the vorticity formulation:
\begin{align}
	\p_t \o + u \cdot \nabla \o =0 \ \ &\text{in} \ [0,T] \times \T^2,   
	\label{vorticity}\\
	u =- \nabla^\perp ( -\Delta)^{-1} \o 
	\ \ &\text{in} \ [0,T] \times \T^2
	,\label{BS}\\
	\o|_{t=0} = \o_0
	\ \ &\text{in} \  \T^2
	.\label{vorticity_initial}
\end{align} 
 We will present the Biot-Savart formula of \eqref{BS} in the periodic box $\T^2$ at \eqref{BS_periodic}. When a velocity field is Lipschitz continuous, there exists a Lagrangian flow $X (s;t,x)$ solving 
\Be\label{dX}
\frac{d}{ds} X(s;t,x)
= u(s, X(s;t,x)),\ \
X(s;t,x)|_{s=t}=x.
\Ee
Then a smooth solution of the vorticity equation \eqref{vorticity}-\eqref{vorticity_initial} is given by 
\Be
\begin{split}\label{Lag_sol}
	\o(t,x) = \o_0 (X(0;t,x)), \ \ 	u(t,x) = -\nabla^\perp (-\Delta)^{-1} \o (t,x).
\end{split}
\Ee
Out of the smooth context, a general notion of Lagrangian flow has been introduced: 
\begin{definition}[\cite{DiL_ODE,CDe}]\label{rLf}
	Let $u \in L^1([0,T] \times  \T^2; \mathbb{R}^2)$. A map $X: [0,T] \times \T^2 \rightarrow \T^2$ is a regular Lagrangian flow of \eqref{dX} if and only if for almost every $x \in \T^2$ and for any $t \in [0,T]$, the map $s \in [0,t] \mapsto X(s;t,x) \in \T^2$ is an absolutely continuous integral solution of \eqref{dX}; and there exists a constant $\mathfrak{C}>0$ such that for all $(s, t) \in [0,t] \times [0,T]$ there holds 
	\Be\label{compression}
	\int_{\T^2} \phi (X(s;t,x)) \dd x \leq 	\mathfrak{C}
	\int_{\T^2} \phi (x) \dd x,\Ee
for every measurable function $\phi: \T^2 \rightarrow [0,\infty]$.
\end{definition}
For a given regular Lagrangian flow to \eqref{dX}, we can define the \textit{Lagrangian solution} $(u, \o)$ along the regular Lagrangian flow as in \eqref{Lag_sol}. In fact, the existence and uniqueness (for a given $u$) of the regular Lagrangian flow is proved in \cite{DiL_ODE,CDe,BC2013} as long as \eqref{BS} holds while $\o \in L^\pp$ for $\pp \geq 1$.

\textit{Our first theorem is about the convergence of $\o_B^\e$ to the Lagrangian solution $\o$, when vorticities belong to $L^\pp (\T^2)$ when $\pp <\infty$.}

\begin{theorem}[Informal statement of Theorem \ref{main_theorem1}: Strong Convergence]\label{main_theorem1_summary}
	Let arbitrary $T>0$ and $(u_0, \o_0) \in L^2 (\mathbb{T}^2) \times L^\pp (\mathbb{T}^2)$ for $\pp \geq 1$. Let $(u, \o) \in L^\infty ((0, T); L^2 (\mathbb{T}^2) \times L^\pp (\mathbb{T}^2))$ be a Lagrangian solution of 2D incompressible Euler equations \eqref{vorticity}-\eqref{vorticity_initial} with initial data $(u_0, \o_0)$. Then we construct a family of solutions to the Boltzmann equation \eqref{eqtn_F} whose macroscopic velocity and vorticity $(u_B^\e, \o_B^\e)$ of \eqref{vorticity_B} converge to the Lagrangian solution. Moreover, we have 
	\Be\notag
\o_B^\e \rightarrow \o \  \ \text{strongly in } [0,T] \times \T^2.
	\Ee 
\end{theorem}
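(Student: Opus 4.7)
Since the Lagrangian solution $(u,\o)$ with $\o\in L^\pp$, $\pp<\infty$, is in general not spatially Lipschitz, neither the modulated-entropy method of Saint-Raymond nor the Hilbert expansion of Jang-Kim can be invoked directly on $(u,\o)$. The plan is a double approximation: (i) mollify the Euler data at scale $\delta>0$; (ii) perform the $\e\to 0$ hydrodynamic limit toward the resulting smooth Euler solution; (iii) send $\delta\to 0$ at the Euler level using the stability theory of regular Lagrangian flows; and (iv) combine the two limits through a diagonal sequence $\delta=\delta(\e)$.

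\textbf{Steps 1--2.} Let $\o_0^\delta\in C^\infty(\T^2)$ be a mollification of $\o_0$ at scale $\delta$, and let $(u^\delta,\o^\delta)$ be the corresponding smooth Euler solution on $[0,T]$. The quantity $\Lambda(\delta):=\|\nabla_x u^\delta\|_{L^\infty_{t,x}}$ diverges as $\delta\to 0$, but can be controlled (quantitatively $\Lambda(\delta)\ls|\log\delta|$, or at worst a small power of $\delta^{-1}$) by combining the Biot-Savart formula \eqref{BS} with the conservative transport identity $\o^\delta=\o_0^\delta\circ X^\delta(0;t,\cdot)$ and $L^\pp$ control of $\o^\delta$. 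For each fixed $\delta$, a quantitative modulated-entropy or Hilbert-expansion argument then produces a family $F^{\e,\delta}$ of solutions to \eqref{eqtn_F} whose macroscopic fields $(u_B^{\e,\delta},\o_B^{\e,\delta})$ approximate $(u^\delta,\o^\delta)$ with an error of the form $\e^{a}\exp(CT\,\Lambda(\delta))$ in suitable norms, where the exponential factor encodes the BKM-type amplification inherited from the macro-micro scale balance \eqref{MMSB}.

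\textbf{Steps 3--4.} Stability of regular Lagrangian flows (DiPerna-Lions, Crippa-De Lellis, Bohun-Bouchut-Crippa \cite{BBC}) yields $\o^\delta\to\o$ strongly in $L^\infty_tL^\pp_x$ as $\delta\to 0$. Choose $\delta=\delta(\e)\to 0$ slowly enough that $\e^{a}\exp(CT\,\Lambda(\delta(\e)))\to 0$, and set $F^\e:=F^{\e,\delta(\e)}$. The triangle inequality
\[
\|\o_B^\e-\o\|\;\le\;\|\o_B^{\e,\delta(\e)}-\o^{\delta(\e)}\|+\|\o^{\delta(\e)}-\o\|
\]
then delivers strong convergence of the kinetic vorticity to the Lagrangian vorticity, yielding the statement of Theorem~\ref{main_theorem1}.

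\textbf{Main obstacle.} Two difficulties drive the argument. The first is closing the diagonal: the kinetic error carries an exponential amplification $\exp(CT\,\Lambda(\delta))$ that must be defeated by algebraic smallness in $\e$, which forces both a sharp quantitative bound on $\Lambda(\delta)$ and, in the rate-of-convergence version of the theorem, control of how fast $\|\o\|_{L^\pp}$ is allowed to blow up as $\pp\to\infty$ (the localized Yudovich class). The second, more structural difficulty is that vorticity is one spatial derivative rougher than velocity, so the convergence $\o_B^\e\to\o$ cannot simply be obtained by taking the curl of a velocity estimate; one has to analyze a suitable kinetic vorticity directly, show that it satisfies an approximate transport equation with a commutator-type remainder that vanishes in the joint limit, and then pass to the Lagrangian solution through the DiPerna-Lions renormalization/stability theory for transport with $L^\pp$ coefficients.
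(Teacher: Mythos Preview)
Your architecture---mollify at scale $\beta$, run the hydrodynamic limit toward the smooth Euler solution $(u^\beta,\o^\beta)$, invoke stability of regular Lagrangian flows for $\o^\beta\to\o$, then diagonalize $\beta=\beta(\e)$---matches the paper's skeleton exactly. The gap is in Step~2, which you treat as a black box.

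Neither modulated entropy (which requires a Lipschitz $u$, precisely what is absent) nor a standard Hilbert expansion closes here. The paper's central technical contribution is a \emph{viscosity-canceling corrector}: the naive expansion around $M_{1,\e u^\beta,1}$ with the usual $-\e^2\kappa(\nabla_x u^\beta)\!:\!\mathfrak{A}\sqrt\mu$ term produces a hydrodynamic source $-\e^2\kappa\,\eta_0\Delta_x u^\beta\cdot(v-\e u^\beta)\mu$ that drives the remainder $f_R$ to size $O(\e\kappa)$, at which point the nonlinearity $\frac{1}{\e\kappa}\Gamma(f_R,f_R)$ can no longer be absorbed. The fix is an extra corrector $\e\kappa\,\tilde u^\beta\cdot(v-\e u^\beta)\mu+\e^2\kappa\,\tilde p^\beta\mu$ in \eqref{F_e}, with $\tilde u^\beta$ solving the auxiliary linear system \eqref{uptilde}, chosen so that all remaining hydrodynamic sources are $o(\e^2\kappa)$ and all non-hydrodynamic ones are $O(\e\kappa)$ (hence controllable by the coercivity of $L$). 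Separately, the paper does not write an approximate transport equation for $\o_B^\e$ as you propose; instead it closes an $H^2_xL^2_v$ energy estimate for $f_R$ via anisotropic Ladyzhenskaya/Agmon interpolation (Lemma~\ref{anint}), losing one factor of $\sqrt\kappa$ per spatial derivative through the commutator $[\![\p^s,L]\!]$, and then $\|\o_B^\e-\o^\beta\|_{L^\pp}$ is read off directly from the expansion \eqref{F_e}. Finally, your bound $\Lambda(\delta)\lesssim|\log\delta|$ is too optimistic for $\o_0\in L^\pp$, $\pp<\infty$: Theorem~\ref{thm:Dptu} gives $\|\nabla u^\beta\|_{L^\infty}\lesssim\beta^{-2/\pp}|\log\beta|\,e^{Ct\beta^{-2/\pp}}$, and the corresponding $V(\beta)$ entering the remainder estimate is far larger still; this does not break the diagonal argument, but it does force $\beta(\e)\to0$ extremely slowly (see \eqref{ekappabeta}).
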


 \begin{remark}Uniqueness of the incompressible Euler equations in 2D is only known for vorticities with moderate growth of $L^\pp$ norm as $\pp \rightarrow \infty$ by Yudovich \cite{Y1963,Y1995}. In some sense, we can view the theorem as a ``selection principle'' of a Lagrangian solution of the incompressible Euler equations from the Boltzmann equation.  
 \end{remark}

\begin{remark}Our proof does not rely on a result of inviscid limit of the nonlinear Navier-Stokes equations (cf. \cite{JK2020}) nor the higher order Hilbert expansion (cf. see the results by Guo \cite{Guo2006} and de Masi-Esposito-Lebowitz \cite{DEL}). A direct approach we develop in this paper is based on stability analysis for both the Lagrangian solutions of the inviscid fluid and the Boltzmann solutions with a new corrector. 
\end{remark}

 
\textit{Our second theorem is about the quantitative rate of convergence/stability of $\o_B^\e$ to $\o$, when the uniqueness of fluid is guaranteed.} In \cite{Y1995}, Yudovich extend his uniquness result for bounded vorticities \cite{Y1963} to the so-called localized Yudovich class, namely $ \o_0 \in Y_{\mathrm{ul}}^\Theta (\Omega)$ with certain modulus of continuity for its velocity $u$. Here, 
\Be \notag
\| \o \|_{Y_{\mathrm{ul} }^\Theta  (\O) } := \sup_{1 \le \pp < \infty} \frac{ \| \o \|_{L^\pp (\O ) } }{\Theta (\pp) } \ \ \text{for some } \Theta(\pp) \rightarrow \infty \ \text{as} \  \pp \rightarrow \infty. 
\Ee
Here, we specify $\Theta:\mathbb{R}_{+} \rightarrow \mathbb{R}_{+}$: there exists $m \in \mathbb{Z}_{+}$ such that $
\Theta (\pp) =  \prod_{k=1} ^m \log_k \pp,$ for large $\pp >1$, where $\log_k \pp$ is defined inductively by $\log_0 \pp = 1$, $\log_1 \pp =  \log \pp$, and $
\log_{k+1} \pp = \log \log_{k} \pp$. Also, we denote the inverse function of $\log_m (\pp) $ (defined for large $\pp$) by $e_m$. Finally, we note that $
\int_{e_m(1) } ^\infty \frac{1}{\pp \Theta (\pp ) } = \infty$ which turns out to be important in uniqueness of the solution.

\begin{theorem}[Informal statement of Theorem \ref{main_theorem2}: Rate of Convergence] \label{main_theorem2_summary} 
If we further assume $\o_0 \in Y_{\mathrm{ul} }^\Theta  (\O)$ in addition to Theorem \ref{main_theorem1_summary}, then 
	\Be\notag
\o_B^\e \rightarrow \o \  \ \text{strongly in }  [0,T] \times \T^2 \  \text{with an explicit rate}. 
\Ee
\hide

\eqref{vorticity_B} converges to the vorticity of the Euler equations \eqref{Lag_sol}

$ \o_0 $ be a function with moderate growth of $L^p$ norm: $\| \o_0 \|_{L^\pp} \lesssim \prod_{k=1} ^m \log_k \pp$ (\eqref{Thetap}) for $\pp \in [1, \infty)$. Let $\o$ be the unique weak solution of 2D incompressible Euler equation with initial data $(u_0, \o_0)$ which propagates up to $T$, and $u$ is given by Biot-Savart law \eqref{BS}.Then there exists a family of solutions to Boltzmann equations \eqref{eqtn_F} whose bulk density, velocity, and temperature converges to $(1, u, 1)$ in the sense that $\frac{1}{\e} \left ( F^\e (t) - M_{1, \e u, 1} \right ) \rightarrow 0$ in a suitable space (see Theorem \ref{main_theorem2}). Furthermore, bulk velocity of $F^\e$ converges in $L^\infty (0, T; W^{1, \pp}(\mathbb{T}^2 ) )$ with an explicit rate (\eqref{conv:vorticity_B_rate}) depending on $\o_0$. Moreover, if $\o_0$ has additional regularity, the rate is uniform in $\o_0$ (\eqref{conv:vorticity_B_uniform}).
\unhide
\end{theorem}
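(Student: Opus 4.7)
The plan is to upgrade the qualitative convergence of Theorem~\ref{main_theorem1_summary} to a quantitative one by combining a quantitative form of the Boltzmann construction with a Yudovich--Osgood stability argument tailored to the localized class $Y_{\mathrm{ul}}^\Theta(\O)$.

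First, I would revisit the construction underlying Theorem~\ref{main_theorem1_summary} and extract an approximate vorticity transport equation of the form
\[
\p_t \o_B^\e + u_B^\e \cdot \nab_x \o_B^\e = \mathcal{R}^\e,
\]
together with an estimate of $\mathcal{R}^\e$ in a negative Sobolev space whose constants depend only polynomially on $\|\o\|_{L^\pp(\O)}$, and hence, by the Yudovich--$\Theta$ hypothesis, only on $\pp\,\Theta(\pp)$. The key is to track \emph{how} the macro--micro corrector arising from \eqref{MMSB} and the various moments of $F^\e - M_{1,\e u,1}$ depend on $\pp$, because this dependence will feed directly into the Osgood step below.

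Second, set $E(t) := \|u_B^\e - u\|_{L^2(\O)}^2$, where $u_B^\e$ is the Biot--Savart reconstruction from $\o_B^\e$. Subtracting the Euler equation in its velocity form and performing an $L^2$ energy estimate, the transport nonlinearity produces a term $\int (u_B^\e - u)\otimes(u_B^\e - u):\nab_x u \,\dd x$ which, by Biot--Savart and duality, is bounded by
\[
C\|\o\|_{L^\pp(\O)}\,\|u_B^\e - u\|_{L^{2\pp/(\pp-1)}(\O)}^2 \,\lesssim\, \pp\,\Theta(\pp)\, E(t)^{1 - 1/\pp},
\]
after interpolating between $L^2$ and $L^\infty$ using a uniform (in $\e$) velocity moment bound on $F^\e$. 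Optimizing in $\pp \geq 1$ as in Yudovich's original argument produces the scalar differential inequality
\[
\dot E(t) \,\leq\, C\, E(t)\,\Theta(|\log E(t)|) + \mathcal{E}(\e),
\]
where $\mathcal{E}(\e) \to 0$ with an explicit polynomial rate in $\e$ (and $\kappa$) taken from the first step.

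Third, I would invoke the Osgood lemma. Since by the choice of $\Theta$ one has $\int_{e_m(1)}^\infty \dd \pp/(\pp\,\Theta(\pp)) = \infty$, the ODE $\dot y = C y\,\Theta(|\log y|)$ admits only $y\equiv 0$ with vanishing initial datum, and its quantitative form yields $E(t) \leq \Phi_T(\mathcal{E}(\e))$, where $\Phi_T$ is an explicit modulus built from $\Theta$; for $\Theta(\pp) = \prod_{k=1}^m \log_k \pp$ this produces a rate of order $e_m(c_T |\log \e|)^{-1}$. Interpolating with the uniform $L^\pp$ bound on $\o_B^\e - \o$ obtained in the first step then converts the $L^2$ velocity rate into a rate of strong convergence for the vorticity $\o_B^\e$ itself, completing the proof.

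The hard part will be the first step. The construction underlying Theorem~\ref{main_theorem1_summary} is robust but only qualitative, and what is needed here is a \emph{sharp} polynomial dependence of every corrector and every remainder on $\pp$, so that the Yudovich modulus $\Theta$ survives intact and is not destroyed by factors such as $\pp!$ or $e^{\pp}$ produced by repeated powers of the inverse linearized collision operator in \eqref{MMSB}. Once this polynomial dependence is secured and the macro--micro balance is closed uniformly in $\e$ along Yudovich's scale of exponents, the remaining Loeper--Yudovich stability argument in the localized class $Y_{\mathrm{ul}}^\Theta(\O)$ goes through in the form sketched above.
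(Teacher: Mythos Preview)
Your plan has a genuine gap in Step~1, and it is precisely the obstacle the paper is organized to avoid. The Boltzmann remainder estimates around a local Maxwellian $M_{1,\e u,1}$ do not depend on $\|\o\|_{L^\pp}$ polynomially for each fixed $\pp$; they carry the Beale--Kato--Majda--type growth factor \eqref{Lipschitz}, namely $\exp\big(\int_0^t\|\nabla_x u(s)\|_{L^\infty}\,\dd s\big)$, at the \emph{microscopic} level (see the commutator $[\![\p^s,L]\!]$ and the streaming term \eqref{stream}, both of which bring in pointwise $\nabla_x u$). For $\o_0\in Y_{\mathrm{ul}}^\Theta$ with $m\ge 1$ the field $u$ is not Lipschitz, so this factor is infinite and your corrector bounds in Step~1 cannot close. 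The hoped-for ``polynomial in $\pp$'' dependence simply is not what the kinetic energy--dissipation machinery produces.

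The paper's route is structurally different: it never compares $F^\e$ directly to $M_{1,\e u,1}$. Instead it mollifies the Euler initial data at scale $\beta=\beta(\e)$, expands $F^\e$ around the \emph{smooth} local Maxwellian $M_{1,\e u^\beta,1}$ with the viscosity-canceling corrector \eqref{F_e}, and closes the $H^2_xL^2_v$ remainder estimate of Theorem~\ref{theo:remainder} using the finite quantities $\|\nabla_x u^\beta\|_{L^\infty}$ and $V(\beta)$ from \eqref{est:Du} and \eqref{Vbeta}. The Yudovich--Osgood argument you sketch is then applied purely on the Euler side, to $\|u^\beta-u\|_{L^2}$, yielding the explicit modulus $\mathrm{Rate}(\o_0;\beta)$ in \eqref{Rateftn}; the vorticity rate \eqref{Rateftnvort} follows by a second mollification parameter $\ell$ and the Lagrangian stability of Proposition~\ref{prop_stab}. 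The final decomposition is $\o_B^\e-\o=(\o_B^\e-\o^\beta)+(\o^\beta-\o)$, with the first piece controlled by $\kappa^{\min\{1/2,1/\pp\}}\sqrt{\mathcal{E}}+\e\kappa V(\beta)$ and the second by the Euler stability; the scales $\e,\kappa,\beta$ are tied together through \eqref{ekappabeta}. In short, the regularization parameter $\beta$ is the missing idea: it decouples the kinetic closure (which needs Lipschitz $u^\beta$) from the Osgood stability (which tolerates the log-Lipschitz modulus of $u$).
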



\hide

Also, convolution on $\mathbb{T}^2$ is denoted by $*$, and it means $f*g (x) = \int_{\mathbb{T}^2} f(x-y) g(y) \dd y$ as in \eqref{convolution}.
Finally, we sometimes denote our domain $\mathbb{T}^2$ by $\T^2$.

Consider the scaled Boltzmann equation \eqref{Boltzmann} with our scaling \eqref{scale}:


The goal of this paper is to construct a family of solutions to \eqref{eqtn_F} such that


In \cite{JK2020}, Jang-Kim develops a new perturbation analysis around the local Maxwellian at this scaling, and 
identify a growth integral factor (replacing $t/ \sqrt{\kappa}$ factor in the perturbation around global Maxwellian)

which resembles the famous Beale-Kato-Majda result \cite{BKM1984}.

On the other hand, the Euler scaling \eqref{scale} suffers a loss of scale from the corresponding macro-micro balance .

Compared to the diffusive scaling, the hydrodynamic limit of the Euler scaling \eqref{scale} has two major difficulties. First, as the nonlinear perturbation is more singular 

Second, the incompressible Euler equations

\cite{SR}

However, when $u$ is not Lipschitz continuous 

\unhide

\subsubsection*{\textbf{B. Novelties, difficulties and idea}} The major novelty of this paper is to establish the incompressible Euler limit \textit{in the level of vorticity} \textit{without using inviscid limit} of the Navier-Stokes equations, in the vicinity of the \textit{macroscopic singularity} ($\o \notin L^\infty (\T^2)$). We study the convergence of Boltzmann's macroscopic vorticity toward the Euler's vorticity, as interesting singular behavior, e.g. interfaces in vortex patches, can be observed only in a stronger topology of velocities. We believe this new approach will shed the light to the validity of Euler equations more direct fashion. Possible application would be direct validity proof of Euler solutions from the kinetic theory without relying on the inviscid limit results. 
In addition, we are able to allow quite far-from-equilibrium initial data (see \eqref{Hilbertexpansion1}). 

There are two major difficulties in the proof.  
First, \textit{the macroscopic solutions are singular} and their singularity appears as growth in the microscopic level (\cite{JK2020}):
	\Be\label{Lipschitz}
	\exp \Big(\int^t_0 \| \nabla_x  u (s) \|_{L^\infty_x}  \dd s\Big).
	\Ee  
This factor becomes significantly difficult to control when we study the Boltzmann solutions close to the solution of Euler equations, instead of Navier-Stokes equations. The diffusion in the bulk velocity has a considerable magnitude, and causes a singular term due to the growth of \eqref{Lipschitz}. 
	Second, \textit{the macro-micro scale balance is singular} in the Euler scaling. As the transport effect is weaker, this results the lack of a scale factor of the hydrodynamic bound in the dissipation. In fact, an integrability gain in $L^p$ ($\hookleftarrow H^1_x$ in 2D) of \cite{EGKM2} or velocity average lemma \cite{glassey} are not useful to control the singular nonlinearity. In addition, the perturbation equations suffer a loss of scale due to the commutator of spatial derivatives and the linearized operator around a local Maxwellian associated with macroscopic solutions.

		\hide


\item

 However, if we introduce less expansion, we may not able to cancel out enough singular terms, which puts a serious obstacle in closing the estimate. Furthermore, we want to construct a sequence of Boltzmann solutions which are close to the solution of Euler equation, instead of Navier-Stokes equation: this is in particular desirable since we want to observe singular behavior of bulk velocity, without blurring from viscosity effect. However, the term which causes diffusion in the bulk velocity has a considerable magnitude, although not singular. Thus, we need an idea to overcome this effect. \unhide

To overcome the difficulties, we devise a novel \textit{viscosity-canceling correction} in an asymptotic expansion of the scaled Boltzmann equations. To handle the low regularity of fluid velocity fields, we regularize the initial data with scale $\beta$ and expand the Boltzmann equations around the local Maxwellian $M_{1, \e u^\beta, 1}$ associated with the Euler solution $u^\beta$ starting from $u^\beta_0$. At first place, one may try a form of the standard Hilbert expansion:
\Be \label{Hilbertexpansion1}
\begin{split}
	M_{1, \e u^\beta, 1} + \e^2 p^\beta M_{1, \e u^\beta, 1}  - \e^2 \kappa (\nabla_x u^\beta): \mathfrak{A} \sqrt{M_{1, \e u^\beta, 1}} + \e f_R \sqrt{M_{1, \e u^\beta, 1}},
\end{split}
\Ee
by matching to cancel most singular terms. The Euler equation is in the hierarchy of $O(\e^2)$: it comes from $\e \p_t M_{1, \e u^\beta, 1}$ and correctors. However, the third term of order $\e^2 \kappa$ introduces the viscosity contribution $-\e^2 \kappa \eta_0 \Delta_x u^\beta \cdot (v - \e u^\beta) M_{1, \e u^\beta, 1}$: and comparing fo $\e f_R \sqrt{M_{1, \e u^\beta, 1}}$, we see that if this term is not canceled, then it will drive the remainder to order $O(\e \kappa)$ - which is dangerous. Note that this term is hydrodynamic, so we cannot rely on coercivity provided by $L$: it provides additional $\e \sqrt{\kappa}$ smallness only for non-hydrodynamic terms.

A simple but useful observation is that still this term is in a lower hierarchy than that of Euler equation. Thus, if we introduce an additional corrector in $\e \kappa$ level, we may cancel out viscosity contribution. Of course one needs to be careful as we introduce $\e \kappa$-size term to cancel out $\e^2 \kappa$-size term! However, by carefully choosing the form of $\e \kappa$-size corrector:
\Be \label{F_e_inftro}
F^\e = \eqref{Hilbertexpansion1} +   \e \kappa \tilde{u}^\beta \cdot (v-\e u^\beta)  M_{1, \e u^\beta, 1}
+ \e^2 \kappa \tilde{p}^\beta  M_{1, \e u^\beta, 1},
\Ee
we can actually fulfill our goal.
\begin{enumerate}
	\item $\e \kappa \tilde{u}\cdot (v-\e u^\beta) M_{1, \e u^\beta, 1 }$ is fully hydrodynamic, and therefore the most singular term coming from collision with local Maxwellian vanishes. Then the largest term coming from collision is the collision of this corrector with itself, which is of size $\e \kappa$, but being non-hydrodynamic. Thus, it is in fact, small (due to $\e \sqrt{\kappa}$ gain for non-hydrodynamic term, non-hydrodynamic source terms of $\e \sqrt{\kappa}$ drives the remainder to order $O(\e \kappa)$. )
	\item By imposing $\nabla_x \cdot \tilde{u} = 0$, we can cancel out the hydrodynamic part for $v \cdot \nabla_x (\tilde{u}\cdot (v-\e u^\beta) M_{1, \e u^\beta, 1 } )$, which is of order $\e \kappa$. Also, by introducing additional corrector at $\e^2 \kappa$ level, one can cancel out all hydrodynamic terms of $\e^2 \kappa$ level by evolution equation for $\tilde{u}$, including $\Delta_x u$. Therefore, the remaining hydrodynamic terms are of order $o(\e^2 \kappa)$, and non-hydrodynamic terms are of order $O(\e \kappa)$, and both are small.
	\item Interaction of this corrector and the remainder also turns out to be innocuous as well.
\end{enumerate}

It is worth to remark that in this corrector-based Hilbert expansion we do not need to set up $\e = \kappa$ as in usual Hilbert expansion \cite{DEL}: we only need $\e/\kappa^2 \rightarrow 0$. This is satisfactory, in the sense that a regime which is close to Navier-Stokes regime (whose $\kappa$ vanishes slowly) should be more tractable in philosophy: and indeed for such regime we can allow larger deviation from the equilibrium. In addition, we note that this expansion in fact allows even more general data than \eqref{Hilbertexpansion1}: we have additional freedom in choosing $\tilde{u}_0$, so in principle a remainder with certain part of size $\e \kappa$ is in fact admissible, while in \eqref{Hilbertexpansion1} all parts of remainder should be of size $o(\e \kappa)$. We believe that this new idea of correction would have many applications.

\subsubsection*{Notations}For the sake of readers' convenience, we list notations used often in this paper. 
\begin{align} 
	\p \ : &  \ \p f  = \ \p_{x_1} f \text{ or } \p_{x_2}f   \label{pderivative} \\
	\p^s \ : & \  \p^s f = \sum_{\alpha_1 + \alpha_2 \leq s} \p_{x_1}^{\alpha_1}  \p_{x_2}^{\alpha_2}f \label{psderivative}  \\
	f*g \ : & \ 	f*g (x) := \int_{\mathbb{T}^2} f(x-y) g(y) \dd y\label{convolution}
	\\
	f*_{\R^2 }g  \ :  & \ 	f*_{\R^2 }g (x) = \int_{\mathbb{R}^2} f(x-y) g(y) \dd y \label{convolution_R2}
	\\
	(  \ \cdot \  )
	_+ \ :&  \  (  a )_+= \max \{a, 0\}
	\label{()+}
	\\
	\log_+ \  :&  \ \log _+ \hspace{-2pt}a = \max\{ \log  a, 0\}
	\label{log+}
	\\
	\lesssim \ : & \  \text{there exists } C>0 \text{ such that }
	a \lesssim b  \  \text{implies } a \leq C b
	\label{lesssim}\\
	a \simeq b \ : & \ \text{$a$ consists of an appropriate linear combination of the terms in $b$} \label{simeq}\\
	[\![  \cdot , \cdot]\!] \ : & \  [\![ A, B]\!] g :=  A  (Bg) -B (Ag)
	\  \ \text{(commutator)} 
	\label{commutator} \\
	\| \cdot \|_{L^p_\Box}  \ : & \ 
	\| f \|_{L^p_t} = \| f \|_{L^p (0,T)}, \ 	\| f \|_{L^p_x} = \| f \|_{L^p (\T^2)}, \ 
		\| f \|_{L^p_v} = \| f \|_{L^p (\R^3)}\label{Lp}
	\\
\| \cdot  \|_{L^p_x L^2_v} \ : & \ \|f \|_{L^p_x L^2_v} :=	\| f \|_{L^p(\mathbb{T}^2 ; L^2 (\mathbb{R}^3))} = \big \| 
 \| f(x,v ) \|_{L^2(\R^3_v)}
\big \|_{L^p(\mathbb{T}^2_x) }  \label{mixedLp}\\
	d_{\mathbb{T}^2} (x,y) \ : & \ \text{geodesic distance between $x$ and $y$ in $\T^2$, often abused as $|x-y|$} 
\end{align}

 \subsubsection*{Acknowledgements}The authors thank Juhi Jang and Theo Drivas for their interest to this project. This project is supported in part by National Science Foundation under Grant No. 1900923 and 2047681 (NSF-CAREER), and Brain Pool program funded by the Ministry of Science and ICT through the National Research Foundation of Korea (2021H1D3A2A01039047). CK thanks Professor Seung Yeal Ha for the kind hospitality during his stay at the Seoul National University. The authors thanks In-jee Jeong for his kind hospitality and support.

\hide

: therefore, instead of choosing initial data as a perturbation around the local Maxwellian $M_{1, \e u_0, 1}$, we choose initial data as a perturbation around the local Maxwellian $M_{1, \e u^\beta_0, 1}$, where $u^\beta_0$ is the initial data regularization of $u_0$ with scale $\beta$.

In the hydrodynamic limit proof, we need a stability of the Euler solution under the perturbation of initial data, as well as control of remaining small terms, we can construct a sequence of Boltzmann solutions whose bulk velocity converges to Euler solution. 

It turns out that this simple idea works well: in the class of solution of Euler equation we consider, we have a certain stability, so we can prove that solution $u^\beta$ starting from $u^\beta_0$ converges to the solution $u$ from $u_0$. 

Also, for the estimate of remainders, introduction of regularization scale $\beta$ gives an additional freedom in our analysis: by sacrificing the speed of regularization convergence, we can control the size of higher derivatives appearing in the remainder equation. In addition, many weak solutions of fluid equations are interpreted as a limit of smooth solutions. In that regard, this initial data regularization approach is quite natural.

\paragraph{\textbf{3. Viscosity-canceling correction}} The final piece of analysis is a Hilbert expansion particularly designed to keep source terms in remainder equation small. At first place, one may try a Hilbert expansion of the form
\Be \label{Hilbertexpansion1}
\begin{split}
	F^\e &= M_{1, \e u^\beta, 1} + \e^2 f_2 \sqrt{M_{1, \e u^\beta, 1}} + \e f_R \sqrt{M_{1, \e u^\beta, 1}} \\
	&= M_{1, \e u^\beta, 1} + \e^2 p^\beta M_{1, \e u^\beta, 1}  - \e^2 \kappa (\nabla_x u^\beta): \mathfrak{A} \sqrt{M_{1, \e u^\beta, 1}} + \e f_R \sqrt{M_{1, \e u^\beta, 1}},
\end{split}
\Ee
by matching to cancel most singular terms. The Euler equation is in the hierarchy of $O(\e^2)$: it comes from $\e \p_t M_{1, \e u^\beta, 1}$ and correctors. However, the third term of order $\e^2 \kappa$ introduces the viscosity contribution $-\e^2 \kappa \eta_0 \Delta_x u^\beta \cdot (v - \e u^\beta) M_{1, \e u^\beta, 1}$: and comparing fo $\e f_R \sqrt{M_{1, \e u^\beta, 1}}$, we see that if this term is not cancelled, then it will drive the remainder to order $O(\e \kappa)$ - which is dangerous. Note that this term is hydrodynamic, so we cannot rely on coercivity provided by $L$: it provides additional $\e \sqrt{\kappa}$ smallness only for non-hydrodynamic, i.e. perpendicular to hydrodynamic, terms.

\paragraph{\textbf{2. Control by Energy}} To admit far-from-equilibrium initial data, we need to keep the characteristic size of remainder as large as possible. A heuristic calculation suggests that the size $o(\e \kappa)$ for the remainder is the threshold: if the remainder becomes of the size $O(\e \kappa)$, we lose the control for nonlinearity of the remainder equation. Thus, we aim to keep our characteristic size of remainder to be slightly smaller than $\e \kappa$. As the macro-micro scale balance is singular, we naturally pursue a control of the nonlinearity by the energy in $H^2_xL^2_v$ and raw dissipation.  



It turns out that this idea gives a sharp scaling
: the commutator $[\![ \p^s, L ]\!]$ between spatial derivatives and $L$ forces us to lose $\sqrt{\kappa}$ scale for each derivative, but we do not lose scale in nonlinearity for 2-dimensional domain. Thus, by setting initial data decaying to $0$ in arbitrary slow rate as $\e \rightarrow 0$, we can keep $L^2_x L^2_v$ norms of remainder and its derivatives small, provided that the source terms are also small, which is the main point of the next idea.

Furthermore, we note that $H^2_x L^2_v$ suits very well with our goal to see convergence in a stronger topology: as we can control up to second derivatives of remainder small, we can keep our Boltzmann solution close to the local Maxwellian $M_{1, \e u^\beta, 1}$. Its zeroth and first derivatives may converge - they correspond to the velocity and vorticity. Its second derivatives may blow in general, which represents the formation of singular object, e.g. interfaces. 

\unhide

\hide
\begin{theorem} We construct a family of solutions $F^\e$ to (\ref{Boltzmann}) and (\ref{diffuse_BC}) with (\ref{scale}) such that for some $T\geq 0$
\Be
 \frac{F^\e(t,x,v)- M_{1, 0, 1} (v)}{\e} \rightarrow u_E(t,x) \ \ \text{strongly in } L^\infty_t (0,T)L^2_{x,v}. 
\Ee

\end{theorem}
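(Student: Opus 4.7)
The plan is to construct $F^\e$ as the six--term corrector Hilbert expansion \eqref{F_e_inftro} around the \emph{regularized} local Maxwellian $M_{1,\e u^\beta,1}$, and to close an $L^2_{x,v}$ energy estimate on the remainder $f_R$ sharp enough to identify the limit of $(F^\e-M_{1,0,1})/\e$ with the hydrodynamic profile carried by $u_E$. First, mollify the initial velocity at a scale $\beta=\beta(\e)\downarrow 0$ to obtain smooth data $u_0^\beta$ and solve \eqref{vorticity}-\eqref{vorticity_initial} via the Lagrangian flow \eqref{Lag_sol}, producing a smooth Euler background $(u^\beta,\o^\beta)$ whose derivatives blow up as $\beta\to 0$ according to \eqref{Lipschitz}. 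On this background, freeze the Hilbert--level pressure $p^\beta$ and the stress $-(\nabla_x u^\beta)\!:\!\mathfrak A$ of \eqref{Hilbertexpansion1}, and then determine the viscosity--canceling pair $(\tilde u^\beta,\tilde p^\beta)$ by imposing $\nabla_x\!\cdot\tilde u^\beta=0$ together with the evolution law that annihilates the $\e^2\kappa$ hydrodynamic source $\eta_0\Delta_x u^\beta\!\cdot\!(v-\e u^\beta)M_{1,\e u^\beta,1}$ left by the stress term, as designed in the introduction. The solvability of $F^\e$ itself is obtained by a Guo--type $L^2$--$L^\infty$ argument applied to the $f_R$ equation.

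\textbf{Remainder equation and energy.} Substituting \eqref{F_e_inftro} into \eqref{eqtn_F} yields a linearized Boltzmann--type equation for $f_R$ of the schematic form
\Be\notag
\e\p_t f_R + v\cdot\nabla_x f_R + \f{1}{\kappa\e}L_{\e u^\beta} f_R
= \f{1}{\sqrt\kappa}\Gamma_{\e u^\beta}(f_R,f_R) + \mathcal L^\beta f_R + \mathcal S^\e,
\Ee
where $L_{\e u^\beta}$ is the linearization at the shifted Maxwellian, $\Gamma_{\e u^\beta}$ its bilinear part, $\mathcal L^\beta$ collects linear couplings with the correctors, and $\mathcal S^\e$ gathers all residual sources. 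By construction of the correctors, every hydrodynamic component of $\mathcal S^\e$ is of order $o(\e^2\kappa)$ and every non--hydrodynamic component of order $O(\e\kappa)$, so $\|\mathcal S^\e\|_{L^2_{x,v}}=o(\e\sqrt\kappa)$, which is precisely the size required to close the energy identity. Testing against $f_R$ and invoking the spectral gap of $L_{\e u^\beta}$ on the non--hydrodynamic part gives
\Be\notag
\tfrac{\e}{2}\tfrac{d}{dt}\|f_R\|_{L^2_{x,v}}^2 + \f{\delta_0}{\kappa\e}\|\ip f_R\|_{L^2_{x,v}(\nu)}^2
\;\le\; C(\|\nabla_x u^\beta\|_{L^\infty_x})\|f_R\|_{L^2_{x,v}}^2 + \f{1}{\sqrt\kappa}|\langle\Gamma_{\e u^\beta}(f_R,f_R),f_R\rangle| + o(\e\sqrt\kappa)\|f_R\|_{L^2_{x,v}}.
\Ee
The Lipschitz factor is controlled by \eqref{Lipschitz}, which, although unbounded as $\beta\to 0$, is dominated by choosing $\beta(\e)$ slow enough. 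The trilinear term splits into hydrodynamic and non--hydrodynamic pieces: the $\P$--part is closed by 2D Sobolev embedding along a higher--derivative bootstrap carrying $\p^s f_R$ for $s=1,2$, while the $\ip$--part is absorbed by the left--hand dissipation after paying a $\sqrt\kappa$. A Gr\"onwall loop then yields $\|f_R(t)\|_{L^2_{x,v}}\to 0$ uniformly on $[0,T]$.

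\textbf{Extracting the stated convergence.} Once $f_R$ is small, decompose
\Be\notag
\f{F^\e-M_{1,0,1}}{\e}
= \f{M_{1,\e u^\beta,1}-M_{1,0,1}}{\e}
+ \e p^\beta M_{1,\e u^\beta,1} - \e\kappa(\nabla_x u^\beta)\!:\!\mathfrak A\sqrt{M_{1,\e u^\beta,1}}
+ \kappa\,\tilde u^\beta\!\cdot\!(v{-}\e u^\beta)M_{1,\e u^\beta,1}
+ \e\kappa\,\tilde p^\beta M_{1,\e u^\beta,1}
+ f_R\sqrt{M_{1,\e u^\beta,1}}.
\Ee
A Taylor expansion of the first summand identifies its $L^2_v$--profile as $u^\beta\!\cdot\!v\,M_{1,0,1}+O(\e)$; the $\e$-- and $\e\kappa$--order correctors vanish in $L^2_{x,v}$ as $(\e,\kappa)\to(0,0)$; the stability of the Lagrangian flow from Theorem~\ref{main_theorem1_summary} delivers $u^\beta\to u_E$ in $L^\infty_tL^2_x$. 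Combining these with $\|f_R\|_{L^2_{x,v}}\to 0$ identifies the limit as the hydrodynamic profile carried by $u_E$, which is what the theorem asserts in shorthand form. If a diffuse reflection is imposed on a bounded spatial domain, the expansion is supplemented by a Knudsen boundary--layer corrector enforcing the matching condition on the incoming trace $\gamma_- f_R = 0$, so that diffuse reflection of the full $F^\e$ holds exactly.

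\textbf{Main obstacle.} The hard part is the coupled calibration of $\beta(\e)$ and $\kappa(\e)$ so that three opposing requirements are simultaneously met: the Lipschitz growth \eqref{Lipschitz} of $u^\beta$ must not overwhelm the Gr\"onwall exponential; the nonlinear term $\Gamma_{\e u^\beta}(f_R,f_R)/\sqrt\kappa$ must still be absorbed despite the $1/\sqrt\kappa$ loss coming from the singular macro--micro balance of the Euler scaling; and the regularization error $u^\beta-u_E$ must tend to zero. Because the constraint is only $\e/\kappa^2\to 0$ rather than $\e=\kappa$, the margin between the dissipation gain $\e\sqrt\kappa$ and the source $O(\e\kappa)$ is slim, and the viscosity--canceling cancellation in \eqref{F_e_inftro} is indispensable; without it the $\e^2\kappa$ viscosity remainder would immediately drive $\|f_R\|_{L^2_{x,v}}$ to order $O(\kappa)$ and destroy the closure.
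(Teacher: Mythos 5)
Your proposal follows essentially the same route as the paper: the viscosity-canceling corrector expansion around $M_{1,\e u^\beta,1}$ with mollified data at scale $\beta(\e)$, an $H^2_x L^2_v$ energy--dissipation estimate coupled to a weighted $L^\infty$ bound for the remainder $f_R$, and stability/compactness of the approximations $u^\beta$ to pass to the Lagrangian Euler solution, which is exactly the scheme of Theorem \ref{theo:remainder} combined with Theorem \ref{main_theorem1}. Only cosmetic discrepancies remain: in the paper's scaling the nonlinearity enters the remainder equation as $\frac{1}{\e\kappa}\Gamma(f_R,f_R)$ (i.e. $\frac{1}{\kappa}$ after multiplying through by $\e$), not $\frac{1}{\sqrt{\kappa}}$, and the spatial domain is the torus $\T^2$, so no diffuse-reflection or Knudsen-layer corrector is actually required.
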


In this note it is important to consider the incompressible Navier-Stokes equation with $\kappa$-viscosity 
\begin{align}
\p_t u+ u \cdot \nabla_x u - \kappa \eta_0 \Delta u + \nabla_x p  &=0 \ \ \text{in} \ \O, \label{NS_k}\\
 \nabla_x \cdot u &=0 \ \ \text{in} \ \O,
 \label{incomp} 
\end{align}
where a physical constant $\eta_0$ will be fixed later. We introduce a vorticity
 \Be\label{vorticity}
 \omega = \text{curl } u= \nabla^\perp \cdot u := \p_1 u_2 - \p_2 u_1   .
 \Ee

 \unhide

 \hide
 Next we consider a key change of variable formula:
 [[INTRO]]
 \begin{lemma}
 	Fix $N>0$. For any $s\geq s^\prime \geq 0$ and $(y,\underline u) \in \O \times \{\underline u \in \R^2: |\underline u|<N\}$, the map 
 	\Be
 	s^\prime \mapsto Y(s^\prime;s,y,\underline u) 
 	\in\O
 	\Ee
 	is $m$-to-one, where $m \leq \max\big\{ \big(2N\frac{s-s^\prime}{\e } \big)^2, 1\big\}$ at most.  There is a change of variables formula: for a non-negative function $A: z \in  \O   \mapsto A(z)  \geq 0$,
 	\Be\label{COV}
 	\int_{\{\underline u \in \R^2: |\underline u|< N\}} A( Y(s^\prime;s,y,\underline u)) \dd \underline u \leq 
 	\max\Big\{
 	N^2, \frac{\e^2}{|s-s^\prime|^2}
 	\Big\}
 	\int_{\O} A( z ) \dd z.
 	\Ee
 	
 \end{lemma}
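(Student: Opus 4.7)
The plan is to first identify the form of the characteristic $Y(s^\prime;s,y,\underline u)$. In the scaling $\e \p_t + v\cdot\nabla_x$ used in \eqref{eqtn_F}, the free-transport backward characteristic (projected onto the $\T^2$-factor via the horizontal velocity $\underline u$) reads
\Be\label{Ychar}
Y(s^\prime;s,y,\underline u) \;=\; y \;-\; \frac{s-s^\prime}{\e}\,\underline u \pmod{\T^2}.
\Ee
Setting $a := (s-s^\prime)/\e$, the assertion reduces to studying the affine map $\underline u \mapsto y - a\underline u \bmod \T^2$ from the Euclidean disk $\{|\underline u|<N\}\subset \R^2$ into $\T^2$, for fixed $s,s^\prime,y$. (I read ``the map $s^\prime \mapsto Y$'' in the statement as a typo for ``the map $\underline u \mapsto Y$'', since otherwise the multiplicity bound in terms of $s-s^\prime$ would have no sensible meaning, and the change-of-variables integral is over $\underline u$.)

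For the $m$-to-one property, I would fix an arbitrary target $z\in\T^2$ and count preimages. By \eqref{Ychar}, $\underline u$ is a preimage of $z$ iff $a\underline u \in (y-z) + \Z^2$, i.e.\ $\underline u = (y-z+k)/a$ for some $k\in\Z^2$, and the constraint $|\underline u|<N$ forces $k$ to lie in the open disk $B\bigl(z-y,aN\bigr)\subset\R^2$ of radius $aN$. A direct lattice-point count in a planar disk gives at most $(2aN+1)^2$ lattice points, which is bounded by $\max\{(2aN)^2,1\}$ up to an absolute constant --- absorbing the constant into the implicit bound yields $m \le \max\{(2N(s-s^\prime)/\e)^{2},1\}$ as stated. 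The dichotomy is natural: when $aN<\tfrac12$ the map is injective, while for larger $a$ the disk wraps around the torus $\lesssim (aN)^2$ times.

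For the integral bound I would pass to the Euclidean cover. Changing variables $w = y - a\underline u$ on $\R^2$ (valid for $a>0$, i.e.\ $s>s^\prime$),
\Be\notag
\int_{|\underline u|<N} A(Y(s^\prime;s,y,\underline u))\,\dd\underline u \;=\; \frac{1}{a^{2}} \int_{B(y,aN)} A(w \bmod \T^2)\,\dd w.
\Ee
Tiling $\R^2$ by the unit cells $k+\T^2$, $k\in\Z^2$, and noting that each cell intersects $B(y,aN)$ and is bijectively identified with $\T^2$ by the projection $w\mapsto w\bmod \T^2$, one obtains
\Be\notag
\int_{B(y,aN)} A(w\bmod\T^2)\,\dd w \;=\; \sum_{k\in\Z^2}\int_{\T^2} \mathbf{1}_{B(y,aN)}(z+k)\,A(z)\,\dd z \;\le\; m \int_{\T^2} A(z)\,\dd z,
\Ee
with $m$ exactly the $m$-to-one bound of the previous step. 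Dividing by $a^2$ and using $m/a^2 \le \max\{(2N)^{2},\e^{2}/(s-s^\prime)^{2}\}$ closes \eqref{COV} (up to an innocuous absolute constant). The edge case $s=s^\prime$ is vacuous since the right-hand side contains $\e^2/(s-s^\prime)^2$.

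The main technical obstacle is the careful bookkeeping around the dichotomy $aN\lessgtr 1$ and the transition between the lattice-point counting bound and the Euclidean-to-torus unfolding; in particular, one must be comfortable losing an absolute constant so that the clean form $\max\{N^2,\e^2/(s-s^\prime)^2\}$ can be written. Everything else is routine affine change of variables on $\R^2$ together with periodicity of $A$.
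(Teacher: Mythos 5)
Your proposal is correct and follows essentially the same route as the paper: the paper's (one-line) proof also uses the Jacobian $\det\big(\frac{\p Y}{\p \underline u}\big)=\frac{|s-s'|^2}{\e^2}$ of the affine map $\underline u \mapsto y-\frac{s-s'}{\e}\underline u$ together with the torus wrap-around multiplicity $m\leq \max\{(2N\frac{s-s'}{\e})^2,1\}$ (which it declares ``obvious''), exactly the two ingredients you supply in more detail via the lattice-point count and the $\Z^2$-unfolding. The factor-of-$4$ slack you flag when passing from $m/a^2$ to $\max\{N^2,\e^2/|s-s'|^2\}$ is present in the paper's own computation as well, so your bookkeeping is, if anything, the more honest of the two.
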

 \begin{proof}It suffices to show \eqref{COV}, while others are obvious. From $\det \big(\frac{\p Y (s^\prime; s, y, \underline u)}{\p \underline u}\big)= \frac{|s-s^\prime|^2}{\e^2}$, 
 	\Be
 	\begin{split}\notag
 		&	\int_{\{\underline u \in \R^2: |\underline u|< N\}} A( Y(s^\prime;s,y,\underline u)) \dd \underline u\\
 		&\leq  \max\Big\{ \big(2N\frac{s-s^\prime}{\e } \Big)^2 , 1 \big\}\int_{\O} A(z) \frac{\e^2}{|s-s^\prime|^2} \dd z\\
 		&=	\max\Big\{
 		N^2, \frac{\e^2}{|s-s^\prime|^2}
 		\Big\}
 		\int_{\O} A( z ) \dd z.
 	\end{split}
 	\Ee
 	%
 \end{proof}
 
 \unhide

\tableofcontents

\section{Approximation of the Lagrangian solutions of the Euler equations}

As discussed in the introduction, we would like to obtain a limit to weak solutions, which does not have enough regularity in the framework of the standard Hilbert expansion in general, Moreover, we want a convergent sequence in a stronger topology than $L^\pp$ for velocity, as interesting singular behavior can be observed only in a stronger topology. However, control in stronger topology requires more regularity for velocity field as well. A straightforward remedy for low regularity of fluid velocity field is to regularize the initial data: therefore, instead of choosing initial data as a perturbation around the local Maxwellian $M_{1, \e u_0, 1}$, we choose initial data as a perturbation around the local Maxwellian $M_{1, \e u^\beta_0, 1}$, where $u^\beta_0$ is the initial data regularization of $u_0$ with scale $\beta$. Then if one can prove stability of the Euler solution under the perturbation of initial data, as well as control of remaining small terms, we can construct a sequence of Boltzmann solutions whose bulk velocity converges to Euler solution. 

It turns out that this simple idea works well: in the class of solution of Euler equation we consider, we have a certain stability, so we can prove that solution $u^\beta$ starting from $u^\beta_0$ converges to the solution $u$ from $u_0$. Also, for the estimate of remainders, introduction of regularization scale $\beta$ gives an additional freedom in our analysis: by sacrificing the speed of regularization convergence, we can control the size of higher derivatives appearing in the remainder equation. In addition, many weak solutions of fluid equations are interpreted as a limit of smooth solutions. In that regard, this initial data regularization approach is quite natural.

\hide
Here, there exists a smooth periodic function $R$ such that 
\Be\label{BS_periodic}
u(x)= \mathbf{b} * w = 
\Big( \frac{1}{2\pi} \frac{ x  ^\perp}{|x |^2} +  \nabla^\perp _xR \Big)
*  \o  .
\Ee
We record an explicit formula of $R$ in \eqref{reg_G}.  {\color{red}[[CK: Add the form of $R$ in the appendix]]}
\unhide

 \hide
Here, with $\mathbf{b}(x ) = \frac{1}{2\pi} \frac{ x  ^\perp}{|x |^2}$ for $x \in \R^2$,
\Be\label{b_star}
\mathbf{b} \star \o (x) := \int_{\R^2} \mathbf{b} (x-y) \o (y) \dd y,
\Ee
where $\o (y)$ should be understood as a periodic function in $\{y \in \R^2\}$. \unhide

\subsection{Regularization}
In our proof of the hydrodynamic limit from the Boltzmann equations, it is important to regularize the Largangian solutions of the Euler equation \eqref{vorticity}. We achieve this by regularizing the initial data using the standard mollifier. \hide Define a convolution on $\mathbb{T}^2$: 
\Be
 	f*g (x) := \int_{\mathbb{T}^2} f(x-y) g(y) \dd y. \label{convolution}
\Ee \unhide
\hide Finally, we sometimes denote our domain $\mathbb{T}^2$ by $\T^2$.  
\begin{align}
	\p_t \o + u \cdot \nabla \o =0 \ \ &\text{in} \ [0,T] \times \T^2,   
	\label{vorticity}\\
	u =- \nabla^\perp ( -\Delta)^{-1} \o 
	\ \ &\text{in} \ [0,T] \times \T^2
	,\label{BS}\\
	\o|_{t=0} = \o_0
	\ \ &\text{in} \  \T^2
	.\label{vorticity_initial}
\end{align} \unhide
Let $\varphi \in C_c ^\infty (\mathbb{R}^2)$ be a smooth non-negative function with $\int_{\mathbb{R}^2} \varphi(x) dx = 1$ and $\varphi(x) = 0 
$ for $|x- (0,0)|\geq \frac{1}{4}$. For $\beta \in (0,1)$, we define 
\be\label{mollifier}
\varphi^\beta(x) :=  \frac{1}{\beta^2} \varphi \big( \frac{x}{\beta}\big) \ \text{ for }  \ x \in \left [ -\frac{1}{2}, \frac{1}{2} \right ]^2.
\ee
Note that $\varphi^\beta$ can be extended periodically so that $\varphi^\beta \in C^\infty (\mathbb{T}^2)$ and $\int_{\mathbb{T}^2} \varphi^\beta (x) dx = 1$ as well. Also, $\varphi^\beta$ is supported on $B_{\frac{\beta}{4} } (0)$. 
Note that $\{\varphi^\beta\}_{\beta}$ are approximate identities: thus
, for $1 \le p < \infty$ and $\psi \in L^p (\mathbb{T}^2)$, we have
\Be\label{mol->1}
\lim_{\beta \rightarrow 0} \| \varphi^\beta * \psi - \psi \|_{L^p (\mathbb{T}^2) } = 0.
\Ee 
Note that we cannot expect a universal rate of convergence, which is independent of $\psi$ if $\psi$ is merely in $L^p(\mathbb{T}^2)$ or $p=\infty$. However, if we have a certain regularity for $\psi$, we have the rate of convergence: for example, if $\psi \in W^{1,2} (\O)$, we have
\Be\begin{split}\label{mol->1_rate}
 &\|\varphi^\beta * \psi - \psi \|_{L^2 (\mathbb{T}^2)} 
= \left ( \int_{\mathbb{T}^2} \left | \int_{\mathbb{T}^2} \varphi^\beta(y) ( \psi(x-y) -  \psi(x) ) \dd y \right |^2 \dd x \right )^\frac{1}{2} \\
 &\le  \int_{\mathbb{T}^2} |\varphi^\beta (y) | \left ( \int_{\mathbb{T}^2} |  \psi(x-y) -  \psi(x) |^2 dx \right )^{\frac{1}{2} } dy \\
 & \le C \int_{\mathbb{T}^2} |y| |\varphi^\beta (y) | dy \|  \psi \|_{W^{1,2} (\mathbb{T}^2 ) }  \le C \beta \|  \psi \|_{W^{1,2} (\mathbb{T}^2 ) }.
\end{split} \Ee


	We consider approximation solutions $(u^\beta, \o^\beta)$ for the mollified initial data: 
\begin{align}
	\p_t \o^\beta + u^\beta \cdot \nabla \o^\beta =0
	\ \ &\text{in} \ [0,T] \times \T^2,
 \label{vorticity_eqtn}\\
		u^\beta = - \nabla^\perp (-\Delta)^{-1}\o^\beta 
		\ \ &\text{in} \   [0,T] \times \T^2,
	 \label{BS_beta}\\
	\o^\beta|_{t=0}= \o^\beta_0 := \varphi^\beta  * \o_0
	\ \ &\text{in} \   \T^2 
	.\label{vorticity_beta} 
\end{align}
Note that, for each $\beta\in (0,1)$ this problem \eqref{vorticity_eqtn}-\eqref{vorticity_beta} has a smooth (therefore unique) solution, which is the Lagrangian solution:  
\begin{align}
	&\o^\beta (t,x) = \o_0^\beta (X^\beta (0;t,x)),\label{Lag_beta}\\ 
	&	\frac{d}{ds} X^\beta (s;t,x) = u^\beta(s, X^\beta (s;t,x) ),   \ \ X^\beta (s;t,x) |_{s=t} = x.
	\label{ODE:X_beta}
\end{align}

\begin{remark}
	If $u^\beta$ is obtained from \eqref{BS} with $\omega^\beta \in C^\infty (\T)$, $u^\beta$ is incompressible and thus associated flow $X^\beta$ by \eqref{dX} satisfies \eqref{compression} with an equality and $\mathfrak{C} = 1$ (measure-preserving).
\end{remark}

We define a pressure as a unique solution of $- \Delta p^\beta = \text{div}(\text{div}(u^\beta \otimes u^\beta))$ with $\fint_{\O} p^\beta=0$. Then we have 

\Be \label{up} 
\begin{split}
(\p_t + u^\beta \cdot \nabla_x) u^\beta  + \nabla_x p^\beta = 0 	\ \ &\text{in} \ [0,T] \times \T^2, \\
\nabla_x \cdot u^\beta  = 0  	\ \ &\text{in} \ [0,T] \times \T^2, \\
u^\beta (x,0)  = u_0^\beta (x) 	\ \ &\text{in} \  \T^2.
\end{split}  
\Ee

Also, we will consider the following auxiliary linear equation.
\Be \label{uptilde} 
\begin{split}
(\p_t + u^\beta \cdot \nabla_x) \tilde{u}^\beta  + \tilde{u}^\beta \cdot \nabla_x u^\beta + \nabla_x \tilde{p}^\beta - \eta_0 \Delta_x u^\beta =0	\ \ &\text{in} \ [0,T] \times \T^2,\\
\nabla_x \cdot \tilde{u}^\beta  = 0	\ \ &\text{in} \ [0,T] \times \T^2, \\
\tilde{u}^\beta(0, x)  = \tilde{u}_0 (x) 	\ \ &\text{in} \  \T^2.
\end{split}  
\Ee
Here $\eta_0$ is given by Lemma \ref{Ainn}.

\subsection{Biot-Savart law in a periodic domain}

In this part, we discuss the asymptotic form of kernel for Biot-Savart law which gives $u$ from $\o$, and the singular integral which gives $\nabla_ x u $ from $\o$ in our setting, the periodic domain $\mathbb{T}^2 = \left [-\frac{1}{2}, \frac{1}{2} \right ]^2$. This is important, since the compactness results we have used, in particular \cite{BC2013}, have the $\mathbb{R}^N$ setting: in particular, the key estimate, weak $L^1$ estimate for $\nabla_x u$ relies on the form of Calderon-Zygmund kernel of Riesz transform. Therefore, we need an asymptotic form of Biot-Savart kernels and Riesz transforms. 

We start from \cite{CO2007}: 
\begin{proposition}[\cite{CO2007}, Lemma 1]
	The function $G$, defined on $\mathbb{R}^2 \simeq \mathbb{C}$ by
	\Be \label{Greentorus}
	\begin{split}
		G(z) &:= \mathrm{Im} \left ( \frac{|z|^2 - z^2}{-4 i } - \frac{z}{2} + \frac{i}{12} \right )  \\
		& - \frac{1}{2\pi} \log  \left |  \left (1 - e (z) \right )\times \prod_{n=1} ^\infty \left (1 - e(n i+z ) \right )\left ( 1 - e(ni - z) \right )   \right |,
	\end{split}
	\Ee
	where $e(z) = e^{2 \pi i z}$, is $\mathbb{Z}^2$-periodic and is the Green's function with mass on $\mathbb{Z}^2$, that is, 
	\Be
	\begin{split}
		- \Delta_x G(x)  = \sum_{\zeta \in \mathbb{Z}^2} \delta( x - \zeta )- 1 \text{ for } x \in \mathbb{R}^2, \ \
		\int
		G(x) dx  = 0.
	\end{split}
	\Ee
\end{proposition}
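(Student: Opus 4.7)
The plan is to decompose $G = P + L$, where
\be\notag
P(x) := \mathrm{Im}\Bigl(\tfrac{|z|^2 - z^2}{-4i} - \tfrac{z}{2} + \tfrac{i}{12}\Bigr) = \tfrac{x_2^2}{2} - \tfrac{x_2}{2} + \tfrac{1}{12}
\ee
is the polynomial piece and $L(z) := -\tfrac{1}{2\pi}\log|Q(z)|$ with $Q(z) := (1-e(z))\prod_{n\ge 1}(1-e(ni+z))(1-e(ni-z))$ is the logarithmic piece. Identifying $z = x_1 + ix_2$ with $(x_1,x_2)\in\R^2$, I would verify in turn: (i) $Q$ is a well-defined entire function with simple zeros exactly on the lattice $\mathbb{Z} + i\mathbb{Z}\simeq \mathbb{Z}^2$; (ii) $-\Delta G = \sum_{\zeta \in \mathbb{Z}^2}\delta_\zeta - 1$ in $\mathcal{D}'(\R^2)$; (iii) $G$ is $\mathbb{Z}^2$-periodic; (iv) $\int_{\T^2}G\,\dd x = 0$.

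For (i)--(ii), since $|e(ni\pm z)| = e^{-2\pi n}e^{\mp 2\pi x_2}$, on any compact set the terms $\log|1 - e(ni\pm z)|$ are $O(e^{-2\pi n})$ for $n$ large, so the product converges uniformly on compact sets to an entire function. The zero set of $Q$ is exactly $\mathbb{Z} + i\mathbb{Z}$, each zero simple, because $1 - e(w) = 0 \iff w \in \mathbb{Z}$. Writing $Q(z) = (z - \zeta)h_\zeta(z)$ near each $\zeta$ with $h_\zeta$ holomorphic and non-vanishing, the standard identity $\Delta\log|z-\zeta| = 2\pi \delta_\zeta$ combined with harmonicity of $\log|h_\zeta|$ gives $-\Delta L = \sum_{\zeta}\delta_\zeta$ distributionally. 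Direct differentiation yields $-\Delta P = -\Delta(x_2^2/2) = -1$, so $-\Delta G = \sum_\zeta \delta_\zeta - 1$.

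The main subtlety is step (iii). Invariance under $z\mapsto z+1$ is immediate because $e(z+1)=e(z)$ fixes each factor of $Q$ and $P$ depends only on $x_2$. Under $z \mapsto z+i$, I would re-index the two sub-products ($n \mapsto n+1$ in the first, with the old leading factor $1-e(z+i)$ filling the new $n=1$ slot, and $n \mapsto n-1$ in the second, producing a new leading factor $1-e(-z)$) to telescope to
\be\notag
Q(z+i) \;=\; (1-e(-z))\prod_{n\ge 1}(1-e(ni+z))(1-e(ni-z)) \;=\; Q(z)\,\frac{1-e(-z)}{1-e(z)}.
\ee
Using $e(z) = e^{-2\pi x_2}e^{2\pi i x_1}$ one finds $|1-e(-z)|^2 = e^{4\pi x_2}|1-e(z)|^2$, so $L(z+i) - L(z) = -x_2$. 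On the polynomial side, $P(x_1,x_2+1) - P(x_1,x_2) = x_2$ exactly cancels this, so $G(z+i)=G(z)$. This cancellation between the non-harmonic $x_2^2/2$ piece of $P$ and the quasi-periodic multiplier of the theta-function-type product $Q$ is what I expect to be the main obstacle: one must guess the precise polynomial correction that restores periodicity.

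For (iv), once (i)--(iii) are in hand, $G$ is a periodic solution of $-\Delta G = \sum_\zeta \delta_\zeta - 1$, and any two such solutions differ by an additive constant, so it suffices to verify that the constant $\tfrac{1}{12}$ is the correct normalization. One way is to expand $\log|1-e(ni\pm z)|^2 = -\sum_{k\ge 1}\tfrac{1}{k}\bigl(e(ni\pm z)^k + \overline{e(ni\pm z)^k}\bigr)$ and integrate term by term over $\T^2$; the $x_1$-integration annihilates every $k\ne 0$ mode, leaving a convergent series in $e^{-2\pi n}$ whose sum is precisely the Bernoulli-type constant needed to cancel $\int_{\T^2}(P - \tfrac{1}{12})\,\dd x$, thereby closing the proof.
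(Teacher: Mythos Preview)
The paper does not prove this proposition; it is quoted from \cite{CO2007}. What the paper does supply is the corollary recorded as \eqref{Greenform}: invoking a standard infinite-product theorem, it shows the product converges to an entire function $\mathfrak{h}$ with simple zeros exactly on $\mathbb{Z}^2$, so that $G(z)=\tfrac{|z|^2}{4}-\tfrac{1}{2\pi}\log|\mathfrak{h}(z)|$. Your proposal goes well beyond this by sketching a full proof of the distributional identity, the periodicity, and the mean-zero condition.

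Your steps (i)--(iii) are correct. The telescoping for $z\mapsto z+i$ and the exact cancellation $P(x_1,x_2+1)-P(x_1,x_2)=x_2=-(L(z+i)-L(z))$ is precisely the mechanism (indeed $\tfrac{1-e(-z)}{1-e(z)}=-e(-z)$, so $|Q(z+i)|=e^{2\pi x_2}|Q(z)|$). Note that the paper's decomposition $G=\tfrac{|z|^2}{4}-\tfrac{1}{2\pi}\log|\mathfrak{h}|$ differs from your $G=P+L$ by a harmonic polynomial absorbed into $\mathfrak{h}$; your splitting makes periodicity transparent, while the paper's makes the local expansion $G(z)=-\tfrac{1}{2\pi}\log|z-\zeta|+\text{smooth}$ immediate, which is what they actually need downstream.

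Step (iv) is slightly muddled. Since $P(x_2)=\tfrac12 B_2(x_2)$ is half the second Bernoulli polynomial, $\int_{[0,1]^2}P=0$ already, so the task reduces to $\int_{[0,1]^2}\log|Q|=0$. Your term-by-term expansion does handle the $n\ge1$ factors, but each $x_1$-integral vanishes outright, so there is no residual ``series in $e^{-2\pi n}$'' to sum as you suggest; more importantly you omitted the $n=0$ factor $\log|1-e(z)|$, for which $|e(z)|=e^{-2\pi x_2}$ hits $1$ at $x_2=0$ and the power series fails. The clean fix is Jensen's formula $\int_0^1\log|1-re^{2\pi i x_1}|\,\dd x_1=\log^+ r$, which gives $\int_0^1\log|1-e(z)|\,\dd x_1=(-2\pi x_2)_+=0$ for $x_2\in[0,1]$ and likewise annihilates every other factor in one stroke.
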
 
		In particular, the infinite product inside converges absolutely and $G$ is of the form
	\Be \label{Greenform}
	G(z) = \frac{|z|^2}{4} - \frac{1}{2\pi}\log |\mathfrak{h}(z) |,
	\Ee
	where $\mathfrak{h}$ is a holomorphic function with simple zeros exactly on $\mathbb{Z}^2$.
	
	
	For the sake of completeness, we briefly reason \eqref{Greenform}. We recall the following result from complex analysis:
	\begin{proposition}[Theorem 15.5 of \cite{RCA}]
		Suppose that $\{ g_n \}$ is a sequence of non-zero holomorphic functions on $\mathbb{C}$ such that
		\Be
		\sum_{n=1} ^\infty |1 - g_n (z) |
		\Ee
		converges uniformly on compact subsets of $\mathbb{C}$. Then the product 
		\Be
		g(z) = \prod_{n=1} ^\infty g_n (z)
		\Ee
		converges uniformly on compact subsets of $\mathbb{C}$, and thus $g$ is holomorphic on $\mathbb{C}$. Furthermore, the multiplicity of $g$ at $z_0$ (i.e. the smallest nonnegative integer $k$ such that $\lim_{z \rightarrow z_0} \frac{g(z)}{(z-z_0)^k} \ne 0$) is the sum of multiplicities of $g_n$ at $z_0$.
	\end{proposition}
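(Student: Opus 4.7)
The plan is to verify this classical Weierstrass-type theorem in three steps: uniform convergence of the partial products on compact subsets via a logarithmic comparison, holomorphicity via the Weierstrass limit theorem, and a local reduction to finite products to read off the multiplicity at each point.

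For the convergence step I would fix a compact set $K \subset \mathbb{C}$ and use uniform convergence of $\sum_n |1-g_n(z)|$ on $K$ to pick $N_0$ such that $|1-g_n(z)| < 1/2$ for all $n \geq N_0$ and $z \in K$. On this range the principal branch of $\log$ is defined at $g_n(z)$ and satisfies the elementary bound $|\log g_n(z)| \le 2|1-g_n(z)|$, so $\sum_{n \geq N_0} \log g_n$ converges uniformly on $K$ by direct comparison. Exponentiation produces a uniform and nonvanishing limit for the tail product $\prod_{n \geq N_0} g_n$, and multiplying by the finite product of the first $N_0-1$ factors yields $g$ on $K$. Since $K$ was arbitrary, $P_N := \prod_{n=1}^N g_n \to g$ uniformly on compacts, which gives holomorphicity of $g$ on $\mathbb{C}$ by the Weierstrass limit theorem.

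For the multiplicity assertion I would fix a point $z_0$ and a small closed disk $\overline{B_r(z_0)}$. Uniform convergence of $\sum |1-g_n|$ on this disk forces $\|1-g_n\|_{L^\infty(\overline{B_r(z_0)})} \to 0$, so only finitely many indices $n$ admit a zero of $g_n$ anywhere in $\overline{B_r(z_0)}$; call this finite set $S$. For $N \geq \max S$, the logarithm argument above applied to this disk shows that $\prod_{n > N} g_n$ converges uniformly to a nonvanishing holomorphic function $h$ on $\overline{B_r(z_0)}$. Thus near $z_0$ we may factor $g(z) = h(z) \prod_{n \in S} g_n(z)$ with $h(z_0) \neq 0$, and reading off the order of vanishing at $z_0$ shows that the multiplicity of $g$ equals $\sum_{n \in S} \mathrm{mult}_{z_0}(g_n) = \sum_{n \geq 1} \mathrm{mult}_{z_0}(g_n)$, as claimed.

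The only mild technical subtlety is making sure the principal branch of the logarithm is well-defined along the tail, which is handled by the bound $|1-g_n| < 1/2$ keeping $g_n(z)$ safely inside the right half-plane away from the branch cut. I do not expect this to be a serious obstacle; the entire argument is standard and can be read off from the proof of the Weierstrass product theorem in Rudin.
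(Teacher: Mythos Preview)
Your proof is correct and follows the standard Weierstrass approach. The paper does not actually prove this proposition; it is quoted verbatim as Theorem 15.5 of Rudin's \emph{Real and Complex Analysis} and used as a black box to conclude that $\mathfrak{h}$ is entire with simple zeros on $\mathbb{Z}^2$, so there is no in-paper proof to compare against. One minor notational slip: when you write $g(z) = h(z)\prod_{n\in S} g_n(z)$ near $z_0$, the $h$ there should absorb not only the tail $\prod_{n>N} g_n$ but also the finitely many nonvanishing early factors $g_n$ with $n\le N$, $n\notin S$; this is harmless since those factors are nonzero at $z_0$ and the multiplicity count is unaffected.
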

	Now we see that $\mathfrak{h}(z)$ is the product of $1 - e(z) = 1 - e^{2 \pi i z}$, $1 - e(ni + z) = 1 - e^{-2 \pi n + 2 \pi i z}$, and $1 - e(ni - z) = 1 - e^{-2 \pi n - 2 \pi i z} $. Note that $| 1 - (1 - e (ni+z) ) |= | 1 - (1 - e(ni - z) ) | = e^{- 2 \pi n}$ so the premise of the proposiiton is satisfied. Thus $\mathfrak{h} (z)$ is holomorphic. Furthermore, the zeros of $\mathfrak{h}$ is exactly the union of zeros of $1 - e(z)$, which is $\{ m i | m \in \mathbb{Z} \}$, zeros of $1 - e (ni + z) $, which is $ \{ m - ni | m \in \mathbb{Z} \}$, and zeros of $1 - e(ni - z) $, which is $\{ m + ni | m \in \mathbb{Z} \}$, for each integer $n \ge 1$. The union os exactly $\mathbb{Z}^2$. Moreover, the multiplicity of each point in $\mathbb{Z}^2$ is 1, in other words, all roots are simple.
	

Thus, on $\mathbb{R}^2 \setminus \mathbb{Z}^2$, $G$ is infinitely differentiable. Furthermore, let $\zeta \in \mathbb{Z}^2$. Then there exists a $\mathfrak{r}_\zeta > 0$ such that 
\Be
\mathfrak{h} (z) = (z - \zeta) \mathfrak{H} (z),
\Ee
where $\mathfrak{H} (z) = \frac{\mathfrak{h} (z) }{z-\zeta}$ is an holomorphic function on $B_{\mathfrak{r}_\zeta} (\zeta)$ and $\inf_{z \in B_{\mathfrak{r}_\zeta} (\zeta)} |\mathfrak{H} (z) | \ge c_\zeta > 0$. Therefore, we can rewrite \eqref{Greenform} in the following form and differentiate: for $z \in B_{\mathfrak{r}_\zeta} (\zeta)$,
\Be
\begin{split}
	G(z) &= -\frac{1}{2\pi} \log | z - \zeta| + \mathfrak{B}_\zeta (z), \\
	\nabla G(z) &= -\frac{1}{2\pi} \frac{z - \zeta} {|z- \zeta |^2} + \nabla  \mathfrak{B}_\zeta (z), \\
	\nabla^2 G(z) &= \frac{1}{4\pi} \frac{ (z-\zeta) \otimes (z-\zeta) - \frac{1}{2} |z-\zeta|^2 \mathbb{I}_2 }{|z-\zeta|^4} + \nabla^2  \mathfrak{B}_\zeta (z),
\end{split}
\Ee
where $z = x+i y$ is identified with $(x,y)$, $\nabla = (\partial_x, \partial_y )$, and
\Be
 \mathfrak{B}_\zeta (z) = \frac{|z|^2}{4} - \frac{1}{2\pi} \log | \mathfrak{H} (z) |
\Ee
is a smooth function (in $x, y$) whose all derivatives are bounded. In particular, taking $\zeta = 0 = (0,0)$ and taking $\mathfrak{r} = \mathfrak{r}_0$, we have the following:
\begin{proposition} \label{PeriodicKernel}
	Let $G$ be defined by \eqref{Greentorus}, so that the solution to Poisson equation $- \Delta_x q = h - \int_{\mathbb{T}^2} h$ is given by $q = G * h$, and the Biot-Savart law by 
	\Be\label{BS_periodic}
u(x)= \mathbf{b} * w = 
\Big( \frac{1}{2\pi} \frac{ x  ^\perp}{|x |^2} +  \nabla^\perp _x \mathfrak{B} \Big)
*  \o  .
\Ee
Then there exists a $\mathfrak{r}>0$ such that $G, \nabla_x G, \nabla_x^2 G$ are smooth and bounded in $\mathbb{T}^2 \setminus B_{\mathfrak{r} } (0) = \left [ -\frac{1}{2}, \frac{1}{2} \right ]^2 \setminus B_{\mathfrak{r} } (0)$ and in $B_{\mathfrak{r} } (0)$ we have
	\Be
	\begin{split}
		G(x) &= -\frac{1}{2\pi} \log |x| +  \mathfrak{B}(x), x \in B_{\mathfrak{r}} (0) , \\
		\nabla_x G(x) &= -\frac{1}{2\pi} \frac{x}{|x|^2} + \nabla_x  \mathfrak{B}(x), x \in B_{\mathfrak{r}} (0), \\
		\nabla_x^2 G(x) &= \frac{1}{4\pi} \frac{x\otimes x - \frac{1}{2} |x|^2 \mathbb{I}_2 }{|x|^4} + \nabla_x^2  \mathfrak{B}(x), x \in B_{\mathfrak{r}} (0),
	\end{split}
	\Ee
	where $\nabla_x ^k  \mathfrak{B}$ are bounded in $B_{\mathfrak{r}} (0) $ for all $k\ge 0$.
\end{proposition}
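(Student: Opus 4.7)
The plan is to package the results already developed in the preceding discussion into the dichotomous description of $G$ stated in the proposition. The starting point is the global representation \eqref{Greenform}, $G(z) = \frac{|z|^2}{4} - \frac{1}{2\pi} \log |\mathfrak{h}(z)|$, combined with the fact established above that $\mathfrak{h}$ is entire with simple zeros precisely on $\mathbb{Z}^2$.

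For the region $\mathbb{T}^2 \setminus B_{\mathfrak{r}}(0)$, I would first observe that on this compact set $\mathfrak{h}$ is continuous and nonvanishing, hence uniformly bounded away from zero. Since $\mathfrak{h}$ is itself holomorphic, $\log|\mathfrak{h}|$ (being locally the real part of a holomorphic branch of $\log\mathfrak{h}$) is $C^\infty$ on this compact set, with all derivatives uniformly bounded. Consequently $G$, $\nabla_x G$, and $\nabla_x^2 G$ are smooth and bounded there, yielding the ``outer'' part of the claim.

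For the ``inner'' part on $B_{\mathfrak{r}}(0)$, I would choose $\mathfrak{r} = \mathfrak{r}_0$ from the preceding discussion (shrinking so $B_{\mathfrak{r}}(0)\cap\mathbb{Z}^2=\{0\}$) so that the factorization $\mathfrak{h}(z) = z\,\mathfrak{H}(z)$ holds with $\mathfrak{H}$ holomorphic and $|\mathfrak{H}| \geq c_0 > 0$ on the disk. Setting
\[
\mathfrak{B}(x) := \frac{|x|^2}{4} - \frac{1}{2\pi}\log|\mathfrak{H}(x)|,
\]
the identity $G(x) = -\tfrac{1}{2\pi}\log|x| + \mathfrak{B}(x)$ follows by direct substitution using $\log|z\,\mathfrak{H}(z)| = \log|z| + \log|\mathfrak{H}(z)|$. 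Since $\mathfrak{H}$ is nonvanishing holomorphic on the simply connected disk $B_{\mathfrak{r}}(0)$, it admits a holomorphic logarithm $\mathcal{L}$ there, so $\log|\mathfrak{H}| = \mathrm{Re}\,\mathcal{L}$ is harmonic and real-analytic; thus $\mathfrak{B} \in C^\infty(B_{\mathfrak{r}}(0))$ with derivatives of every order bounded on any slightly smaller ball. The expansions of $\nabla_x G$ and $\nabla_x^2 G$ then follow by differentiating term by term, using the standard identities $\nabla_x \log|x| = x/|x|^2$ and $\partial_i \partial_j \log|x| = \delta_{ij}/|x|^2 - 2 x_i x_j/|x|^4$, and collecting the singular part into the stated Calder\'on--Zygmund form.

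I do not anticipate a serious obstacle, since every nontrivial ingredient (the closed formula for $G$, the infinite product representation of $\mathfrak{h}$, the simple-zero factorization near each lattice point) has already been established in the text above; what remains is essentially bookkeeping. The only subtle point worth flagging is the existence of a holomorphic branch of $\log\mathfrak{H}$ on $B_{\mathfrak{r}}(0)$, which uses simple connectedness of the disk together with strict positivity of $|\mathfrak{H}|$, and is precisely what promotes the remainder $\mathfrak{B}$ from merely continuous to $C^\infty$ with bounded derivatives of all orders.
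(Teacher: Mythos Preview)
Your proposal is correct and follows essentially the same approach as the paper: the paper likewise derives the proposition directly from the factorization $\mathfrak{h}(z) = z\,\mathfrak{H}(z)$ near $\zeta = 0$ (taking $\mathfrak{r} = \mathfrak{r}_0$) and the smoothness of $\log|\mathfrak{H}|$, after noting that $G$ is $C^\infty$ away from $\mathbb{Z}^2$. Your explicit invocation of a holomorphic branch of $\log\mathfrak{H}$ on the simply connected disk is a clean way to justify the $C^\infty$ regularity of $\mathfrak{B}$, which the paper asserts without spelling out.
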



\subsection{Higher Regularity of the Approximations $(u^\beta,\o^\beta)$}

In this section we establish the regularity estimate of $(u^\beta, \o^\beta)$ solving \eqref{up} and \eqref{vorticity_eqtn}-\eqref{vorticity_beta} and $(\tilde{u}^\beta, \tilde{p}^\beta)$ solving \eqref{uptilde}.

First we prove that, for $1 \leq r, p \leq \infty$,
\begin{align}
\| \o_0^\beta \|_{L^r (\T^2)} &  
\lesssim \beta^{-2 \big( \frac{1}{p} - \frac{1}{r}\big)_{\hspace{-2pt}+}}
\| \o_0 \|_{L^p},
\label{growth:o_ell}\\
\|  \nabla^k\o _0^\beta \|_{L^r (\T^2)} & 
\lesssim \beta^{- k -2 \big( \frac{1}{p} - \frac{1}{r}\big)_{\hspace{-2pt}+}} \| \o_0 \|_{L^p}  .\label{growth:Do_ell}
\end{align}
From the Young's inequality, for $1+1/r = 1/p+ 1/q$ and $r, p, q \in [1, \infty]$,
\Be\notag
\| \o_0^\beta \|_{L^r (\T^2)} \leq \| \varphi^\beta \|_{L^q (\T^2)} \| \o_0 \|_{L^p (\T^2)}  
\lesssim \beta^{-2 \big( \frac{1}{p} - \frac{1}{r}\big)}
  \| \o_0 \|_{L^p} \ \ \text{for }  r \geq p, 
\Ee
\Be\notag
\|  \nabla^k\o _0^\beta \|_{L^r (\T^2)} \leq  \| \nabla^k \varphi^\beta \|_{L^q (\T^2)} \| \o_0 \|_{L^p (\T^2)}  
\leq \beta^{- k -2 \big( \frac{1}{p} - \frac{1}{r}\big)}   \| \o_0 \|_{L^p} \ \ \text{for }  r \geq p.
\Ee
For both, we have used 
%
\Be\notag
\left(\int_{\T^2} | \nabla^k_x\varphi^\beta|^q \dd x\right)^{1/q}= \left(  \frac{\beta^2}{\beta^{q(2+k)}}\int_{\T^2} |\nabla^k \varphi (\frac{x}{\beta})|^q \dd \frac{x_1}{\beta}
\dd \frac{x_2}{\beta}
\right)^{1/q}
= \beta^{ -k- \frac{
			2(q-1)	
		}{q}} \| \nabla^k \varphi \|_{L^q (\T^2)}.
\Ee
Using $|\T^2| =1$, we have  
\begin{align*}
\| \o_0^\beta \|_{L^r (\T^2)} &\leq \| \o_0^\beta \|_{L^p (\T^2)} \lesssim \| \o_0 \|_{L^p (\T^2)}  \ \ \text{for }  p \geq r,
\\
\| \nabla^k \o_0 ^\beta \|_{L^r (\T^2)} &\leq \| \nabla^k \o_0 ^\beta \|_{L^p (\T^2)} \lesssim 
 \beta^{- k} \| \o _0 \|_{L^p (\T^2)}  \ \ \text{for }  p \geq r .
\end{align*} 
Collecting the bounds, we conclude \eqref{growth:o_ell} and \eqref{growth:Do_ell}

\subsubsection{Bounds for $\| \nabla_x u^\beta (t) \|_{L^\infty(\mathbb{T}^2 ) }$}

\begin{theorem}\label{thm:Dptu}
Let $(u^\beta,\o^\beta)$ be the Lagrangian solution of \eqref{Lag_beta} supplemented with \eqref{ODE:X_beta} and \eqref{BS_beta}. For $\pp \in [1, \infty]$ and $\beta  \ll \color{black}{} \| \o_0 \|_{L^\pp}$, we have the following estimate for all $t\geq0$,
	\begin{align}
		\| \nabla u^\beta (t, \cdot )\|_{L^\infty} 
		&	\lesssim  
		\mathfrak{Lip}(\beta, \pp) :=
		\Big(  \beta^{- \frac{2}{\pp}} 
		\log_+ \hspace{-2pt}  \frac{1}{\beta}
		\Big)  \| \o_0 \|_{L^\pp}   e^{ t C\color{black}{} \beta^{- \frac{2}{\pp}}\| \o_0 \|_{L^\pp} }  {\text{ for some } C>1}\color{black}{}.\label{est:Du} 
	\end{align}
\end{theorem}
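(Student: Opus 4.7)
The plan is to combine the classical log-interpolation estimate for the Biot--Savart operator with the transport structure of $\o^\beta$, and then close by a Gronwall inequality.

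\emph{Step 1 (Log-interpolation estimate).} Using Proposition~\ref{PeriodicKernel}, I write $\nabla u^\beta(t,\cdot) = K * \o^\beta(t,\cdot)$, where the matrix-valued kernel $K = \nabla \mathbf{b}$ has a Calder\'on--Zygmund singularity of the form $\nabla(x^\perp/|x|^2)$ at the origin and a smooth remainder $\nabla \nabla^\perp \mathfrak{B}$ coming from \eqref{BS_periodic}. Exploiting the vanishing spherical mean of the CZ part, I replace $\o^\beta(t,y)$ by $\o^\beta(t,y) - \o^\beta(t,x)$ in the principal-value integral; splitting the domain at a radius $r$ and bounding the near-field by $r \|\nabla \o^\beta(t)\|_{L^\infty}$, the far-field by $\|\o^\beta(t)\|_{L^\infty}\log(1/r)$, and the smooth-remainder contribution by $\|\o^\beta(t)\|_{L^1}$, then optimizing in $r$, I arrive at the log-interpolation inequality
\begin{equation}\notag
\|\nabla u^\beta(t)\|_{L^\infty} \lesssim \|\o^\beta(t)\|_{L^1} + \|\o^\beta(t)\|_{L^\infty}\bigl(1 + \log_+ \|\nabla \o^\beta(t)\|_{L^\infty}\bigr).
\end{equation}

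\emph{Step 2 (Lagrangian bounds).} Since $u^\beta$ is smooth and divergence-free, the flow $X^\beta(s;t,\cdot)$ from \eqref{ODE:X_beta} is measure-preserving, so by \eqref{Lag_beta} the transport identity $\|\o^\beta(t)\|_{L^q} = \|\o_0^\beta\|_{L^q}$ holds for all $q \in [1,\infty]$. Combined with \eqref{growth:o_ell} this gives $\|\o^\beta(t)\|_{L^1} \lesssim \|\o_0\|_{L^\pp}$ and $\|\o^\beta(t)\|_{L^\infty} \lesssim \beta^{-2/\pp}\|\o_0\|_{L^\pp}$. Differentiating \eqref{Lag_beta} in $x$ gives $\nabla_x \o^\beta(t,x) = \nabla \o_0^\beta(X^\beta(0;t,x))\cdot \nabla_x X^\beta(0;t,x)$, and the variational ODE for $\nabla X^\beta$ along the flow combined with Gronwall yields
\begin{equation}\notag
\|\nabla_x X^\beta(0;t,\cdot)\|_{L^\infty} \leq \exp\Bigl(\int_0^t \|\nabla u^\beta(s)\|_{L^\infty}\,\dd s\Bigr).
\end{equation}
Combining with \eqref{growth:Do_ell}, I conclude
\begin{equation}\notag
\log_+ \|\nabla \o^\beta(t)\|_{L^\infty} \lesssim \log_+(1/\beta) + \log_+\|\o_0\|_{L^\pp} + \int_0^t \|\nabla u^\beta(s)\|_{L^\infty}\,\dd s.
\end{equation}

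\emph{Step 3 (Gronwall closure).} Setting $Y(t) := \|\nabla u^\beta(t)\|_{L^\infty}$ and $A := C\beta^{-2/\pp}\|\o_0\|_{L^\pp}$, the combination of Steps~1 and~2 yields
\begin{equation}\notag
Y(t) \leq A\bigl(1+\log_+(1/\beta) + \log_+\|\o_0\|_{L^\pp}\bigr) + A\int_0^t Y(s)\,\dd s.
\end{equation}
Under the smallness hypothesis on $\beta$ in the statement, which makes $1+\log_+\|\o_0\|_{L^\pp}$ absorbable into $\log_+(1/\beta)$, Gronwall's inequality then produces $Y(t) \lesssim A\log_+(1/\beta)\,e^{At}$, which is precisely \eqref{est:Du}.

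The main technical obstacle is Step~1, where one must carefully handle the principal-value cancellation against the CZ part of $\nabla \mathbf{b}$ together with the contribution from the smooth remainder $\nabla^\perp \mathfrak{B}$ in \eqref{BS_periodic}, to obtain a clean log-interpolation inequality valid on $\T^2$. Once that inequality is established, Steps~2 and~3 follow from routine transport and Gronwall arguments.
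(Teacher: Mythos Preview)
Your proposal is correct and follows essentially the same strategy as the paper: a log-interpolation (potential-theory) bound for $\nabla u^\beta$ in terms of low norms of $\o^\beta$ plus the logarithm of a high norm, Lagrangian propagation of the high norm via the flow-gradient estimate, and a Gronwall closure. The only cosmetic difference is that the paper phrases the high norm as a H\"older seminorm $[\o^\beta(t)]_{C^{0,\gamma}}$ (controlled through Morrey's inequality on $\nabla\o_0^\beta$ and the flow map), whereas you work directly with $\|\nabla\o^\beta(t)\|_{L^\infty}$ via the chain rule; since $\o_0^\beta$ is smooth these are interchangeable and lead to the same Gronwall inequality.
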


We will estimate $\nabla_x X$ by applying the Gronwall's inequality to the differentiation of \eqref{ODE:X_beta}:  
	\Be\label{dspX}
		\frac{d}{ds} \nabla_x X^\beta (s; t, x)= \nabla_x X(s; t, x) \cdot (\nabla_x u) (s, X(s; t, x) ). 
	\Ee

The initial condition for each purely spatial derivative can be driven from \eqref{dX}: 
\Be\label{initial:DX}
\nabla_x X(s;t,x)|_{s=t} = id   
\Ee

 We use a following version of Gronwall's inequality.  
\begin{lemma}[\cite{BCD2011}, Lemma 3.3]\label{lem:gronwall}
Let $q$ and $z$ be two $C^0$ (resp. $C^1$) nonnegative functions on $[t_0, T]$. Let $\mathcal{G}$ be a continuous function on $[t_0, T]$. Suppose that, for $t \in[t_0, T]$,
\Be
\frac{d}{dt} z(t) \le \mathcal{G}(t) z(t) + q(t).
\Ee
For any time $t \in [t_0, T]$, we have
\Be
z(t) \le z(t_0) \exp \left (\int_{t_0} ^t \mathcal{G} (\tau) d\tau \right ) + \int_{t_0} ^t q(\tau) \exp \left ( \int_\tau ^t  \mathcal{G} (\tau') d\tau ' \right ) d \tau.
\Ee
\end{lemma}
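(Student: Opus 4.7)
The plan is to prove Lemma \ref{lem:gronwall} by the classical integrating-factor method, reducing the differential inequality to a pointwise monotonicity statement that can be integrated directly.

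First, I would introduce the integrating factor
\[
\mu(t) := \exp\!\left(-\int_{t_0}^{t} \mathcal{G}(\tau)\, d\tau\right),
\]
which is well-defined and $C^1$ on $[t_0, T]$ because $\mathcal{G}$ is continuous, and which satisfies $\mu(t_0)=1$, $\mu(t)>0$, and $\mu'(t) = -\mathcal{G}(t)\mu(t)$. Multiplying the hypothesis $z'(t) \le \mathcal{G}(t) z(t) + q(t)$ by the positive quantity $\mu(t)$ and rearranging gives
\[
\mu(t) z'(t) - \mu(t)\mathcal{G}(t) z(t) \;\le\; \mu(t) q(t),
\]
whose left-hand side is exactly $\frac{d}{dt}\bigl(\mu(t) z(t)\bigr)$ by the product rule.

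Next, I would integrate this pointwise inequality over $[t_0, t]$ (permissible since both sides are continuous in $\tau$). This yields
\[
\mu(t) z(t) - z(t_0) \;\le\; \int_{t_0}^{t} \mu(\tau) q(\tau)\, d\tau.
\]
Dividing through by $\mu(t) > 0$ and using $\mu(t_0)/\mu(t) = \exp\!\bigl(\int_{t_0}^{t}\mathcal{G}\bigr)$ and $\mu(\tau)/\mu(t) = \exp\!\bigl(\int_{\tau}^{t}\mathcal{G}\bigr)$ produces
\[
z(t) \;\le\; z(t_0)\exp\!\left(\int_{t_0}^{t}\mathcal{G}(\tau)\,d\tau\right) + \int_{t_0}^{t} q(\tau)\exp\!\left(\int_{\tau}^{t}\mathcal{G}(\tau')\,d\tau'\right)d\tau,
\]
which is precisely the claimed bound.

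A small subtlety worth flagging is the regularity requirement: the hypothesis phrases $z$ as $C^0$ \emph{or} $C^1$, so if one wishes to treat only absolutely continuous $z$, one should invoke the fundamental theorem of calculus for absolutely continuous functions to justify $\mu(t)z(t) - z(t_0) = \int_{t_0}^{t} (\mu z)'(\tau)\,d\tau$ from the pointwise a.e.\ derivative bound. In the stated $C^1$ setting this step is trivial, so there is no genuine obstacle; the argument is essentially a one-line integrating-factor computation, and I would keep the exposition short accordingly.
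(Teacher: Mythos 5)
Your integrating-factor argument is correct and complete; the paper itself gives no proof of this lemma (it is quoted from \cite{BCD2011}), and your computation is exactly the standard Gronwall argument used there. The regularity remark is apt: since the hypothesis is phrased with $\frac{d}{dt}z$, the $C^1$ (or absolutely continuous) reading is the right one, and your proof handles it without any gap.
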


\color{black}{}

\begin{lemma}\label{lem:pX} For any $r \in [1,\infty]$ and $0 \leq s \leq t$, 
	\begin{align}\label{est:DX}
			\|  \nabla_x X^\beta (s;t,\cdot ) \|_{L^r (\T^2)}  \leq
		e^{\int_s ^t \|\nabla_x u (t') \|_{L^\infty_x} \dd t'	 }.
		\end{align}
\end{lemma}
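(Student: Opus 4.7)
\textbf{Proof plan for Lemma \ref{lem:pX}.} The plan is to obtain a pointwise in $x$ estimate on $|\nabla_x X^\beta(s;t,x)|$ via Gronwall applied to the linearized flow ODE, and then deduce the $L^r$ bound from the fact that $|\T^2|=1$ makes a pointwise bound subsume all $L^r$ bounds.

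First I would differentiate the defining ODE \eqref{ODE:X_beta} in the initial spatial variable $x$ to obtain the linearized equation \eqref{dspX} together with the initial condition \eqref{initial:DX}. Taking any matrix norm (e.g.\ the Euclidean operator norm) and using the chain rule, the resulting inequality is
\[
\left| \frac{d}{ds} |\nabla_x X^\beta(s;t,x)| \right| \leq \|\nabla_x u^\beta(s,\cdot)\|_{L^\infty_x} \, |\nabla_x X^\beta(s;t,x)|,
\]
since the composition with $X^\beta(s;t,x)$ only sees the supremum of $\nabla_x u^\beta$ in space. (I read the $u$ on the right-hand side of the stated inequality as $u^\beta$, consistent with the flow being $X^\beta$.)

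Next I would apply the Gronwall lemma (Lemma \ref{lem:gronwall}) integrating \emph{backwards} from the terminal condition at $s=t$, where $|\nabla_x X^\beta(t;t,x)|=|\mathrm{id}|\lesssim 1$, down to $s$. With $q\equiv 0$ and $\mathcal{G}(\tau) = \|\nabla_x u^\beta(\tau,\cdot)\|_{L^\infty_x}$, this yields the pointwise bound
\[
|\nabla_x X^\beta(s;t,x)| \;\leq\; \exp\!\Big( \int_s^t \|\nabla_x u^\beta(t',\cdot)\|_{L^\infty_x}\,\dd t' \Big)
\quad \text{for every } x\in\T^2.
\]

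Finally, since the right-hand side is independent of $x$ and $|\T^2|=1$, taking $L^r(\T^2)$ norm for any $r\in[1,\infty]$ preserves the bound and yields \eqref{est:DX}. There is no real obstacle here: the only subtlety is the backward-in-time integration (since $s\leq t$), which is handled cleanly by Lemma \ref{lem:gronwall} applied on $[s,t]$ with the terminal datum at $t$; and the passage from the pointwise to the $L^r$ bound is free because the torus has unit measure.
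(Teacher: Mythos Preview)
Your proposal is correct and follows essentially the same approach as the paper: apply Gronwall to the linearized flow equation \eqref{dspX} with the terminal datum $\nabla_x X^\beta(t;t,x)=\mathrm{id}$, then pass to $L^r$. The only cosmetic difference is that the paper phrases the Gronwall step directly at the level of the $L^r$ norm (using $\|\mathrm{id}\|_{L^r(\T^2)}=1$), whereas you first obtain the pointwise bound and then take $L^r$; the two are equivalent here.
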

\begin{proof}The proof is immediate from the Gronwall's inequality to \eqref{dspX} and the initial condition $\|\nabla X^\beta (t;t,x)\|_{L^r (\T^2)}= \|\nabla x\|_{L^r (\T^2)}= \|id\|_{L^r (\T^2)}=1$ from \eqref{initial:DX}.
\end{proof}

Next, using the Morrey's inequality 
\Be\label{Sob_emb}
W^{1, r} (\T^2) \subset C^{0,1- \frac{2}{r}} (\T^2)  \ \ \text{ for } \   r>2.
\Ee 
we estimate the Holder seminorm of $\o^\beta$.
\begin{lemma}\label{lem:estH:o}
For $r\in (2,\infty) $, 
	\Be
	\begin{split}\label{estH:o}
		[\o^\beta (t, \cdot)]_{C^{0,1- \frac{2}{r}} (\T^2)} \lesssim 
		\beta^{-1-2   \big( \frac{1}{\pp} - \frac{1}{r}   \big)_{\hspace{-2pt} +}}  \| \o_0 \|_{L^\pp (\T^2)} e^{\left (1- \frac{2}{r} \right )\int_0 ^t \| \nabla_x u^\beta (t') \|_{L^\infty_x} \dd t'}.
	\end{split}
	\Ee
\end{lemma}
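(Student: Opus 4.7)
The plan is to exploit the Lagrangian representation $\o^\beta(t,x) = \o_0^\beta(X^\beta(0;t,x))$ from \eqref{Lag_beta} and estimate the Hölder seminorm as a composition of a Hölder function $\o_0^\beta$ with a Lipschitz map $X^\beta(0;t,\cdot)$.

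First, for any $x, y \in \T^2$, I would write
\[
|\o^\beta(t,x) - \o^\beta(t,y)| = |\o_0^\beta(X^\beta(0;t,x)) - \o_0^\beta(X^\beta(0;t,y))| \leq [\o_0^\beta]_{C^{0,1-\frac{2}{r}}(\T^2)} \, |X^\beta(0;t,x) - X^\beta(0;t,y)|^{1-\frac{2}{r}},
\]
and then use the Lipschitz bound
\[
|X^\beta(0;t,x) - X^\beta(0;t,y)| \leq \|\nabla_x X^\beta(0;t,\cdot)\|_{L^\infty(\T^2)} \, |x-y|,
\]
which would give $[\o^\beta(t,\cdot)]_{C^{0,1-\frac{2}{r}}} \leq [\o_0^\beta]_{C^{0,1-\frac{2}{r}}} \|\nabla_x X^\beta(0;t,\cdot)\|_{L^\infty}^{1-\frac{2}{r}}$.

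For the first factor, I would invoke the Morrey embedding \eqref{Sob_emb} together with the regularization bound \eqref{growth:Do_ell} (with $k=1$) to get
\[
[\o_0^\beta]_{C^{0,1-\frac{2}{r}}(\T^2)} \lesssim \|\nabla \o_0^\beta\|_{L^r(\T^2)} \lesssim \beta^{-1-2\left(\frac{1}{\pp}-\frac{1}{r}\right)_+} \|\o_0\|_{L^\pp(\T^2)}.
\]
For the second factor, I would apply Lemma \ref{lem:pX} with $r=\infty$ and $s=0$, yielding
\[
\|\nabla_x X^\beta(0;t,\cdot)\|_{L^\infty(\T^2)}^{1-\frac{2}{r}} \leq e^{\left(1-\frac{2}{r}\right)\int_0^t \|\nabla_x u^\beta(t')\|_{L^\infty_x} \, dt'}.
\]
Combining the two bounds produces exactly \eqref{estH:o}.

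No step here looks genuinely hard: the Morrey embedding is standard, the regularization estimate is already established in \eqref{growth:Do_ell}, and the flow gradient bound is Lemma \ref{lem:pX}. The only point that deserves care is the application of Morrey's inequality on $\T^2$ to $\o_0^\beta$, which is legitimate because $\o_0^\beta = \varphi^\beta * \o_0 \in C^\infty(\T^2)$ from \eqref{vorticity_beta}, so all the seminorms involved are finite and the composition manipulation is rigorous.
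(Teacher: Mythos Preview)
Your proof is correct and follows essentially the same approach as the paper's own proof: both use the Lagrangian representation $\o^\beta(t,x)=\o_0^\beta(X^\beta(0;t,x))$, bound the H\"older seminorm by $[\o_0^\beta]_{C^{0,1-2/r}}\|\nabla_x X^\beta(0;t,\cdot)\|_{L^\infty}^{1-2/r}$, and then apply Morrey's inequality \eqref{Sob_emb} together with \eqref{growth:Do_ell} for the first factor and Lemma~\ref{lem:pX} for the second.
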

\begin{proof}
We note that
\Be
\begin{split}
[ \o^\beta (t, \cdot ) ]_{C^{0, 1 - \frac{2}{r} }  (\T^2)} &= \sup_{x\ne y \in \T^2} \frac{|\o_0 ^\beta (X^\beta(0; t, x) ) - \o_0 ^\beta (X^\beta (0; t, y) ) | }{|x-y|^{\left (1 - \frac{2}{r} \right ) } } \\
&\le [\o_0 ^\beta ]_{C^{0, 1 - \frac{2}{r} }  (\T)} \| \nabla_x X^\beta (0; t, \cdot ) \|_{L^\infty_x} ^{\left (1 - \frac{2}{r} \right )},
\end{split}
\Ee
where we slightly abused the notation by
\Be \label{geodistance1}
|x-y| = \mathrm{dist}_{\mathbb{T}^2} (x,y).
\Ee
Applying Morrey's inequality \eqref{Sob_emb} for $[\o_0 ^\beta]_{C^{0, 1 - \frac{2}{r} }  (\T^2)}$ and applying \eqref{est:DX} gives the result.
\end{proof}

The following standard estimate is important in the proof: 
 \begin{lemma}
Let $(u^\beta, \o^\beta)$ satisfy \eqref{BS_beta}.
Then, for any $\gamma>0$,
\Be\label{potential1_k}
\| \nabla_x   u \|_{L^\infty (\O)} \lesssim 1+ 
\|  \o \|_{L^1(\O)} + \|  \o \|_{L^\infty(\O) }
\log_+ ( [ \o]_{C^{0,\gamma} (\O)} )
. \Ee\hide\Be\label{potential_k}
\| \nabla \p^\alpha  u \|_{L^\infty (\O)} \lesssim 
\| \p^\alpha  \o \|_{L^1(\O)} + \| \p^\alpha \o \|_{L^\infty(\O) }
\log   
\left(e+ \frac{[\p^\alpha \o]_{C^{0,\gamma} (\O)}}{\| \p ^\alpha \o \|_{L^\infty(\O)}}
\right). 
\Ee 
 \Be 
B\log 
\left( e+ \frac{A}{B}
\right)
\lesssim B\log ( eA+ B)
-B \log B \lesssim B \max ( \log_+ A, \log_+ B)
.
\Ee\unhide\end{lemma}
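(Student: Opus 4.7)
The bound is a classical potential-theoretic estimate in the spirit of Kato and Beale–Kato–Majda, so the plan is to view $\nabla_x u$ as a Calderón–Zygmund-type singular integral of $\o$, split the kernel according to distance from the diagonal, and optimize that splitting parameter against the Hölder seminorm.

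First I would use Proposition~\ref{PeriodicKernel} to write $\nabla_x u(x) = (K * \o)(x)$ with $K(y) = \nabla_x^\perp \nabla_x G(y)$; near $y=0$ the principal part has the form $\frac{\Omega(y/|y|)}{|y|^2}$ with $\Omega$ a smooth $0$-homogeneous function whose mean over the unit circle vanishes (this is the key cancellation that follows from the explicit $\frac{x\otimes x - \tfrac12|x|^2 \mathbb{I}_2}{|x|^4}$ formula in Proposition~\ref{PeriodicKernel}), and $K$ is smooth and bounded on $\mathbb{T}^2 \setminus B_{\mathfrak{r}}(0)$. Then for a radius $\delta \in (0, \mathfrak{r}/2]$ to be chosen later I would decompose
\[
\nabla_x u(x) = \int_{|y|<\delta} K(y)\,\o(x-y)\,\dd y \;+\; \int_{\delta \le |y| \le \mathfrak{r}} K(y)\,\o(x-y)\,\dd y \;+\; \int_{|y|>\mathfrak{r}} K(y)\,\o(x-y)\,\dd y.
\]

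For the inner piece, the cancellation $\int_{S^1} \Omega = 0$ lets me replace $\o(x-y)$ by $\o(x-y)-\o(x)$, after which Hölder regularity gives a bound of order $[\o]_{C^{0,\gamma}}\,\delta^{\gamma}$; the contribution of the smooth remainder $\nabla_x^2\mathfrak{B}$ absorbs into $\|\o\|_{L^\infty}\,\delta^2$. For the middle annular piece, crude $L^\infty$-in-$\o$ and the $|y|^{-2}$ bound on $K$ yield a contribution of order $\|\o\|_{L^\infty}\,\log(\mathfrak{r}/\delta)$. The far piece uses $\|K\|_{L^\infty(\mathbb{T}^2\setminus B_\mathfrak{r})} < \infty$ to produce $\|\o\|_{L^1}$ times a constant.

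Finally I would optimize by taking
\[
\delta = \min\!\Big(\tfrac{\mathfrak{r}}{2},\; \bigl(\|\o\|_{L^\infty}/[\o]_{C^{0,\gamma}}\bigr)^{1/\gamma}\Big),
\]
which balances the near-field $[\o]_{C^{0,\gamma}}\delta^\gamma$ against $\|\o\|_{L^\infty}$. When $[\o]_{C^{0,\gamma}} \le \|\o\|_{L^\infty}(\mathfrak{r}/2)^{-\gamma}$ the cap $\delta=\mathfrak{r}/2$ is active and the logarithmic term becomes an $O(1)$ constant, which is where the $\log_+$ (rather than $\log$) and the additive $1$ in the statement come from. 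Otherwise $\log(\mathfrak{r}/\delta) \lesssim \log_+[\o]_{C^{0,\gamma}} + \log_+(1/\|\o\|_{L^\infty})$, and the extra $\log_+(1/\|\o\|_{L^\infty})$ contribution, when multiplied by $\|\o\|_{L^\infty}$, is also $O(1)$. Summing the three pieces and absorbing pure constants into the $1$ gives the claimed bound. The main bookkeeping obstacle is precisely this two-regime analysis to reach the $\log_+[\o]_{C^{0,\gamma}}$ form rather than the naive $\log\!\bigl([\o]_{C^{0,\gamma}}/\|\o\|_{L^\infty}\bigr)$; capping $\delta$ at $\mathfrak{r}/2$ from the start is the cleanest way to handle it.
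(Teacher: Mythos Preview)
Your proposal is correct and follows essentially the same route as the paper: a three-region splitting of the singular integral (near, intermediate annulus, far), the cancellation $\int_{S^1}\Omega=0$ to subtract $\o(x)$ in the near region, and an optimization of the inner cutoff against $[\o]_{C^{0,\gamma}}$. The paper's version is terser---it takes $R=1$ for the outer cutoff and chooses the inner radius as $[\o]_{C^{0,\gamma}}^{-1/\gamma}$ (capped at~$1$), omitting the $\|\o\|_{L^\infty}$ factor in the balancing---but the structure is identical, and your more careful two-regime discussion of how $\log_+$ and the additive~$1$ arise is a reasonable elaboration of what the paper leaves implicit.
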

\begin{proof} The result is well known from the potential theory (e.g. \cite{Rein}) so we just briefly sketch the proof. Assume that $\o \in L^1(\O) \cap C^{0,\gamma} (\O)$. From \eqref{BS} and \eqref{BS_periodic}, for $R\geq d>0$, there exists $C_2>0$ only depending on the spatial dimension (2 in our case), 
		\Be
		\begin{split}
	\p_{x_j} 	u_i (x) =& \ 
	\int_{|x-y| \geq R} \p_{j} \mathbf{b}_i (x-y) \o (y) \dd y
	+ 	\int_{d \leq |x-y| \leq R} \p_{j} \mathbf{b}_i (x-y) \o (y) \dd y
	\\
	&
	+ 	\int_{|x-y| \leq d}\p_j \mathbf{b}_i (x-y)  [ \o (y) - \o(x)] \dd y +C_2 \delta_{i+1, j} \o(x) ,
		\label{exp:Du}
		\end{split}
		\Ee
		for 
		\Be\label{b_j}
		\p_j \mathbf{b} (x-y) :=  \frac{1}{2\pi }\bigg(	\frac{2 (x_{i+1} - y_{i+1}) (x_j - y_j)}{|x-y|^4} - \frac{\delta_{i+1, j}}{|x-y|^2}\bigg)
		+ \p_j \mathfrak{B}(x-y)
		.
		\Ee
		 Here, the index $i+1$ should be understood on a modulus of 2; and $\delta_{i+1,j}=1$ if $i+1=j$ mod 2 and $\delta_{i+1,j}=0$ if $i+1\neq j$ mod 2. We bound $\eqref{exp:Du}$ as 
		\Be\begin{split}\label{exp:Du_decomp}
	| \eqref{exp:Du}| 	& \leq \int_{|x-y|\geq R} \frac{4}{|x-y|^2} |\o (y)| \dd y
	+ \int_{d \leq |x-y|\leq R} \frac{4}{|x-y|^2} |\o (y)| \dd y\\
	&+ [\o]_{C^{0, \gamma} (\O)} \int_{|x-y| \leq d } \frac{4}{|x-y|^{2-\gamma}} \dd y 
	+ C_2 |\o (x) |  \\
	& \lesssim R^{-1/2} \| \o \|_{L^1(\O)} + \ln \left(\frac{R}{d} \right) \| \o \|_{L^\infty(\O)} 
	+ d^\gamma [\o]_{C^{0, \gamma} (\O)}    + \| \o \|_{L^\infty(\O)}.
					\end{split}\Ee
				We finalize the proof by choosing $R=1$ and $d=\max\big(1,  [\o]_{C^{0,\gamma} (\O)}^{1/\gamma}\big)$. 
 	\end{proof}

\begin{proof}[\textbf{Proof of Theorem \ref{thm:Dptu}}] 
	To prove \eqref{est:Du}, we apply $\eqref{growth:o_ell}|_{r=1, \infty}$ and $\eqref{estH:o}|_{r>2}$ to $\eqref{potential1_k}|$ to conclude that 
	\Be\begin{split}\label{est1:Du}
		&\| \nabla u^\beta (t, \cdot )\|_{L^\infty}/ \| \o_0 \|_{L^\pp}\\
		&\lesssim 1 + \beta^{- \frac{2}{\pp}}
		\log_+ (
		\beta^{-1 -2   (\frac{1}{\pp} - \frac{1}{r})_+} \| \o_0 \|_{L^\pp} e^{\int_0 ^t \| \nabla_x u^\beta (s) \|_{L^\infty_x} \dd s }
		)\\
		&\lesssim 1+ \beta^{- \frac{2}{\pp}} 
		\Big\{
		\log_+ \hspace{-2pt} \frac{1}{\beta} 
		+   \log_+ \hspace{-2pt} \| \o_0 \|_{L^\pp}
		+   \int^t_0\| \nabla u^\beta(s, \cdot ) \|_{L^\infty} \dd s\Big\}.
	\end{split}\Ee
Applying Gronwall's inequality gives the result.
\end{proof}

\subsubsection{Bounds for $V(\beta)$}

We introduce the growth-of-estimate function for $(u^\beta, p^\beta, \tilde{u}^\beta, \tilde{p}^\beta )$, which is a function of $\beta$:
\Be \label{Vbeta}
\begin{split}
V(\beta) :=   \sum_{s_1 + s_2 \le 2, D \in \{\p_t, \p\} }& \| \p^{s_1} D (u^\beta, \p u^\beta, p^\beta, \tilde{u}^\beta, \tilde{p}^\beta) \|_{L^\infty_{t,x} } \\
& \times  \left ( 1 + \| \p^{s_2} (\tilde{u}^\beta, u^\beta ) \|_{L^\infty_{t,x} } \right ) (1 + \sum_{j\le 2} \| \p^j u^\beta \|_{L^\infty_{t,x}} )^2.
\end{split}
\Ee

This is a pointwise bound for all derivatives of $(u^\beta, p^\beta, \tilde{u}^\beta, \tilde{p}^\beta)$ appearing in the remainder estimates in section \ref{sec:re}. 

We have a following explicit bound for $V(\beta)$.

\begin{theorem} \label{Vbound}
Suppose that $\o_0 \in L^\pp (\mathbb{T}^2)$. Then 
$$V(\beta) \lesssim \left (\| \tilde{u}_0 \|_{H^6(\mathbb{T}^2 ) }  + TU(\beta, \pp) e^{T U(\beta, \pp ) } + U(\beta , \pp) \right )^6,$$ 
where $U(\beta)$ is as defined in \eqref{uutildeb}.
\end{theorem}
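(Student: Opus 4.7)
The plan is to propagate sufficiently high Sobolev regularity of $u^\beta, p^\beta, \tilde u^\beta, \tilde p^\beta$ on $[0,T]$, convert the Sobolev bounds to pointwise bounds by the embedding $H^s(\T^2)\hookrightarrow W^{s-2,\infty}(\T^2)$ for $s>3$, and then exploit the product structure of \eqref{Vbeta} to produce the stated sixth-power bound. Since $u_0^\beta=\varphi^\beta * u_0\in C^\infty(\T^2)$, the 2D Euler system \eqref{up} has a global smooth Lagrangian solution, and \eqref{uptilde} is a linear system with smooth coefficients, so every quantity below is smooth and the estimates only need to track the size.

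For $u^\beta$ I would apply $\p^k$ to the vorticity equation \eqref{vorticity_eqtn}, pair with $\p^k\o^\beta$, and invoke the standard Beale-Kato-Majda-type commutator estimate together with $\operatorname{div}u^\beta=0$ to deduce $\tfrac{d}{dt}\|\o^\beta\|_{H^k}\lesssim\|\nabla u^\beta\|_{L^\infty}\|\o^\beta\|_{H^k}$. Gronwall, Biot--Savart, and Theorem \ref{thm:Dptu} then yield
\[
\|u^\beta(t)\|_{H^{k+1}}\lesssim \|u_0^\beta\|_{H^{k+1}}\exp\bigl(CT\,\mathfrak{Lip}(\beta,\pp)\bigr),
\]
with $\|u_0^\beta\|_{H^{k+1}}$ controlled by \eqref{growth:Do_ell}; both the prefactor and the exponent are absorbed into $U(\beta,\pp)$ and $TU(\beta,\pp)e^{TU(\beta,\pp)}$ by definition. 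For the pressure, the Poisson equation $-\Delta p^\beta=\p_i\p_j(u_i^\beta u_j^\beta)$ combined with the Calder\'on--Zygmund estimates on $\T^2$ (Proposition \ref{PeriodicKernel}) gives $\|p^\beta\|_{H^k}\lesssim\|u^\beta\|_{H^k}^2$.

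Next I would run an $L^2$ energy estimate on $\p^k\tilde u^\beta$ from \eqref{uptilde}: the pressure $\tilde p^\beta$ disappears after integration by parts thanks to $\operatorname{div}\tilde u^\beta=0$, and the remaining terms produce
\[
\tfrac{d}{dt}\|\tilde u^\beta\|_{H^k}^2 \lesssim \bigl(\|\nabla u^\beta\|_{L^\infty}+\|\nabla\tilde u^\beta\|_{L^\infty}\bigr)\|\tilde u^\beta\|_{H^k}^2 + \|u^\beta\|_{H^{k+2}}\|\tilde u^\beta\|_{H^k}.
\]
Taking $k=6$ and using Gronwall together with the $u^\beta$-bound above gives $\|\tilde u^\beta\|_{L^\infty_tH^6}\lesssim \|\tilde u_0\|_{H^6}+TU(\beta,\pp)e^{TU(\beta,\pp)}$. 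The auxiliary pressure $\tilde p^\beta$ satisfies $-\Delta\tilde p^\beta=\p_i\p_j(u_i^\beta\tilde u_j^\beta+\tilde u_i^\beta u_j^\beta)$ (the viscous term $\Delta u^\beta$ is divergence-free because $\operatorname{div}u^\beta=0$), so elliptic regularity bounds it by $\|u^\beta\|_{H^k}\|\tilde u^\beta\|_{H^k}$. Time derivatives are eliminated in favor of spatial ones directly through the PDEs themselves ($\p_t u^\beta = -u^\beta\cdot\nabla u^\beta - \nabla p^\beta$ and its analogue for $\tilde u^\beta$), each introducing at most one additional spatial-norm factor.

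Finally, Sobolev embedding ($H^6(\T^2)\subset W^{4,\infty}$ in 2D suffices since the highest-order term in \eqref{Vbeta} is $\p^{s_1+2}u^\beta$ with $s_1\le 2$) promotes the Sobolev bounds to the $L^\infty_{t,x}$ bounds required in \eqref{Vbeta}. Each individual $L^\infty$-norm appearing in \eqref{Vbeta} is then controlled by the single quantity $\|\tilde u_0\|_{H^6}+TU(\beta,\pp)e^{TU(\beta,\pp)}+U(\beta,\pp)$, and multiplying the first factor, the second factor, and the square of the third factor in \eqref{Vbeta} -- together with the extra product generated whenever $D=\p_t$ is converted into a spatial nonlinearity -- produces at most a sixth power, matching the claim. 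The main obstacle is the careful bookkeeping of regularity indices: closing the $\tilde u^\beta$ estimate at order $H^6$ requires sourcing $\Delta u^\beta$ at $H^6$ and hence $u^\beta\in H^8$, which forces an appeal to \eqref{growth:Do_ell} at order $8$; one must verify that the resulting $\beta^{-c}$ factors are all absorbed into $U(\beta,\pp)$ and that no additional uncontrolled powers of $\beta^{-1}$ escape.
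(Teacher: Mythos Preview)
Your proposal is correct and follows essentially the same route as the paper: propagate $u^\beta$ in $H^8$ and $\tilde u^\beta$ in $H^6$ by energy/commutator estimates, recover the pressures elliptically, trade time derivatives for spatial ones via the equations, and use Sobolev embedding plus the product structure of \eqref{Vbeta} to arrive at the sixth power. The only cosmetic difference is that the paper runs Gronwall for $\tilde u^\beta$ with the linear factor $\|u^\beta\|_{H^8}$ (via the $H^s$ algebra property), whereas your displayed inequality carries an extra $\|\nabla\tilde u^\beta\|_{L^\infty}$; the correct Kato--Ponce commutator actually pairs that factor with $\|u^\beta\|_{H^k}$ rather than $\|\tilde u^\beta\|_{H^k}$, so after one Sobolev embedding the Gronwall is genuinely linear and your stated conclusion stands.
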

\begin{proof}
By Sobolev embedding, and the formula for $p^\beta$, $\tilde{p}^\beta$, $\p_t {u}^\beta$, and $\p_t \tilde{u}^\beta$, we have a bound
\Be\notag
V(\beta) \lesssim  \left (\| u^\beta  \|_{L^\infty ((0, T); H^8(\mathbb{T}^2 ) )}  +\| \tilde{u}^\beta  \|_{L^\infty ((0, T); H^6(\mathbb{T}^2 ) )} \right ) ^6
\Ee
We invoke the standard energy, commutator estimate and algebra property of $H^s(\mathbb{T}^2)$, $s > 1$: 
\Be
\begin{split}\notag
&\frac{\dd}{2 \dd t} \| \p^8 u^\beta (t) \|_{L^2 (\mathbb{T}^2 ) }^2 \\
&\le \| \p^8 u^\beta (t) \|_{L^2 (\mathbb{T}^2 ) } \| [\![ \p^8, u^\beta \cdot \nabla_x ]\!] u \|_{L^2 (\mathbb{T}^2)} \lesssim \| \nabla_x u^\beta \|_{L^\infty (\mathbb{T}^2 ) }\| \p^8 u^\beta (t) \|_{L^2 (\mathbb{T}^2 ) } ^2 , \\
&\frac{\dd}{2 \dd t} \| \p^6 \tilde{u}^\beta (t) \|_{L^2 (\mathbb{T}^2 ) }^2\\
 & \lesssim \| \p^6 \tilde{u}^\beta (t) \|_{L^2 (\mathbb{T}^2) } \\
& \ \   {\tiny   \times \bigg( \| [\![ \p^6, u^\beta \cdot \nabla_x  ]\!] \tilde{u} (t) \|_{L^2 (\mathbb{T}^2 ) } + \| \p^6 \tilde{u}^\beta (t) \|_{L^2 (\mathbb{T}^2 ) } \| \p^7 u^\beta (t) \|_{L^2 (\mathbb{T}^2 ) } + \| \p^8 u^\beta (t) \|_{L^2 (\mathbb{T}^2 ) } \bigg)}  \\
& \lesssim \| \p ^8 u^\beta (t) \|_{L^2 (\mathbb{T}^2 )} \| \p^6 \tilde{u}^\beta (t ) \|_{L^2 (\mathbb{T}^ 2 ) }^2 + \| \p^8 \tilde{u}^\beta (t) \|_{L^2 (\mathbb{T} ^2 ) }^2.
\end{split}
\Ee
Therefore, we have
\Be \label{uutildeb}
\begin{split}
\| u^\beta \|_{L^\infty ((0, T); H^8 (\mathbb{T}^2 ) )} &\lesssim e^{\| \nabla_x u^\beta \|_{L^\infty ((0, T) \times \mathbb{T}^2 )}} \| u^\beta (0) \|_{L^\infty ((0, T); H^8 (\mathbb{T}^2 ) )} \\ 
&\lesssim e^{\mathfrak{Lip} (\beta, \pp)} \beta^{-8 - 2 \left ( \frac{1}{\pp} - \frac{1}{2} \right )_+} \| \o_0 \|_{L^\pp} =: U(\beta, \pp) , \\
\| \tilde{u}^\beta \|_{L^\infty ((0, T); H^6 (\mathbb{T}^2 ) )} &\lesssim e^{\|u^\beta \|_{L^\infty ((0, T); H^8 (\mathbb{T}^2 ) ) } T } \left ( \| \tilde{u}_0 \|_{H^6(\mathbb{T}^2 ) } +  T  \|u\|_{L^\infty ((0, T); H^8 (\mathbb{T}^2 ) ) } \right ) \\
& \lesssim (\| \tilde{u}_0 \|_{H^6(\mathbb{T}^2 ) } + TU(\beta, \pp) ) e^{T U(\beta, \pp ) }.
\end{split}
\Ee\end{proof}

\section{Hilbert-type Expansion with Viscosity-canceling corrector}

\subsection{Formulation around a local Maxwellian}

We denote a local Maxwellian corresponding to $(1 , \e u^\beta ,1 )$ by
\Be\label{mu_e}
\mu : = M_{1 , \e u^\beta , 1 }.
\Ee

We try to construct a family of solutions $F^\e$ in a form of 
\Be \label{F_e}
F^\e =  
 \mu+ \e^2 p^\beta  \mu - \e^2 \kappa (\nabla_x u^\beta) : \mathfrak{A} \sqrt{ \mu} +
   \{\e \kappa \tilde{u}^\beta \cdot (v-\e u^\beta)  
  + \e^2 \kappa \tilde{p}^\beta\}  \mu
  + \e f_R \sqrt{ \mu},
\Ee
where $p^\beta$, $\tilde{u}^\beta$, and $\tilde{p}^\beta$ satisfy \eqref{up} and \eqref{uptilde}, and $\mathfrak{A}$ will be defined in \eqref{AB}.

Also, we assume the following assumption on the relative maginitudes on $\e, \kappa = \kappa(\e), \beta =  \beta(\e) $:
\Be \label{ekappabeta}
\begin{split}
\lim_{\e \rightarrow 0} \frac{\e}{\kappa^2} = 0, \\
\lim_{\e \rightarrow 0} \kappa^{\frac{1}{4}} V(\beta ) = 0, \\
\lim_{\e \rightarrow 0} \kappa^{\frac{1}{2} } e^{2 \mathbf{C}_0 T \| \nabla_x u^\beta \|_{L^\infty ((0, T) \times \mathbb{T}^2 ) } ^2 } = 0,
\end{split}
\Ee
where $\mathbf{C}_0$ is specified in Section \ref{sec:re}.

We define 
\Be\label{L_Gamma}
L  f = \frac{-2}{\sqrt{\mu }} Q(\mu , \sqrt{\mu }f)   ,\ \
\Gamma (f,g) = \frac{1}{ \sqrt{\mu}}  Q(\sqrt{\mu}f, \sqrt{\mu}g)  .
\Ee
From the collision invariance, a null space of $L$, denoted by $\mathcal{N}$, 
has five orthonormal bases $\{\varphi_i \sqrt{\mu }\}_{i=1}^5$ with 
\hide  \Be\label{basis}
\varphi_0 := \frac{1}{\sqrt{1+ \delta \sigma}}
,   \  \ \ \varphi_i: = \frac{1}{\sqrt{1+ \delta \sigma}} \frac{v_i -\delta u_i }{\sqrt{1+ \delta \theta}}
\ \ \text{for} \ i=1,2,3 
,   \  \ \ \varphi_4: =  \frac{1}{ \sqrt{1+ \delta \sigma}}
\frac{ \big|\f{v-\delta u}{\sqrt{1+ \delta \theta}}\big|^2-3}{\sqrt{6}}.
\Ee\unhide
\Be\label{basis}
\begin{split}
	&\varphi = (\varphi_0, \varphi_1, \varphi_2, \varphi_3, \varphi_4), \\
&\varphi_0 := 1
,   \  \ \ \varphi_i: =   {v_i -\e u_i^\beta  } 
\ \ \text{for} \ i=1,2,3 
,   \  \ \ \varphi_4: =   
\frac{  | {v-\e u^\beta } |^2-3}{\sqrt{6}}.
\end{split}
\Ee
We define $\mathbf{P}$, an $L^2_v$-projection on $\mathcal{N}$, as 
\Be\label{P}
\begin{split}
	P g&:= (P_0 g, P_1 g, P_2 g, P_3 g, P_4 g )
, \ \  
{P}_j g:= \int_{\R^3} g \varphi_j \sqrt{\mu } \dd v \ \ \text{for } j=0,1, \cdots,4,\\
\mathbf{P} g&:= \sum_{j=0}^4  ( {P}_j g) \varphi_j \sqrt{\mu } = Pg \cdot \varphi \sqrt{\mu}.
\end{split}
\Ee

We record the exact form of $L$ and $\Gamma$ for the later purpose: the calculation is due to Grad \cite{G1963}: one can also read \cite{glassey} for details of derivations. Also the exact form of formulae were excerpted from \cite{JK2020}: For certain positive constants $c_1, c_2, c_3$,
\Be \label{Lgammaform}
\begin{split}
	Lf(v) &= \nu f (v) - Kf (v) =  \nu(v) f(v) - \int_{\mathbb{R}^3}\mathbf{k}(v, v_*) f(v_*) \dd v_*, \\
	\nu(v) &= c_1 \left ( (2 |v-\e u^\beta | + \frac{1}{|v-\e u^\beta |} ) \int_0 ^{|v-\e u^\beta | } e^{-\frac{z^2}{2}} \dd z + e^{-\frac{|v-\e u^\beta | ^2 }{2 }} \right ), \\
	\mathbf{k} (v, v_*) &= c_2 |v-v_* | e^{-\frac{|v-\e u^\beta |^2 + |v_* - \e u^\beta |^2 }{4}} - \frac{c_3}{|v-v_*| }e^{-\frac{1}{8} |v-v_*|^2 - \frac{1}{8} \frac{(|v-\e u^\beta |^2 - |v_* - \e u^\beta | ^2 )^2}{|v-v_* |^2 } }, \\
	\Gamma (f, g) ( v) &= \int_{\mathbb{R}^3} \int_{\mathbb{S}^2} |(v- v_*) \cdot \o | \sqrt{\mu (v_*) } (f(v')g(v_*') + g(v')f(v_*') ) \dd \o \dd v_*  \\
	&- \int_{\mathbb{R}^3} \int_{\mathbb{S}^2} |(v- v_*) \cdot \o| \sqrt{\mu (v_* ) } ( f(v) g(v_*) + g(v) f(v_*) ) \dd \o \dd v_*, 
\end{split}
\Ee
where $v^\prime = v- ((v-v_*) \cdot \o) \o$, $v_*^\prime = v_*+ ((v-v_*) \cdot \o) \o$. Here, all $\nu, \mathbf{k}, \Gamma$ also depend on $x$ and $t$ in a straightforward manner, that is, $Lf(x, t, v)$ and $\Gamma (f, g)(x, t, v)$ depends on $f(x, t, \cdot)$, $g(x, t, \cdot)$, and $u^\beta(x, t)$; we omitted them for the sake of simplicity.
 
Also, we define ${}_{\p^s} L$ and ${}_{\p^s} \Gamma$ for $s \ge 1$: 
\Be \label{psLGamma}
\begin{split}
{}_{\p^s} L f (v) &= \p^s (\nu ) (v) f(v) - \int_{\mathbb{R}^3} \p^s (\mathbf{k})(v, v_*) f(v_*) \dd v_*, \\
{}_{\p^s} \Gamma (f, g) ( v) &= \int_{\mathbb{R}^3} \int_{\mathbb{S}^2} |(v- v_*) \cdot \o | \p^s(\sqrt{\mu (v_*) }) (f(v')g(v_*') + g(v')f(v_*') ) \dd \o \dd v_*  \\
	&- \int_{\mathbb{R}^3} \int_{\mathbb{S}^2} |(v- v_*) \cdot \o| \p^s (\sqrt{\mu (v_* ) } ) ( f(v) g(v_*) + g(v) f(v_*) )\dd \o \dd v_*.
\end{split}
\Ee

We list standard results which will be used later in this section for the sake of readers' convenience. First we note that 
\Be\label{Lnonhydro}
Q(\mu, \mu)=0=\mathbf{P}L=L \mathbf{P}=\mathbf{P} \Gamma,
\Ee
from the collision invariance.

\begin{lemma}[\cite{EGKM2,Guo2006,Guo2002}]
	Suppose that \eqref{ekappabeta} holds. Then 
	\label{lemma_L}
	\Be\label{est:L}
	\begin{split}
	&\| \nu^{-1/2}L f \|_{L^2(\O \times \R^3)} \lesssim 	\|  \sqrt{\nu}  (\mathbf{I} - \mathbf{P}) f \|_{L^2(\O \times \R^3)}, \\
	&\| \nu^{\frac{1}{2}} (\mathbf{I} - \mathbf{P} ) f \|_{L^2_v }^2 \lesssim \left | \int Lf(v) f(v) \dd v \right | , \\ 
	&\left | \int {}_{\p^s} L f(v) g(v) \dd v \right | \\
	&\lesssim \e \| \p^s u^\beta \|_{L^\infty_{t,x} }  \left (\| \mathbf{P} f \|_{L^2_v} + \| \nu^{\frac{1}{2}} (\mathbf{I} - \mathbf{P} ) f \|_{L^2_v} \right ) \left (\| \mathbf{P} g \|_{L^2_v} + \| \nu^{\frac{1}{2}} (\mathbf{I} - \mathbf{P} ) g \|_{L^2_v} \right ),
	\end{split}
	\Ee
	\Be \label{est:Gamma}
	\begin{split}
&	\left | \int \Gamma ( f, g) h \d v \d x \d t \right |\\
&  \lesssim \int \left [ \left ( \| \mathbf{P} f  \|_{L^2_v} + \|\nu^{\frac{1}{2} } (\mathbf{I} - \mathbf{P} ) f\|_{L^2_v} \right ) \| g \|_{L^2_v} \right. \\
	& \ \ \ \ \ \ \ \ \   + \left. \left ( \| \mathbf{P} g \|_{L^2_v} + \| \nu^{\frac{1}{2} } (\mathbf{I} - \mathbf{P} ) g  \|_{L^2_v} \right ) \| f \|_{L^2_v}   \right ] \| \nu^{\frac{1}{2} } (\mathbf{I} - \mathbf{P} ) h  \|_{L^2_v}  \d x \d t, \\
&	\left | \int {}_{\p^s} \Gamma ( f, g) h \d v \d x \d t \right | \\
& \lesssim \e \| \p ^s u \|_{L^\infty_{t,x} }  \int \left [ \left ( \| \mathbf{P} f  \|_{L^2_v} + \|\nu^{\frac{1}{2} } (\mathbf{I} - \mathbf{P} ) f \|_{L^2_v} \right ) \| g \|_{L^2_v} \right. \\
	& \ \ \ \ \ \ \ \ \   \ \ \ \  \ \ \  \ \ \ \ \ \    + \left. \left ( \| \mathbf{P} g \|_{L^2_v} + \| \nu^{\frac{1}{2} } (\mathbf{I} - \mathbf{P} ) g  \|_{L^2_v} \right ) \| f \|_{L^2_v}   \right ] \\
	& \ \ \ \ \ \ \ \ \   \ \ \ \  \ \ \  \ \ \  \times  \left ( \| \mathbf{P} h  \|_{L^2_v} + \| \nu^{\frac{1}{2} } (\mathbf{I} - \mathbf{P} ) h  \|_{L^2_v} \right )  \d x \d t.
	\end{split}
	\Ee
\hide
	\Be\label{est:Gamma}
	\begin{split}
		\| \nu^{-1/2} \Gamma (f,g) \|_{L^2((0,t) \times \O \times \R^3)} 
		\lesssim & \  \| wg \|_{L^\infty([0,t] \times \O \times \R^3)}\|  \sqrt{\nu} (\mathbf{I} - \P) f \|_{L^2 ((0,t) \times \O \times \R^3)} 
		%
		\\	&
		+ \|  w g \|_{L^2(0,t;L^\infty(\O  ))}   \|    f \|_{L^\infty(0,t;L^2(\O \times \R^3  ))} .
	\end{split}
		\Ee
\unhide
		\end{lemma}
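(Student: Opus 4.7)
The first two bounds in \eqref{est:L} and the nonlinear bound on $\Gamma$ in \eqref{est:Gamma} are the classical Grad-type estimates for the hard-sphere operator, transcribed to the moving local Maxwellian $\mu = M_{1,\e u^\beta,1}$. My plan is to use the explicit decomposition $L = \nu - K$ from \eqref{Lgammaform}: the collision frequency satisfies $\nu(v) \asymp 1+|v-\e u^\beta| \asymp 1+|v|$ uniformly in $(t,x)$ because $u^\beta$ is bounded by Theorem \ref{Vbound}, and the kernel $\mathbf{k}$ satisfies the standard Grad weighted $L^1$ bounds in $v_*$. Since $L\mathbf{P}=0$ by \eqref{Lnonhydro}, one has $Lf = L(\mathbf{I}-\mathbf{P})f$, and the first inequality in \eqref{est:L} then follows by a Schur-test argument on $\mathbf{k}$. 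The spectral-gap inequality is the standard Grad coercivity; its constant may be taken uniform in $(t,x)$ because $\mu$ is a Galilean shift of the global Maxwellian by the bounded quantity $\e u^\beta$.

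For the nonlinear $\Gamma$ estimate I would use $\mathbf{P}\Gamma(f,g)=0$ from \eqref{Lnonhydro} to replace $h$ by $(\mathbf{I}-\mathbf{P})h$ when testing against $h$, which is what produces the $\nu^{1/2}(\mathbf{I}-\mathbf{P})h$ factor on the right-hand side. One then decomposes $f = \mathbf{P}f + (\mathbf{I}-\mathbf{P})f$ and similarly for $g$, and applies Cauchy--Schwarz in $(v,v_*,\sigma)$ together with the exponential decay of $\sqrt{\mu(v_*)}$ in the gain integrand to arrive at the four quadratic combinations in \eqref{est:Gamma}. The $\nu^{1/2}$ weight on the non-hydrodynamic piece is the standard gain coming from the Carleman parametrization of the collision integral.

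For the derivative counterparts ${}_{\p^s}L$ and ${}_{\p^s}\Gamma$ the key structural fact is that $\mu$ depends on $(t,x)$ only through the single argument $v-\e u^\beta(t,x)$. Consequently every spatial derivative falling on $\nu(v)$, $\mathbf{k}(v,v_*)$, or $\sqrt{\mu(v_*)}$ brings down a factor of the form $-\e\,\p u^\beta \cdot \p_v(\cdot)$. Applying the Leibniz rule to $\p^s$ and keeping the term where all $s$ derivatives hit a single factor of $u^\beta$ produces the leading contribution $\e\,\p^s u^\beta$ times a kernel of the same Grad type as $\nu$ and $\mathbf{k}$, with slightly modified but still integrable velocity weights. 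The other terms in the Leibniz expansion involve lower-order products of derivatives of $u^\beta$; these are controlled by $V(\beta)$ and absorbed into the implicit constant using the smallness assumption \eqref{ekappabeta}. With these kernel bounds in hand, the ${}_{\p^s}$ versions of \eqref{est:L} and \eqref{est:Gamma} follow by exactly the same Schur-test and Cauchy--Schwarz arguments as in the $s=0$ case, now with the extra prefactor $\e\|\p^s u^\beta\|_{L^\infty_{t,x}}$.

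The main subtlety, and the reason the right-hand sides in the derivative bounds are more generous than in the undifferentiated ones, is that ${}_{\p^s}L$ and ${}_{\p^s}\Gamma$ no longer annihilate the hydrodynamic subspace: the collision-invariance identities in \eqref{Lnonhydro} rely on the specific Gaussian weight of $\mu$, and differentiating that weight destroys the cancellation. As a result one cannot reduce $h$ in the ${}_{\p^s}\Gamma$ bound to its non-hydrodynamic part, which is why both $\|\mathbf{P}h\|_{L^2_v}$ and $\|\nu^{1/2}(\mathbf{I}-\mathbf{P})h\|_{L^2_v}$ appear; the same mechanism forces both hydrodynamic and non-hydrodynamic norms of $f$ and $g$ to appear in the ${}_{\p^s}L$ bound. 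Tracking this loss of cancellation and verifying that the compensating prefactor $\e\|\p^s u^\beta\|_{L^\infty_{t,x}}$ is strong enough for the energy estimates of Section \ref{sec:re} is the only genuinely delicate point; the rest is a routine transcription of the classical Grad bounds to the moving Maxwellian $\mu$.
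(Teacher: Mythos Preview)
The paper does not supply its own proof of this lemma; it simply records the estimates with the citations \cite{EGKM2,Guo2006,Guo2002} and moves on. Your sketch is a correct outline of the standard argument from those sources: the undifferentiated bounds are the classical Grad estimates (Schur-type bound on $K$ plus coercivity) transferred to the shifted Maxwellian $\mu=M_{1,\e u^\beta,1}$, with uniformity in $(t,x)$ coming from $\e|u^\beta|=o(1)$; and the differentiated bounds follow from the chain-rule observation that $\p^s$ acting on $\nu$, $\mathbf{k}$, or $\sqrt{\mu(v_*)}$ produces a factor $\e\,\p^s u^\beta$ multiplying a kernel of the same Grad type. Your explanation of why both $\mathbf{P}$ and $(\mathbf{I}-\mathbf{P})$ norms appear on the right of the ${}_{\p^s}$-estimates---namely that differentiation of the Gaussian weight destroys the collision-invariance cancellations in \eqref{Lnonhydro}---is exactly the structural point, and your handling of the lower-order Leibniz terms via \eqref{ekappabeta} is also correct (recall from \eqref{psderivative} that $\|\p^s u^\beta\|$ already includes all lower-order derivatives, so the cross terms like $\e^2(\p u^\beta)^2$ are absorbed). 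Since the paper offers nothing to compare against beyond the citations, your outline is an accurate expansion of what is deferred to the literature.
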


Next, we introduce a lemma illustrating the structure of higher derivatives of $Lf$. Recall the notation $[\![  \cdot , \cdot]\!]$ for the commutator \eqref{commutator}.
\begin{lemma} \label{Lpcomm}
For $s \ge 1$, $[\![ \p^s, L ]\!] f$ is a linear combination, whose coefficient depends only on $s$, of the terms having one of the following forms:
\begin{enumerate}
\item ${}_{\p^j} L (\mathbf{I} - \mathbf{P} ) \p^{s-j} f$, where $1 \le j \le s$, 
\item $L \p \cdots [\![\mathbf{P}, \p ]\!] \cdots \p f$, where $\p \cdots [\![\mathbf{P}, \p ]\!] \cdots \p f$ is an application of $s-1$ $\p$ and one $ [\![\mathbf{P}, \p ]\!]$ at $j$-th order to $f$ ($0 \le j \le s$), or
\item ${}_{\p^j} L \p \cdots [\![ \mathbf{P}, \p ]\!] \cdots \p f$, where $1 \le j \le s-1$, and $\p \cdots [\![ \mathbf{P}, \p ]\!] \cdots \p f$ is an application of $s-j-1$ $\p$ and one $ [\![\mathbf{P}, \p ]\!]$ at $i$-th order to $f$ ($0 \le i \le s-j$).
\end{enumerate}
\end{lemma}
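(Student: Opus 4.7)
I would prove the statement by induction on $s \ge 1$. The structural reason that only the three shapes (1)--(3) appear is twofold: (a) $L$ depends on $(x,t)$ only through $u^\beta$, so the Leibniz rule gives $\p(Lg) = ({}_{\p} L)g + L(\p g)$; and (b) from \eqref{Lnonhydro} we have $L\mathbf{P} = 0$, so any $\mathbf{P}$ sitting directly next to $L$ must be traded, via $\p(\mathbf{P} g) = \mathbf{P}(\p g) - [\![\mathbf{P},\p]\!] g$, for a $[\![\mathbf{P},\p]\!]$ together with a derivative pushed further out.

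For the base case $s = 1$, Leibniz gives $[\![\p, L]\!] f = ({}_{\p} L) f$. Splitting $f = \mathbf{P} f + (\mathbf{I}-\mathbf{P}) f$ and differentiating the identity $L\mathbf{P} g \equiv 0$ once, I obtain
\begin{equation*}
({}_{\p} L)\mathbf{P} g = -L \p(\mathbf{P} g) = -L \mathbf{P}(\p g) + L [\![\mathbf{P},\p]\!] g = L[\![\mathbf{P},\p]\!] g,
\end{equation*}
the last step again using $L\mathbf{P} = 0$. Hence
\begin{equation*}
[\![\p, L]\!] f = ({}_{\p} L)(\mathbf{I} - \mathbf{P}) f + L [\![\mathbf{P}, \p]\!] f,
\end{equation*}
which is exactly one term of type (1) with $j = 1$ plus one of type (2) with no extra $\p$'s.

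For the inductive step I would use the identity
\begin{equation*}
[\![\p^s, L]\!] f = [\![\p, L]\!](\p^{s-1} f) + \p \, [\![\p^{s-1}, L]\!] f,
\end{equation*}
which follows from two applications of Leibniz. The first summand is handled by applying the base case to $\p^{s-1} f$. For the second I would invoke the inductive hypothesis and distribute $\p$ across each of the three shapes, using only the two elementary moves $\p({}_{\p^j}L\, g) = ({}_{\p^{j+1}}L)g + ({}_{\p^j} L)\p g$ and $\p((\mathbf{I}-\mathbf{P}) g) = (\mathbf{I}-\mathbf{P})(\p g) + [\![\mathbf{P},\p]\!] g$.

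The main task, and the place where I expect all the work to sit, is the bookkeeping: I must verify that every term produced this way lies in one of types (1)--(3) with the declared index ranges, and that no spurious ``type (4)'' with two commutators, or with an unprotected $\mathbf{P}$ adjacent to $L$, ever appears. Tracking case by case, differentiating a type (1) term produces either a higher-$j$ type (1) term or, via the commutation above, a type (3) term; differentiating a type (2) or (3) term either raises the index on ${}_{\p^j}L$ (staying inside type (3)) or inserts one more $\p$ into the string $\p \cdots [\![\mathbf{P},\p]\!] \cdots \p$ (staying inside the same type, with the position of the commutator shifting but remaining within the allowed range). Since all index ranges close and no new shapes arise, the induction completes, and the resulting coefficients are purely combinatorial in $s$ as claimed.
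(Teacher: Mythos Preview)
Your proposal is correct and follows essentially the same route as the paper: induction on $s$, the identity $[\![\p^{s},L]\!]f = [\![\p,L]\!](\p^{s-1}f) + \p\,[\![\p^{s-1},L]\!]f$, and the two elementary moves (Leibniz on ${}_{\p^j}L$ and commuting $\p$ past $\mathbf{I}-\mathbf{P}$). Your base-case derivation differs only cosmetically --- you obtain $({}_{\p}L)\mathbf{P}f = L[\![\mathbf{P},\p]\!]f$ by differentiating the identity $L\mathbf{P}\equiv 0$, whereas the paper starts from $Lf = L(\mathbf{I}-\mathbf{P})f$ and commutes $\p$ inward --- but both arrive at $[\![\p,L]\!]f = {}_{\p}L(\mathbf{I}-\mathbf{P})f + L[\![\mathbf{P},\p]\!]f$ and proceed identically from there.
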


\begin{proof}
We proceed by the induction on $s$: first we note that
\Be\notag
\begin{split}
 \p (Lf) &= \p L (\mathbf{I} - \mathbf{P}) f = {}_{\p} L (\mathbf{I} - \mathbf{P} ) f + L \p (\mathbf{I} - \mathbf{P} ) f \\
 &=  {}_{\p} L (\mathbf{I} - \mathbf{P} ) f + L [\![ \mathbf{P}, \p ]\!] f + L (\mathbf{I} - \mathbf{P} ) \p f, \\
[\![ \p, L ]\!] f &= {}_{\p} L (\mathbf{I} - \mathbf{P} ) f + L [\![ \mathbf{P}, \p ]\!] f,
\end{split}
\Ee
which proves the claim for $s=1$. Next, for $s\ge 1$, we have
\Be\notag
[\![ \p^{s+1}, L ]\!] f = \p^{s+1} L f - L \p^{s+1} f = \p [\![\p^s, L]\!] f  + [\![ \p, L ]\!] \p^s f ,
\Ee
and by the first step $[\![ \p, L ]\!] \p^s f $ consists of terms in the lemma. Also, application of $\p$ to the terms of the second and third form of the lemma produces terms of the second and third form again, while application of $\p$ to the first form produces
\Be\notag
\begin{split}
\p {}_{\p^j} L (\mathbf{I} - \mathbf{P} ) \p^{s-j} f &= {}_{\p^{j+1}} L (\mathbf{I} - \mathbf{P} ) \p^{s-j} f + {}_{\p^j} L \p (\mathbf{I} - \mathbf{P} ) \p^{s-j} f \\
&= {}_{\p^{j+1}} L (\mathbf{I} - \mathbf{P} ) \p^{s-j} f + {}_{\p^j} L [\![ \mathbf{P}, \p ]\!] \p^{s-j} f  + \cdots +  {}_{\p^j} L \p^{s-j} [\![ \mathbf{P}, \p ]\!]  f \\
& +  {}_{\p^j} L (\mathbf{I} - \mathbf{P} ) \p^{s-j+1} f,
\end{split}
\Ee
which proves the claim.
\end{proof}

Also, we have the following straightforward estimate for $[\![\mathbf{P}, \p ]\!] f$:
\begin{lemma} \label{pPcomm}
Suppose that \eqref{ekappabeta} holds. For $ s_1 + s_2 \le 1$, the following holds:
\Be
\begin{split}\notag
[\![\mathbf{P}, \p ]\!] f &= - \sum_{i=0} ^4 \langle f, \varphi_i \sqrt{\mu} \rangle_{L^2_v} \p (\varphi_i \sqrt{\mu} ),\\
\| [\![\mathbf{P}, \p ]\!] f \|_{L^2_v} &\lesssim \e \| \nabla_x u^\beta \|_{L^\infty_{t,x} } \| f \|_{L^2_v }, \\
\| \p^{s_1} [\![\mathbf{P}, \p ]\!] \p^{s_2} f \|_{L^2_v} &\lesssim \e V(\beta)  \| \p^{s_1 + s_2} f \|_{L^2_v }. 
\end{split}
\Ee
\end{lemma}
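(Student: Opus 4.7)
The plan is a direct computation from the explicit definition $\mathbf{P}f=\sum_{j=0}^{4}\langle f,\varphi_j\sqrt{\mu}\rangle_{L^2_v}\,\varphi_j\sqrt{\mu}$ in \eqref{P}, combined with Gaussian moment estimates. The crucial input is that the basis elements $\varphi_j\sqrt{\mu}$ depend on $(t,x)$ only through $\e u^\beta(t,x)$ by \eqref{mu_e} and \eqref{basis}, so every spatial derivative produces an explicit $\e\p u^\beta$ prefactor.

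First, I would apply the product rule to $\p(\mathbf{P}f)$. The derivative falling inside the $L^2_v$ pairing on the $f$-factor reconstitutes $\mathbf{P}(\p f)$, so the commutator $[\![\mathbf{P},\p]\!]f=\mathbf{P}(\p f)-\p(\mathbf{P}f)$ is captured by the remaining terms, namely those in which $\p$ hits the outer basis factor $\varphi_j\sqrt{\mu}$ and those in which $\p$ hits the basis factor inside the pairing. Reading off this decomposition gives the stated closed-form identity for $[\![\mathbf{P},\p]\!]f$ as a finite linear combination of terms of the schematic form $\langle f,\Psi^{(1)}\rangle_{L^2_v}\,\Psi^{(2)}$, each factor being a derivative of $\varphi_j\sqrt{\mu}$ of total order one.

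Second, I would estimate $\|\p(\varphi_j\sqrt{\mu})\|_{L^2_v}$. Direct differentiation yields $\p\sqrt{\mu}=\tfrac12\e(\p u^\beta)\cdot(v-\e u^\beta)\sqrt{\mu}$, and each $\p\varphi_j$ is a scalar multiple of $\e\p u^\beta$ times a polynomial in $v-\e u^\beta$. Since any polynomial-weighted Gaussian has a uniformly bounded $L^2_v$ norm, independently of the centering by $\e u^\beta$, we obtain $\|\p(\varphi_j\sqrt{\mu})\|_{L^2_v}\lesssim \e\|\nabla_x u^\beta\|_{L^\infty_{t,x}}$. Substituting into the closed-form identity and applying Cauchy--Schwarz to the inner-product pairings delivers the second displayed bound.

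Third, for the mixed estimate with $s_1+s_2\le 1$, the case $s_1=0$ follows by applying the second step to $\p^{s_2}f$. For $s_1=1,\ s_2=0$, I would differentiate the closed-form identity once more. The resulting terms split into two types: those placing one $\p$ on $f$ and one on a basis element, which are bounded by $\e\|\nabla_x u^\beta\|_{L^\infty}\|\p f\|_{L^2_v}$, and those placing both $\p$'s on basis elements, whose prefactors involve $\e\p^2 u^\beta$ and $\e^2(\p u^\beta)^2$, both controlled by $V(\beta)$ via \eqref{Vbeta}. Cauchy--Schwarz then yields the claimed $\e V(\beta)\|\p^{s_1+s_2}f\|_{L^2_v}$ bound. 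The only real obstacle is bookkeeping the distribution of $\p$'s across basis elements versus the projected argument and verifying that every assembled prefactor sits inside $\e V(\beta)$; no genuine analytic difficulty arises beyond standard moment estimates for centered Gaussians.
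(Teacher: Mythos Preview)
The paper gives no proof for this lemma (it is presented as a ``straightforward estimate''), so your direct computation from the definition of $\mathbf{P}$ is exactly the intended route.

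Two small points deserve care. First, your own decomposition correctly produces \emph{two} types of contribution---one where $\p$ hits the outer $\varphi_j\sqrt{\mu}$ and one where $\p$ hits the basis element inside the pairing---yet you then assert this ``gives the stated closed-form identity,'' which displays only the first type. In fact the full commutator is
\[
[\![\mathbf{P},\p]\!]f=-\sum_j\langle f,\varphi_j\sqrt{\mu}\rangle_{L^2_v}\,\p(\varphi_j\sqrt{\mu})-\sum_j\langle f,\p(\varphi_j\sqrt{\mu})\rangle_{L^2_v}\,\varphi_j\sqrt{\mu},
\]
so the displayed identity in the lemma is incomplete as written. This is harmless for the norm estimates, since the extra term carries the same $\e\,\p u^\beta$ prefactor and satisfies the same bound. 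Second, in the case $(s_1,s_2)=(1,0)$ your ``second type'' of term (both $\p$'s landing on basis elements) leaves no derivative on $f$, so Cauchy--Schwarz gives $\e V(\beta)\|f\|_{L^2_v}$ rather than $\e V(\beta)\|\p f\|_{L^2_v}$. Hence the literal bound $\lesssim \e V(\beta)\|\p^{s_1+s_2}f\|_{L^2_v}$ does not follow as stated; one actually obtains $\e V(\beta)\bigl(\|f\|_{L^2_v}+\|\p f\|_{L^2_v}\bigr)$. This is again harmless for the applications---in Section~\ref{sec:re} the relevant commutator bounds are used precisely with such lower-order terms included---but your write-up should reflect it rather than claim the sharper inequality.
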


Next, we introduce anisotropic spaces: this will be key to our analysis. For $p \in [1, \infty]$, we recall the space $L^p(\mathbb{T}^2 ; L^2 (\mathbb{R}^3))$ by the norm $\| f \|_{L^p(\mathbb{T}^2 ; L^2 (\mathbb{R}^3))} $ in \eqref{mixedLp}. 
For $p, q \in [1, \infty]$, $L^q([0, T]; L^p(\mathbb{T}^2; L^2 (\mathbb{R}^3 ) ))$ is defined similarly. We have the following anisotropic interpolations:

\begin{lemma}\label{anint}
We have the following:
\begin{enumerate}
\item (Anisotropic Ladyzhenskaya) $\| f \|_{L^4_x L^2_v} \lesssim \| f \|_{L^2_x L^2_v} ^{\frac{1}{2} } \| \p f \|_{L^2_x L^2_v} ^{\frac{1}{2} }$, and 
\item (Anisotropic Agmon) $\| f \|_{L^\infty_x L^2_v} \lesssim \| f \|_{L^2_x L^2_v} ^{\frac{1}{2} } \| \p^2 f \|_{L^2_x L^2_v} ^{\frac{1}{2} }$.
\end{enumerate}
\end{lemma}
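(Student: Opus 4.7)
My approach is to reduce both anisotropic inequalities to their scalar 2D counterparts on $\mathbb{T}^2$, applied pointwise in $v$, and then to unwind the $v$-dependence via Minkowski's integral inequality and Cauchy--Schwarz. The scalar 2D Ladyzhenskaya estimate $\|g\|_{L^4(\mathbb{T}^2)}^2 \lesssim \|g\|_{L^2(\mathbb{T}^2)} \|g\|_{H^1(\mathbb{T}^2)}$ and the scalar 2D Agmon estimate $\|g\|_{L^\infty(\mathbb{T}^2)}^2 \lesssim \|g\|_{L^2(\mathbb{T}^2)} \|g\|_{H^2(\mathbb{T}^2)}$ on the torus are standard and I will invoke them as a black box; note that both include the $L^2$ norm on the right (so that constant modes are controlled), which matches the convention of \eqref{psderivative} where $\p^s f$ bundles in all derivatives up to order $s$.

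For (1), I would first rewrite $\|f\|_{L^4_x L^2_v}^2 = \bigl\| \int_{\R^3} |f|^2 \dd v \bigr\|_{L^2_x}$ and apply Minkowski's integral inequality to pull the $v$-integration outside the $L^2_x$ norm, obtaining
\[
\|f\|_{L^4_x L^2_v}^2 \le \int_{\R^3} \bigl\| |f(\cdot,v)|^2 \bigr\|_{L^2_x} \dd v = \int_{\R^3} \|f(\cdot,v)\|_{L^4_x}^2 \dd v .
\]
The scalar Ladyzhenskaya inequality applied pointwise in $v$ followed by Cauchy--Schwarz in $v$ then gives
\[
\int_{\R^3} \|f(\cdot,v)\|_{L^4_x}^2 \dd v \lesssim \int_{\R^3} \|f(\cdot,v)\|_{L^2_x} \|f(\cdot,v)\|_{H^1_x} \dd v \le \|f\|_{L^2_x L^2_v} \|f\|_{H^1_x L^2_v},
\]
which by \eqref{psderivative} is bounded by $\|f\|_{L^2_x L^2_v} \|\p f\|_{L^2_x L^2_v}$. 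Taking square roots yields (1).

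For (2), the starting observation is that $\|f\|_{L^\infty_x L^2_v}^2 = \sup_x \int_{\R^3} |f(x,v)|^2 \dd v \le \int_{\R^3} \|f(\cdot,v)\|_{L^\infty_x}^2 \dd v$ by pushing the supremum inside the integral. Then the scalar 2D Agmon inequality pointwise in $v$, combined with Cauchy--Schwarz in $v$, gives $\|f\|_{L^\infty_x L^2_v}^2 \lesssim \|f\|_{L^2_x L^2_v} \|f\|_{H^2_x L^2_v}$, which by \eqref{psderivative} is controlled by $\|f\|_{L^2_x L^2_v} \|\p^2 f\|_{L^2_x L^2_v}$.

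The only mildly subtle point is the first step of each part, where one must commute the scalar space norm with the $v$-integration: in (1) this is the Minkowski embedding $L^2_x(L^1_v) \hookrightarrow L^1_v(L^2_x)$ applied to $|f|^2$, and in (2) it is the elementary bound $\sup_x \int \le \int \sup_x$. After these swaps, the remainder is a routine application of well-known scalar inequalities, so I do not anticipate serious difficulties.
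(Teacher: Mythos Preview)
Your proposal is correct and follows essentially the same route as the paper: use Minkowski's inequality to pass from $L^p_x L^2_v$ to $L^2_v L^p_x$, apply the scalar 2D Ladyzhenskaya (resp.\ Agmon) inequality pointwise in $v$, and then Cauchy--Schwarz in $v$. Your observation that the convention \eqref{psderivative} absorbs the zeroth-order term needed on the torus is also apt.
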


\begin{proof}
We only prove the former: the latter is derived in a similar manner.
\Be
\begin{split}\notag
\| f \|_{L^4_x L^2_v} &= \left ( \int_{\mathbb{T}^2} \left ( \int_{\mathbb{R}^3} |f(x,v)|^2 \dd v \right )^{\frac{4}{2} } \dd x \right )^{\frac{1}{2\cdot 2} } \le  \left (\int_{\mathbb{R}^3} \left (\int_{\mathbb{T}^2} |f(x,v) |^4 \dd x \right )^{\frac{1}{2}} \dd v \right )^{\frac{1}{2}} \\
& = \left ( \int_{\mathbb{R}^3} \| f(\cdot ,v )\|_{L^4_x} ^2 \dd v \right )^{\frac{1}{2}} \lesssim \left ( \int_{\mathbb{R}^3} \| f (\cdot ,v) \|_{L^2_x} \| \p f (\cdot , v ) \|_{L^2 _x} \dd v \right )^{\frac{1}{2}} \\
& \le \left ( \int_{\mathbb{R}^3} \int_{\mathbb{T}^2} |f(x,v)|^2 \dd x \dd v \right )^{\frac{1}{2\cdot 2}} \left ( \int_{\mathbb{R}^3} \int_{\mathbb{T}^2} |\p f(x,v)|^2 \dd x \dd v \right )^{\frac{1}{2\cdot 2}} \\
&= \| f \|_{L^2_x L^2_v} ^{\frac{1}{2} } \| \p f \|_{L^2_x L^2_v} ^{\frac{1}{2} }.
\end{split}
\Ee
where we applied Minkowski for the first, usual Ladyzhenskaya for the second, and Holder for the last inequalities.
\end{proof}

From Lemma \ref{anint}, we have the following.
\begin{lemma} \label{nonhydroLp}
\Be\notag
\begin{split}
&\| \nu^{\frac{1}{2} } (\mathbf{I} - \mathbf{P} ) f \|_{L^4_x L^2_v}\\
 \lesssim& \  \e^{\frac{1}{2}}  \| \nu^{\frac{1}{2} } (\mathbf{I} - \mathbf{P} ) f \|_{L^2_x L^2_v} ^{\frac{1}{2} } \\
& \ \  \times \left (\|\p u^\beta \|_{L^\infty_{x} }  \| f \|_{L^2_x L^2_v} + \| \e^{-1} \nu^{\frac{1}{2} } (\mathbf{I} - \mathbf{P}) \p f \|_{L^2_x L^2_v} +  V(\beta) \| \e^{-1} \nu^{\frac{1}{2} } (\mathbf{I} - \mathbf{P} ) f \|_{L^2_x L^2_v }  \right )^{\frac{1}{2}}, \\
&\| \nu^{\frac{1}{2} } (\mathbf{I} - \mathbf{P} ) f \|_{L^\infty_x L^2_v}\\
 \lesssim & \ \e^{\frac{1}{2}} \| \nu^{\frac{1}{2} } (\mathbf{I} - \mathbf{P} ) f \|_{L^2_x L^2_v} ^{\frac{1}{2} } \\
& \ \  \times \left [\| \p u^\beta \|_{L^\infty_x} \| \p f \|_{L^2_x L^2_v }+ \|\e^{-1} \nu^{\frac{1}{2} } (\mathbf{I} - \mathbf{P} ) \p^2 f \|_{L^2_x L^2_v} \right. \\
& \ \ \ \  \ \ \  \left. + V(\beta) \left (\| \e^{-1} \nu^{\frac{1}{2}} (\mathbf{I} - \mathbf{P} ) f \|_{L^2_x L^2_v } + \|\e^{-1}  \nu^{\frac{1}{2} } (\mathbf{I} - \mathbf{P}) \p f \|_{L^2_x L^2_v } + \| f \|_{L^2_x L^2_v} \right ) \right ]^{\frac{1}{2} }.
\end{split}
\Ee
\end{lemma}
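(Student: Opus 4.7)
The plan is to obtain both estimates by applying the anisotropic interpolations of Lemma \ref{anint} (anisotropic Ladyzhenskaya for the first, anisotropic Agmon for the second) to $g := \nu^{1/2}(\mathbf{I}-\mathbf{P})f$, and then bounding $\|\partial g\|_{L^2_x L^2_v}$ and $\|\partial^2 g\|_{L^2_x L^2_v}$ by carefully distributing the spatial derivatives across the factors $\nu^{1/2}$, $(\mathbf{I}-\mathbf{P})$, and $f$. The central observation is that both the collision frequency $\nu$ (see \eqref{Lgammaform}) and the projector $\mathbf{P}$ (see \eqref{P}) depend on $x$ only through the local Maxwellian $\mu = M_{1,\e u^\beta,1}$, i.e.\ only through $\e u^\beta$; hence every derivative landing on either object releases a factor $\e$ multiplied by a derivative of $u^\beta$.

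For the first inequality I first apply Lemma \ref{anint}(1) to obtain $\|g\|_{L^4_x L^2_v} \lesssim \|g\|_{L^2_x L^2_v}^{1/2}\|\partial g\|_{L^2_x L^2_v}^{1/2}$, and then expand
\[
\partial g \,=\, (\partial \nu^{1/2})(\mathbf{I}-\mathbf{P})f \,+\, \nu^{1/2}(\mathbf{I}-\mathbf{P})\partial f \,+\, \nu^{1/2}[\![\partial,\,\mathbf{I}-\mathbf{P}]\!]f.
\]
The first summand is estimated pointwise in $v$ via $|\partial \nu^{1/2}| \lesssim \e \|\partial u^\beta\|_{L^\infty_x}(1+|v|)^{1/2}$ (since $\nu^{1/2}$ itself grows like $(1+|v|)^{1/2}$), yielding a bound by $\e \|\partial u^\beta\|_{L^\infty_x}\|f\|_{L^2_v}$. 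The second summand is kept intact; once multiplied by the $\e^{1/2}$ that emerges from the square root of the interpolation, it assembles into $\|\e^{-1}\nu^{1/2}(\mathbf{I}-\mathbf{P})\partial f\|_{L^2_xL^2_v}^{1/2}$. For the third summand, $[\![\partial,\,\mathbf{I}-\mathbf{P}]\!]f = [\![\mathbf{P},\partial]\!]f$, and Lemma \ref{pPcomm} furnishes the necessary $\e V(\beta)$-smallness; the hydrodynamic moments of $f$ appearing on the right of Lemma \ref{pPcomm} can be re-expressed via $f = \mathbf{P}f + (\mathbf{I}-\mathbf{P})f$ and absorbed into the $(\mathbf{I}-\mathbf{P})$-form on the RHS of the target inequality.

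The $L^\infty_x L^2_v$ estimate proceeds analogously: one applies Lemma \ref{anint}(2) and then expands $\partial^2 g$ by Leibniz across $\nu^{1/2}$, $(\mathbf{I}-\mathbf{P})$, and $f$. Each derivative falling on $\nu^{1/2}$ or $(\mathbf{I}-\mathbf{P})$ releases one factor of $\e$ times $\partial u^\beta$ or $\partial^2 u^\beta$, all absorbed into $V(\beta)$ as defined in \eqref{Vbeta}. The nested commutator terms $\partial [\![\mathbf{P},\partial]\!] f$ and $[\![\mathbf{P},\partial]\!] \partial f$ are controlled directly by the third estimate of Lemma \ref{pPcomm} with $s_1 + s_2 = 1$, while the only surviving ``pure'' term is $\nu^{1/2}(\mathbf{I}-\mathbf{P})\partial^2 f$, which matches the desired RHS.

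The main obstacle I anticipate is careful bookkeeping rather than technical depth: one must ensure that every mixed term (a derivative on $\nu^{1/2}$ paired with a derivative on $(\mathbf{I}-\mathbf{P})$ or on $f$, etc.) is absorbed into one of the three summands on the RHS with the correct power of $\e$ and the right weight $\|\partial u^\beta\|_{L^\infty_x}$ or $V(\beta)$. One small technical point to verify is that the factor $\sqrt{\mu}$ implicit in $[\![\mathbf{P},\partial]\!]f = -\sum_i (P_i f)\,\partial(\varphi_i \sqrt{\mu})$ easily dominates the $\nu^{1/2}$-weight in $v$, so that $\|\nu^{1/2}[\![\mathbf{P},\partial]\!]f\|_{L^2_v}$ is controlled by $\e \|\partial u^\beta\|_{L^\infty_x}\|f\|_{L^2_v}$; this follows immediately from the Gaussian decay of $\sqrt{\mu}$ against the polynomial growth of $\nu^{1/2}$.
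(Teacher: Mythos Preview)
Your proposal is correct and follows essentially the same approach as the paper: apply the anisotropic Ladyzhenskaya/Agmon interpolation of Lemma \ref{anint} to $g=\nu^{1/2}(\mathbf{I}-\mathbf{P})f$, then expand $\partial g$ (resp.\ $\partial^2 g$) by Leibniz into the three pieces $(\partial\nu^{1/2})(\mathbf{I}-\mathbf{P})f$, $\nu^{1/2}[\![\mathbf{P},\partial]\!]f$, and $\nu^{1/2}(\mathbf{I}-\mathbf{P})\partial f$, using $|\nu^{-1}\partial\nu|\lesssim \e\|\partial u^\beta\|_{L^\infty_x}$ for the first and Lemma \ref{pPcomm} for the second. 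The paper's proof is more terse (it writes the three-term decomposition and leaves the rest to the reader), so your more explicit bookkeeping is fine; the only minor over-complication is the remark about re-expressing $f=\mathbf{P}f+(\mathbf{I}-\mathbf{P})f$ for the commutator term---Lemma \ref{pPcomm} already delivers $\|f\|_{L^2_v}$ directly, which matches the first bracket term $\|\partial u^\beta\|_{L^\infty_x}\|f\|_{L^2_xL^2_v}$.
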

\begin{proof}
We only give proof for the first inequality: the second inequality can be proved by a similar argument. By Lemma \ref{anint}, it suffices to control $\p (\nu^{\frac{1}{2} } (\mathbf{I} -\mathbf{P} ) f )$: we have
\Be\notag
\p (\nu^{\frac{1}{2} } (\mathbf{I} -\mathbf{P} ) f ) = \frac{1}{2} \nu^{-1} \p (\nu) \nu^{\frac{1}{2}} (\mathbf{I} - \mathbf{P} ) f + \nu^{\frac{1}{2}}[\![\mathbf{P},\p]\!] f + \nu^{\frac{1}{2}}(\mathbf{I} - \mathbf{P} ) \p f.
\Ee
One can easily check that $ \sup_{x,v} | \nu^{-1} \p (\nu) | \lesssim \e \| \p u^\beta \|_{L^\infty_x}$, and thus the inequality follows.
\end{proof}



\begin{lemma}[\cite{CIP,Guo2006}] \label{Linverse}
	$L|_{\mathcal{N}^\perp} : \mathcal{N}^\perp \rightarrow \mathcal{N}^\perp$ is a bijection, and thus $L^{-1} : \mathcal{N}^\perp \rightarrow \mathcal{N}^\perp$ is well-defined. Also, $L^{-1}$ is symmetric under any orthonormal transformation. In particular, if $f \in \mathcal{N}^\perp $ is an even (resp. odd) function, then so is $L^{-1} f$.
\end{lemma}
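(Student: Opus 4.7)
The plan is to assemble this from the classical Hilbert--Grad structure of $L$ already written down in \eqref{Lgammaform}, together with symmetry properties of the hard-sphere collision operator \eqref{Q}. The key observation is that $L = \nu(v) - K$, where the multiplicative part $\nu(v)$ is bounded below by a positive constant (uniformly in $x,t$, since it depends on $v$ only through $v-\e u^\beta$ and $\nu$ has a positive infimum), and $K$, being of Hilbert--Schmidt type with kernel $\mathbf{k}(v,v_*)$ as in \eqref{Lgammaform}, is compact on $L^2_v$. Together with the self-adjointness of $L$ on $L^2_v$ (inherited from the symmetry of $Q$ after the $\sqrt{\mu}$-rescaling in \eqref{L_Gamma}), this makes $L$ a bounded self-adjoint Fredholm operator of index zero whose kernel is precisely $\mathcal{N}$.

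First I would record self-adjointness: using the standard symmetry identity for the collision integral and the definition \eqref{L_Gamma}, verify $\langle Lf, g\rangle_{L^2_v} = \langle f, Lg\rangle_{L^2_v}$. Next, by the Fredholm alternative and zero index, $L$ restricted to $\mathcal{N}^\perp$ is a bijection onto $\mathrm{Range}(L) = \mathcal{N}^\perp$, and hence $L^{-1}:\mathcal{N}^\perp\to\mathcal{N}^\perp$ is well-defined and bounded. The spectral gap estimate of \eqref{est:L} then provides a quantitative bound $\|\nu^{1/2}L^{-1}h\|_{L^2_v}\lesssim \|\nu^{-1/2}h\|_{L^2_v}$ for $h\in\mathcal{N}^\perp$.

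For the symmetry statement, the point is that in the hard-sphere form \eqref{Q} the kernel $|(v-v_*)\cdot\sigma|\,\mathrm{d}\sigma\,\mathrm{d} v_*$ and the elastic reflection law $v'=v-((v-v_*)\cdot\sigma)\sigma$ are invariant under any $O\in O(3)$ acting simultaneously on $v,v_*,\sigma$. After translating velocities by $\e u^\beta$ so that the underlying local Maxwellian $\mu$ becomes the standard centered one, this invariance gives $L(f\circ O) = (Lf)\circ O$ for every orthogonal $O$ acting on $w=v-\e u^\beta$. Since $\mathcal{N}^\perp$ is preserved by such $O$ (the basis \eqref{basis} transforms into itself under orthonormal changes of the velocity frame), the same identity passes to $L^{-1}$.

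The even/odd assertion is then a direct consequence, applied to the specific orthogonal transformation $O=-\mathbf{I}$ acting on $w=v-\e u^\beta$: if $f(\e u^\beta+w)$ is even (resp. odd) in $w$, then so is $Lf$, and inverting yields the same parity for $L^{-1}f$. The only mild point to be careful about is that parity is measured in the shifted variable $w=v-\e u^\beta$ rather than $v$ itself, which matches the convention built into the basis \eqref{basis}; once this is made explicit the statement is immediate. The main conceptual obstacle is really the Fredholm step, i.e.\ verifying compactness of $K$ on $L^2_v$ from the kernel formula in \eqref{Lgammaform}, and this is precisely what is imported from \cite{CIP,Guo2006}.
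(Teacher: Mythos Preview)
Your proposal is correct and follows precisely the approach the paper indicates: the paper's own proof simply reads ``The proof follows the Fredholm alternative and rotational invariance of $Q$. We refer to \cite{CIP,Guo2006} for the proof,'' and you have spelled out exactly those two ingredients (Fredholm via $L=\nu-K$ with $K$ compact and $L$ self-adjoint, and $O(3)$-invariance of the hard-sphere collision operator in the shifted variable $w=v-\e u^\beta$). Your remark that parity should be read in the variable $w=v-\e u^\beta$ rather than $v$ is a useful clarification that the paper leaves implicit.
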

\begin{proof}
The proof follows the Fredholm alternative and rotational invariance of $Q$. We refer to \cite{CIP,Guo2006} for the proof. 
	\end{proof}


The term $
(v-\e u^\beta) \otimes (v -\e u^\beta) 
 \sqrt{\mu}
$ and its image over $L^{-1}$ turns out to play an important role in Hilbert expansion. Note that 
	\begin{align}\label{vvproj}
	&(\mathbf{I} - \mathbf{P} )  \left (  (v-\e u^\beta) \otimes (v -\e u^\beta) \sqrt{\mu}\right ) =  \left ((v-\e u^\beta) \otimes (v -\e u^\beta) - \frac{1}{3} |v-\e u^\beta|^2 \mathbf{I}_3 \right ) \sqrt{\mu}.
\end{align}
\hide
\begin{lemma}[\cite{BGL1, JK2020}] 
	We have the following:

\end{lemma}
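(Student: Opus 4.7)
The identity to verify is essentially a statement about Gaussian moments: one needs to compute the projection $\mathbf{P}$, defined in \eqref{P} using the orthonormal basis \eqref{basis}, applied to the tensor-valued function $w_i w_j \sqrt{\mu}$ where I will write $w = v-\e u^\beta$. The claim is equivalent to the identity $\mathbf{P}(w_i w_j \sqrt{\mu}) = \frac{\delta_{ij}}{3} |w|^2 \sqrt{\mu}$, so the plan is to compute each of the five coefficients $P_k(w_i w_j \sqrt{\mu}) = \int_{\R^3} w_i w_j \varphi_k \mu \, \dd v$ explicitly, then assemble.

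The key observation is that since $\mu = M_{1, \e u^\beta, 1}$ is a unit-variance Gaussian centered at $\e u^\beta$, the change of variable $v \mapsto w = v - \e u^\beta$ turns $\mu \dd v$ into the standard Gaussian measure in $w$. Thus odd moments of $w$ vanish, and the nonzero even moments I need are the standard ones: $\int w_i w_j \mu \, \dd v = \delta_{ij}$, $\int w_i^4 \mu \, \dd v = 3$, and $\int w_i^2 w_j^2 \mu \, \dd v = 1$ for $i \ne j$. With these, the five projections are computed as follows. First, $P_0(w_i w_j \sqrt{\mu}) = \int w_i w_j \mu \, \dd v = \delta_{ij}$. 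Second, for $k = 1,2,3$, $P_k(w_i w_j \sqrt{\mu}) = \int w_i w_j w_k \mu \, \dd v = 0$ by odd parity. Third, for $k=4$,
\[
P_4(w_i w_j \sqrt{\mu}) = \tfrac{1}{\sqrt 6}\int w_i w_j (|w|^2 - 3)\, \mu \,\dd v,
\]
which vanishes for $i \ne j$ by odd parity, and for $i = j$ equals $\frac{1}{\sqrt 6}(\int w_i^4 \mu + 2\int w_i^2 w_j^2 \mu - 3) = \frac{1}{\sqrt 6}(3 + 2 - 3) = \frac{2}{\sqrt 6}$.

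Assembling via $\mathbf{P} g = \sum_{k=0}^4 (P_k g)\varphi_k \sqrt{\mu}$, I obtain
\[
\mathbf{P}(w_i w_j \sqrt{\mu}) = \delta_{ij} \sqrt{\mu} + \delta_{ij} \tfrac{2}{\sqrt 6}\cdot \tfrac{|w|^2 - 3}{\sqrt 6} \sqrt{\mu} = \delta_{ij}\Bigl(1 + \tfrac{|w|^2 - 3}{3}\Bigr) \sqrt{\mu} = \tfrac{\delta_{ij}}{3} |w|^2 \sqrt{\mu},
\]
from which \eqref{vvproj} follows immediately by subtracting from $w_i w_j \sqrt{\mu}$. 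There is no real obstacle here beyond bookkeeping of Gaussian moments; the only subtle point is to verify that the basis $\{\varphi_k \sqrt{\mu}\}$ is indeed orthonormal in $L^2_v$ (which is precisely why the specific normalization $1/\sqrt 6$ appears in $\varphi_4$), so that the formula for $\mathbf{P}$ in \eqref{P} yields the correct orthogonal projection. This can be checked by the same moment calculations used above.
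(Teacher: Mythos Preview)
Your proof is correct and is essentially the same approach as the paper's: the paper simply says the identity follows ``via Isserlis theorem,'' which is precisely the computation of Gaussian moments you carry out explicitly. Your direct calculation of the five projection coefficients is a faithful unpacking of that remark.
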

\begin{proof}
	The proof is straightforward, via Isserlis theorem.{\color{red}\huge[[Add details]]}
\end{proof}\unhide
Thus, we define $\mathfrak{A}:= \mathfrak{A}(t,x) \in \mathbb{M}_{3 \times 3} (\mathbb{R})$ by (see \cite{BGL1})
\begin{equation} \label{AB}
	\mathfrak{A}_{ij} = L^{-1} \left ( \left ((v-\e u^\beta)_i (v-\e u^\beta)_j - \frac{|v- \e u^\beta |^2}{3} \delta_{ij} \right ) \sqrt{\mu} \right ).
\end{equation}

Regarding $\mathfrak{A}$, we have the following useful lemma.
\begin{lemma}[\cite{BGL1, BGL2}] \label{Ainn}
	$\langle L \mathfrak{A}_{\ell k}, \mathfrak{A}_{ij} \rangle = \eta_0 (\delta_{ik} \delta_{j\ell} + \delta_{i\ell} \delta_{jk} ) - \frac{2}{3} \eta_{0} \delta_{ij} \delta_{k\ell}.$
\end{lemma}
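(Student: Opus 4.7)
The plan is to reduce the claim to a statement about an isotropic $4$-tensor on $\mathbb{R}^3$ and then exploit symmetry. By the definition \eqref{AB}, $L\mathfrak{A}_{\ell k} = B_{\ell k}$ where I abbreviate
\[
B_{ij}(v) := \Big((v-\e u^\beta)_i (v-\e u^\beta)_j - \tfrac{|v-\e u^\beta|^2}{3}\delta_{ij}\Big)\sqrt{\mu},
\]
so the inner product in the lemma equals $\langle B_{\ell k}, \mathfrak{A}_{ij}\rangle = \langle B_{\ell k}, L^{-1} B_{ij}\rangle_{L^2_v}$. Substituting $w = v - \e u^\beta$ and using the fact that the linearized operator $L$ defined in \eqref{L_Gamma} is Galilean-invariant (it depends on $v$ only through the shifted variable $w$, as visible from \eqref{Lgammaform}), I reduce to the case $u^\beta = 0$, so that $B_{ij}(w) = (w_i w_j - \tfrac{|w|^2}{3}\delta_{ij})\sqrt{M_{1,0,1}(w)}$ and $L$ is the standard linearized Boltzmann operator around $M_{1,0,1}$.

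Next, introduce the $4$-tensor $T_{ijk\ell} := \langle B_{ij}, L^{-1} B_{k\ell}\rangle_{L^2_w}$. I would establish three symmetry properties: (i) $T$ is symmetric under $i \leftrightarrow j$ and under $k \leftrightarrow \ell$, since $B_{ij}$ itself is; (ii) $T$ is symmetric under the exchange $(ij)\leftrightarrow (k\ell)$, since $L$ is self-adjoint on $\mathcal{N}^\perp$ in $L^2_v(\dd v)$ (which follows from the standard symmetrization of the hard-sphere collision operator around a Maxwellian); (iii) $T$ is $O(3)$-invariant, i.e.\ $T_{ijk\ell} = R_{ii'}R_{jj'}R_{kk'}R_{\ell \ell'} T_{i'j'k'\ell'}$ for any $R \in O(3)$. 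Property (iii) follows from the rotational invariance of the hard-sphere kernel together with the fact that $B_{k\ell}$ transforms as a symmetric $2$-tensor under rotation $w \mapsto Rw$; this extends Lemma \ref{Linverse}, whose parity statement I would strengthen (by the same Fredholm argument, now applied with a rotated orthonormal frame) to full $O(3)$-equivariance of $L^{-1}$. By the classical classification of isotropic $4$-tensors on $\mathbb{R}^3$ that are symmetric in the first two and last two indices, one has the representation
\[
T_{ijk\ell} = \alpha\, \delta_{ij}\delta_{k\ell} + \beta\big(\delta_{ik}\delta_{j\ell} + \delta_{i\ell}\delta_{jk}\big)
\]
for scalars $\alpha,\beta \in \mathbb{R}$ (the antisymmetric part $\delta_{ik}\delta_{j\ell} - \delta_{i\ell}\delta_{jk}$ is ruled out by (i)).

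Finally, I would impose the tracelessness of $B$: since $\sum_i B_{ii} = 0$, we get $\sum_i T_{iik\ell} = 0$. Contracting the representation above yields $3\alpha + 2\beta = 0$, i.e.\ $\alpha = -\tfrac{2}{3}\beta$. Setting $\eta_0 := \beta$ gives exactly the formula stated in the lemma. The positivity of $\eta_0$ (needed later for the viscous dissipation in \eqref{uptilde}) follows from contracting: $\sum_{ij} T_{ijij} = 10\,\eta_0$, while the left-hand side equals $\sum_{ij}\langle B_{ij}, L^{-1} B_{ij}\rangle > 0$ by strict positivity of $L^{-1}$ on $\mathcal{N}^\perp$ (ensured by Lemma \ref{lemma_L} applied to $B_{ij} \in \mathcal{N}^\perp$, which is nonzero for generic $i,j$).

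The main obstacle in carrying out this plan rigorously is the $O(3)$-equivariance of $L^{-1}$: Lemma \ref{Linverse} is only stated for parity, so I would need a short supplementary argument extending the Fredholm alternative argument to the full rotation group, using that $Q(M_{1,0,1},\cdot)$ commutes with the pullback action of rotations (which is immediate from the rotational symmetry of the hard-sphere cross section in \eqref{Q}). Once this equivariance is in hand, the rest is pure linear algebra.
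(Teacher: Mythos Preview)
Your argument is correct and is essentially the standard one from \cite{BGL1, BGL2}; the paper itself does not give a proof but simply refers to those references, so there is nothing to compare at the level of strategy.

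One remark: the ``main obstacle'' you identify is not actually an obstacle. Lemma~\ref{Linverse} as stated in the paper says that $L^{-1}$ is symmetric under \emph{any orthonormal transformation}; the parity claim is listed only as a particular consequence. So full $O(3)$-equivariance of $L^{-1}$ is already available to you, and no supplementary Fredholm argument is needed. With that in hand, the rest of your reduction (isotropic $4$-tensor classification plus the trace constraint $3\alpha + 2\beta = 0$) goes through exactly as you wrote.
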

\begin{proof}
We refer to \cite{BGL1, BGL2} for the proof. 
	\end{proof}

From explicit calculation, we can also establish the following result:
\begin{lemma} \label{hydroph3}
For $i,j,k \in \{ 1,2,3 \}$, 
\Be\notag
\mathbf{P} (\varphi_i \varphi_j \varphi_k \sqrt{\mu} ) = \sum_{\ell=1} ^3 (\delta_{ij}\delta_{k\ell} + \delta_{ik}\delta_{j\ell} + \delta_{jk}\delta_{i\ell} ) \varphi_\ell \sqrt{\mu}.
\Ee
\end{lemma}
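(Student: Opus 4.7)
The plan is to compute each of the five projection coefficients $P_\ell(\varphi_i \varphi_j \varphi_k \sqrt{\mu}) = \int_{\R^3} \varphi_i \varphi_j \varphi_k \varphi_\ell \mu \, \dd v$ for $\ell = 0, 1, 2, 3, 4$ directly, using the explicit form of the basis \eqref{basis} and of $\mu$ in \eqref{mu_e}. The key simplification is the translation $w := v - \e u^\beta$, under which $\mu(v) \dd v = (2\pi)^{-3/2} e^{-|w|^2/2} \dd w$ becomes the standard centered Gaussian density in $\R^3$, and the basis elements take the simple form $\varphi_0 = 1$, $\varphi_m = w_m$ for $m=1,2,3$, and $\varphi_4 = (|w|^2 - 3)/\sqrt{6}$.

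With this change of variables, the integrand for $\ell = 0$ becomes $w_i w_j w_k$ times a Gaussian, which is odd in $w$ and therefore integrates to zero; similarly, for $\ell = 4$ the integrand is $w_i w_j w_k (|w|^2 - 3)/\sqrt{6}$ times a Gaussian, again odd in $w$, so $P_4 = 0$. The nontrivial coefficients are those for $\ell \in \{1,2,3\}$, where one needs the fourth moment
\[
\int_{\R^3} w_i w_j w_k w_\ell \, \frac{e^{-|w|^2/2}}{(2\pi)^{3/2}} \dd w = \delta_{ij}\delta_{k\ell} + \delta_{ik}\delta_{j\ell} + \delta_{jk}\delta_{i\ell},
\]
which is just Isserlis' theorem (Wick's formula) for a centered Gaussian with identity covariance. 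One can also verify this directly: if all indices are distinct, the integrand is odd in at least one coordinate; if two indices coincide (say $i=j \ne k = \ell$), the integral factors as $\int w_i^2 \frac{e^{-w_i^2/2}}{\sqrt{2\pi}} \dd w_i \cdot \int w_k^2 \frac{e^{-w_k^2/2}}{\sqrt{2\pi}} \dd w_k = 1$; if all four coincide, one uses $\int w_i^4 \frac{e^{-w_i^2/2}}{\sqrt{2\pi}} \dd w_i = 3$; and in each case the numerical value matches the claimed sum of Kronecker deltas.

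Assembling, $\mathbf{P}(\varphi_i \varphi_j \varphi_k \sqrt{\mu}) = \sum_{\ell=0}^4 P_\ell(\varphi_i\varphi_j\varphi_k\sqrt{\mu}) \varphi_\ell \sqrt{\mu}$ reduces to the claimed sum over $\ell = 1,2,3$. There is no real obstacle; the only point to keep straight is that the parity argument genuinely uses the centering of $\mu$ at $\e u^\beta$ (so that $\varphi_m = w_m$ with $w$ the centered velocity), and that the numerical factor $1/\sqrt{6}$ in $\varphi_4$ is exactly what makes $\{\varphi_\ell \sqrt{\mu}\}_{\ell=0}^4$ orthonormal, which is implicitly used in writing $\mathbf{P}$ in the form \eqref{P}.
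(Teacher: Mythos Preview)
Your proof is correct and is precisely the ``explicit calculation'' the paper alludes to: the change of variables $w = v - \e u^\beta$ reduces the projection coefficients to standard Gaussian moments, the odd ones vanish by parity, and the fourth moment is given by Isserlis' theorem. The paper does not spell out a proof of this lemma beyond that phrase, so there is nothing further to compare.
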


We also have the following useful pointwise estimates. First, we have the following pointwise estimates on $\p^s \left ( f \frac{(\partial_t + \frac{v}{\e} \cdot \nabla_x) \sqrt{\mu} }{\sqrt{\mu} } \right )$:

\begin{lemma} \label{Momentstreambound}
Suppose that \eqref{ekappabeta} holds. Then for $s \le 2$, we have
\Be \label{mombound}
\begin{split}
\p^s \left ( f \frac{(\partial_t + \frac{v}{\e} \cdot \nabla_x) \sqrt{\mu} }{\sqrt{\mu} } \right ) = & \  \p^s f\left ( \frac{(\partial_t + \frac{v}{\e} \cdot \nabla_x) \sqrt{\mu} }{\sqrt{\mu} } \right ) \\& + \sum_{s' < s} (\p^{s'} f) \frac{1}{2} \sum_{i,j} (\partial^{s-s'} \p_{x_i} u^\beta_j) \varphi_i \varphi_j  + R,
\end{split}
\Ee
where $
|R| \lesssim \e V(\beta) \nu(v) \sum_{s' < s}| \p^{s'} f |.$
\end{lemma}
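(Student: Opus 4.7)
The strategy is a direct computation combined with Leibniz: evaluate the ``streaming coefficient'' $G:=\frac{(\p_t+\frac{v}{\e}\cdot\nabla_x)\sqrt{\mu}}{\sqrt{\mu}}$ in closed form, then expand $\p^s(fG)$ and isolate the unique summand in each $\p^kG$ containing two powers of $\varphi$. From $\sqrt{\mu}=(2\pi)^{-3/4}\exp\bigl(-\tfrac14|v-\e u^\beta|^2\bigr)$, logarithmic differentiation gives $\frac{\p_\tau\sqrt{\mu}}{\sqrt{\mu}}=\frac{\e}{2}\sum_i\varphi_i\,\p_\tau u_i^\beta$ for $\tau\in\{t,x_1,x_2\}$. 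Substituting this and then writing $v_j=\varphi_j+\e u_j^\beta$ inside the transport operator yields the clean decomposition
\[
G=\tfrac12\sum_{i,j}\varphi_i\varphi_j\,\p_{x_j}u_i^\beta\;+\;\tfrac{\e}{2}\sum_i\varphi_i\Bigl(\p_t u_i^\beta+\sum_j u_j^\beta\,\p_{x_j}u_i^\beta\Bigr)=:G_0+G_1.
\]
Here $G_0$ is precisely the ``$\varphi_i\varphi_j$'' term on the right of \eqref{mombound} for $k:=s-s'=0$, while $G_1$ is pointwise bounded by $\e V(\beta)\nu(v)$, using $|\varphi|\lesssim\nu(v)$ (since $\nu(v)\gtrsim 1+|v|$ by \eqref{Lgammaform}) together with the control of $\p_t u^\beta$ and $u^\beta\cdot\nabla u^\beta$ by $V(\beta)$ through \eqref{Vbeta} (one could equally well replace $G_1$ by $-\tfrac{\e}{2}\sum_i\varphi_i\,\p_{x_i}p^\beta$ via the Euler equation \eqref{up}).

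Next I apply the Leibniz rule $\p^s(fG)\simeq\sum_{k=0}^s(\p^{s-k}f)(\p^kG)$. The $k=0$ piece is $(\p^sf)\cdot G$, which matches the first term on the right of \eqref{mombound}. For $1\leq k\leq s\leq 2$, I expand $\p^kG_0$ summand by summand: whenever a derivative lands on a factor $\varphi_i=v_i-\e u_i^\beta$ it produces $-\e\,\p u_i^\beta$, so the summand automatically carries an explicit $\e$, keeps at most one surviving $\varphi$, and is therefore pointwise bounded by $\e V(\beta)\nu(v)$. The unique summand of $\p^kG_0$ escaping this bound is the one in which all $k$ derivatives act on $\p_{x_j}u_i^\beta$, producing $\tfrac12\sum_{i,j}\varphi_i\varphi_j\,\p^k\p_{x_j}u_i^\beta$, which, after relabelling $i\leftrightarrow j$, is exactly the $s'=s-k$ main term of \eqref{mombound}. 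The full contribution $\p^kG_1$ is pure remainder since $G_1$ already carries an explicit $\e$; each additional derivative either costs another $\e$ on a $\varphi$-factor or falls on a derivative of $u^\beta$ or $p^\beta$ controlled by $V(\beta)$.

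Collecting: every summand absorbed into $R$ carries one explicit $\e$, at most one $\varphi$ (hence at most one power of $\nu(v)$), and a product of spatial/temporal derivatives of $(u^\beta,p^\beta)$ of total order $\leq 3$; by the $D\in\{\p_t,\p\}$ convention in \eqref{Vbeta} together with its $(1+\sum_{j\leq 2}\|\p^j u^\beta\|_{L^\infty_{t,x}})^2$ multiplier, such products are dominated pointwise by $V(\beta)$. This yields $|R|\lesssim\e V(\beta)\nu(v)\sum_{s'<s}|\p^{s'}f|$, as claimed. Since $s\leq 2$ there are only a handful of summands to enumerate explicitly (three for $s=1$, six for $s=2$); the proof presents no real analytical obstacle, the only point deserving care being that every $\p_t$ arising from differentiating $G$ is absorbed into $V(\beta)$ via the $D\in\{\p_t,\p\}$ bookkeeping, which is the whole reason that convention was built into the definition \eqref{Vbeta}.
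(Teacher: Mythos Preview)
Your proposal is correct and follows the same approach as the paper. The paper's proof is essentially a one-line sketch: it records the identity
\[
\frac{(\partial_t + \frac{v}{\e} \cdot \nabla_x) \sqrt{\mu}}{\sqrt{\mu}} = \tfrac{1}{2}\sum_{i,j} \p_{x_i} u^{\beta}_j\, \varphi_i \varphi_j + \tfrac{\e}{2}\sum_{i} (\p_t u^\beta + u^\beta \cdot \nabla_x u^\beta)_i\, \varphi_i
\]
and then observes that the two displayed main terms in \eqref{mombound} are exactly the Leibniz summands where all $\p$'s fall either on $f$ or on $\p_{x_i}u^\beta_j$, with $R$ collecting everything else; you have simply spelled out that Leibniz bookkeeping and the remainder bound in detail.
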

\begin{proof}
It suffices to notice that
\Be\notag
  \frac{(\partial_t + \frac{v}{\e} \cdot \nabla_x) \sqrt{\mu} }{\sqrt{\mu} } = \frac{1}{2}     \sum_{i,j} \p_{x_i} u^{\beta}_j \varphi_i \varphi_j +  \frac{1}{2}   \e \sum_{i} (\p_t u^\beta + u^\beta \cdot \nabla_x u^\beta)_i \varphi_i ,
\Ee
and that the first two terms of the right-hand side of \eqref{mombound} correspond to the terms where all $\p$ are applied to either $f$ or $\partial_{x_i} u^\beta_j$, and $R$ are all others.
\end{proof}

Next, we present pointwise estimates on $A$ and its derivatives (\cite{JK2020})
\begin{lemma}[Lemma 3 of \cite{JK2020}]\label{Abound}
	Suppose that \eqref{ekappabeta} holds. For $\varrho \in (0, 1/4)$, 
	\begin{equation}\notag
		\begin{split}
			|\mathfrak{A}_{ij} (v) | &\lesssim e^{-\varrho |v-\e u^\beta |^2 }, \\
			\sum_{s \le 2, D \in \{ \p_t, \p\} }\left |  \p^s \left ( (1+(u^\beta, \tilde{u}^\beta) )D \mathfrak{A}_{ij} (v) \right ) \right |   & \lesssim \e V(\beta) e^{-\varrho |v-\e u^\beta |^2 }. \\
			\hide
			| \nabla_x \mathfrak{A}_{ij} (v) | &\lesssim \e |\nabla_x u^\beta| e^{-\varrho |v-\e u^\beta |^2 },\\ |\partial_t \mathfrak{A}_{ij} (v) | &\lesssim \e |\p_t u^\beta| e^{-\varrho |v-\e u^\beta |^2 }, \\
			|(\p_t + u^\beta \cdot \nabla_x)  \mathfrak{A}_{ij} (v) | &\lesssim \e |(\p_t + u^\beta \cdot \nabla_x ) u^\beta| e^{-\varrho |v-\e u^\beta |^2 }, \\
|  \nabla_x \partial_t   \mathfrak{A}_{ij} (v) | &\lesssim \e ( | \nabla_x \partial_t u | + \e |\nabla_x u| \partial_t u| ) e^{-\varrho |v- \e u^\beta |^2}, \\
|\nabla_x ^2 \mathfrak{A}_{ij} (v) | &\lesssim \e( | \nabla_x^2 u | + \e |\nabla_x u|^2  ) e^{-\varrho |v- \e u^\beta |^2}. \unhide
		\end{split}
	\end{equation}
\end{lemma}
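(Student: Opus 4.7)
The strategy is to exploit Galilean invariance to freeze the $(t,x)$-dependence of $\mathfrak{A}_{ij}$ into a pure shift of $v$. Since $Q$ is invariant under $v$-translation, the formulas \eqref{Lgammaform} show that $L$, $\nu$, $\mathbf{k}$, and hence also $\mathfrak{A}_{ij}$, depend on $(t,x,v)$ only through the shifted variable $w:=v-\e u^\beta(t,x)$. Concretely, $\mathfrak{A}_{ij}(t,x,v)=\mathfrak{a}_{ij}(w)$, where $\mathfrak{a}_{ij}$ is the fixed (that is, $(t,x)$-independent) solution of $L_\star\mathfrak{a}_{ij}=(w_iw_j-\tfrac{|w|^2}{3}\delta_{ij})\sqrt{M_{1,0,1}}$ for the standard linearization $L_\star$ around $M_{1,0,1}$. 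The lemma thus reduces to two independent tasks: (i) Gaussian pointwise decay $|\nabla_w^k\mathfrak{a}_{ij}(w)|\lesssim e^{-\varrho|w|^2}$ for $k\le 3$, and (ii) a chain-rule expansion whose coefficients are absorbed by $V(\beta)$.

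For (i), the case $k=0$ is the classical Grad decay estimate for $L_\star^{-1}$ acting on $\mathcal{N}^\perp$ sources with Gaussian decay (\cite{G1963, Guo2006, CIP}); since the source $(w_iw_j-\tfrac{|w|^2}{3}\delta_{ij})\sqrt{M_{1,0,1}}$ is of this type by \eqref{vvproj}, the first inequality of the lemma follows at once, with any $\varrho\in(0,1/4)$. For $k\ge 1$ I would proceed by induction: differentiating $L_\star\mathfrak{a}_{ij}=g$ gives $L_\star(\nabla_w^k\mathfrak{a}_{ij})=\nabla_w^k g-[\nabla_w^k,L_\star]\mathfrak{a}_{ij}$, whose right-hand side inherits Gaussian decay from the inductive hypothesis because $\nabla_w\nu_\star$ has at most polynomial growth and the kernel $\mathbf{k}_\star(w,w_*)$ in \eqref{Lgammaform} retains Gaussian decay in both variables after differentiation (explicit differentiation of the Grad kernel plus one integration by parts handles the $\nabla_w$ applied to $K_\star$). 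Applying the Grad decay estimate once more to the $\mathcal{N}^\perp$ part of $\nabla_w^k\mathfrak{a}_{ij}$ closes the induction; the $\mathcal{N}$-component is a fixed finite linear combination of $\varphi_j\sqrt{M_{1,0,1}}$ with constant coefficients (obtained by integration against $\varphi_j\sqrt{M_{1,0,1}}$), which is trivially Gaussian.

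For (ii), applying the chain rule to $\mathfrak{A}_{ij}(t,x,v)=\mathfrak{a}_{ij}(v-\e u^\beta(t,x))$ yields, via Fa\`a di Bruno, the expansion of $\p^s D\mathfrak{A}_{ij}$ as a finite linear combination of terms of the form $\e^{|\alpha|}\bigl(\prod_\ell \p^{\alpha_\ell} u^\beta\bigr)(\nabla_w^{|\alpha|}\mathfrak{a}_{ij})(w)$, with $|\alpha|\ge 1$ and combined order $\sum_\ell|\alpha_\ell|=s+1\le 3$, each $\p^{\alpha_\ell}$ being either a pure spatial derivative of order $\le 3$ or one $\p_t$ combined with spatial derivatives. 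Expanding the outer prefactor $1+(u^\beta,\tilde u^\beta)$ by Leibniz only introduces further low-order factors of $u^\beta$, $\tilde u^\beta$, or their derivatives. By the explicit definition \eqref{Vbeta} of $V(\beta)$—which is tailored precisely to dominate products of the shape $\|\p^{s_1} D(\cdot)\|_{L^\infty}(1+\|\p^{s_2}(\cdot)\|_{L^\infty})(1+\sum_{j\le 2}\|\p^j u^\beta\|_{L^\infty})^2$—every such product of derivatives of $u^\beta,\tilde u^\beta$ at the orders appearing here is bounded in $L^\infty_{t,x}$ by $V(\beta)$, while one explicit factor of $\e$ is always present from the differentiation of $\e u^\beta$. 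Combined with step (i), this yields the second inequality. The main technical obstacle is the commutator step in (i): controlling $[\nabla_w^k,K_\star]$ requires the precise Gaussian kernel estimates for $\mathbf{k}_\star$ from Grad, but these are classical and the remaining manipulations are bookkeeping.
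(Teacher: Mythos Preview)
The paper does not prove this lemma; it records it as Lemma~3 of \cite{JK2020} and gives no argument of its own. Your approach---freezing the $(t,x)$-dependence via the Galilean shift $w=v-\e u^\beta$ so that $\mathfrak{A}_{ij}(t,x,v)=\mathfrak{a}_{ij}(w)$, invoking the classical Grad decay for $L_\star^{-1}$ on Gaussian sources in $\mathcal{N}^\perp$, and then chain-ruling to produce the $\e$ and the $V(\beta)$ factors---is the standard and correct route, and is precisely what one expects the cited reference to do.
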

 
Next, we have the following pointwise estimates on $\Gamma$ and $L$:
\begin{lemma}[Lemma 4 of \cite{JK2020}] \label{Gammabound}
	Suppose that $\e |u^\beta (x,t) | \lesssim 1$. For $0< \varrho < 1/4$, $C \in \mathbb{R}^3$ and $s \le 2$, we have
	\begin{equation}\notag
	\begin{split}
		|\Gamma(f,g)(v) | &\lesssim  \| e^{\varrho|v|^2 + C \cdot v }f(v) \|_{L^\infty_v} \| e^{\varrho|v|^2 + C \cdot v }g(v) \|_{L^\infty_v} \frac{\nu(v)}{e^{\varrho|v|^2+C\cdot v}}, \\
		|{}_{\p^s} \Gamma (f,g)(v) | &\lesssim \e V(\beta) \| e^{\varrho|v|^2 + C \cdot v }f(v) \|_{L^\infty_v} \| e^{\varrho|v|^2 + C \cdot v }g(v) \|_{L^\infty_v} \frac{\nu(v)}{e^{\varrho|v|^2+C\cdot v}}, \\
		| {}_{\p^s} L f (v) | & \lesssim \e V(\beta) \| e^{\varrho|v|^2 + C \cdot v }f(v) \|_{L^\infty_v}  \frac{\nu(v)^2 }{e^{\varrho|v|^2+C\cdot v}}.
	\end{split}
	\end{equation}
Here we can choose the constant for the bound uniformly for $\{ |C | \le 1 \}$.
\end{lemma}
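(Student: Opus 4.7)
The plan is to exploit the invariance of $|v|^2$ and $v+v_*$ under elastic collisions to reduce all three bounds to the same Gaussian integration in $v_*$. The key identity, which underlies everything, is the collision invariance $|v'|^2 + |v_*'|^2 = |v|^2 + |v_*|^2$ together with $v'+v_*' = v+v_*$, so that the weight $e^{\varrho|v|^2 + C\cdot v}$ is conserved across the collision: $e^{\varrho(|v'|^2+|v_*'|^2) + C\cdot(v'+v_*')} = e^{\varrho(|v|^2+|v_*|^2) + C\cdot(v+v_*)}$. For the first bound on $\Gamma(f,g)$, I would split the integrand into its four pieces, insert the weighted $L^\infty_v$ norms of $f$ and $g$, and use this invariance on the gain terms to pull out the factor $e^{-\varrho(|v|^2+|v_*|^2) - C\cdot(v+v_*)}$. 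What remains factors as $e^{-\varrho|v|^2 - C\cdot v}$ times $\int_{\R^3}\!\int_{\S^2} |(v-v_*)\cdot\o|\,\sqrt{\mu(v_*)}\, e^{-\varrho|v_*|^2 - C\cdot v_*}\, d\o\, dv_*$. Choosing $\varrho'\in(\varrho,1/4)$ so that $e^{-\varrho'|v_*|^2}$ absorbs both $e^{-\varrho|v_*|^2 - C\cdot v_*}$ and the Gaussian tail of $\sqrt{\mu(v_*)}$ uniformly for $|C|\leq 1$, the $\o$-integral yields $|v-v_*|$ and the remaining Gaussian integration in $v_*$ produces a prefactor of order $1+|v|\sim\nu(v)$.

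For $_{\p^s}\Gamma$ the strategy is identical, with $\sqrt{\mu(v_*)}$ replaced by $\p^s\sqrt{\mu(v_*)}$. Since $\sqrt{\mu}=(2\pi)^{-3/4}e^{-|v-\e u^\beta|^2/4}$, each application of $\p$ either generates a new factor $\tfrac{\e}{2}(v_*-\e u^\beta)\cdot \p u^\beta$ or differentiates an existing such factor; for $s\leq 2$ this gives a polynomial of degree $\leq s$ in $v_*-\e u^\beta$ times $\sqrt{\mu(v_*)}$, with coefficients uniformly controlled by $\e V(\beta)$ in view of the definition \eqref{Vbeta}. Shifting $\varrho$ slightly downward once more absorbs this polynomial growth into the Gaussian, and the previous calculation then yields the same bound with the extra prefactor $\e V(\beta)$.

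For $_{\p^s}L$, I would verify from the Grad representation \eqref{Lgammaform} that, since $\nu$ and $\mathbf{k}$ depend on $x$ only through $\e u^\beta$, the chain rule gives $|\p^s\nu(v)|\lesssim \e V(\beta)\nu(v)$ and $|\p^s\mathbf{k}(v,v_*)|\lesssim \e V(\beta)(1+|v|+|v_*|)^{N_s}\cdot (\text{Gaussian in }v,v_*)$ for some integer $N_s\leq 2s$. Applying the weighted $L^\infty_v$ bound on $f$, extracting the weight $e^{-\varrho|v|^2-C\cdot v}$, and performing the same $v_*$-integration produces an estimate of order $\e V(\beta)\nu(v)e^{-\varrho|v|^2-C\cdot v}\|e^{\varrho|\cdot|^2+C\cdot(\cdot)}f\|_{L^\infty_v}$; using the well-known uniform lower bound $\nu(v)\geq\nu_0>0$ upgrades $\nu(v)$ to the stated $\nu(v)^2$.

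The main technical obstacle I expect is maintaining uniformity of the bound for $|C|\leq 1$ while simultaneously absorbing the polynomial growth generated by differentiating $\sqrt{\mu}$, $\nu$, and $\mathbf{k}$. This is essentially a bookkeeping task: one fixes $\varrho<\varrho'<1/4$ and partitions the margin $\tfrac{1}{4}-\varrho'$ among (i) absorbing $e^{C\cdot v_*}$ uniformly for $|C|\leq 1$, (ii) absorbing polynomial weights from derivatives, and (iii) retaining sufficient Gaussian decay in $v_*$ to produce the final $\nu(v)$ prefactor. Once these thresholds are chosen, all three estimates reduce to the single Gaussian integration described above.
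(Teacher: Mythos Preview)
The paper does not prove this lemma; it is imported from \cite{JK2020}. Your treatment of the first two bounds is correct and standard: the collision invariances $|v'|^2+|v_*'|^2=|v|^2+|v_*|^2$ and $v'+v_*'=v+v_*$ transfer the weight from the gain to the loss variables, and the remaining Gaussian integration in $v_*$ produces the factor $\nu(v)$, uniformly for $|C|\le 1$ once $\varrho<1/4$.

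There is, however, a genuine gap in your treatment of $_{\p^s}L$. Your pointwise claim that $|\p^s\mathbf{k}(v,v_*)|$ is bounded by a polynomial times a ``Gaussian in $v,v_*$'' is false for the second (Carleman) piece of $\mathbf{k}$ in \eqref{Lgammaform}. Writing $\xi=v-v_*$ and $\eta=v+v_*-2\e u^\beta$, that exponent is $-\tfrac18|\xi|^2-\tfrac18|\eta_\parallel|^2$ with $\eta_\parallel$ the component of $\eta$ along $\xi$; there is \emph{no} decay in the orthogonal direction $\eta_\perp$, so the kernel does not decay in $v$ and $v_*$ separately (take $v=(R,0,0)$, $v_*=(R,1,0)$ with $R\to\infty$). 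Consequently your step ``extracting the weight $e^{-\varrho|v|^2-C\cdot v}$'' does not follow from the kernel alone.

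Two clean repairs are available. One is to invoke the Grad-type bound \eqref{k_w}: $\mathbf{k}(v,v_*)\tfrac{w(v)}{w(v_*)}\lesssim |v-v_*|^{-1}e^{-C_\vartheta|v-v_*|^2/2}$ for any weight $w=e^{\vartheta|v|^2+C\cdot v}$ with $\vartheta<1/4$; the derivatives $\p^s\mathbf{k}$ obey the same estimate up to a factor $\e V(\beta)$ and a harmless polynomial in $|v-v_*|$ and $|\eta_\parallel|$, both of which are absorbed by giving up half the exponent. The missing decay in $v$ then comes from the factor $w(v_*)^{-1}$ supplied by $f$. A more economical route is to use $L=-2\Gamma(\sqrt{\mu},\cdot)$ from \eqref{L_Gamma}: differentiating with $f$ frozen gives $_{\p}Lf=-2\big({}_{\p}\Gamma(\sqrt{\mu},f)+\Gamma(\p\sqrt{\mu},f)\big)$ and similarly for $s=2$, so the third bound follows directly from the first two together with $\|e^{\varrho|v|^2+C\cdot v}\p^j\sqrt{\mu}\|_{L^\infty_v}\lesssim \e V(\beta)$ for $1\le j\le 2$ (immediate since $\varrho<1/4$ and $\e|u^\beta|\lesssim 1$). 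Either way the stated $\nu(v)^2$ is then recovered exactly as you say, via $\nu\le\nu^2/\nu_0$.
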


Finally, we present pointwise estimates regarding projections $\mathbf{P}$ and $\mathbf{I} - \mathbf{P}$. 

\begin{lemma} \label{projbound}
	Suppose that $f(t,x,v)  \in L^2_v$ satisfies $|f(t,x,v) | \le C(t,x ) \exp \left ( - \varrho |v- \e u^\beta(t,x) |^2 \right )$ for some constant $C(t,x)$ independent of $v$ and $\varrho \in (0, 1/4)$. Then
\Be	\begin{split}\label{hydrop}
		| \mathbf{P} f (t,x,v) | &\lesssim C(t,x ) \exp \left ( - \varrho |v- \e u^\beta(t,x) |^2 \right ), \\
		|(\mathbf{I} - \mathbf{P} ) f(t,x,v) | &\lesssim C(t,x ) \exp \left ( - \varrho |v- \e u^\beta(t,x) |^2 \right ),
	\end{split}\Ee
	where the constants for inequalities are independent of $t,x,v$ but depend on $\varrho$. 
\end{lemma}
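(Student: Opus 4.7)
The plan is to use the explicit description of $\mathbf{P}$ in \eqref{P} as the $L^2_v$-projection onto the hydrodynamic span of $\{\varphi_j\sqrt{\mu}\}_{j=0}^{4}$, where the $\varphi_j$ listed in \eqref{basis} are polynomials of degree $\le 2$ in the shifted variable $w := v - \e u^\beta(t,x)$. The first bound in \eqref{hydrop} will reduce to a Gaussian moment estimate using the gap $1/4 - \varrho > 0$ between the decay rate in the assumption and the decay rate of $\sqrt{\mu}$, and the second bound will follow from the triangle inequality $|(\mathbf{I}-\mathbf{P})f| \le |f| + |\mathbf{P}f|$.

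Concretely, I would first estimate the scalar coefficients $P_j f(t,x) = \int_{\R^3} f \varphi_j \sqrt{\mu}\,\dd v$. Since $\sqrt{\mu(v)} = (2\pi)^{-3/4}\exp(-|w|^2/4)$ and $\varphi_j(v)$ is polynomial in $w$, the hypothesis $|f| \le C(t,x) e^{-\varrho|w|^2}$ yields
$$|P_j f(t,x)| \le C(t,x)\int_{\R^3} |\varphi_j(v)| \sqrt{\mu(v)}\, e^{-\varrho|w|^2}\,\dd v \lesssim C(t,x),$$
because after the change of variables $w = v - \e u^\beta(t,x)$ the integral becomes a convergent Gaussian moment with rate $\varrho + 1/4 > 0$, independent of $(t,x,v)$.

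Second, I would absorb each factor $\varphi_j(v)\sqrt{\mu(v)}$ into the target Gaussian $e^{-\varrho|w|^2}$. Writing $|\varphi_j(v)|\sqrt{\mu(v)} \lesssim (1+|w|)^{k_j} e^{-|w|^2/4}$ with $k_j \le 2$, and using that $(1+|w|)^{k_j} e^{-(1/4 - \varrho)|w|^2}$ is bounded uniformly in $w$ since $\varrho < 1/4$, one concludes
$$|\varphi_j(v)|\sqrt{\mu(v)} \lesssim e^{-\varrho|w|^2} = e^{-\varrho |v - \e u^\beta(t,x)|^2}.$$
Combining this with the coefficient estimate through the definition $\mathbf{P} f = \sum_{j=0}^{4} (P_j f)\, \varphi_j \sqrt{\mu}$ yields
$$|\mathbf{P} f(t,x,v)| \le \sum_{j=0}^{4} |P_j f(t,x)|\, |\varphi_j(v)|\sqrt{\mu(v)} \lesssim C(t,x)\, e^{-\varrho |v - \e u^\beta(t,x)|^2},$$
which is the first bound in \eqref{hydrop}. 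The second bound then follows immediately from the triangle inequality and the assumption.

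There is no real obstacle here; the only point that requires care is tracking the constraint $\varrho < 1/4$, which is precisely what allows the polynomial factors from $\varphi_j$ and from the $L^2_v$ inner product with $\sqrt{\mu}$ to be absorbed into the Gaussian tail with rate $\varrho$, while simultaneously keeping the constants independent of $\e$ and of the underlying hydrodynamic shift $\e u^\beta(t,x)$ (which is assumed to be bounded by \eqref{ekappabeta}).
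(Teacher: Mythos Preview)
Your proposal is correct and follows essentially the same approach as the paper: bound the coefficients $P_j f$ via a Gaussian moment integral with rate $\varrho + 1/4$, absorb the polynomial factor $|\varphi_j|\sqrt{\mu}$ into $e^{-\varrho|v-\e u^\beta|^2}$ using the gap $1/4 - \varrho > 0$, and then deduce the second inequality from the triangle inequality. One small remark: the independence of the constants from the shift $\e u^\beta(t,x)$ already follows from your change of variables $w = v - \e u^\beta$ alone, so you need not invoke \eqref{ekappabeta} for that purpose.
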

\begin{proof}
	It suffices to show (\ref{hydrop}) only: the other follows from $|(\mathbf{I} - \mathbf{P} ) f(t,x,v) | \le | \mathbf{P} f (t,x,v) | + |f(t,x,v) |$. Note that, from \eqref{P}, 
	\begin{align*}
		&| \mathbf{P} f(t,x,v)| \\
		&\le \sum_{\ell=1} ^5 C(t,x) \int \langle v - \e u^\beta \rangle ^2 \exp \left ( - \left (\varrho + \frac{1}{4} \right ) |v- \e u^\beta(t,x) |^2 \right ) dv \langle v - \e u^\beta \rangle ^2 \sqrt{\mu} \\
		& \le C(t,x) C_{\varrho}  \exp \left ( - \varrho |v- \e u^\beta(t,x) |^2 \right ).
	\end{align*}\end{proof}

\subsection{New Hilbert-type Expansion}

We recall an explicit form of derivatives of ${\mu}^k$:  
\Be\begin{split}\label{vDmu}
	& \left [ \p_t + u^\beta \cdot \nabla_x \right ] {\mu}^k = \e k (\p_t u^\beta + u^\beta \cdot \nabla_x u^\beta ) \cdot (v - \e u^\beta ) \mu^k, \\
	&(v - \e u^\beta) \cdot \nabla_x \mu^k = \e k ( \nabla_x u^\beta ) : ( (v-\e u^\beta ) \otimes (v - \e u^\beta)) \mu^k,
\end{split}\Ee
where $k > 0$ and $A:B = \mathrm{tr} (AB) = \sum_{i,j=1} ^3 A_{ij} B_{ji}$ for arbitrary rank 2 tensors $A$, $B$. 

Now we derive an equation of $f_R$. First, we plug (\ref{F_e}) into (\ref{eqtn_F}) to obtain
\begin{align}
&(\underline{v}-\e u^\beta) \cdot \nabla_x \left ({\mu} {+ \e^2 p^\beta \mu} {- \e^2 \kappa (\nabla_x u^\beta): \mathfrak{A}\sqrt{\mu} } {+ \e \kappa \tilde{u}^\beta \cdot (v-\e u^\beta) \mu} {+ \e^2 \kappa \tilde{p}^\beta \mu} \right )  \label{HE1} \\
&+ \e (\p_t + u^\beta \cdot \nabla_x ) \left ({\mu} + \e^2 p^\beta \mu - \e^2 \kappa (\nabla_x u^\beta): \mathfrak{A}\sqrt{\mu} {+ \e \kappa \tilde{u}^\beta \cdot (v-\e u^\beta) \mu }+ \e^2 \kappa \tilde{p}^\beta \mu \right ) \label{HE2}\\
& - \frac{1}{\kappa \e}  Q (\mu + \e^2 p^\beta \mu - \e^2 \kappa (\nabla_x u^\beta): \mathfrak{A}\sqrt{\mu} + \e \kappa \tilde{u}^\beta \cdot (v-\e u^\beta) \mu + \e^2 \kappa \tilde{p}^\beta \mu ) \label{HE3}\\  
&+ \e^2 \Big\{ 
\p_t (f_R\sqrt{\mu} ) + \frac{\underline{v}}{\e} \cdot \nabla_x (f_R\sqrt{\mu} ) - \frac{1}{\e \kappa} Q(f_R \sqrt{\mu}, f_R \sqrt{\mu} )\Big\}  \label{HE5}\\
&  -  \frac{2}{  \kappa} Q(\mu + \e^2 p^\beta \mu - \e^2 \kappa (\nabla_x u^\beta): \mathfrak{A}\sqrt{\mu} + \e \kappa \tilde{u}^\beta \cdot (v-\e u^\beta) \mu + \e^2 \kappa \tilde{p}^\beta \mu, f_R\sqrt{\mu} )  = 0 \label{HE6},
\end{align}
where we have used an abbreviation $Q(g)=Q(g,g)$ in \eqref{HE3}. 

We group the source terms \eqref{HE1}, \eqref{HE2}, \eqref{HE3} with corresponding order of magnitudes: it is good to keep in mind that in our method, all hydrodynamic terms of order of magnitude less than $\e^2 \kappa$ are considered small, and all non-hydrodynamic terms of order of magnitude less than $\e \sqrt{\kappa}$ are considered small. In the end, we will group all small terms altogether.

\paragraph{0. Terms which are greater than $\e$:} Among terms which are independent of $f_R$, There are no terms whose magnitude is greater than $\e$: for terms in \eqref{HE1} and \eqref{HE2} this is obvious: the largest term comes from $(\underline{v} - \e u^\beta) \cdot \nabla_x \mu$, which is of order $\e$. For terms in \eqref{HE3}, we note that since $(v-\e u^\beta ) \sqrt{\mu}, \sqrt{\mu} \in \mathcal{N}$, in fact \eqref{HE3} can be rewritten as
\Be \label{HEQ}
\begin{split}
&{ 2\e Q(\mu (1 {\color{black}+ \e^2 p^\beta + \e^2 \kappa \tilde{p}^\beta }) , (\nabla_x u^\beta): \mathfrak{A} \sqrt{\mu} )} {- \kappa \e Q(\tilde{u}^\beta \cdot (v -\e u^\beta) \mu,\tilde{u}^\beta \cdot (v -\e u^\beta) \mu)} \\
& {+ 2 \e^2 \kappa Q(\tilde{u}^\beta \cdot (v-\e u^\beta) \mu, (\nabla_x u^\beta) : \mathfrak{A} \sqrt{\mu} )} - \e^3 \kappa Q((\nabla_x u^\beta): \mathfrak{A} \sqrt{\mu}, (\nabla_x u^\beta): \mathfrak{A} \sqrt{\mu}),
\end{split}
\Ee
whose leading order is $\e$.

\subsubsection{Order $\e$:} Among terms which are independent of $f_R$, there are two terms of order $\e$:
\Be\notag
\begin{split}
&(\underline{v} - \e u^\beta) \cdot \nabla_x \mu + \frac{2}{\kappa \e} Q(\mu, \e^2 \kappa (\nabla_x u^\beta):\mathfrak{A} \sqrt{\mu} ) \\
&=\e  \nabla_{x} u^\beta : (v-\e u^\beta) \otimes (v-\e u^\beta) \mu - \e  (\nabla_x u^\beta): L\mathfrak{A} \sqrt{\mu}  = 0,
\end{split}
\Ee
as $\nabla_x \cdot u^\beta = 0.$

\subsubsection{Order $\e \kappa$:} Among terms which are independent of $f_R$, there are two terms of order $\e \kappa$.
\begin{align}
&\e \kappa (\underline{v} - \e u^\beta) \cdot \nabla_x ( \tilde{u}^\beta \cdot (v- \e u^\beta) \mu ) - \e \kappa Q(\tilde{u}^\beta \cdot (v -\e u^\beta) \mu,\tilde{u}^\beta \cdot (v -\e u^\beta) \mu) \notag\\
&= \e \kappa \left ( (\nabla_x \tilde{u}^\beta): L \mathfrak{A}  - \Gamma \left ( \tilde{u}^\beta \cdot (\underline{v} - \e u^\beta) \sqrt{\mu},\tilde{u}^\beta \cdot (\underline{v} - \e u^\beta) \sqrt{\mu} \right) \right ) \sqrt{\mu}  \label{ekappa} \\
& + \e^2 \kappa ( \sum_{i, j } (\underline{v} - \e u^\beta)_i \tilde{u}^\beta_j \left (- \partial_{x_i} u^\beta_j + (\underline{v} - \e u^\beta)_j (\underline{v} - \e u^\beta)_k \partial_{x_i} u^\beta_k \right )) \mu ,\label{e2kappa1}
\end{align}
as $\nabla_x \cdot \tilde{u}^\beta = 0.$ Note that terms of order $\e \kappa$ are non-hydrodynamic: $\frac{1}{\sqrt{\mu} } \eqref{ekappa} \in \mathcal{N}^\perp$.

\subsubsection{Order $\e^2$:} The following terms are of order $\e^2$:
\begin{align}
& \e (\p_t + u^\beta \cdot \nabla_x) \mu + \e^2 (\underline{v} - \e u^\beta) \cdot \nabla_x (p^\beta \mu)\notag \\
&= \e^2 \left ((\p_t + u^\beta \cdot \nabla_x )u^\beta + \nabla_x p^\beta \right ) \cdot (\underline{v} - \e u^\beta) \mu \notag \\
&+ \e^3 p^\beta \nabla_{x_i} u^\beta_j (\underline{v} - \e u^\beta)_i (\underline{v} - \e u^\beta)_j \mu = \e^3 p^\beta \nabla_{x_i} u^\beta_j (\underline{v} - \e u^\beta)_i (\underline{v} - \e u^\beta)_j \mu,  \label{e3}
\end{align}
since $(\p_t + u^\beta \cdot \nabla_x )u^\beta + \nabla_x p^\beta= 0$.

\subsubsection{Order $\e^2 \kappa$:} The key reason to introduce correctors $\e \kappa \tilde{u}^\beta \cdot (v - \e u^\beta) \mu$ and $\e^2 \kappa \tilde{p}^\beta \mu$ is to get rid of hydrodynamic terms of order $\e^2 \kappa$: as a payback, we obtained terms of order $\e \kappa$, which is larger, but all of them are non-hydrodynamic, so they are small in our scale. The following is the collection of all terms of order $\e^2 \kappa$:
\begin{align}
&-\e^2 \kappa (\underline{v} - \e u^\beta) \cdot \nabla_x ( (\nabla_x u^\beta) : \mathfrak{A} \sqrt{\mu} ) + \e^2 \kappa (\underline{v} - \e u^\beta) \cdot \nabla_x (\tilde{p}^\beta \mu ) + \eqref{e2kappa1} \notag \\
& + \e^2 \kappa (\p_t + u^\beta \cdot \nabla_x ) ( \tilde{u}^\beta \cdot (\underline{v} - \e u^\beta) \mu )  + 2 \e^2 \kappa \Gamma ( \tilde{u}^\beta \cdot (\underline{v} - \e u^\beta) \sqrt{\mu}, (\nabla_x u^\beta): \mathfrak{A}) \sqrt{\mu} \notag \\
&   = \e^2 \kappa  
\{-\eta_0 \Delta_x u^\beta 
+ 
\nabla_x \tilde{p}^\beta 
  +
   (\p_t + u^\beta \cdot \nabla_x) \tilde{u}^\beta \} 
   \cdot (\underline{v} - \e u^\beta) \mu  \label{e2kappahydro1} \\
& {\tiny+ \e^2 \kappa \bigg (- \sum_{i,j} \tilde{u}^\beta_j \partial_{x_i} u^\beta_j (\underline{v} - \e u^\beta)_i + \sum_{i,j,k, \ell} \tilde{u}^\beta_j \partial_{x_i} u^\beta_k (\delta_{ij} \delta_{k\ell} + \delta_{ik} \delta_{j\ell} + \delta_{jk} \delta_{i\ell} ) (\underline{v} - \e u^\beta)_\ell \bigg ) \mu} \label{e2kappahydro2} \\
& +  \e^2 \kappa \left (2 \Gamma ( \tilde{u}^\beta \cdot (\underline{v} - \e u^\beta) \sqrt{\mu}, (\nabla_x u^\beta): \mathfrak{A} ) - (\nabla_x ^2 u^\beta):(\mathbf{I} - \mathbf{P}) (\underline{v} - \e u^\beta) \mathfrak{A}  \right ) \sqrt{\mu} \label{e2kappanonhydro1}\\
& + \e^2 \kappa \sum_{i,j, k} \tilde{u}^\beta_j \p_{x_i} u^\beta_k  (\mathbf{I} - \mathbf{P})\left ( (\underline{v} - \e u^\beta)_i (\underline{v} - \e u^\beta)_j (\underline{v} - \e u^\beta)_k \sqrt{\mu} \right ) \sqrt{\mu} \label{e2kappanonhydro2} \\
&  {\tiny+ \e^2 \kappa \left ( - (\nabla_x u^\beta): (\underline{v} - \e u^\beta) \cdot \nabla_x (\mathfrak{A} \sqrt{\mu} ) + \tilde{p}^\beta (\underline{v} - \e u^\beta) \cdot \nabla_x \mu + \tilde{u}^\beta \cdot (\p_t + u^\beta \cdot \nabla_x) ((\underline v - \e u^\beta )\mu ) \right ) } \label{e3kappa1} \\
&= \eqref{e2kappanonhydro1}+ \eqref{e2kappanonhydro2} + \eqref{e3kappa1}\notag.
\end{align}
Here, we have used Lemma \ref{hydroph3}, and that \eqref{e2kappahydro1} and \eqref{e2kappahydro2} can be gathered to form
\Be\notag
\eqref{e2kappahydro1} + \eqref{e2kappahydro2}  = \e^2 \kappa ( (\p_t + u^\beta \cdot \nabla_x) \tilde{u}^\beta + \tilde{u}^\beta \cdot \nabla_x u^\beta - \eta_0 \Delta_x u^\beta + \nabla_x \tilde{p}^\beta ) \cdot (\underline{v} - \e u^\beta) \mu = 0.
\Ee
Note that $\frac{1}{\sqrt{\mu} }\left (\eqref{e2kappanonhydro1} + \eqref{e2kappanonhydro2}\right ) \in \mathcal{N}^\perp$, that is, it is non-hydrodynamic so small in our scales, and \eqref{e3kappa1} is small: in fact, it is of order $\e^3 \kappa$.

\subsubsection{Small, non-necessarily non-hydrodynamic remainders}
The remaining terms are small in our scales: the following gathers all remaining terms.
\Be
\begin{split} \label{R1}
 \e^3 \sqrt{\mu} \mathfrak{R}_1  
 = & \   \eqref{e3} + \eqref{e3kappa1} + \e^3 (\p_t + u^\beta \cdot \nabla_x) (p^\beta \mu) \\
& + \e^3 \kappa (\p_t + u^\beta \cdot \nabla_x) ( - (\nabla_x u^\beta):\mathfrak{A} \sqrt{\mu} + \tilde{p}^\beta \mu ) \\
&- \e^3 p^\beta (L (\nabla_x u^\beta): \mathfrak{A} )\sqrt{\mu} - \e^3 \kappa \tilde{p}^\beta (L(\nabla_x u^\beta): \mathfrak{A} )\sqrt{\mu} \\
&- \e^3 \kappa \Gamma ((\nabla_x u^\beta): \mathfrak{A},(\nabla_x u^\beta): \mathfrak{A}) \sqrt{\mu}.
\end{split}
\Ee
One can easily observe the following:
\begin{proposition}
Suppose that \eqref{ekappabeta} holds. $\mathfrak{R}_1$ consists of a linear combination of the terms in the following tensor product:
\Be
\left (
\begin{matrix}
1 \\ \kappa \\ \e \\ \e \kappa
\end{matrix}
\right ) \otimes
\left (
\begin{matrix}
1 \\
p^\beta \\
\nabla_x u^\beta \\ \tilde{p}^\beta \\ \tilde{u}^\beta \\ \tilde{u}^\beta \otimes u^\beta \\ u^\beta
\end{matrix}
\right ) \otimes
D \left (
\begin{matrix}
p^\beta \\ u^\beta \\ \nabla_x u^\beta \\ \tilde{p}^\beta 
\end{matrix}
\right ) \otimes 
\mathfrak{P}^{\le 2} ( (\underline v - \e u^\beta ) )
\left ( 
\begin{matrix}
\sqrt{\mu} \\ \frac{1}{\e} \p_t \mathfrak{A} \\\frac{1}{\e} \p \mathfrak{A} \\ L \mathfrak{A} \\ \Gamma(\mathfrak{A}, \mathfrak{A} )
\end{matrix}
\right ), \notag
\Ee
where $D$ is either $\p_t$ or $\p$, which is applied to $p^\beta, u^\beta, \nabla_x u^\beta, \tilde{p}^\beta$, and $\mathfrak{P}^{\le 2}$ is a polynomial of degree $\le 2$ of its arguments. In particular, for $\varrho \in (0,\frac{1}{4})$ and $s \le 2$, we have the following pointwise estimate:
\Be \label{R1point}
| \p^s \mathfrak{R}_1 | \lesssim V(\beta) e^{-\varrho |v- \e u^\beta|^2 }. 
\Ee
\end{proposition}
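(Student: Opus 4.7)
The plan is to verify both parts of the proposition by direct inspection of each term contributing to $\mathfrak{R}_1$ via the defining identity \eqref{R1}, using the derivative formulas \eqref{vDmu} to resolve the transport operators on $\mu$ and $\sqrt{\mu}$.

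For the structural claim, I would systematically expand each of the six groups on the right-hand side of \eqref{R1}, applying Leibniz whenever a $\p_t$ or $\nabla_x$ lands on a product of a fluid quantity and a velocity factor. Each application of $(\p_t + u^\beta \cdot \nabla_x)$ or $(v - \e u^\beta)\cdot \nabla_x$ to $\mu^k$ produces, by \eqref{vDmu}, an $\e$ times a derivative of $u^\beta$ times a polynomial of degree at most two in $(v - \e u^\beta)$ times $\mu^k$; similarly, derivatives of $\mathfrak{A}\sqrt{\mu}$ are either $\p \mathfrak{A}\cdot \sqrt{\mu}$ (which slots into the $\frac{1}{\e}\p \mathfrak{A}$ entry after extracting the $\e\nabla_x u^\beta$ from Lemma \ref{Abound}) or $\mathfrak{A}$ times $\nabla_x \sqrt{\mu}$ (which produces a further $\e \nabla_x u^\beta \otimes (v - \e u^\beta)$ factor and a $\sqrt{\mu}$ factor, still matching the template). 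After dividing by $\e^3 \sqrt{\mu}$ on both sides, each resulting summand is manifestly of the listed tensor-product shape: the $\e$-prefactor ($1, \kappa, \e, \e\kappa$) comes from the overall size, the fluid coefficient is one of $\{1, p^\beta, \nabla_x u^\beta, \tilde{p}^\beta, \tilde{u}^\beta, \tilde{u}^\beta \otimes u^\beta, u^\beta\}$, the symbol $D$ denotes the one remaining $\p_t$ or $\p$ landing on a fluid quantity, and the velocity factor belongs to $\{\sqrt{\mu},\, \frac{1}{\e}\p_t \mathfrak{A},\, \frac{1}{\e}\p \mathfrak{A},\, L\mathfrak{A},\, \Gamma(\mathfrak{A},\mathfrak{A})\}$.

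For the pointwise estimate \eqref{R1point}, I would combine the pointwise bound $|\p^s(\text{fluid quantity})|\lesssim V(\beta)$ from \eqref{Vbeta} with Lemma \ref{Abound} to control $\mathfrak{A}, \p\mathfrak{A}, \p^2 \mathfrak{A}$ by $e^{-\varrho |v - \e u^\beta|^2}$ (times an extra $\e V(\beta)$ for each spatial derivative of $\mathfrak{A}$, which is exactly what allows the $\frac{1}{\e}\p\mathfrak{A}$ and $\frac{1}{\e}\p_t\mathfrak{A}$ factors to be bounded by $V(\beta) e^{-\varrho|v-\e u^\beta|^2}$), and Lemma \ref{Gammabound} to handle $L\mathfrak{A}$ and $\Gamma(\mathfrak{A},\mathfrak{A})$ pointwise. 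The polynomial factor $\mathfrak{P}^{\le 2}(v - \e u^\beta)$ is absorbed by decreasing $\varrho$ slightly, since $|P(v - \e u^\beta)|\, e^{-\varrho|v-\e u^\beta|^2}\lesssim e^{-\varrho'|v-\e u^\beta|^2}$ for any $\varrho' < \varrho$. For the derivative bound with $s \le 2$, Leibniz distributes $\p^s$ across each summand: spatial derivatives landing on fluid factors are absorbed into $V(\beta)$ by definition \eqref{Vbeta}, while those landing on $\mathfrak{A}, L\mathfrak{A}, \Gamma(\mathfrak{A},\mathfrak{A})$ yield the same exponential decay with an additional $\e V(\beta)$, harmless after summation.

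The main obstacle is purely bookkeeping: one must make sure every Leibniz term produced from \eqref{e3}--\eqref{e3kappa1} and the $(\p_t + u^\beta \cdot \nabla_x)$ terms in \eqref{R1} fits the template, particularly the mixed terms where $(v-\e u^\beta)\cdot \nabla_x$ falls on $\mathfrak{A}\sqrt{\mu}$ (generating an extra degree-two polynomial in $(v-\e u^\beta)$ that is legal under $\mathfrak{P}^{\le 2}$) and where $(\p_t + u^\beta\cdot \nabla_x)$ acts on $p^\beta \mu$ or $\tilde{p}^\beta \mu$ (producing $Dp^\beta$ or $D\tilde{p}^\beta$ times $(v - \e u^\beta)\sqrt{\mu}$ pieces). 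Once the catalogue is complete, Lemmas \ref{Abound}--\ref{projbound} and the definition of $V(\beta)$ make \eqref{R1point} immediate.
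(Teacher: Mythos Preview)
Your proposal is correct and is precisely the approach the paper takes: the paper states this proposition after the phrase ``One can easily observe the following'' and gives no proof, treating it as immediate from direct inspection of the defining expression \eqref{R1} together with \eqref{vDmu} and Lemmas \ref{Abound}--\ref{Gammabound}. Your outlined Leibniz expansion and term-by-term matching is exactly the bookkeeping the paper is suppressing.
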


\subsubsection{Small non-hydrodynamic remainders} \eqref{ekappa}, \eqref{e2kappanonhydro1}, \eqref{e2kappanonhydro2} are non-hydrodynamic remainders. We group them to obtain the following proposition:

\begin{proposition}
Suppose that \eqref{ekappabeta} holds. Let $ \mathfrak{R}_2$ be defined by
\Be \label{R2}
\e \kappa \sqrt{\mu} \mathfrak{R}_2 = \eqref{ekappa}+ \eqref{e2kappanonhydro1}+ \eqref{e2kappanonhydro2}.
\Ee
Then $\mathfrak{R}_2$ consists of a linear combination of the terms in the following tensor product:
\Be
\left (
\begin{matrix}
1 \\ \e
\end{matrix}
\right ) \otimes 
\left (
\begin{matrix}
\nabla_x \tilde{u}^\beta \\ \tilde{u}^\beta \otimes \tilde{u}^\beta \\ \tilde{u}^\beta \otimes \nabla_x u^\beta \\ \nabla_x^2 u^\beta 
\end{matrix}
\right ) \otimes
\left (
\begin{matrix}
L\mathfrak{A} \\
\Gamma((\underline v - \e u^\beta) \sqrt{\mu}, (\underline v - \e u^\beta) \sqrt{\mu} ) \\
\Gamma((\underline v - \e u^\beta) \sqrt{\mu}, \mathfrak{A} ) \\
(\mathbf{I} - \mathbf{P} ) (\underline v - \e u^\beta) \mathfrak{A} \\
(\mathbf{I} - \mathbf{P} ) (\underline v - \e u^\beta)^{\otimes^3} \sqrt{\mu}.
\end{matrix}
\right ). \notag
\Ee
In particular, $\mathfrak{R}_2 \in \mathcal{N}^\perp$, and for $\varrho \in (0,\frac{1}{4})$ and $s \le 2$, we have the following pointwise estimate:
\Be \label{R2point}
\begin{split}
| (\mathbf{I} - \mathbf{P}) \p^s \mathfrak{R}_2 | \lesssim V(\beta) e^{-\varrho |v- \e u^\beta|^2 }, \\
| \mathbf{P} \p^s \mathfrak{R}_2 | \lesssim \e V(\beta) e^{-\varrho |v- \e u^\beta|^2 }.
\end{split}
\Ee
\end{proposition}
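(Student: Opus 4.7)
The plan is to verify in turn (i) the claimed tensor-product decomposition, (ii) the membership $\mathfrak{R}_2 \in \mathcal{N}^\perp$, and (iii) the pointwise bounds \eqref{R2point}. For (i), I would substitute \eqref{ekappa}, \eqref{e2kappanonhydro1}, and \eqref{e2kappanonhydro2} into \eqref{R2}, divide through by $\e\kappa\sqrt{\mu}$, and use bilinearity of $\Gamma$ and linearity of $(\mathbf{I}-\mathbf{P})$ to pull the $(t,x)$-coefficients $\tilde{u}^\beta, \nabla_x\tilde{u}^\beta, \nabla_x u^\beta, \nabla_x^2 u^\beta$ out of the $v$-arguments. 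This produces exactly five contributions, matched one-by-one against entries of the tensor product: at the ``$1$'' level, $(\nabla_x \tilde{u}^\beta):L\mathfrak{A}$ and $-\tilde{u}^\beta_i \tilde{u}^\beta_j \Gamma((v-\e u^\beta)_i\sqrt{\mu}, (v-\e u^\beta)_j\sqrt{\mu})$ from \eqref{ekappa}; at the ``$\e$'' level, $2 \tilde{u}^\beta_i (\nabla_x u^\beta)_{jk}\Gamma((v-\e u^\beta)_i\sqrt{\mu}, \mathfrak{A}_{jk})$ and $-(\nabla_x^2 u^\beta):(\mathbf{I}-\mathbf{P})((v-\e u^\beta)\mathfrak{A})$ from \eqref{e2kappanonhydro1}, together with $\tilde{u}^\beta_j (\p_{x_i} u^\beta_k)(\mathbf{I}-\mathbf{P})((v-\e u^\beta)^{\otimes 3}\sqrt{\mu})$ from \eqref{e2kappanonhydro2}. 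Since the $(t,x)$-coefficients commute with $\mathbf{P}$, claim (ii) reduces termwise to $\mathbf{P} L\mathfrak{A}=0$ and $\mathbf{P}\Gamma=0$, both recorded in \eqref{Lnonhydro}, together with the tautology $\mathbf{P}(\mathbf{I}-\mathbf{P})=0$.

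For (iii), I would establish the $(\mathbf{I}-\mathbf{P})\p^s \mathfrak{R}_2$ bound by the Leibniz rule. At $s=0$, each $v$-dependent factor obeys a pointwise estimate of the form $\lesssim e^{-\varrho|v-\e u^\beta|^2}$: the bound on $L\mathfrak{A}$ is immediate from $L\mathfrak{A}_{ij}=((v-\e u^\beta)_i(v-\e u^\beta)_j - \tfrac{|v-\e u^\beta|^2}{3}\delta_{ij})\sqrt{\mu}$; on $\mathfrak{A}$ it follows from Lemma \ref{Abound}; on $\Gamma$ from Lemma \ref{Gammabound} with $C = -2\varrho\e u^\beta$ (admissible since $|\e u^\beta|\lesssim 1$ under \eqref{ekappabeta}); and on $(\mathbf{I}-\mathbf{P})$ applied to polynomials-times-$\sqrt{\mu}$ from Lemma \ref{projbound}. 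Each $(t,x)$-coefficient is controlled by $V(\beta)$. For $s=1,2$ the derivative either hits a coefficient (still bounded by $V(\beta)$) or a $v$-dependent factor, in which case the second bounds in Lemmas \ref{Abound} and \ref{Gammabound}, along with the $\e\p u^\beta$ factors arising when $\p$ acts on the shifted Maxwellian inside $(\mathbf{I}-\mathbf{P})((v-\e u^\beta)^{\otimes k}\sqrt{\mu})$, supply an extra $\e V(\beta)$, easily absorbed into $V(\beta)e^{-\varrho|v-\e u^\beta|^2}$.

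To obtain the sharper bound on $\mathbf{P}\p^s\mathfrak{R}_2$, I exploit $\mathbf{P}\mathfrak{R}_2=0$ from (ii) to write $\mathbf{P}\p^s\mathfrak{R}_2 = -[\![\p^s,\mathbf{P}]\!]\mathfrak{R}_2$. Expanding this commutator recursively and applying Lemma \ref{pPcomm}, each instance of $[\![\p,\mathbf{P}]\!]$ brings in a factor $\e\|\nabla_x u^\beta\|_{L^\infty_{t,x}}\lesssim \e V(\beta)$, which upgrades the previous bound by the missing power of $\e$ and delivers the claimed $|\mathbf{P}\p^s\mathfrak{R}_2|\lesssim \e V(\beta)e^{-\varrho|v-\e u^\beta|^2}$. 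The main --- though modest --- obstacle is the bookkeeping of $v$-polynomial weights generated when $\p$ hits $\sqrt{\mu}$ or the argument $v-\e u^\beta$; this is handled by choosing $\varrho'<\varrho<1/4$ so the exponential absorbs any finite polynomial, with constants uniform in $\e$ thanks to $|\e u^\beta|\lesssim 1$ from \eqref{ekappabeta}.
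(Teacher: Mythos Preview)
Your proposal is correct and follows essentially the same route as the paper: both rely on the tensor-product structure to see that each microscopic factor lies in $\mathcal{N}^\perp$, then extract the extra $\e$ in the $\mathbf{P}\p^s\mathfrak{R}_2$ bound via the commutator $[\![\mathbf{P},\p]\!]$ (Lemma \ref{pPcomm}). The only cosmetic difference is organizational---the paper Leibniz-expands first and then splits cases according to whether any $\p$ lands on a microscopic factor $g$ (writing $\p^{s'}g = (\mathbf{I}-\mathbf{P})\p^{s'}g + [\![\mathbf{P},\p^{s'}]\!]g$), whereas you decompose into $\mathbf{P}$ and $(\mathbf{I}-\mathbf{P})$ parts globally; note that Lemma \ref{pPcomm} is stated in $L^2_v$, so for the pointwise conclusion you should invoke its explicit formula $[\![\mathbf{P},\p]\!]f = -\sum_i \langle f,\varphi_i\sqrt{\mu}\rangle\,\p(\varphi_i\sqrt{\mu})$ together with Lemma \ref{projbound}.
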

\begin{proof}
It suffices to show \eqref{R2point}: we see that if all $\p^s$ are applied to macroscopic quantities $\nabla_x \tilde{u}^\beta, \cdots, \nabla_x ^2 u^\beta$, then the resulting term is still non-hydrodynamic. In that case, the first inequality of \eqref{R2point} applies. On the other hand, if some of $\p$ are applied to microscopic quantities $g = L\mathfrak{A}, \cdots, (\mathbf{I} - \mathbf{P}) (\underline{v} - \e u^\beta)^{\otimes^3} \sqrt{\mu}$, we note that 
\Be\notag
\p^{s'} g = \p^{s'} (\mathbf{I} - \mathbf{P} ) g = (\mathbf{I} - \mathbf{P} ) \p^{s'} g +  [\![ \mathbf{P}, \p^{s'} ]\!] g.
\Ee
The first term belongs to $\mathcal{N}^\perp$, and the second term belongs to $\mathcal{N}$ and is bounded by $\e (1+ \sum_{s''\le s'} \|\p^{s''} u\|_{L^\infty} ) \| \p^{s'-1} g \|_{L^\infty_x L^2_v } e^{-\varrho |v- \e u^\beta|^2 }$. In both cases, \eqref{R2point} is valid.
\end{proof}

Also, we can collect terms in \eqref{HE6} except for $\mu$ and $f_R$ by $\mathfrak{R}_3$: 
\begin{proposition}
Suppose that \eqref{ekappabeta} holds. Let $\mathfrak{R}_3$ be defined by
\Be \label{R3}
\e \kappa \sqrt{\mu} \mathfrak{R}_3 = 2 \e \kappa \tilde{u}^\beta \cdot (\underline{v} - \e u^\beta) \mu + \e^2 p^\beta \mu - \e^2 \kappa (\nabla_x u^\beta): \mathfrak{A} \sqrt{\mu} + \e^2 \kappa \tilde{p}^\beta \mu.
\Ee
Then for $\varrho \in (0,\frac{1}{4})$ and $s \le 2$, we have the following pointwise estimate:
\Be \label{R3point}
| \p^s \mathfrak{R}_3 | \lesssim V(\beta) e^{-\varrho |v- \e u^\beta|^2 }.
\Ee
\end{proposition}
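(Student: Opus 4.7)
The proposal is straightforward bookkeeping once \eqref{R3} is inverted. Dividing both sides by $\e\kappa\sqrt{\mu}$ gives the explicit formula
\begin{equation*}
\mathfrak{R}_3 \;=\; 2\tilde{u}^\beta\cdot(\underline{v}-\e u^\beta)\sqrt{\mu} \;+\; \tfrac{\e}{\kappa}\, p^\beta \sqrt{\mu} \;-\; \e\,(\nabla_x u^\beta):\mathfrak{A} \;+\; \e\, \tilde{p}^\beta\sqrt{\mu},
\end{equation*}
which exhibits $\mathfrak{R}_3$ as a sum of four products, each of the form (macroscopic factor in $(t,x)$) $\times$ (microscopic factor in $(t,x,v)$ with Gaussian tail). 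The scaling assumption $\lim_{\e\to 0}\e/\kappa^2=0$ in \eqref{ekappabeta} ensures in particular that $\e/\kappa = o(1)$, so the coefficient $\e/\kappa$ in front of the $p^\beta$ term is uniformly bounded by some constant independent of $\e$.

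Next I would apply the Leibniz rule to $\p^s$ for $s\le 2$ and estimate each resulting term. Every macroscopic factor of the form $\p^{s'}\tilde{u}^\beta$, $\p^{s'} p^\beta$, $\p^{s'}(\nabla_x u^\beta)$, $\p^{s'}\tilde{p}^\beta$ with $s'\le 2$ sits in $L^\infty_{t,x}$ with norm controlled by $V(\beta)$: this is the very reason the quantity $V(\beta)$ in \eqref{Vbeta} was defined to accumulate all such derivatives up to the required order, together with at least one factor which is bounded below by a constant (so that also $\|\tilde u^\beta\|_{L^\infty_{t,x}}$, etc., are controlled by $V(\beta)$). For the microscopic side, derivatives of $\sqrt{\mu}$ and of the polynomial-in-$v$ factors multiplying it are computed from \eqref{vDmu}; each $\p$ hitting $\sqrt{\mu}$ produces a factor of $\e$ times a derivative of $u^\beta$ times a polynomial in $(v-\e u^\beta)$ which is absorbed into $e^{-\varrho|v-\e u^\beta|^2}$ for any $\varrho<1/4$ at the cost of a constant. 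For the microscopic factor $\mathfrak{A}$ and its first and second derivatives, the required pointwise bound is exactly Lemma \ref{Abound}: $|\mathfrak{A}|\lesssim e^{-\varrho|v-\e u^\beta|^2}$ and $|\p^s((1+(u^\beta,\tilde u^\beta))D\mathfrak{A})|\lesssim \e V(\beta)e^{-\varrho|v-\e u^\beta|^2}$.

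Combining these observations through Leibniz yields, for each of the four summands of $\mathfrak{R}_3$, a pointwise bound of the form $V(\beta)\,e^{-\varrho|v-\e u^\beta|^2}$; any extra factors of $\e$ produced when derivatives fall on $\sqrt{\mu}$ or on $\mathfrak{A}$ only improve the bound (given that $\e,\kappa$ are small), while powers of $V(\beta)$ higher than one can be absorbed into $V(\beta)$ itself by the elementary estimate $V(\beta)^k\lesssim V(\beta)$ up to a constant (using that $V(\beta)\gtrsim 1$). Summing the four contributions delivers \eqref{R3point}. There is no genuine analytic obstacle here: the argument is a careful application of the Leibniz rule together with Lemma \ref{Abound} and the construction of $V(\beta)$. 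The only points to watch are (i) that the scaling \eqref{ekappabeta} keeps $\e/\kappa$ bounded so the $p^\beta$ term does not blow up after the division by $\e\kappa$, and (ii) that the Gaussian tail has to be absorbed with a slightly smaller $\varrho<1/4$ to swallow the polynomial-in-$(v-\e u^\beta)$ factors appearing in all terms.
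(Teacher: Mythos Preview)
Your approach is correct and matches the paper's treatment; the paper in fact states this proposition without proof, regarding it as immediate from the structure of \eqref{R3}, Lemma~\ref{Abound}, and the definition of $V(\beta)$.

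One slip to correct: the inequality ``$V(\beta)^k\lesssim V(\beta)$ using that $V(\beta)\gtrsim 1$'' goes the wrong way --- if $V(\beta)$ is large then $V(\beta)^k$ is larger, not smaller. Fortunately you do not need any such inequality. The point is that $V(\beta)$ in \eqref{Vbeta} is \emph{already defined} as a sum of products of up to four $L^\infty$ norms of $(u^\beta,\p u^\beta,p^\beta,\tilde u^\beta,\tilde p^\beta)$ and their derivatives, precisely so that every Leibniz product you encounter (e.g.\ $\p\tilde u^\beta\cdot\p u^\beta$, or $\tilde u^\beta\cdot(\p u^\beta)^2$) appears as an individual summand in $V(\beta)$. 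So rather than bounding by $V(\beta)^k$ and then trying to collapse, you should simply observe that each product of macroscopic factors arising from Leibniz is dominated term-by-term by one of the summands in \eqref{Vbeta}. With this correction, the rest of your argument (division by $\e\kappa\sqrt\mu$, boundedness of $\e/\kappa$ from \eqref{ekappabeta}, Lemma~\ref{Abound} for $\mathfrak A$, absorption of polynomial factors into the Gaussian for $\varrho<1/4$) is exactly right.
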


\subsection{Remainder equation and its derivatives} We have simplified \eqref{HE1}-\eqref{HE6} so far. Finally, by dividing \eqref{HE1}-\eqref{HE6} by $\e^2 \sqrt{\mu}$, we obtain
\Be \label{Remainder}
\begin{split}
&\p_t f_R + \frac{\underline v}{\e} \cdot \nabla_x f_R + f_R \left ( \frac{(\p_t + \frac{\underline v}{\e} \cdot \nabla_x ) \sqrt{\mu} } {\sqrt{\mu} } \right ) + \frac{1}{\e^2 \kappa} L f_R\\
& = \frac{1}{\e\kappa} \Gamma (f_R, f_R) + \frac{1}{\e} \Gamma ( \mathfrak{R}_3, f_R) - \e \mathfrak{R}_1 - \frac{\kappa}{\e} \mathfrak{R}_2,
\end{split}
\Ee
where $\mathfrak{R}_1, \mathfrak{R}_2$, and $\mathfrak{R}_3$ are defined by \eqref{R1}, \eqref{R2}, and \eqref{R3} respectively.

Also, we have the equation for $\p^s f$, for $s \le 2$: by Lemma \ref{Momentstreambound}, 

\Be \label{pRemainder}
\begin{split}
& \p_t \p^s f_R + \frac{\underline{v}}{\e} \cdot \nabla_x \p^s f_R + \p^s f_R \left ( \frac{(\p_t + \frac{\underline v}{\e} \cdot \nabla_x ) \sqrt{\mu} } {\sqrt{\mu} }  \right ) + \frac{1}{\e^2\kappa} L \p^s f_R \\
&=  - \sum_{s' < s} \p^{s'} f_R \frac{1}{2} \sum_{i,j} (\p^{s-s'} \partial_{x_i} u^\beta_j ) \varphi_i \varphi_j  + R_s \\
& + \frac{1}{\e^2 \kappa} [\![\p^s, L]\!] f_R+ \frac{1}{\e \kappa} \p^s \Gamma (f_R, f_R) + \frac{1}{\e} \p^s \Gamma(\mathfrak{R}_3, f_R) \\
& - \e \p^s \mathfrak{R}_1 - \frac{\kappa}{\e} (\mathbf{I} - \mathbf{P} ) \p^s \mathfrak{R}_2 - \frac{\kappa}{\e} \mathbf{P} \p^s \mathfrak{R}_2,
\end{split}
\Ee
where $
|R_s | \lesssim \e V(\beta) \nu(v) \sum_{s'< s} |\p^{s'} f |.$

\subsection{Scaled $L^\infty$-estimate}
In this section, we prove a pointwise estimate (with a weight \eqref{weight}) of an $L^p$ solution of the linear Boltzmann equation with a force term. We consider the following transport equation with 
\eqref{stream} term: 
\Be \label{transport_f_stream}
[\partial_t + \e^{-1} \underline v \cdot \nabla_x ] f  + \frac{1}{\e^2 \kappa} L f 
- \frac{\left ( \partial_t + \frac{1}{\e} \underline v \cdot \nabla_x \right ) \sqrt{\mu} }{\sqrt{\mu} } f_R 
= \tilde H \text { in } [0, T] \times \O \times \mathbb{R}^3.
\Ee
Also, we have an issue of momentum stream: the remainder equations \eqref{Remainder} in our case contains the term 
\Be \label{stream}
\frac{\left ( \partial_t + \frac{1}{\e} \underline v \cdot \nabla_x \right ) \sqrt{\mu} }{\sqrt{\mu} } f_R
\Ee 
which cannot be controlled by $f_R$ for large $v$: this term precisely comes from that we expand around local Maxwellian, not global one. In \cite{JK2020}, a weight function of the form
\Be
w(x, v) := \exp ( \vartheta |v|^2 - Z(x) \cdot v),
\Ee
where $Z(x)$ is a suitable vector field, was introduced to bound \eqref{stream} term in the expansion around the local Maxwellian: 
\Be
w \left ( \partial_t + \frac{1}{\e} \underline v \cdot \nabla_x \right )  f_R =\left ( \partial_t + \frac{1}{\e} \underline v \cdot \nabla_x \right ) (w f_R) + \frac{1}{\e} \left ( \underline v \cdot \nabla_x Z(x) \cdot \underline v \right ) w f_R,
\Ee
and if $Z(x)$ is chosen so that $\underline v\cdot \nabla_x Z(x)\cdot \underline v > 0$ for any $v$ ($Z(x) = z(x) x$ for a suitably chosen function $z(x)$ works), one may control the most problematic term in \eqref{stream}: $(\nabla_x u^\beta : \underline v \otimes \underline v) w f_R.$  

Inspired by this, we introduce a suitable weight function, which is appropriate for periodic domain. Unlike the whole Euclidean space, existence of such $Z(x)$ in $\mathbb{T}^2$ is less obvious: in fact, if $Z = (Z_1, Z_2)$ is smooth, then since $\int_{\mathbb{T}^1}  \partial_1 Z_1 (x_1, x_2) dx_1 = 0$, $\partial_1 Z_1 $ will have a mixed sign along the circle $\mathbb{T}^1 \times \{x_2 \}$ for each $x_2 \in \mathbb{T}^1$, unless being 0 over whole circle. Thus, $\nabla_x Z + (\nabla_x Z)^T$ is neither positive definite nor negative definite over whole domain $\mathbb{T}^2$.

To overcome this difficulty, we introduce a weight function which cancels the most problematic term of \eqref{stream} instead of controlling it: we introduce
\Be \label{weight}
w(t,x,v) := \exp \left (\vartheta |v|^2 - \frac{1}{2} \e u^\beta (t,x) \cdot \underline v \right ),
\Ee
where $\vartheta \in (0, \frac{1}{4} )$, under the assumption 
\begin{equation} \label{smallnessubeta}
	\e | u^\beta (t,x) | = o(1). 
\end{equation}
In our scale regime \eqref{ekappabeta}, \eqref{smallnessubeta} holds.

\begin{proposition}\label{prop_infty}For an arbitrary $T>0$, suppose $f(t,x,v)$ is a distribution solution to \eqref{transport_f_stream}. Also, suppose that \eqref{ekappabeta} holds.

	Then, for $w= e^{\vartheta |v|^2 - \frac{1}{2} \e u^\beta (t,x) \cdot \underline v }$ with $\vartheta \in (0, \frac{1}{4})$ in \eqref{weight}, 	
	\Be \label{Ltinfty_2D}
	\begin{split}
		&	\e \kappa  \sup_{t \in [0,T]} 	\| wf (t) \|_{L^\infty (\T^2 \times \R^3)}\\
		\lesssim   	& \ 
		\e \kappa \| w f_0 \|_{L^\infty (\T^2 \times \R^3)}+ \sup_{t \in [0,T]}  \| f(s) \|_{L^2 (\T^2 \times \R^3)}
		+ \e^3 \kappa ^2 \sup_{t \in [0,T]} 	  	\| \nu^{-1}w\tilde H \|_{L^\infty (\T^2 \times \R^3)}.
	\end{split}	\Ee
		
\end{proposition}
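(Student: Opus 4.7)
The plan is to follow the Grad-type $L^\infty$ argument, adapted to the torus and the local-Maxwellian framework with the weight $w$ introduced in \eqref{weight}. Setting $g := w f$, I would first derive the equation for $g$ from \eqref{transport_f_stream}. A direct computation shows that the quadratic-in-$v$ part of the stream coefficient $\frac{(\p_t + \e^{-1}\underline v \cdot \nabla_x)\sqrt{\mu}}{\sqrt{\mu}}$ is precisely cancelled by $\frac{(\p_t + \e^{-1}\underline v \cdot \nabla_x)w}{w}$; the remaining piece is a coefficient $c(t,x,v)$ with $|c| \lesssim \e V(\beta) \nu(v)$, using \eqref{smallnessubeta}. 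Writing $L = \nu - K$, the equation takes the form
\begin{equation*}
\p_t g + \tfrac{1}{\e}\underline v \cdot \nabla_x g + \tfrac{\nu(v)}{\e^2\kappa}g = \tfrac{w(v)}{\e^2\kappa} \int \mathbf{k}(v, v_*) f(s, x, v_*)\dd v_* - c g + w \tilde H.
\end{equation*}

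Next, I would integrate along the straight characteristics $X_s := x - \tfrac{t-s}{\e}\underline v \in \mathbb{T}^2$ to obtain a Duhamel formula. The initial datum contributes $\leq \|g(0)\|_{L^\infty}$ pointwise. Using $\int_0^t e^{-\nu(t-s)/(\e^2\kappa)} \nu \dd s \leq \e^2\kappa$, the source contributes $\leq \e^2\kappa\|\nu^{-1}w\tilde H\|_{L^\infty}$, matching the $\e^3\kappa^2$ term in \eqref{Ltinfty_2D} after multiplication by $\e\kappa$. The $cg$ term contributes $\lesssim \e^3\kappa V(\beta)\|g\|_{L^\infty_{t,x,v}}$, which is absorbed into the left-hand side since $\e^3\kappa V(\beta) = o(1)$ under \eqref{ekappabeta}. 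Only the kernel term is problematic: a one-step bound $\lesssim \|g\|_{L^\infty}$ lacks the needed smallness prefactor and cannot be absorbed.

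The main step is therefore a second Duhamel iteration, substituting the Duhamel representation for $f(s, X_s, v_*)$ back inside the kernel integral and producing the dominant double-kernel term
\begin{equation*}
\frac{w(v)}{(\e^2\kappa)^2}\int_0^t\!\!\int_0^s e^{-\frac{\nu(v)(t-s) + \nu(v_*)(s-\tau)}{\e^2\kappa}}\!\!\int\!\!\int \mathbf{k}(v, v_*)\mathbf{k}(v_*, v_{**})\, f\big(\tau, X_s - \tfrac{s-\tau}{\e}\underline v_*, v_{**}\big)\dd v_{**}\dd v_* \dd\tau \dd s.
\end{equation*}
I would split the $(v_*, v_{**})$ integration into a far region $\max(|v_*|, |v_{**}|) \geq N$, handled by the exponential decay of $\mathbf{k}$ (yielding $O(e^{-cN^2})\|g\|_{L^\infty}$), and a near region $|\underline v_*|, |\underline v_{**}| \leq N$. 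On the near region, the change of variables $\underline v_* \mapsto z := X_s - \tfrac{s-\tau}{\e}\underline v_*$ on $\mathbb{T}^2$ has Jacobian $(\tfrac{s-\tau}{\e})^2$ and multiplicity $\lesssim (1 + \tfrac{N(s-\tau)}{\e})^2$. A further splitting $|s-\tau|\leq \e^2\kappa$ versus $|s-\tau|>\e^2\kappa$ (using the length of the time interval in the former and the $\dd z$-integration in the latter), combined with Cauchy--Schwarz in $v_{**}$ using $\int \mathbf{k}(v_*, v_{**})^2 \dd v_{**} \lesssim \nu(v_*)^2$ and then in $z \in \mathbb{T}^2$, yields a bound $o(1)\|g\|_{L^\infty} + \tfrac{1}{\e\kappa}\sup_{\tau}\|f(\tau)\|_{L^2_{x,v}}$.

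Finally, taking $\sup_{t,x,v}$, multiplying by $\e\kappa$, and absorbing all $o(1)$ multiples of $\e\kappa\|g\|_{L^\infty}$ into the left-hand side yields \eqref{Ltinfty_2D}. The hard part of the argument is the near-region estimate: the $\mathbb{T}^2$-specific multiplicity bound for the periodic characteristics and the careful time-splitting are essential to simultaneously gain a small $L^\infty$-prefactor and convert to an $L^2$ norm of $f$.
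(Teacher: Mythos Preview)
Your approach is essentially the same as the paper's (double Duhamel, kernel truncation, time-splitting, $\mathbb{T}^2$ change of variables with multiplicity count), but two details need adjustment.

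First, the time-splitting at exactly $|s-\tau| = \e^2\kappa$ does not close: the short-interval contribution is $\int_{s-\e^2\kappa}^s (\e^2\kappa)^{-1}\dd\tau \cdot \|\mathbf{k}^\vartheta\|_{L^1}^2 \|g\|_{L^\infty} = O(1)\|g\|_{L^\infty}$, not $o(1)$, and cannot be absorbed into the left-hand side. The paper splits at $|s-\tau| = \e^2\kappa\eta$ with an auxiliary small parameter $\eta$, yielding $O(\eta)\|g\|_{L^\infty}$ from the short piece; $\eta$ is chosen small (after $N$) so that all small multiples of $\|g\|_{L^\infty}$ together are absorbable. The $\frac{1}{\e\kappa}$ factor in front of $\|f\|_{L^2}$ then depends on $N,\eta$, which is harmless.

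Second, the far-region bound $O(e^{-cN^2})$ is too optimistic: when $|v|$ (resp.\ $|v_*|$) is comparable to $N$, the tail $\int_{|v_*|>N}\mathbf{k}_w(v,v_*)\dd v_*$ is only $O(1/N)$ via \eqref{int:k_w}, not exponentially small. The paper instead truncates $\mathbf{k}^\vartheta$ to $\mathbf{k}_N^\vartheta := \mathbf{k}^\vartheta \mathbf{1}_{\{1/N<|v-v_*|<N,\ |v_*|<N\}}$, which both confines $v_*$ and removes the diagonal singularity, making $\mathbf{k}_N^\vartheta$ pointwise bounded; the remainder satisfies $\int \mathbf{k}_R^\vartheta \dd v_* \to 0$ by monotone convergence. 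Either route gives an $o_N(1)$ tail, which is all you need, but the exponential rate you wrote is not correct as stated. The rest of your outline (handling the $c$-term as an absorbable source rather than folding it into a modified $\tilde\nu$ as the paper does; the periodic multiplicity count) is fine.
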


The proof is based on the Duhamel formula \eqref{form:h} along the trajectory with scaled variables, and the $L^p$-$L^\infty$ interpolation argument based on the change of variable.


Let, with $w$ of \eqref{weight}, 
\Be\label{def:h}
h= w f.
\Ee
From \eqref{transport_f_stream}, we can write the evolution equation of $h$: 
\Be
\begin{split}\label{eqtn_h}
	&[ \partial_t + \frac{1}{\e} \underline v \cdot \nabla_x ] h = w [ \partial_t + \frac{1}{\e} \underline v \cdot \nabla_x ]  f + f[ \partial_t + \frac{1}{\e} \underline v \cdot \nabla_x ] w \\
	&= -\frac{1}{\e^2 \kappa} w L f + \frac{[ \partial_t + \frac{1}{\e} \underline v \cdot \nabla_x ]  \sqrt{\mu} }{\sqrt{\mu}} h
	+ w\tilde H + h [ \partial_t - \frac{1}{\e} \underline v \cdot \nabla_x ]   \frac{1}{2} \e u^\beta \cdot \underline v   \\
	&= - \frac{1}{\e^2 \kappa } w L f 
	+ w\tilde H 
\\
& \ \ 
	+ h \left ( -\frac{1}{2} (\underline v-\e u^\beta ) \cdot [ \partial_t + \frac{1}{\e} \underline v \cdot \nabla_x ] (-\e u^\beta ) - \frac{1}{2}  [ \partial_t + \frac{1}{\e} \underline v \cdot \nabla_x ]\e u^\beta \cdot \underline v \right ) \\
	&=  - \frac{1}{\e^2 \kappa } w L \Big(\frac{h}{w}\Big)
	 - h \left ( \frac{\e^2}{2} u^\beta \cdot \partial_t u^\beta + \frac{\e}{2} \underline v \cdot (\nabla_x u^\beta ) \cdot u^\beta \right ) + w\tilde H .
\end{split}
\Ee

Next, we recall that $Lf = \nu  f - Kf$ from \eqref{Lgammaform}. From the explicit form of $\nu$ in \eqref{Lgammaform}, we have a positive constant $\nu_0 >0$ such that 
\Be \label{nu0}
\nu_0 (|v-\e u^\beta| +1 ) \le \nu(v) \le 2 \nu_0 (|v-\e u^\beta |  + 1 ).
\Ee

In particular, \eqref{nu0} and \eqref{ekappabeta} implies that
\Be \label{nutilde}
\tilde{\nu} (t,x, v) := \nu(t,x,v) +  \frac{\e^4 \kappa }{2} u^\beta \cdot \partial_t u^\beta +  \frac{\e^3 \kappa }{2} \underline v \cdot (\nabla_x u^\beta ) \cdot u^\beta 
\Ee
satisfies
\Be \label{nutilde0}
\frac{1}{2}  \nu_0 (|v|+1) \le \tilde{\nu} (t,x,v) \le \frac{5}{2} \nu_0 (|v| + 1).
\Ee
With $\tilde{\nu}$, we can write the evolution equation for $h$:
\Be \label{hevolution}
\left ( \partial_t + \frac{1}{\e} \underline{v} \cdot \nabla_x \right ) h + \frac{1}{\e^2 \kappa} \tilde{\nu} h = \frac{1}{\e^2 \kappa} w K \frac{h}{w} + w\tilde H.
\Ee

Let $K_w h (v)= \int_{\R^3} \mathbf{k}_w (v,v_*) h(v_*) \dd v_*$ with $\mathbf{k}_w (v,v_*):= \mathbf{k}(v,v_*) \frac{w (v)}{w(v_*)}$. Then
\Be
w(v) K \frac{h}{w} (v) = \int_{\mathbb{R}^3} \mathbf{k} (v, v_*) \frac{w(v) }{w(v_*) } h(v_*) \dd v_* = K_w h (v).
\Ee
We will need the following estimate for $\mathbf{k}_w$: 
\begin{lemma}[Lemma 2 of\cite{JK2020}; also \cite{EGKM2}]
	
	Suppose that \eqref{smallnessubeta} holds. For $w= e^{\vartheta |v|^2 - \frac{1}{2} \e u^\beta \cdot v}$ with $\vartheta  \in (0, \frac{1}{4})$, there exists $C_{\vartheta }>0$ such that 
	\Be\label{k_w}
	\mathbf{k}_w(v,v_*)   \lesssim \frac{1}{|v-v_*| } e^{-C_{\vartheta } \frac{|v- v_*|^2}{2} } =: \mathbf{k}^\vartheta (v-v_*),
	\Ee
	\Be\label{int:k_w}
	\int_{\R^3} (1+|v-v_*|) \mathbf{k}_w(v,v_*) \dd v_* \lesssim \frac{1}{\nu(v)} \lesssim \frac{1}{1+|v|},
	\Ee
	\Be\label{int:k_w2}
	\int_{\R^3} \frac{1}{|v-v_*| } \mathbf{k}_w(v,v_*) \dd v_* \lesssim \frac{1}{\nu(v)} \lesssim 1.
	\Ee
	Note that $\mathbf{k}^\vartheta \in L^1 (\mathbb{R}^3)$. 
\end{lemma}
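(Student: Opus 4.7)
The plan is to combine the explicit Grad representation of $\mathbf{k}(v,v_*)$ recorded in \eqref{Lgammaform} with the exponent of $w(v)/w(v_*)$, show that the Gaussian factor in $\mathbf{k}$ absorbs the weight ratio, and then perform a standard Carleman-type computation to read off \eqref{k_w}--\eqref{int:k_w2}. I work throughout with the relative variables $u=v-\e u^\beta$, $u_*=v_*-\e u^\beta$, so that $v-v_*=u-u_*$ and, using $|v|^2-|v_*|^2=|u|^2-|u_*|^2+2\e u^\beta\cdot(u-u_*)$, the weight ratio becomes
\[
\tfrac{w(v)}{w(v_*)}=\exp\!\left(\vartheta(|u|^2-|u_*|^2)+\bigl(2\vartheta-\tfrac12\bigr)\e u^\beta\cdot(v-v_*)\right).
\]
Under \eqref{smallnessubeta} the linear-in-$(v-v_*)$ piece contributes at most a factor $e^{C\e|u^\beta||v-v_*|}$, which is harmless: any $C\e|u^\beta||v-v_*|$ can be absorbed into a Gaussian in $v-v_*$ by Young (losing only a constant).

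For the regular part $c_2|v-v_*|e^{-(|u|^2+|u_*|^2)/4}$, I would use the trivial bound
$(\tfrac14-\vartheta)|u|^2+(\tfrac14+\vartheta)|u_*|^2 \geq \bigl(\tfrac14-\vartheta\bigr)\tfrac12|u-u_*|^2$,
valid for $\vartheta\in(0,\tfrac14)$, together with the elementary estimate $|v-v_*|\le \tfrac{1}{|v-v_*|}e^{c|v-v_*|^2}$ for any $c>0$, which yields the desired form $\tfrac{1}{|v-v_*|}e^{-C_\vartheta|v-v_*|^2/2}$. For the singular part $\frac{c_3}{|v-v_*|}e^{-\frac18|v-v_*|^2-\frac18(|u|^2-|u_*|^2)^2/|v-v_*|^2}$, I would exploit the identity $|u|^2-|u_*|^2=(u-u_*)\cdot(u+u_*)$ and then the decomposition of $u+u_*$ parallel and perpendicular to $e:=(u-u_*)/|u-u_*|$; this shows
\[
\tfrac18|u-u_*|^2+\tfrac18\tfrac{((u-u_*)\cdot(u+u_*))^2}{|u-u_*|^2} \;\geq\; \tfrac18|u-u_*|^2,
\]
while the weight exponent $\vartheta(|u|^2-|u_*|^2)=\vartheta(u-u_*)\cdot(u+u_*)$ is controlled by $2\vartheta|u-u_*|\cdot|(u+u_*)\cdot e|$, which is $\le \tfrac18((u+u_*)\cdot e)^2+\tfrac{\vartheta^2}2|u-u_*|^2\cdot\frac{8}{8}$ by Cauchy--Schwarz, i.e.\ the weight ratio is absorbed provided $4\vartheta^2<\tfrac18$, which holds for $\vartheta<\tfrac14$. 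Combining the two pieces yields \eqref{k_w} with some $C_\vartheta>0$.

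The two integral bounds \eqref{int:k_w} and \eqref{int:k_w2} then follow from \eqref{k_w} and elementary computations in spherical coordinates around $v$: writing $w=v-v_*$ and splitting into $|w|\le 1$ and $|w|\ge 1$,
\[
\int_{\R^3}(1+|w|)\tfrac{1}{|w|}e^{-C_\vartheta|w|^2/2}\,dw
\]
is an absolute constant, and sharpening by a standard stationary-phase/shift-of-origin argument (integrating $e^{-C_\vartheta|v-v_*|^2/2}$ against the slowly varying factor $1/|v-v_*|$ and using that the Gaussian concentrates near $v_*=v$, while the polynomial growth comes only from $|v-v_*|$, bounded by $1+|v_*|$) produces the extra $1/\nu(v)\sim 1/(1+|v|)$ gain, exactly as in Grad's original proof; the estimate $1/|v-v_*|$ is integrable in three dimensions near the diagonal and the weight provides decay at infinity. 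The main obstacle in this argument is the Carleman-type decomposition in the singular piece, since the weight $w$ is centred at $\e u^\beta(t,x)$ (not at $0$), so one must be careful to track the $\e u^\beta$-dependent cross terms; this is precisely why the smallness assumption \eqref{smallnessubeta} is needed. All remaining computations are routine.
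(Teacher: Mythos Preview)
The paper does not prove this lemma; it is quoted verbatim from \cite{JK2020} (Lemma~2) and \cite{EGKM2}, so there is no ``paper's own proof'' to compare against. Your outline for \eqref{k_w} is the standard argument and is essentially correct: once you pass to the shifted variables $u=v-\e u^\beta$, $u_*=v_*-\e u^\beta$, the weight ratio adds $\vartheta(|u|^2-|u_*|^2)$ plus an $O(\e|u^\beta|\,|v-v_*|)$ cross term to the exponent, and for $\vartheta\in(0,\tfrac14)$ a Young inequality absorbs $\vartheta(|u|^2-|u_*|^2)=\vartheta|u-u_*|\,((u+u_*)\cdot e)$ into $\tfrac18|u-u_*|^2+\tfrac18((u+u_*)\cdot e)^2$ with room to spare. (There are a couple of harmless slips in your constants: the weight contributes $\vartheta$, not $2\vartheta$, times $|u-u_*|\,|(u+u_*)\cdot e|$, and your stated threshold ``$4\vartheta^2<\tfrac18$'' is not what your own Young step requires; the correct condition is simply $\vartheta<\tfrac14$.)

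There is, however, a real gap in your derivation of \eqref{int:k_w}. The bound $\mathbf{k}^\vartheta(v-v_*)=\tfrac{1}{|v-v_*|}e^{-C_\vartheta|v-v_*|^2/2}$ depends only on $v-v_*$, so $\int_{\R^3}(1+|v-v_*|)\mathbf{k}^\vartheta(v-v_*)\,\dd v_*$ is an absolute constant independent of $v$; no ``stationary-phase/shift-of-origin'' argument on $\mathbf{k}^\vartheta$ can produce the factor $1/(1+|v|)$ that \eqref{int:k_w} asserts. The decay in $|v|$ comes from the piece of the exponent you discarded: in your Young step you used the entire $\tfrac18((u+u_*)\cdot e)^2$ to absorb the weight, but since $\vartheta<\tfrac14$ you can instead spend only a fraction and retain a term of the form $-c\,\dfrac{(|u|^2-|u_*|^2)^2}{|u-u_*|^2}$ in the exponent of the weighted kernel. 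It is precisely this surviving factor that, upon integrating over $v_*$ in Grad's classical computation (writing $|u|^2-|u_*|^2=(2u\cdot\hat w-|w|)\,|w|$ with $w=v-v_*$ and integrating first over $|w|$, then over the angle between $\hat w$ and $u$), yields the extra $1/(1+|u|)\sim 1/\nu(v)$. So the fix is to sharpen \eqref{k_w} to a bound of the form $\tfrac{1}{|v-v_*|}\exp\bigl(-c|v-v_*|^2-c\,\tfrac{(|u|^2-|u_*|^2)^2}{|v-v_*|^2}\bigr)$ before integrating; \eqref{int:k_w} then follows exactly as in Grad's original calculation, and \eqref{int:k_w2} follows by the same computation (or already from \eqref{k_w} alone, since only the final bound $\lesssim 1$ is needed there).
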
 

We solve \eqref{hevolution} along the characteristics:
\hide 
\Be\Bs\label{Duhamel_linear}
&\frac{d}{ds}\Big\{ h(s , Y(s;t,x,v), v)
e^{- \frac{\nu(v)}{\e^2 \kappa} (t-s) }
\Big\}\\
=& \ \Big\{  \frac{1}{\e^2 \kappa}K_w h (s , Y(s;t,x,v), v)
+ H(s , Y(s;t,x,v), v)\Big\}
e^{-\frac{\nu(v)}{\e^2 \kappa}  (t-s) },
\end{split}\Ee

Along the characteristics,
\Be\Bs\label{Duhamel_linear}
&\frac{d}{ds}\Big\{ h(s , Y(s;t,x,v), v)
e^{- \frac{\nu(v)}{\e^2 \kappa} (t-s) }
\Big\}\\
=& \ \Big\{  \frac{1}{\e^2 \kappa}K_w h (s , Y(s;t,x,v), v)
+ H(s , Y(s;t,x,v), v)\Big\}
e^{-\frac{\nu(v)}{\e^2 \kappa}  (t-s) },
\end{split}\Ee
and hence  
\Be\Bs\label{form:h}
h(t,x,v) =& \ h_0 (Y(0;t,x,v), v) e^{-  
\frac{\nu(v)}{\e^2 \kappa}  t  }
\\
&+ \int^t_0 
\frac{ e^{
-  \frac{\nu(v)}{\e^2 \kappa} (t-s) }}{\e^2 \kappa }
\int_{\R^3} \mathbf{k}_w (v,u ) 
\underline{	h (s , Y(s;t,x,v), u) }
\dd u
\dd s\\
& + \int^t_0 
e^{
-  \frac{\nu(v)}{\e^2 \kappa} (t-s) }
H (s , Y(s;t,x,v), v) 
\dd s.
\end{split}\Ee

then we have\unhide

\Be\Bs\label{form:h}
&h(t,x,v) =  h_0 (Y(0;t,x, \underline v), v) \exp \left ({-  
\int_0 ^t \frac{\tilde{\nu}(\tau, Y(\tau; t, x, \underline v ), v)}{\e^2 \kappa}   \dd \tau   } \right )
\\
&+ \int^t_0 
\frac{ e^{ - \int_s^t  
\frac{\tilde{\nu}(\tau, Y(\tau; t, x, \underline v), v) \dd \tau }{\e^2 \kappa}  }}{\e^2 \kappa }
\int_{\R^3} \mathbf{k}_w (s, Y(s; t, x, \underline v), v, v_*  ) 
h (s , Y(s;t,x,\underline v), v_*) 
\dd v_*
\dd s\\
& + \int^t_0 
e^{
- \int_s ^t  \frac{\tilde{\nu}(\tau, Y(\tau; t, x, \underline v ), v) \dd \tau }{\e^2 \kappa}  }
(w \tilde H) (s , Y(s;t,x,\underline v), v) 
\dd s.
\end{split}\Ee

\hide
\begin{proposition}\label{prop_infty}For an arbitrary $T>0$, suppose $f(t,x,v)$ is a distribution solution to \eqref{transport_f} in $[0,T] \times \O \times  \R^3$.
	Then, for $w= e^{\vartheta |v|^2}$ with $\vartheta \in (0, \frac{1}{4})$, 	
	\Be \label{Linfty_2D}
	\begin{split}
		&	\e \kappa  \sup_{t \in [0,T]} 	\| wf (t) \|_{L^\infty (\O \times \R^3)}\\
	 \lesssim   	& \ 
			\e \kappa \| w f_0 \|_{L^\infty (\O \times \R^3)}+ \sup_{t \in [0,T]}  \| f(s) \|_{L^2 (\O \times \R^3)}
		\\
		& 
		+ \e^3 \kappa ^2 \sup_{t \in [0,T]} 	  	\| \nu^{-1}wH \|_{L^\infty (\O \times \R^3)}
		.  
	\end{split}	\Ee

\end{proposition}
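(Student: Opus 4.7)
The plan is to prove \eqref{Ltinfty_2D} through the iterated Duhamel formula \eqref{form:h} together with an $L^2$--$L^\infty$ interpolation argument along the scaled characteristics, adapting the Guo-type strategy to our scaling \eqref{ekappabeta}. Setting $h = wf$, the representation \eqref{form:h} decomposes $h$ into an initial-data contribution, a source contribution, and a kernel contribution. Using \eqref{nutilde0}, the initial-data contribution is dominated by $\|wf_0\|_{L^\infty}$, and the source contribution is bounded by $\e^2\kappa\,\|\nu^{-1}w\tilde H\|_{L^\infty}$ after factoring out the $L^\infty$ norm and computing $\int_0^t \nu(v)\,e^{-\int_s^t \tilde\nu/(\e^2\kappa)\,d\tau}\,ds \lesssim \e^2\kappa$. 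Multiplying through by $\e\kappa$ yields exactly the first and third terms on the right of \eqref{Ltinfty_2D}.

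For the kernel contribution I iterate \eqref{form:h} once more by substituting it into the inner $h(s, Y(s;t,x,\underline v), v_*)$. The iterated initial and source sub-terms are absorbed into the previous bounds using \eqref{int:k_w}, and the nontrivial remaining piece is the double-kernel term
\[
\frac{1}{(\e^2\kappa)^2}\!\int_0^t\!\!\int_0^s \!\!e^{-\int_s^t \frac{\tilde\nu(v)}{\e^2\kappa}d\tau} e^{-\int_{s'}^s \frac{\tilde\nu(v_*)}{\e^2\kappa}d\tau}\!\!\iint \mathbf{k}_w(v,v_*)\mathbf{k}_w(v_*,v_{**})\, h(s', y', v_{**})\,dv_{**}\,dv_*\,ds'\,ds,
\]
with $y' = Y(s';s, Y(s;t,x,\underline v),\underline{v_*})$. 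I split the integration domain into four regions: (i) $|v|\ge N$ or $|v_*|\ge N$, (ii) $|v_{**}|\ge 2N$, (iii) $|s-s'|\le \e^2\kappa\,\lambda$, and (iv) the bulk. In (i)--(ii), the Gaussian decay \eqref{k_w} of $\mathbf{k}^\vartheta$ gives a small prefactor $C(N)\to 0$ as $N\to\infty$; in (iii), direct integration of the exponential in $s'$ gives a small prefactor $C(\lambda)\to 0$ as $\lambda\to 0$. Each of these contributes at most $\tfrac{C(N,\lambda)}{\e\kappa}\|h\|_{L^\infty}$.

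In the bulk region (iv) I perform the key change of variables $\underline{v_*}\mapsto y'$ on the two-dimensional spatial slice. The pointwise Jacobian of $Y(s';s,\cdot,\underline{v_*}) = \cdot + (s'-s)\underline{v_*}/\e$ is $(|s-s'|/\e)^2$, while wrapping around $\T^2$ contributes a multiplicity bounded by $\max((N|s-s'|/\e)^2, 1)$, producing the effective upper bound $\max(N^2, \e^2/|s-s'|^2)$ for the change-of-variables. Combined with $e^{-\tilde\nu(s-s')/(\e^2\kappa)}$ and the substitution $\sigma=(s-s')/(\e^2\kappa)$, this yields an integrable weight away from $\sigma = 0$. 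Applying Cauchy--Schwarz in $(y', v_{**})$ and using that $w$ is bounded on $|v_{**}|\le 2N$ extracts $\|f(s')\|_{L^2}$ up to a constant $C_N$. The net bound takes the form $\tfrac{C_N}{\e\kappa}\sup_t\|f(t)\|_{L^2}^{1/2}\|h\|_{L^\infty}^{1/2}$, and Young's inequality $ab\le \tfrac12 a^2+\tfrac12 b^2$ with $a=(\e\kappa\|h\|_\infty)^{1/2}$, $b=C_N\|f\|_{L^2}^{1/2}$ absorbs $\tfrac12 \e\kappa\|h\|_{L^\infty}$ into the left-hand side, after first choosing $N$ large and then $\lambda$ small to consume the smallness from (i)--(iii). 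The main obstacle is the sharp balancing of scales in this step: the prefactor $1/(\e^2\kappa)^2$, the Jacobian $\e^2/|s-s'|^2$, and the torus multiplicity must combine with the exponential in $(s-s')/(\e^2\kappa)$ to yield exactly the coefficient $1/(\e\kappa)$ in front of $\|f\|_{L^2}$; a secondary subtlety is the linear-in-$v$ factor $-\tfrac12\e u^\beta\cdot\underline v$ in \eqref{weight}, which makes $w$ non-Gaussian, but \eqref{smallnessubeta} ensures that \eqref{k_w}--\eqref{int:k_w2} remain valid so no essential modification of the Guo--EGKM scheme is needed.
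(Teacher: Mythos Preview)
Your approach is essentially the same as the paper's: iterate the Duhamel formula once, bound the initial-data and source contributions directly using \eqref{nutilde0} and \eqref{int:k_w}, then treat the double-kernel term by a short-time cutoff plus a velocity truncation, and in the bulk perform the change of variables $\underline{v_*}\mapsto y'$ on $\T^2$ to extract the $L^2$ norm of $f$. The paper implements the velocity truncation by decomposing the kernel itself, $\mathbf{k}^\vartheta=\mathbf{k}^\vartheta_N+\mathbf{k}^\vartheta_R$, rather than by cutting on $|v|,|v_*|,|v_{**}|$ separately; the two devices are interchangeable.

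There is one wrinkle in your final step. After Cauchy--Schwarz and the change of variables you correctly say you ``extract $\|f(s')\|_{L^2}$'', but then you state the net bound as $\tfrac{C_N}{\e\kappa}\|f\|_{L^2}^{1/2}\|h\|_{L^\infty}^{1/2}$ and invoke Young's inequality. This is inconsistent and unnecessary: the change of variables gives the factor $\max\!\big(N,\tfrac{\e}{|s-s'|}\big)\le \tfrac{C_{N,\lambda}}{\e\kappa}$ directly (since $|s-s'|\ge \e^2\kappa\lambda$ in the bulk), and the remaining time integrals $\int e^{-\tilde\nu/(\e^2\kappa)}\,/(\e^2\kappa)\,ds'\,ds$ contribute $O(1)$. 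So the bulk term is bounded by $\tfrac{C_{N,\lambda}}{\e\kappa}\sup_t\|f(t)\|_{L^2}$ outright, with no $\|h\|_{L^\infty}^{1/2}$ factor and no need for Young. The only absorption is of the small-coefficient contributions from regions (i)--(iii) into the left-hand side, which is done by first taking $N$ large and $\lambda$ small. With that correction your argument matches the paper's.
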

The proof is based on (1) the Duhamel formula of \eqref{form:h} along the trajectory with scaled variables, and (2) the change of variable \eqref{COV}. 

First, 

Let 
\Be\label{def:h}
h= w f,
\Ee

and $K_w h (v)= \int_{\R^3} \mathbf{k}_w (v,u) h(u) \dd u$ with $\mathbf{k}_w (v,u):= \mathbf{k}(v,u) \frac{w (v)}{w(u)}$. 
\begin{lemma}[\cite{EGKM2}]
	
	For $w= e^{\vartheta |v|^2}$ with $\vartheta  \in (1, \frac{1}{4})$,  
	\Be\label{k_w}
	\mathbf{k}_w(v,u)   \lesssim 
	\Ee
	\Be\label{int:k_w}
	\int_{\R^3} \mathbf{k}_w(v,u) \dd u \lesssim \frac{1}{\nu(v)}.
	\Ee
\end{lemma}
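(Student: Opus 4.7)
The plan is to argue directly from the explicit formulas in \eqref{Lgammaform}, reducing each claim to an inspection of the exponent of the relevant Gaussian. I write $a:=\e u^\beta$, $\sigma:=v-v_*$, $V:=v-a$, $V_*:=v_*-a$, and $\tau:=V+V_*$, so that the weight ratio is $w(v)/w(v_*)=\exp(\vartheta(|v|^2-|v_*|^2)-\tfrac12\, a\cdot\sigma)$, and I treat the splitting $\mathbf k=\mathbf k_{\mathrm{gain}}-\mathbf k_{\mathrm{loss}}$ from \eqref{Lgammaform} one piece at a time. The key identities I will repeatedly use are $|V|^2+|V_*|^2=\tfrac12(|\sigma|^2+|\tau|^2)$, $|V|^2-|V_*|^2=\sigma\cdot\tau=:Y$, and $|v|^2-|v_*|^2=Y+2a\cdot\sigma$.

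For \eqref{k_w}, gain part, the exponent is $-\tfrac14(|V|^2+|V_*|^2)+\vartheta(|v|^2-|v_*|^2)-\tfrac12 a\cdot\sigma$, which after substitution becomes the quadratic form $-\tfrac18|\sigma|^2-\tfrac18|\tau|^2+\vartheta\,\sigma\cdot\tau$ plus an $a$-linear remainder. The quadratic form is negative definite precisely when $\vartheta<1/4$ (eigenvalues $-\tfrac18\pm\tfrac{\vartheta}{2}$), and is bounded by $-(\tfrac18-\tfrac{\vartheta}{2})|\sigma|^2$; under \eqref{smallnessubeta}, Young's inequality absorbs the $a$-linear term. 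The built-in prefactor $c_2|\sigma|$ is converted to $\lesssim 1/|\sigma|$ at the cost of a slightly smaller $C_\vartheta$, using $|\sigma|^2 e^{-\delta|\sigma|^2}\lesssim 1$. For the loss part the exponent is $-\tfrac18|\sigma|^2-\tfrac{Y^2}{8|\sigma|^2}+\vartheta Y+(2\vartheta-\tfrac12)a\cdot\sigma$; completing the square in $Y$ yields $-\tfrac{1}{8|\sigma|^2}(Y-4\vartheta|\sigma|^2)^2+2\vartheta^2|\sigma|^2$, so the net coefficient of $|\sigma|^2$ is $-(1/8-2\vartheta^2)$, which is strictly negative iff $\vartheta<1/4$. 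Absorbing the $a$-linear term again by Young under \eqref{smallnessubeta}, and combining with the built-in $1/|\sigma|$ factor of $\mathbf k_{\mathrm{loss}}$, delivers \eqref{k_w} with $C_\vartheta$ depending explicitly on $1/4-\vartheta$.

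For the integrated bounds \eqref{int:k_w} and \eqref{int:k_w2}, the pointwise estimate \eqref{k_w} gives only an $O(1)$ bound; to extract the $1/\nu(v)$ decay I use the classical Grad-type sharper analysis. Changing variables $v_*=v+w$ and decomposing $w=w_\parallel\hat V+w_\perp$ with $\hat V=V/|V|$, the loss kernel's second exponential $\exp\!\big(-\tfrac{(2(v-a)\cdot w+|w|^2)^2}{8|w|^2}\big)$ Gaussian-concentrates $w_\parallel$ to a strip of width $O(1/|V|)$; the $w_\parallel$-integration then produces the decisive factor $1/(1+|V|)\sim 1/\nu(v)$ (using $\e|u^\beta|$ bounded to compare with $1/(1+|v|)$), while the $w_\perp$-integration is bounded by a standard Gaussian. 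The gain piece is easier: the surviving factor $e^{-|V|^2/4}$ after integration in $v_*$ already provides the required decay. The extra weights $(1+|v-v_*|)$ and $1/|v-v_*|$ appearing in \eqref{int:k_w}–\eqref{int:k_w2} are handled by absorbing the polynomial $|w|$ into the Gaussian (shrinking $c$ slightly) and by $\int_{\R^3}\frac{1}{|w|}e^{-c|w|^2}\,dw<\infty$ via spherical coordinates, which also confirms $\mathbf k^\vartheta\in L^1(\R^3)$.

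The main obstacle is twofold bookkeeping: first, keeping the $\vartheta$-dependent completing-of-squares compatible with the small-but-nonzero $a$ corrections so that $C_\vartheta>0$ is a definite quantity uniformly in $(t,x)$ under \eqref{smallnessubeta}; and second, executing the Grad-type cylindrical decomposition around $\hat V$ carefully enough to extract the sharp $1/\nu(v)$ factor rather than a weaker $O(1)$ bound, in particular in the transitional regime where $|w|\sim 1$ and $|V|$ is large.
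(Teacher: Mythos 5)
Your argument is correct and is essentially the same as the paper's, which offers no proof of its own but cites the standard weighted Grad estimate (Lemma 2 of \cite{JK2020}, also \cite{EGKM2}): your completed-square bookkeeping showing the exponent remains negative definite exactly for $\vartheta<\frac14$ (with the $\e u^\beta$-linear terms absorbed via \eqref{smallnessubeta}), followed by the concentration of the component of $v_*-v$ parallel to $v-\e u^\beta$ in a strip of width $O(1/(1+|v|))$ to extract the $1/\nu(v)$ decay, is precisely the computation carried out in those references. No genuine gap to flag.
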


Along the characteristics,
\Be\Bs\label{Duhamel_linear}
&\frac{d}{ds}\Big\{ h(s , Y(s;t,x,v), v)
e^{- \frac{\nu(v)}{\e^2 \kappa} (t-s) }
\Big\}\\
=& \ \Big\{  \frac{1}{\e^2 \kappa}K_w h (s , Y(s;t,x,v), v)
+ H(s , Y(s;t,x,v), v)\Big\}
e^{-\frac{\nu(v)}{\e^2 \kappa}  (t-s) },
\end{split}\Ee
and hence  
\Be\Bs\label{form:h}
h(t,x,v) =& \ h_0 (Y(0;t,x,v), v) e^{-  
\frac{\nu(v)}{\e^2 \kappa}  t  }
\\
&+ \int^t_0 
\frac{ e^{
	-  \frac{\nu(v)}{\e^2 \kappa} (t-s) }}{\e^2 \kappa }
\int_{\R^3} \mathbf{k}_w (v,u ) 
\underline{	h (s , Y(s;t,x,v), u) }
\dd u
\dd s\\
& + \int^t_0 
e^{
-  \frac{\nu(v)}{\e^2 \kappa} (t-s) }
H (s , Y(s;t,x,v), v) 
\dd s.
\end{split}\Ee

\unhide

\hide
Next we consider the change of variable formula:
\begin{lemma}
Fix $N>0$. For any $s\geq s^\prime \geq 0$ and $(y,\underline u) \in \O \times \{\underline u \in \R^2: |\underline u|<N\}$, the map 
\Be
s^\prime \mapsto Y(s^\prime;s,y,\underline u) 
\in\O
\Ee
is $m$-to-one, where $m \leq \max\big\{ \big(2N\frac{s-s^\prime}{\e } \big)^2, 1\big\}$.  There is a change of variables formula: for a non-negative function $A: z \in  \O   \mapsto A(z)  \geq 0$,
\Be\label{COV}
\int_{\{\underline u \in \R^2: |\underline u|< N\}} A( Y(s^\prime;s,y,\underline u)) \dd \underline u \leq 
\max\Big\{
N^2, \frac{\e^2}{|s-s^\prime|^2}
\Big\}
\int_{\O} A( z ) \dd z.
\Ee

\end{lemma}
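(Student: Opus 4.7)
The plan is to start from the Duhamel representation \eqref{form:h} for $h=wf$ and iterate once more to trade the non-local $K_w$ term for an $L^2$ quantity, which is exactly what the right-hand side of \eqref{Ltinfty_2D} offers. Write \eqref{form:h} schematically as $h = \mathcal{I}(h_0)+\mathcal{S}(w\tilde H)+\mathcal{K}(h)$. The first two contributions are easy: because $\tilde\nu\ge \tfrac{1}{2}\nu_0(|v|+1)$ by \eqref{nutilde0}, the exponential factor $\exp(-\int_s^t \tilde\nu/(\e^2\kappa)\,d\tau)$ produces the gain $\e^2\kappa/\tilde\nu$ after time integration, so
\[
|\mathcal{I}(h_0)(t,x,v)|\le \|w f_0\|_{L^\infty},\qquad |\mathcal{S}(w\tilde H)(t,x,v)|\lesssim \e^2\kappa\, \|\nu^{-1} w\tilde H\|_{L^\infty},
\]
which, after multiplying by $\e\kappa$, match the first and third terms on the right of \eqref{Ltinfty_2D}.

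The main obstacle is the kernel term $\mathcal{K}(h)$. I would substitute \eqref{form:h} into itself to obtain a double time integral involving $\mathbf{k}_w(v,v_*)\mathbf{k}_w(v_*,v_{**})$ along the two-leg characteristic $Y(s;t,x,\underline v)$ and $Y(s';s,Y(s;t,x,\underline v),\underline v_*)$. The initial-data and source-term contributions arising from this second use of Duhamel are again controlled by the pointwise bounds above, using \eqref{int:k_w} to integrate out $v_*$. What remains is the genuinely doubly-iterated piece
\[
\frac{1}{(\e^2\kappa)^2}\int_0^t\!\!\int_0^s \!\!e^{-\int_s^t \tilde\nu/(\e^2\kappa)}e^{-\int_{s'}^{s}\tilde\nu/(\e^2\kappa)}\!\!\int\!\!\int \mathbf{k}_w(v,v_*)\mathbf{k}_w(v_*,v_{**})\,h(s',\tilde Y,v_{**})\,dv_{**}\,dv_*\,ds'\,ds,
\]
and this is the piece I would extract an $L^2$ bound from.

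To do this I would split the $(v_*,v_{**})$ integration into three regions. First, truncate to $|v_*|,|v_{**}|\le N$ for some large $N=N(\e,\kappa)$ to be chosen; the contribution of $\{|v_*|>N\}\cup\{|v_{**}|>N\}$ is controlled using the Gaussian tail in \eqref{k_w}, which gives a small factor $e^{-cN^2}$ absorbing any loss. Second, restrict to $s-s'\ge \delta$ for a small $\delta=\delta(\e,\kappa)$; the remaining piece $s-s'\le\delta$ is bounded directly by $\delta\cdot \|wh\|_{L^\infty}$ times an $L^1$-in-$v_*$ kernel (using \eqref{int:k_w2}), producing a small fraction of $\sup\|h\|_{L^\infty}$ that can be absorbed to the left. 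Third, in the bulk region $\{|v_*|,|v_{**}|\le N,\ s-s'\ge \delta\}$ I would apply Cauchy--Schwarz in $(v_*,v_{**})$ against the bound $\mathbf{k}_w^2\lesssim \mathbf{k}^\vartheta\cdot \mathbf{k}^\vartheta$ and then use the change-of-variables lemma for the map $\underline v_*\mapsto Y(s';s,y,\underline v_*)$ stated at the end of the excerpt with the bound $\e^2/(s-s')^2\le \e^2/\delta^2$. This converts the $L^\infty_x$ evaluation of $h$ at $\tilde Y$ into $\|h(s')\|_{L^2_xL^2_v}\le \|f(s')\|_{L^2}/w_{\min}$ up to a factor $\lesssim \e/\delta$; after combining with the prefactor $(\e^2\kappa)^{-2}$ and the $s$-integration gain $\e^2\kappa/\tilde\nu$, one obtains a bound of the form $(\e\kappa)^{-1}\sup_s\|f(s)\|_{L^2}$, which is exactly the second term on the right of \eqref{Ltinfty_2D} after multiplication by $\e\kappa$.

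Finally I would choose $N$ and $\delta$ as small positive powers of $\e\kappa$ so that the tail and short-time pieces contribute at most $\tfrac{1}{2}\e\kappa\sup_t\|wf(t)\|_{L^\infty}$ to the left, absorb this into the left-hand side, and conclude \eqref{Ltinfty_2D}. The delicate point that I expect to require the most care is the bulk estimate: one must verify that after Cauchy--Schwarz the kernel factors $(\mathbf{k}^\vartheta)^2$ remain integrable in $v$ (which follows from \eqref{k_w}), that the Jacobian loss $\e^2/(s-s')^2$ in \eqref{COV} is offset by a sufficiently high power of $\delta$ coming from the exponential factors, and that the change-of-variables multiplicity $\max\{N^2,\e^2/(s-s')^2\}$ is accommodated by the same choice of $N,\delta$. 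Everything else reduces to bookkeeping of the scales $\e,\kappa$ under \eqref{ekappabeta}.
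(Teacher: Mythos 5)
You have proved the wrong statement. The lemma at hand is the elementary multiplicity/change-of-variables estimate for the map $\underline u \mapsto Y(s^\prime;s,y,\underline u)$, whereas your text is an outline of the weighted $L^\infty$ bound of Proposition \ref{prop_infty}; worse, in your ``bulk'' step you explicitly invoke ``the change-of-variables lemma \dots with the bound $\e^2/(s-s^\prime)^2$'', i.e.\ you use the very statement you were asked to prove as a black box. As a proof of the lemma this is circular and contains no argument at all: nowhere do you justify the $m$-to-one bound $m \leq \max\{(2N\frac{s-s^\prime}{\e})^2,1\}$, nor the inequality \eqref{COV}. (As a sketch of Proposition \ref{prop_infty} itself your outline does follow essentially the paper's route --- double Duhamel iteration, kernel truncation, short-time splitting, then the change of variables to reach $\sup_t\|f\|_{L^2}$ --- but that is a different statement.)

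The actual proof is a two-line computation, which is what the paper does. Since $Y(s^\prime;s,y,\underline u)= y - \frac{s-s^\prime}{\e}\,\underline u$ modulo $\mathbb{Z}^2$, the map is affine in $\underline u$ with constant Jacobian $\det\big(\frac{\p Y}{\p \underline u}\big)= \frac{|s-s^\prime|^2}{\e^2}$. Before quotienting by $\mathbb{Z}^2$, the ball $\{|\underline u|<N\}$ is mapped onto a disc of radius $N\frac{s-s^\prime}{\e}$, which meets at most $\max\big\{\big(2N\frac{s-s^\prime}{\e}\big)^2,1\big\}$ translates of the fundamental domain; this gives the multiplicity bound $m$. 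The change of variables then yields
\Be\notag
\int_{\{|\underline u|<N\}} A\big(Y(s^\prime;s,y,\underline u)\big)\,\dd \underline u
\;\leq\; \max\Big\{\Big(2N\tfrac{s-s^\prime}{\e}\Big)^2,1\Big\}\,\frac{\e^2}{|s-s^\prime|^2}\int_{\O}A(z)\,\dd z
\;=\;\max\Big\{N^2,\tfrac{\e^2}{|s-s^\prime|^2}\Big\}\int_{\O}A(z)\,\dd z,
\Ee
up to the harmless constant absorbed in the statement. No Duhamel formula, kernel estimates, or $L^2$--$L^\infty$ interpolation enter; those belong to the later proposition where this lemma is applied.
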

\begin{proof}It suffices to show \eqref{COV}, while others are obvious. From $\det \big(\frac{\p Y (s^\prime; s, y, \underline u)}{\p \underline u}\big)= \frac{|s-s^\prime|^2}{\e^2}$, 
\Be
\begin{split}\notag
&	\int_{\{\underline u \in \R^2: |\underline u|< N\}} A( Y(s^\prime;s,y,\underline u)) \dd \underline u\\
&\leq  \max\Big\{ \big(2N\frac{s-s^\prime}{\e } \Big)^2 , 1 \big\}\int_{\O} A(z) \frac{\e^2}{|s-s^\prime|^2} \dd z\\
&=	\max\Big\{
N^2, \frac{\e^2}{|s-s^\prime|^2}
\Big\}
\int_{\O} A( z ) \dd z.
\end{split}
\Ee
%
\end{proof}

\unhide

\begin{proof}[\textbf{Proof of Proposition \ref{prop_infty}}]  

We again apply \eqref{form:h} to the second term in the right-hand side of \eqref{form:h}:
\Be\Bs\notag
&h(t,x,v) =
\ h_0 (Y(0;t,x, \underline v), v) \exp \left ({-  
\int_0 ^t \frac{\tilde{\nu}(\tau, Y(\tau; t, x, \underline v ), v)}{\e^2 \kappa}   \dd \tau   } \right )
\\
& + \int^t_0 
e^{
- \int_s ^t  \frac{\tilde{\nu}(\tau, Y(\tau; t, x, \underline v ), v) \dd \tau }{\e^2 \kappa}  }
(w H) (s , Y(s;t,x,\underline v), v) 
\dd s \\
& + \int_0 ^t \frac{ e^{ - \int_s^t  
	  \frac{\tilde{\nu}(\tau, Y(\tau; t, x, \underline v), v) \dd \tau }{\e^2 \kappa}  }}{\e^2 \kappa }
\int_{\R^3} \mathbf{k}_w (s, Y(s; t, x, \underline v), v, v_*  ) \\
& \ \ \times h_0 (Y(0; s, Y(s; t, x, \underline v), \underline v_* ), v_* ) e^{-\int_0 ^s \frac{\tilde{\nu} (\tau', Y(\tau'; s, Y(s; t, x, \underline v), \underline v_* ), v_* ) }{\e^2 \kappa } \dd \tau' } \dd v_* \dd s \\
&+ \int_0 ^t \frac{ e^{ - \int_s^t  
	  \frac{\tilde{\nu}(\tau, Y(\tau; t, x, \underline v), v) \dd \tau }{\e^2 \kappa}  }}{\e^2 \kappa }
\int_{\R^3} \mathbf{k}_w (s, Y(s; t, x, \underline v), v, v_*  ) \\
&  \ \ \times \int_0 ^s e^{-\int_{\tau} ^s \frac{ \tilde{\nu} (\tau', Y(\tau'; s, Y(s; t, x, \underline v), \underline v_*), v_* ) \dd \tau' }{\e^2 \kappa} } (wH) (\tau, Y(\tau; s, Y(s; t, x, \underline v), \underline v_*), v_* ) \dd \tau \dd v_* \dd s \\
& +  \int_0 ^t \frac{ e^{ - \int_s^t  
	  \frac{\tilde{\nu}(\tau, Y(\tau; t, x, \underline v), v) \dd \tau }{\e^2 \kappa}  }}{\e^2 \kappa }
\int_{\R^3} \mathbf{k}_w (s, Y(s; t, x, \underline v), v, v_*  ) \\
& \ \ \times \int_0 ^s \frac{e^{-\int_\tau ^s \frac{\tilde{\nu} (\tau', Y(\tau'; s, Y(s; t, x, \underline v), \underline v_* ), v_* ) \dd \tau' }{\e^2 \kappa } } }{\e^2 \kappa } \int_{\mathbb{R}^3} \mathbf{k}_w (\tau, Y(\tau; s, Y(s; t, x, \underline v ), \underline v_* ), v_*, v_{**} ) \\
&  \ \ \times h (\tau, Y(\tau; s, Y(s; t, x, \underline v ), \underline v_* ), v_{**} ) \dd v_{**} \dd \tau \dd v_* \dd s \\
& =: I_1 + I_2 + I_3 + I_4 + I_K.
\end{split}\Ee

First, we control $I_0 := I_1 + I_3$, contribution from initial data. We easily notice from \eqref{nutilde0}, \eqref{int:k_w} that
\Be\notag
\begin{split}
& |I_1| \le \| h_0 \|_{L^\infty(\O \times \mathbb{R}^3)} e^{- \frac{\nu_0 (|v|+1) t }{2 \e^2 \kappa } }\le \| h_0 \|_{L^\infty(\O \times \mathbb{R}^3)}, \\
& |I_3 | \le \int_0 ^t \frac{e^{-\frac{\nu_0 (|v|+1) (t-s) }{2 \e^2 \kappa } }}{\e^2 \kappa } \int_{\mathbb{R}^3} \mathbf{k}_w ( v, v_* ) e^{- \frac{\nu_0 (|v_*| + 1)s  }{2 \e^2 \kappa } } \| h_0 \|_{L^\infty (\O \times \mathbb{R}^3 ) }  \dd v_* \dd s \lesssim \| h_0 \|_{L^\infty (\O \times \mathbb{R}^3 ) } .
\end{split}
\Ee
In the second inequality, the dependence of $\mathbf{k}_w$ on $t,x$ variable is omitted as the bound is uniform on them.

Next, we control $I_H := I_2 + I_4$, the contribution from source $H$. Again from \eqref{nutilde0}, \eqref{int:k_w} we have
\Be\notag
\begin{split}
&|I_2 | \le \int_0 ^t e^{- \frac{\nu_0 (|v|+1) (t-s) }{2 \e^2 \kappa } } | w H (v) | \dd s \lesssim \e^2 \kappa  \| \nu^{-1} w H \|_{L^\infty ([0, T] \times \O \times \mathbb{R}^3 ) } , \\
&|I_4| \le \int_0 ^t \frac{e^{-\frac{\nu_0 (|v|+1) (t-s) }{2 \e^2 \kappa } }}{\e^2 \kappa } \int_{\mathbb{R}^3} \mathbf{k}_w ( v, v_* ) \int_0 ^s e^{- \frac{\nu_0 (|v_*| + 1)(s-\tau)  }{2 \e^2 \kappa } } | wH  (v_*) |\dd \tau  \dd v_* \dd s \\
& \lesssim \e^2 \kappa \| \nu^{-1} wH \|_{L^\infty ([0, T] \times \O \times \mathbb{R}^3 ) }.
\end{split}
\Ee

Finally, we control $I_K$. The idea is the following: we decompose the time interval $[0, s]$ into $[0, s-\e^2 \kappa o(1)]$ and $[s - \e^2 \kappa o(1) , s]$: the first integral is controlled using the change of variables $\underline v_* \rightarrow Y(\tau; s, Y(s; t, x, \underline v ), \underline v_*)$ and thus we can rewrite the integral of $h$ with respect to $v_*, v_{**}$ variables into the space-time integral of $f$: for that reason we plugged \eqref{form:h} into itself. Also, the splitting of the time gives control of the Jacobian factor obtained from change of variables. On the other hand, the second term is controlled by the fact that it is a short time integral: this gives smallness and thus we can bound the integral with $o(1) \|h\|_{L^\infty ([0, T] \times \O \times \mathbb{R}^3 )}$. 

For this purpose, we introduce a small positive number $\eta>0$, which is to be determined. Using  \eqref{nutilde0}, \eqref{int:k_w}, we have the following:

\Be
\begin{split}\notag
& |I_K| \le \int_0 ^t \frac{e^{-\frac{\nu_0 (|v|+1) (t-s) }{2\e^2 \kappa} } }{\e^2 \kappa} \int_0 ^s \frac{e^{-\frac{\nu_0 (|v_*|+1) (s-\tau ) }{2\e^2\kappa} } }{\e^2 \kappa } \\
& \  \times \int_{\mathbb{R}^3} \int_{\mathbb{R}^3} \mathbf{k}^\vartheta (v- v_*) \mathbf{k}^\vartheta (v_* - v_{**} ) | h (\tau, Y(\tau; s, Y(s; t, x, \underline v), \underline v_*), v_{**} ) | \dd v_{**} \dd v_* \dd \tau \dd s \\
& = \int_0 ^t \frac{e^{-\frac{\nu_0 (|v|+1) (t-s) }{2\e^2 \kappa} } }{\e^2 \kappa} \int_0 ^{s- \e^2 \kappa \eta} \frac{e^{-\frac{\nu_0 (|v_*|+1) (s-\tau ) }{2\e^2\kappa} } }{\e^2 \kappa } \\
 &  \ \times \int_{\mathbb{R}^3} \int_{\mathbb{R}^3} \mathbf{k}^\vartheta (v- v_*) \mathbf{k}^\vartheta (v_* - v_{**} ) | h (\tau, Y(\tau; s, Y(s; t, x, \underline v), \underline v_*), v_{**} ) | \dd v_{**} \dd v_* \dd \tau \dd s \\
&+ \int_0 ^t \frac{e^{-\frac{\nu_0 (|v|+1) (t-s) }{2\e^2 \kappa} } }{\e^2 \kappa} \int_{s - \e^2 \kappa \eta}  ^s \frac{e^{-\frac{\nu_0 (|v_*|+1) (s-\tau ) }{2\e^2\kappa} } }{\e^2 \kappa } \\
& \  \times \int_{\mathbb{R}^3} \int_{\mathbb{R}^3} \mathbf{k}^\vartheta (v- v_*) \mathbf{k}^\vartheta (v_* - v_{**} ) | h (\tau, Y(\tau; s, Y(s; t, x, \underline v), \underline v_*), v_{**} ) | \dd v_{**} \dd v_* \dd \tau \dd s \\
&=: I_{5,1} + I_{5,2}.
\end{split}
\Ee

We first bound $I_{5,2}$. From the integrability of $\mathbf{k}^\vartheta$ we have

\Be\notag
I_{5,2} \le \int_0 ^t  \frac{e^{-\frac{\nu_0 (|v|+1) (t-s) }{2\e^2 \kappa} } }{\e^2 \kappa} \dd s\frac{\e^2 \kappa \eta}{\e^2 \kappa} \| \mathbf{k}^\vartheta \|_{L^1 (\mathbb{R}^3) }^2 \| h \|_{L^\infty ([0, T] \times \O \times \mathbb{R}^3 ) } \lesssim \eta \| h \|_{L^\infty ([0, T] \times \O \times \mathbb{R}^3 ) } .
\Ee

Next, to treat $I_{5,1}$, we introduce the following decomposition of $\mathbf{k}^\vartheta (v - v_*)$: for a given $N>0$, 
\Be\notag
\begin{split}
&\mathbf{k}^\vartheta (v- v_*) = \mathbf{k}_N ^\vartheta (v , v_*) + \mathbf{k}_R ^\vartheta (v, v_*), \text{ where } \\
&\mathbf{k}_N ^\vartheta (v , v_*) = \mathbf{k}^\vartheta (v- v_*) \mathbf{1}_{B_N (0) \setminus B_{\frac{1}{N} } (0) } (v-v_*) \mathbf{1}_{B_N (0) } (v_*), \text{ and } \\
& \mathbf{k}_R ^\vartheta (v, v_*) = \mathbf{k}^\vartheta (v- v_*) - \mathbf{k}_N ^\vartheta (v , v_*).
\end{split}
\Ee

With this decomposition, we can split $I_{5,1}$ by
\Be
\begin{split}\notag
& I_{5,1} =  \int_0 ^t \frac{e^{-\frac{\nu_0 (|v|+1) (t-s) }{2\e^2 \kappa} } }{\e^2 \kappa} \int_0 ^{s- \e^2 \kappa \eta} \frac{e^{-\frac{\nu_0 (|v_*|+1) (s-\tau ) }{2\e^2\kappa} } }{\e^2 \kappa } \\
& \  \times \int_{\mathbb{R}^3} \int_{\mathbb{R}^3}
 \mathbf{k}_N ^\vartheta (v, v_*) \mathbf{k}_N ^\vartheta (v_*, v_{**} )
 | h (\tau, Y(\tau; s, Y(s; t, x, \underline v), \underline v_*), v_{**} ) | \dd v_{**} \dd v_* \dd \tau \dd s  \\
& + \int_0 ^t \frac{e^{-\frac{\nu_0 (|v|+1) (t-s) }{2\e^2 \kappa} } }{\e^2 \kappa} \int_0 ^{s- \e^2 \kappa \eta} \frac{e^{-\frac{\nu_0 (|v_*|+1) (s-\tau ) }{2\e^2\kappa} } }{\e^2 \kappa } \\
& \  \times \int_{\mathbb{R}^3} \int_{\mathbb{R}^3}
 \mathbf{k}_N ^\vartheta (v, v_*) \mathbf{k}_R ^\vartheta (v_* , v_{**} )
 | h (\tau, Y(\tau; s, Y(s; t, x, \underline v), \underline v_*), v_{**} ) | \dd v_{**} \dd v_* \dd \tau \dd s  \\
& + \int_0 ^t \frac{e^{-\frac{\nu_0 (|v|+1) (t-s) }{2\e^2 \kappa} } }{\e^2 \kappa} \int_0 ^{s- \e^2 \kappa \eta} \frac{e^{-\frac{\nu_0 (|v_*|+1) (s-\tau ) }{2\e^2\kappa} } }{\e^2 \kappa } \\
& \  \times \int_{\mathbb{R}^3} \int_{\mathbb{R}^3}
 \mathbf{k}_R ^\vartheta (v,v_*) \mathbf{k}_N ^\vartheta (v_* , v_{**} )
 | h (\tau, Y(\tau; s, Y(s; t, x, \underline v), \underline v_*), v_{**} ) | \dd v_{**} \dd v_* \dd \tau \dd s  \\
& + \int_0 ^t \frac{e^{-\frac{\nu_0 (|v|+1) (t-s) }{2\e^2 \kappa} } }{\e^2 \kappa} \int_0 ^{s- \e^2 \kappa \eta} \frac{e^{-\frac{\nu_0 (|v_*|+1) (s-\tau ) }{2\e^2\kappa} } }{\e^2 \kappa } \\
& \  \times \int_{\mathbb{R}^3} \int_{\mathbb{R}^3}
 \mathbf{k}_R ^\vartheta (v, v_*) \mathbf{k}_R ^\vartheta (v_*, v_{**} )
 | h (\tau, Y(\tau; s, Y(s; t, x, \underline v), \underline v_*), v_{**} ) | \dd v_{**} \dd v_* \dd \tau \dd s  \\
& =: I_{5,1} ^{NN} + I_{5,1} ^{NR} + I_{5,1} ^{RN} + I_{5,1} ^{RR}.
\end{split}
\Ee

Since $\int_{\mathbb{R}^3} \mathbf{k}_N ^\vartheta (v, v_*) \dd v_* \uparrow \| \mathbf{k}^\vartheta \|_{L^1 (\mathbb{R}^3)}$ as $N\rightarrow \infty$ and thus $A_N := \int_{\mathbb{R}^3} \mathbf{k}^\vartheta _R (v, v_{*} ) \dd v_* \rightarrow 0$ as $N\rightarrow \infty$ by Monotone convergence theorem, we have
\Be
\begin{split}\notag
&I_{5,1} ^{NR} \lesssim A_N \| \mathbf{k}^\vartheta \|_{L^1 (\mathbb{R}^3)} \| h \|_{L^\infty  ([0, T] \times \O \times \mathbb{R}^3 ) }, \\
&I_{5,1} ^{RN}\lesssim A_N \| \mathbf{k}^\vartheta \|_{L^1 (\mathbb{R}^3)} \| h \|_{L^\infty  ([0, T] \times \O \times \mathbb{R}^3 ) }, \\
&I_{5,1}^{RR} \lesssim A_N^2  \| h \|_{L^\infty  ([0, T] \times \O \times \mathbb{R}^3 ) }.
\end{split}
\Ee

Finally, we estimate $I_{5,1} ^{NN}$. First, we recall that $\mathbf{k}^\vartheta _N (v, v_*)$ is supported on $\{\frac{1}{N} <  |v-v_*| < N \}$ and therefore is bounded by some constant $C_N$. Thus, we have 
\Be
\begin{split}\notag
&\mathbf{k}^\vartheta _N (v, v_*) \le C_N \mathbf{1}_{B_N (0) } (v_*), \\
&\mathbf{k}^\vartheta _N (v_*, v_{**}) \le C_N \mathbf{1}_{B_N (0) } (v_{**}).
\end{split}
\Ee
Next, we expand $|h(\tau, Y(\tau; s, Y(s; t, x, \underline v), \underline v_*), v_{**} )|$: in the support of $\mathbf{k}^\vartheta _N (v, v_*) \mathbf{k}^\vartheta _N (v_*, v_{**})$, $|v_*|, |v_{**} | < N$. Note that this implies $|\underline v_*| < N$ and $|(v_*)_3| < N$, where $v_* = (\underline v_*, (v_*)_3).$ Together with \eqref{smallnessubeta}, we have
\Be
\begin{split}\notag
& |h(\tau, Y(\tau; s, Y(s; t, x, \underline v), \underline v_*), v_{**} )| = |w f (\tau, Y(\tau; s, Y(s; t, x, \underline v), \underline v_*), v_{**} )| \\
& \le e^{\vartheta |v_{**}|^2 + \frac{1}{2} \e \| u^\beta\|_{L^\infty ([0, T] \times \O ) }| \underline{v_{**}} |} |f (\tau, Y(\tau; s, Y(s; t, x, \underline v), \underline v_*), v_{**} )| \\
& \le C_N |f (\tau, Y(\tau; s, Y(s; t, x, \underline v), \underline v_*), v_{**} )|.
\end{split}
\Ee
Also, we rewrite $Y(\tau; s, Y(s; t, x, \underline v), \underline v_*)$: we have
\Be\notag
Y(\tau; s, Y(s; t, x, \underline v), \underline v_*) = x - \frac{t-s}{\e} \underline v - \frac{s-\tau} {\e} \underline v_* / \mathbb{Z}^2 \in \O.
\Ee
Finally, we remark that since $\tau \in [0, s - \e^2 \kappa \eta]$, we have $\frac{s-\tau}{\e} > \e \kappa \eta$. Combining these altogether, for $\tau \in [0, s - \e^2 \kappa \eta]$, we have
\Be
\begin{split}\notag
&\int_{\mathbb{R}^3} \int_{\mathbb{R}^3}
 \mathbf{k}_N ^\vartheta (v, v_*) \mathbf{k}_N ^\vartheta (v_*, v_{**} )
 | h (\tau, Y(\tau; s, Y(s; t, x, \underline v), \underline v_*), v_{**} ) | \dd v_{**} \dd v_*   \\
& \lesssim_{N} \int_{|(v_*)_3 | < N}  \int_{|\underline v_*| < N } \int_{|v_{**}|< N } | f_R (\tau, x - \frac{t-s}{\e} \underline v - \frac{s-\tau} {\e} \underline v_* / \mathbb{Z}^2, v_{**} ) | \dd v_{**} \dd v_* \\
& \lesssim_{N} \left ( \int_{|\underline v_*| < N } \int_{\mathbb{R}^3} \left | f_R (\tau, x - \frac{t-s}{\e} \underline v - \frac{s-\tau} {\e} \underline v_* / \mathbb{Z}^2, v_{**} ) \right |^2 \dd v_{**} \dd v_* \right )^{\frac{1}{2} },
\end{split}
\Ee
where we have used that the integrand is independent of $(v_*)_3$ and $\| \mathbf{1}_{\{|\underline v_*| < N\} \times \{|v_{**}| < N \} } \|_{L^2 (\mathbb{R}^2 \times \mathbb{R}^3 ) } \lesssim_N 1$.

Next, we apply the change of variables $\underline v_* \rightarrow y = x - \frac{t-s}{\e} \underline v - \frac{s-\tau} {\e} \underline v_* \in \mathbb{R}^2$. This map is one-to-one, and maps $\underline v_* \in B_N (0)$ onto $y \in B_{\frac{s-\tau}{\e} N } (x - \frac{t-s}{\e} \underline v)$ with $\dd y = \left (\frac{s-\tau}{\e} \right )^2 \dd \underline v_*$. Therefore, we have
\Be \label{chvarinteg}
\begin{split}
& \left ( \int_{|\underline v_*| < N } \int_{\mathbb{R}^3} \left | f_R (\tau, x - \frac{t-s}{\e} \underline v - \frac{s-\tau} {\e} \underline v_* / \mathbb{Z}^2, v_{**} ) \right |^2 \dd v_{**} \dd v_* \right )^{\frac{1}{2} } \\
&= \left ( \int_{y \in B_{\frac{s-\tau}{\e} N } (x - \frac{t-s}{\e} \underline v) } \int_{\mathbb{R}^3} \left | f_R (\tau, y / \mathbb{Z}^2 , v_{** } ) \right |^2 \left ( \frac{\e }{s-\tau } \right )^2  \dd v_{**} \dd y \right  )^{\frac{1}{2} } \\
& = \left ( \sum_{k \in \mathbb{Z}^2} \int_{y \in ( \left [-\frac{1}{2},\frac{1}{2} \right ]^2 + k) \cap B_{\frac{s-\tau}{\e} N } (x - \frac{t-s}{\e} \underline v) } \int_{\mathbb{R}^3} \left | f_R (\tau, y-k , v_{** } ) \right |^2 \left ( \frac{\e }{s-\tau } \right )^2  \dd v_{**} \dd y \right  )^{\frac{1}{2} } \\
& = \left ( \sum_{k \in \mathbb{Z}^2} \int_{z \in  \left [-\frac{1}{2},\frac{1}{2} \right ]^2  \cap B_{\frac{s-\tau}{\e} N } (x - \frac{t-s}{\e} \underline v - k ) } \int_{\mathbb{R}^3} \left | f_R (\tau, z , v_{** } ) \right |^2 \left ( \frac{\e }{s-\tau } \right )^2  \dd v_{**} \dd z \right  )^{\frac{1}{2} },
\end{split}
\Ee
where $z = y-k$ in each integral. Next, we count the number of $k \in \mathbb{Z}^2$ such that $\left [-\frac{1}{2},\frac{1}{2} \right ]^2  \cap B_{\frac{s-\tau}{\e} N } (x - \frac{t-s}{\e} \underline v - k ) \ne \emptyset$. There are two cases: if $N \frac{s-\tau}{\e} \le 1$, there are $O(1)$ such $k \in \mathbb{Z}^2$. If $N \frac{s-\tau}{\e} > 1$, there are $O \left (\left (N \frac{s-\tau}{\e} \right )^2 \right ) $ such $k \in \mathbb{Z}^2$. Therefore, we have
\Be
\begin{split}\notag
&\eqref{chvarinteg} \lesssim \left ( \max \left ( \left (\frac{\e}{s-\tau} \right )^2, N^2 \right ) \int_{\O} \int_{\mathbb{R}^3}  |f_R(\tau, z, v_{**} ) |^2 \dd v_{**} \dd z \right )^{\frac{1}{2}} \\
& \lesssim_{N, \eta} \frac{1}{\e \kappa} \sup_{\tau \in [0, t]} \| f_R (\tau ) \|_{L^2 (\O \times \mathbb{R}^3 ) }.
\end{split}
\Ee
Choosing $N$ large enough and $\eta$ small enough so that we can bury $I_{5,2}$, $I_{5,1}^{NR}, I_{5,1}^{RN}$, and $I_{5,1}^{RR} $ gives
\Be\notag
\begin{split}
\| h \|_{L^\infty ([0, T] \times \O \times \mathbb{R}^3 )} \lesssim&\   \| h_0 \|_{L^\infty (\O \times \mathbb{R}^3)} + \e^2 \kappa \| \nu^{-1} w H \|_{L^\infty ([0, T] \times \O \times \mathbb{R}^3) }\\
& + \frac{1}{\e \kappa} \| f_R \|_{L^\infty ([0, T] ; L^2 (\O \times \mathbb{R}^3 ) ) },
\end{split}
\Ee
which is the desired conclusion.
\end{proof}

\subsection{Remainder Estimate}\label{sec:re}  To admit far-from-equilibrium initial data, we need to keep the characteristic size of remainder as large as possible. A heuristic calculation suggests that the size $o(\e \kappa)$ for the remainder is the threshold: if the remainder becomes of the size $O(\e \kappa)$, we lose the control for nonlinearity of the remainder equation. Thus, we aim to keep our characteristic size of remainder to be slightly smaller than $\e \kappa$. 

There is only a very slight room for this: the only possible gain is the coercivity of the linearized Boltzmann operator $L$. However, many conventional techniques (averaging lemma, $L^\infty$-estimates) do not rely on it; up to the authors' knowledge the coercivity of $L$ is exploited only in $L^2_v$ estimates. If we rely on other techniques in too early stage, we enormously lose the scale and fail to achieve the goal. 

As a consequence, we need to push the $L^2_v$ estimates as far as possible. The important observation made in \cite{Guo2002} is that even for nonlinear term, we have control by $L^2$-in-$v$ integral of remainders, since nonlinear term is also expressed in terms of an integral with nicely decaying kernel: what lacks is $L^2$ integrability in $x$. This observation naturally leads us to pursue $L^2_v$-estimate for derivatives of remainder and then rely on interpolation - $H^2_x$, but $L^2_v$ estimate. 

It turns out that this idea gives a sharper scaling than many conventional techniques: the commutator $[\![ \p^s, L ]\!]$ between spatial derivatives and $L$ forces us to lose $\sqrt{\kappa}$ scale for each derivative, but we do not lose scale in nonlinearity for 2-dimensional domain. Thus, by setting initial data decaying to 0 in arbitrary slow rate as $\e \rightarrow 0$, we can keep $L^2_x L^2_v$ norms of remainder and its derivatives small, provided that the source terms are also small, which is the main point of the next idea.

Furthermore, we note that $H^2_x L^2_v$ suits very well with our goal to see convergence in a stronger topology: as we can control up to second derivatives of remainder small, we can keep our Boltzmann solution close to the local Maxwellian $M_{1, \e u^\beta, 1}$. Its zeroth and first derivatives may converge - they correspond to the velocity and vorticity. Its second derivatives may blow in general, which represents the formation of singular object, e.g. interfaces.

Now we are ready to prove compactness of $f_R$ in a suitable topology, thereby proving convergence. For a fixed $T>0$ and $t \in (0, T)$, we use the following scaled energy and its dissipation:
\Be \label{ED}
\begin{split}
\mathcal{E}(t) &:= \sum_{s \le 2} \sup_{t' \in (0, t) }\| \kappa^{-1 + \frac{s}{2}} \p^s f_R (t') \|_{ L^2_x L^2 _v  } ^2, \\
\mathcal{D} (t) &:= \sum_{s \le 2} \| \e^{-1} \kappa^{-\frac{3}{2} + \frac{s}{2} } \nu^{\frac{1}{2} } (\mathbf{I} - \mathbf{P} ) \p^s f_R \|_{L^2 ((0, t); L^2 _x L^2_v )  ) } ^2.
\end{split}
\Ee
We also need the following auxilliary norm:
\Be \label{F}
\mathcal{F} (t) := \e \sup_{t' \in (0, t) } \| f_R (t') \|_{L^\infty (\mathbb{T}^2 \times \mathbb{R}^3 ) }. 
\Ee
Also, we will frequently use the following basic inequality:
\Be\notag
\sum_{s \le 2} \| \kappa^{-1 + \frac{s}{2}} \p^s f_R \|_{L^2 ( (0, t); L^2_x L^2 _v ) } ^2 = \int_0 ^t \mathcal{E} \lesssim_{T} \mathcal{E}(t).
\Ee

The main theorem of this section is the following.
\begin{theorem} \label{theo:remainder}
Let $T>0$. Suppose that $\delta_s = \delta_s (\e)$, $s = 0, 1, 2$ satisfy the following:
\begin{align}
&\lim_{\e \rightarrow 0} \delta_0 (\e) ^2  \left (\|\nabla_x u^\beta\|_{L^\infty{((0, T) \times \mathbb{T}^2 }) } ^2 + 2 \right ) \exp \left ( 2 \mathbf{C}_0 \left (\|\nabla_x u^\beta\|_{L^\infty{((0, T) \times \mathbb{T}^2 }) } ^2 + 2 \right ) T \right ) = 0, \label{delta0} \\
& \delta_s (\e) < (\e^{-1} \kappa^{-1/2} )^s, s = 1, 2 \label{deltas}.
\end{align}
Suppose that $f_R (0)$ satisfies 
\Be\notag
\sqrt{\mathcal{E}(0) }, \  \mathcal{F}(0) < \delta_0 (\e), \  \| w_s \p^s f_{R0} \|_{L^\infty (\mathbb{T}^2 \times \mathbb{R}^3 ) } < \delta_s (\e), s = 1, 2.
\Ee
Then \eqref{Remainder} with initial data $f_R(0)$, and $\tilde{u}^\beta (0) \equiv 0$ has a solution $f_R(t)$, $t \in (0, T)$ such that 
\Be
\begin{split}\notag
& 	\mathcal{E}(t) + \mathcal{D}(t)\\
	 \le & \ (\delta_0^2 + \kappa) (1+T)  C(\mathbf{C}_0)\\
	& \times  \left ( 2 \mathbf{C}_0 \left (\|\nabla_x u^\beta\|_{L^\infty((0, T) \times \mathbb{T}^2 ) } ^2 + 2 \right )   \exp \left ( 2 \mathbf{C}_0 \left (\|\nabla_x u^\beta\|_{L^\infty((0, T) \times \mathbb{T}^2 ) } ^2 + 2 \right ) T \right )  \right ) ,
\end{split}
\Ee
and $
\lim_{\e \rightarrow 0} \sup_{t \in (0, T) } (\mathcal{E}(t) + \mathcal{F}(t) ) = 0. $
\end{theorem}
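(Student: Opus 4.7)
The plan is to close a coupled \emph{a priori} bound on $\mathcal{E}(t)$, $\mathcal{D}(t)$ and $\mathcal{F}(t)$ for a solution of \eqref{Remainder}, and then produce $f_R$ by a standard Picard iteration applied to \eqref{Remainder} together with a continuation-in-time argument; the bound itself yields the smallness conclusion. The core of the $L^2$ part is obtained by testing the equation \eqref{pRemainder} for $\p^s f_R$ against $\p^s f_R$, multiplying by $\kappa^{-2+s}$ and summing over $s\le 2$. The streaming term integrates out by periodicity, while $\frac{1}{\e^2\kappa}L\p^s f_R$ produces the dissipation contribution to $\mathcal{D}$ via Lemma \ref{lemma_L}. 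The hydrodynamic piece coming from $\p^s f_R\cdot(\p_t+\underline v/\e\cdot\nabla_x)\sqrt{\mu}/\sqrt{\mu}$ and from the explicit low-order term isolated in Lemma \ref{Momentstreambound} is at most $\|\nabla_x u^\beta\|_{L^\infty}\|\p^s f_R\|_{L^2_xL^2_v}^2$ modulo $O(\e V(\beta))$ corrections absorbed by the dissipation, which yields the Gr\"onwall factor $\exp\bigl(2\mathbf{C}_0 T(\|\nabla_x u^\beta\|_{L^\infty}^2+2)\bigr)$ of the statement.

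For the source terms I would invoke the pointwise bounds \eqref{R1point}, \eqref{R2point}, \eqref{R3point}. Crucially the hydrodynamic component $\mathbf{P}\p^s\mathfrak{R}_2$ carries an extra $\e$, so $\frac{\kappa}{\e}\mathbf{P}\p^s\mathfrak{R}_2$ is $O(\kappa V(\beta))=o(1)$ under \eqref{ekappabeta}, while the non-hydrodynamic portion pairs with $(\mathbf{I}-\mathbf{P})\p^s f_R$ and is absorbed by the $\e\sqrt{\kappa}$ gain of $\mathcal{D}$. The nonlinearity $\frac{1}{\e\kappa}\p^s\Gamma(f_R,f_R)$ and $\frac{1}{\e}\p^s\Gamma(\mathfrak{R}_3,f_R)$ are controlled by \eqref{est:Gamma} combined with the anisotropic interpolations of Lemma \ref{anint} and Lemma \ref{nonhydroLp}: one argument is placed in $L^\infty_xL^2_v$ and estimated by $\mathcal{F}$, while the other interpolates between $\mathcal{E}$ and $\mathcal{D}$ through $\|\p^s f_R\|_{L^4_xL^2_v}\lesssim \|\p^s f_R\|_{L^2_xL^2_v}^{1/2}\|\p^{s+1}f_R\|_{L^2_xL^2_v}^{1/2}$. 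The commutator $\frac{1}{\e^2\kappa}[\![\p^s,L]\!]f_R$ is expanded via Lemma \ref{Lpcomm}; each derivative falling on $L$ or $\mathbf{P}$ supplies an $\e V(\beta)$ factor (Lemma \ref{pPcomm}), so the commutator is of order $\kappa^{1/4}V(\beta)^2\sqrt{\mathcal{E}\mathcal{D}}$ and is absorbable under \eqref{ekappabeta}.

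The $L^\infty$ bound for $\mathcal{F}(t)$ is obtained by applying Proposition \ref{prop_infty} to \eqref{Remainder} with $\tilde H=\frac{1}{\e\kappa}\Gamma(f_R,f_R)+\frac{1}{\e}\Gamma(\mathfrak{R}_3,f_R)-\e\mathfrak{R}_1-\frac{\kappa}{\e}\mathfrak{R}_2$; the $L^2$ norm on the right of \eqref{Ltinfty_2D} is $\sqrt{\mathcal{E}(t)}$, the $\mathfrak{R}_j$-contributions are pointwise controlled by Lemma \ref{Gammabound} together with \eqref{R1point}--\eqref{R3point}, and the quadratic $\Gamma(f_R,f_R)$ is absorbed by the smallness of $\mathcal{F}$ itself. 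Combined with the $L^2$ estimate this yields $\mathcal{E}(t)+\mathcal{D}(t)\lesssim (\delta_0^2+\kappa)\cdot(\text{Gr\"onwall factor})$ and $\mathcal{F}(t)\to 0$. Existence follows by Picard iteration on the linearization of \eqref{Remainder}, which contracts in the norm $\mathcal{E}+\mathcal{D}+\mathcal{F}$ by the same estimates applied to differences; hypotheses \eqref{delta0}--\eqref{deltas} keep the right-hand side finite so a continuation argument extends the solution up to time $T$.

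The principal difficulty is closing the $s=2$ estimate: on a naive count, $[\![\p^2,L]\!]f_R$, $\p^2\Gamma(f_R,f_R)$ and $\frac{1}{\e}\p^2\Gamma(\mathfrak{R}_3,f_R)$ each exceed $\mathcal{D}$ by a factor $\kappa^{-1/2}$, and only the combination of (i) the hidden $\e V(\beta)$ factor attached to every commutator via Lemmas \ref{Lpcomm} and \ref{pPcomm}, (ii) the fact that $\Gamma$ outputs a $\mathcal{N}^\perp$-valued quantity so that one may place $(\mathbf{I}-\mathbf{P})$ on the test factor and buy dissipation (Lemma \ref{lemma_L}), and (iii) the sharp 2D anisotropic interpolation of Lemmas \ref{anint} and \ref{nonhydroLp}, saves the scale. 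The scaling \eqref{ekappabeta} is tailor-made so that all these small factors combine to give $o(1)$ corrections and thereby identify the constant $\mathbf{C}_0$.
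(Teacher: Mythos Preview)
Your overall architecture is right, but the scaling analysis of two terms is off in a way that hides where the hypotheses are actually used.

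First, the commutator $[\![\p^s,L]\!]f_R$ is \emph{not} absorbable. When a single $\p$ hits $L$ (type~1 in Lemma~\ref{Lpcomm}), ${}_{\p}L$ carries only the factor $\e\|\nabla_x u^\beta\|_{L^\infty}$, not $\e V(\beta)$; after the prefactor $\e^{-2}\kappa^{-2+s}$ this pairs $(\mathbf{I}-\mathbf{P})\p^{s-1}f_R$ against $\p^s f_R$ at the \emph{exact} $\mathcal{D}$-scale, yielding $\|\nabla_x u^\beta\|_{L^\infty}\sqrt{\mathcal{D}}\sqrt{\int_0^t\mathcal{E}}$ and hence, after Young, $(\|\nabla_x u^\beta\|_{L^\infty}^2+1)\int_0^t\mathcal{E}+o(1)\mathcal{D}$. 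This is the source of the squared gradient in the Gr\"onwall exponent; your momentum-stream term only produces a linear $\|\nabla_x u^\beta\|_{L^\infty}$ coefficient, so attributing the factor $\exp\bigl(2\mathbf{C}_0T(\|\nabla_x u^\beta\|^2+2)\bigr)$ to it is inconsistent with your own bound.

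Second, you do not explain how the $\nu^2$-weight in the momentum-stream integral $\int |\kappa^{-1+s/2}\p^s f_R|^2\nu^2$ is handled at large $|v|$. The paper splits $\{|v-\e u^\beta|\le(\e\sqrt\kappa)^{-o(1)}\}$ versus its complement; on the tail one must invoke $\|w_s\p^s f_R\|_{L^\infty}$ for $s=1,2$, which is why Proposition~\ref{prop_infty} is applied to \eqref{pRemainder} (not only to $f_R$ itself) and why the hypotheses $\delta_s<(\e^{-1}\kappa^{-1/2})^s$ appear at all. Your $L^\infty$ step only produces $\mathcal{F}$ and omits the derivative-level $L^\infty$ bounds, so the role of $\delta_1,\delta_2$ is unaccounted for. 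As a minor point, the paper closes the nonlinearity purely in $\sqrt{\mathcal{E}}$ via anisotropic Agmon/Ladyzhenskaya rather than through $\mathcal{F}$; your $\mathcal{F}$-based closure would also work, but note that $\Gamma$ is $\mathcal{N}^\perp$-valued only in the \emph{output}, so for ${}_{\p^{s_1}}\Gamma$ with $s_1\ge1$ you must also control the hydrodynamic part of the test factor.
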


\subsubsection{Energy estimate}

By taking $L^2$ norm for \eqref{Remainder}, \eqref{pRemainder} for $s \le 2$ and integrating over time, we have
\begin{align}
\mathcal{E}(t) &+ \mathcal{D}(t)  \lesssim  \mathcal{E}(0)\notag \\
&+ \| \nabla_x u^\beta\|_{L^\infty_{t,x} } \sum_{s \le 2} \int_{(0, t) \times \mathbb{T}^2 \times \mathbb{R}^3} \left |\frac{\p^s f_R}{\kappa^{1- \frac{s}{2} } } \right |^2 \nu^2 \dd v \dd x \dd t'  \label{momstr1}\\
& + \sum_{s' < s} \kappa^{\frac{s-s'}{2} } V(\beta) \int_{(0, t) \times \mathbb{T}^2 \times \mathbb{R}^3} \left |\frac{\p^{s'} f_R}{\kappa^{1- \frac{s'}{2} } } \right | \left |\frac{\p^s f_R}{\kappa^{1- \frac{s}{2} } } \right | \nu^2 \dd v \dd x \dd t' + \e V(\beta) (\mathcal{E}(t) + \mathcal{D}(t) ) \label{momstr2} \\
&+ \sum_{s \le 2} \int_{(0, t) \times \mathbb{T}^2 \times \mathbb{R}^3} \e^{-2}  \kappa^{-2 + \frac{s}{2}} [\![ \p^s , L ]\!] f_R \frac{\p^s f_R}{\kappa^{1 - \frac{s}{2} } } \dd v \dd x \dd t' \label{Lcommutator} \\
&+ \sum_{s \le 2}\int_{(0, t) \times \mathbb{T}^2 \times \mathbb{R}^3} \e^{-1} \kappa^{-2 + \frac{s}{2} } \p^s \Gamma ( f_R, f_R ) \frac{\p^s f_R}{\kappa^{1 - \frac{s}{2} } } \dd v \dd x \dd t' \label{nonlinearity} \\
& + \sum_{s \le 2} \int_{(0, t) \times \mathbb{T}^2 \times \mathbb{R}^3} \e^{-1} \kappa^{-1 + \frac{s}{2}} \p^s \Gamma (\mathfrak{R}_3, f_R)\frac{\p^s f_R}{\kappa^{1 - \frac{s}{2} } } \dd v \dd x \dd t' \label{linear} \\
&- \sum_{s \le 2}\int_{(0, t) \times \mathbb{T}^2 \times \mathbb{R}^3} \kappa^{-1 + \frac{s}{2} } \left (\e \p^s \mathfrak{R}_1 + \frac{\kappa}{\e} (\mathbf{I} - \mathbf{P} ) \p^s \mathfrak{R}_2 + \frac{\kappa}{\e} \p^s \mathfrak{R}_2 \right ) \frac{\p^s f_R}{\kappa^{1 - \frac{s}{2} } } \dd v \dd x \dd t' . \label{source}
\end{align}

\textit{Step 1. Control of \eqref{source}.} From \eqref{R1point} and \eqref{R2point}, we have
\Be \label{sourceestimate}
\eqref{source} \lesssim \sum_{s \le 2} \left ( \e \kappa^{-1 + \frac{s}{2} } V(\beta) \sqrt{\mathcal{E}(t) } + \kappa^{\frac{3}{2} } V(\beta) \sqrt{\mathcal{D}(t) } + \kappa V(\beta) \sqrt{\mathcal{E}(t)} \right ) \lesssim \kappa^{\frac{1}{2}} \left (\sqrt{\mathcal{E}(t) } + \sqrt{\mathcal{D}(t) } \right ),
\Ee
by \eqref{ekappabeta}.

\textit{Step 2. Control of \eqref{linear}.} We note that
\Be\notag
\p^s \Gamma(\mathfrak{R}_3, f_R ) = \sum_{s_1 + s_2 + s_3 = s} {}_{\p^{s_1}} \Gamma (\p^{s_2} \mathfrak{R}_3 , \p^{s_3} f_R ).
\Ee
There are two cases. First, if $s_1 = 0$, then 
\Be
\begin{split}\notag
\sum_{s_2 + s_3 = s} & \int_{(0, t) \times \mathbb{T}^2 \times \mathbb{R}^3}  \e^{-1} \kappa^{-1 + \frac{s}{2} } \Gamma( \p^{s_2} \mathfrak{R}_3, \p^{s_3} f_R) \frac{\p^s f_R}{\kappa^{1 - \frac{s}{2} } } \dd v \dd x \dd t' \\
& \lesssim \sum_{ s_3 \le s} \kappa^{-\frac{1}{2} + \frac{s}{2} } V(\beta) \| \p^{s_3} f_R \|_{L^2 ((0, t); L^2_x L^2_v ) } \sqrt{\mathcal{D}(t) } \lesssim \kappa^{\frac{1}{2} } V(\beta) \sqrt{\mathcal{E}(t) }\sqrt{\mathcal{D}(t) }.
\end{split}
\Ee
If $s_1 \ge 1$, then by Lemma \ref{lemma_L} we have
\Be\notag
\begin{split}
\sum_{s_1 +s_2 + s_3 = s} & \int_{(0, t) \times \mathbb{T}^2 \times \mathbb{R}^3}  \e^{-1} \kappa^{-1 + \frac{s}{2} } {}_{\p^{s_1} }\Gamma( \p^{s_2} \mathfrak{R}_3, \p^{s_3} f_R) \frac{\p^s f_R}{\kappa^{1 - \frac{s}{2} } } \dd v \dd x \dd t' \\
& \lesssim \sum_{s_3 < s} V(\beta) \kappa^{-1 + \frac{s}{2}} \left ( \| \p^{s_3} f_R \|_{L^2((0, t); L^2_x L^2_v)} + \| \nu^{\frac{1}{2} } (\mathbf{I} - \mathbf{P }) \p^{s_3} f_R \|_{L^2((0, T); L^2_x L^2_v ) } \right ) \\
& \times \left ( \sqrt{\mathcal{E}(t) } + \e \kappa^{\frac{1}{2} } \sqrt{\mathcal{D}(t) } \right ) \lesssim \kappa^{\frac{1}{2} } V(\beta) (\mathcal{E}(t) + \e^2 \kappa \mathcal{D}(t)),
\end{split}
\Ee
since $s_3 < s$. In conclusion, we have
\Be \label{linearestimate}
\eqref{linear} \lesssim \kappa^{\frac{1}{2} }V(\beta) (\mathcal{E}(t) + \mathcal{D}(t) ).
\Ee
\textit{Step 3. Control of \eqref{Lcommutator}.} For $s=0$, $[\![\p^s, L ]\!]=0$. When $s=1$, $[\![\p^s, L ]\!] f_R$ consists of type 1 and type 2 of terms in Lemma \ref{Lpcomm}. When $s=2$, there are exactly one term in $[\![\p^s, L ]\!] f_R$ which is of type 3 in Lemma \ref{Lpcomm}: ${}_{\p} L [\![ \mathbf{P}, \p ]\!] f_R$. For a given $s \le 2$ and type 1 term in Lemma \ref{Lpcomm}, we have an upper bound
\Be \label{Lpcomuppb}
\left ( \|\nabla_x u^\beta\|_{L^\infty_{t,x} } + \kappa^{\frac{1}{2} } V(\beta) \right ) \sqrt{\mathcal{D}(t) } \left ( \sqrt{\int_0 ^t \mathcal{E}  } + \e \kappa^{\frac{1}{2} } \sqrt{\mathcal{D} (t) } \right ) 
\lesssim ( \|\nabla_x u^\beta\|_{L^\infty_{t,x} }^2 + 1) \int_0 ^t \mathcal{E} + o(1) \mathcal{D}(t),
\Ee
where the first $\|\nabla_x u^\beta\|_{L^\infty_{t,x}}$ term corresponds to ${}_{\p^{s_1} } L (\mathbf{I} - \mathbf{P}) \p^{s_2} f_R$ and the second $\kappa^{\frac{1}{2} } V(\beta) $ term corresponds to ${}_{\p^2} L (\mathbf{I} - \mathbf{P} )  f_R$. For example, for $s=2$ with ${}_{\p} L (\mathbf{I} - \mathbf{P}) \p f_R$ term, we have
\Be
\begin{split}\notag
&\int_{(0, t) \times \mathbb{T}^2 \times \mathbb{R}^3 }  \e^{-2} \kappa^{-1}  {}_{\p} L (\mathbf{I} - \mathbf{P} ) \p f_R \p^2 f_R \dd v \dd x \dd t'  \lesssim \| \nabla_x u^\beta \|_{L^\infty_{t,x} } \| \e^{-1} \kappa^{-1} \nu^{\frac{1}{2} } (\mathbf{I} - \mathbf{P} ) \p f_R \|_{L^2((0, t); L^2_x L^2 _v ) }  \\
& \times \left ( \|\p^2 f_R \|_{L^2 ((0, t); L^2_x L^2_v ) } + \e \kappa^{\frac{1}{2} } \|\e^{-1} \kappa^{-\frac{1}{2} } \nu^{\frac{1}{2} } (\mathbf{I} - \mathbf{P} ) \p^2 f_R \|_{L^2 ((0, t); L^2_x L^2_v ) } \right )
\end{split}
\Ee
which is bounded by the right-hand side of \eqref{Lpcomuppb}.

For a given $s \le 2$ and type 2 term in Lemma \ref{Lpcomm}, we have a similar upper bound
\Be\notag
\sum \e^{-1} \kappa^{-\frac{3}{2} + \frac{s}{2}} \| \p \cdots [\![\mathbf{P}, \p ]\!] \cdots \p f_R \|_{L^2((0, t); L^2_x L^2_v)} \sqrt{\mathcal{D}(t) },
\Ee
where summation is over possible combinations of $\p \cdots [\![\mathbf{P}, \p ]\!] \cdots \p$, consisting of $s-1$ $\p$ and one $[\![\mathbf{P}, \p ]\!]$. We note that 
\Be\notag
\begin{split}
&\| \p \cdots [\![\mathbf{P}, \p ]\!] \cdots \p f_R \|_{L^2((0, t); L^2_x L^2_v)} \\
& \lesssim \e \left ( \| \nabla_x u^\beta \|_{L^\infty_{t,x} } \| \p^{s-1} f_R \|_{L^2((0, t); L^2_x L^2_v)} + V(\beta) \sum_{s' < s-1} \| \p^{s'} f_R  \|_{L^2((0, t); L^2_x L^2_v)}\right ),
\end{split}
\Ee
where the former term corresponds to the case that all $s-1$ derivatives $\p$ are applied to $f_R$, and the latter corresponds to the others. Thus, again, we have a bound
\Be\notag
\left ( \|\nabla_x u^\beta\|_{L^\infty_{t,x} } + \kappa^{\frac{1}{2} } V(\beta) \right ) \sqrt{\int_0 ^t \mathcal{E} }  \sqrt{\mathcal{D} (t) } 
\lesssim ( \|\nabla_x u^\beta\|_{L^\infty_{t,x} }^2 + 1) \int_0 ^t \mathcal{E} + o(1) \mathcal{D}(t),
\Ee

Finally, for a type 3 term in Lemma \ref{Lpcomm} (which immediately implies $s=2$), we have
\Be\notag
\| \nabla_x u^\beta \|_{L^\infty_{t,x} }^2 \left \| \frac{f_R}{\kappa} \right \|_{L^2((0, t); L^2_x L^2_v ) } \sqrt{\int_0 ^t \mathcal{E} } \lesssim \| \nabla_x u^\beta \|_{L^\infty_{t,x} }^2 \int_0 ^t \mathcal{E}.
\Ee
To summarize, we have
\Be\label{Lcommutatorestimate}
\eqref{Lcommutator} \lesssim ( \|\nabla_x u^\beta\|_{L^\infty_{t,x} }^2 + 1) \int_0 ^t \mathcal{E} + o(1) \mathcal{D}(t).
\Ee

\textit{Step 4. Control of \eqref{momstr1}, \eqref{momstr2}. }We use the following standard estimate: let $0<\vartheta_2<\vartheta_1<\vartheta_0<\frac{1}{4}$, and let 
\Be \label{weightpfr}
w_j = e^{\vartheta_j|v|^2 - \frac{1}{2} \e u^\beta \cdot \underline v }, j=0, 1, 2.
\Ee
For $s \le 2$, we have
\Be\notag
\begin{split}
&\int_{(0, t) \times \mathbb{T}^2 \times \mathbb{R}^3} \left | \frac{\p^{s} f_R}{\kappa^{1-\frac{s}{2} } } \right |^2 \nu^2 \dd v \dd x \dd t' \lesssim \left \| \frac{ \mathbf{P} \p^s f_R }{\kappa^{1- \frac{s}{2} } } \right \|_{L^2 ((0, t); L^2_x L^2_v ) }^2 + \left \| \nu^1 \frac{ (\mathbf{I} -\mathbf{P}) \p^s f_R }{\kappa^{1- \frac{s}{2} } } \right \|_{L^2 ((0, t); L^2_x L^2_v ) }^2 \\
& \lesssim \int_0 ^t \mathcal{E} + \left \|\mathbf{1}_{ \{ |v - \e u^\beta|>(\e \sqrt{\kappa} )^{-o(1) } \} }\nu^1 \frac{ (\mathbf{I} -\mathbf{P}) \p^s f_R }{\kappa^{1- \frac{s}{2} } }\right   \|_{L^2 ((0, t); L^2_x L^2_v ) }^2 \\
&+ \left \|\mathbf{1}_{ \{ |v - \e u^\beta|\le (\e \sqrt{\kappa} )^{-o(1) } \} }\nu^1 \frac{ (\mathbf{I} -\mathbf{P}) \p^s f_R }{\kappa^{1- \frac{s}{2} } }\right   \|_{L^2 ((0, t); L^2_x L^2_v ) }^2  \\
& \lesssim \int_0 ^t \mathcal{E} + \|\mathbf{1}_{ \{ |v - \e u^\beta|>(\e \sqrt{\kappa} )^{-o(1) } \} } \nu^1 w_s^{-1}  \|_{L^2 ((0, t); L^2_x L^2_v ) }^2 \| w_s \p^s f \|_{L^\infty((0, t); L^\infty(\mathbb{T}^2 \times \mathbb{R}^3 ) )} ^2 \\
& \ \  + \left (\e \sqrt{\kappa} \right )^{1 - o(1) } \mathcal{D}(t) \\
&  \lesssim \int_0 ^t \mathcal{E} +\left (\e \sqrt{\kappa} \right )^{1 - o(1) } \mathcal{D}(t) + e^{-\frac{1}{(\e \sqrt{\kappa})^{o(1)}} } \| w_s \p^s f \|_{L^\infty((0, t); L^\infty(\mathbb{T}^2 \times \mathbb{R}^3 ) )} ^2.
\end{split}
\Ee
Similar calculation for \eqref{momstr2} gives the following bound:
\Be \label{momestimate}
\eqref{momstr1} + \eqref{momstr2} \lesssim (1 + \| \nabla_x u^\beta \|_{L^\infty_{t,x} } ) \int_0 ^t \mathcal{E} + o(1) \mathcal{D}(t)  + e^{-\frac{1}{(\e \sqrt{\kappa})^{o(1)}} } \sum_{s \le 2} \| w_s \p^s f \|_{L^\infty((0, t); L^\infty(\mathbb{T}^2 \times \mathbb{R}^3 ) )} ^2.
\Ee

\textit{Step 5. Control of \eqref{nonlinearity}.} Finally, we control the nonlinear contribution \eqref{nonlinearity}: here we use anisotropic interpolation result (Lemma \ref{anint}) and Lemma \ref{lemma_L}. First, from Lemma \ref{nonhydroLp} and \eqref{ekappabeta}, we remark that
\Be\notag
\begin{split}
\left \| \nu^{\frac{1}{2} } (\mathbf{I} - \mathbf{P} ) \frac{f}{\kappa} \right \|_{L^2 ((0, t); L^4_x L^2_v ) } &+ \left \| \nu^{\frac{1}{2} } (\mathbf{I} - \mathbf{P} ) \frac{\p f}{\sqrt{\kappa} } \right \|_{L^2 ((0, t); L^4_x L^2_v ) } + \left \| \nu^{\frac{1}{2} } (\mathbf{I} - \mathbf{P} ) \frac{f}{\kappa} \right \|_{L^2 ((0, t); L^\infty_x L^2_v ) }\\
& \lesssim \e^{\frac{1}{2} } \left ( \sqrt{\mathcal{D}(t) } + \sqrt{\int_0 ^t \mathcal{E} } \right ).
\end{split}
\Ee
Next, we estimate the following integrals: first, we estimate
\Be
\begin{split}\notag
&\int_{(0, t) \times \mathbb{T}^2 \times \mathbb{R}^3} \frac{1}{\e \kappa^2} \Gamma(f_R, f_R) \frac{f_R}{\kappa} \dd v \dd x \dd t' \\
&  \lesssim \left ( \left \| \frac{f_R}{\kappa} \right \|_{L^2_{tvx}} + \left \| \nu^{\frac{1}{2} } (\mathbf{I} - \mathbf{P} ) \frac{f_R}{\kappa} \right \|_{L^2_{tvx} } \right ) \sqrt{\kappa}^{-1} \| f_R \|_{L^\infty_{tx} L^2_v} \sqrt{\mathcal{D} (t) } \\
& \lesssim \left ( \sqrt{\int_0 ^t \mathcal{E} } + \e \sqrt{\kappa} \sqrt{D(t) } \right ) \sqrt{D(t) } \left \| \frac{f_R}{\kappa} \right \|_{L^\infty_t L^2_{xv} } ^{\frac{1}{2} } \| \p^2 f_R \|_{L^\infty _t L^2_{xv} } ^{\frac{1}{2} } \lesssim \left ( \int_0 ^t \mathcal{E} + \mathcal{D}(t) \right ) \sqrt{\mathcal{E}(t) }.
\end{split}
\Ee
In a similar fashion, we see
\begin{align}\notag
\begin{split}
&\int_{(0, t) \times \mathbb{T}^2 \times \mathbb{R}^3} \frac{1}{\e \kappa^{2 - \frac{s}{2} }} \Gamma(\p^s f_R, f_R) \frac{\p^s f_R}{\kappa^{1 - \frac{s}{2}}} \dd v \dd x \dd t' \\
&\lesssim \sqrt{\mathcal{D}(t) } \frac{1}{\kappa^{\frac{3-s}{2} }} \left [ \left ( \| \p^s f_R \|_{L^2_{txv}} + \| \nu^{\frac{1}{2} } (\mathbf{I} - \mathbf{P} ) \p^s f_R \|_{L^2_{txv}} \right ) \| f_R \|_{L^\infty_{tx} L^2_v } \right. \\
&\left. +  \left ( \|  f_R \|_{L^2_t L^\infty_x L^2_v} + \| \nu^{\frac{1}{2} } (\mathbf{I} - \mathbf{P} )  f_R \|_{L^2_t L^\infty_x L^2_v} \right ) \| \p^s  f_R \|_{L^\infty_{t} L^2_{xv} } \right ] \\
& \lesssim \sqrt{\mathcal{D}(t) } \left [ \left (\sqrt{\int_0^t \mathcal{E} } + \e \sqrt{\kappa} \sqrt{\mathcal{D}(t) } \right ) \frac{1}{\sqrt{\kappa}} \| f_R \|_{L^\infty_{tx} L^2_v } \right. \\
&\left.+ \sqrt{\mathcal{E}(t) } \left ( \frac{1}{\sqrt{\kappa} } \|  f_R \|_{L^2_t L^\infty_x L^2_v}  + \e^{\frac{1}{2} } \left ( \sqrt{\mathcal{D}(t) } + \sqrt{\int_0 ^t \mathcal{E} } \right ) \right ) \right ] \lesssim \left ( \int_0 ^t \mathcal{E} + \mathcal{D}(t) \right ) \sqrt{\mathcal{E}(t) }, s \le 2,
\end{split} \\
\begin{split}\notag
&\int_{(0, t) \times \mathbb{T}^2 \times \mathbb{R}^3} \frac{1}{\e \kappa} \Gamma(\p f_R, \p f_R) \p^2 f_R \dd v \dd x \dd t' \\
&\lesssim \sqrt{\mathcal{D}(t) } \frac{1}{\sqrt{\kappa}}\left (  \| \p f_R \|_{L^2_t L^4_x L^2_v} + \| \nu^{\frac{1}{2} } (\mathbf{I} - \mathbf{P} ) \p f_R \|_{L^2_t L^4_x L^2_v } \right ) \| \p f_R \|_{L^\infty_t L^4_x L^2_v} \lesssim \left ( \int_0 ^t \mathcal{E} + \mathcal{D}(t) \right ) \sqrt{\mathcal{E}(t) }.
\end{split}
\end{align}
Here we have used Lemma \ref{lemma_L} to first bound terms with $L^2_v$ norms with mixed $L^p_x$ norms and then Lemma \ref{anint} to turn back to $L^2_x$ norm. In a similar manner, we have, for $s \le 2$, $s_1 + s_2 = s$, and $s_1 \ge 1$,
\Be
\begin{split}
&\int_{(0, t) \times \mathbb{T}^2 \times \mathbb{R}^3} \frac{1}{\e \kappa^{2 - \frac{s}{2} }} {}_{\p^{s_1}}\Gamma(\p^{s_2} f_R, f_R) \frac{\p^s f_R}{\kappa^{1 - \frac{s}{2}}} \dd v \dd x \dd t' \\
&\lesssim \| \p^{s_1} u^\beta \|_{L^\infty_{t,x} } \sqrt{\int_0 ^t \mathcal{E} } \frac{1}{\kappa^{2-\frac{s}{2} } } \left [ \left ( \| \p^{s_2} f_R \|_{L^2_{txv} } + \| \nu^{\frac{1}{2} } (\mathbf{I} - \mathbf{P} ) \p^{s_2} f_R \|_{L^2_{txv} } \right ) \| f_R \|_{L^\infty_{tx} L^2_v} \right. \\
&\left. + \left ( \|  f_R \|_{L^2_t L^\infty_x L^2_v } + \| \nu^{\frac{1}{2} } (\mathbf{I} - \mathbf{P} )  f_R \|_{L^2_t L^\infty_x L^2_v } \right ) \| \p^{s_2} f_R \|_{L^\infty_t  L^2_{xv}} \right ] \\
& \lesssim \| \p^{s_1} u^\beta \|_{L^\infty_{t,x} } \sqrt{\int_0^t \mathcal{E} } \frac{1}{\kappa^{\frac{1}{2} - \frac{s-s_2}{2}} } \left [ \left ( \sqrt{\int_0 ^t \mathcal{E} } + \e \sqrt{\kappa} \sqrt{\mathcal{D}(t) } \right ) \sqrt{\mathcal{E}(t) } \right. \\
&\left. + \left (\sqrt{\int_0 ^t \mathcal{E} } +\e^{\frac{1}{2}}\left (\sqrt{\mathcal{D}(t)} + \sqrt{\int_0 ^t \mathcal{E} } \right ) \right ) \sqrt{\mathcal{E}(t) } \right ] \\
& \lesssim (\| \nabla_x u^\beta \|_{L^\infty_{tx} } + V(\beta) \sqrt{\kappa} ) \sqrt{\mathcal{E}(t) } \left ( \int_0 ^ t \mathcal{E} + \e \mathcal{D}(t) \right ) \lesssim \sqrt{\mathcal{E} (t) } \left ( \| \nabla_x u^\beta \|_{L^\infty_{tx} } \int_0 ^t \mathcal{E} + \mathcal{D}(t) \right )
\end{split}\notag
\Ee
where the first factor $\| \nabla_x u^\beta \|_{L^\infty_{tx} }$ comes from the case $s_1 = 1$ and the second factor $V(\beta) \sqrt{\kappa} $ comes from the case case $s_2 = 2, s_2 = 0$. Also we have used \eqref{ekappabeta} to bury the contribution of $\|\nabla_x u^\beta \|_{L^\infty_{t,x} }$ in $\mathcal{D}(t) $. 

Therefore, we have
\Be \label{nonlinearestimate}
\eqref{nonlinearity} \lesssim \left ( (\| \nabla_x u^\beta \|_{L^\infty_{tx} }+1) \int_0 ^t \mathcal{E} + \mathcal{D}(t) \right )  \sqrt{\mathcal{E} (t) }
\Ee

Summing up \eqref{sourceestimate}, \eqref{linearestimate}, \eqref{Lcommutatorestimate}, \eqref{momestimate}, \eqref{nonlinearestimate}, we have
\Be \label{Energy}
\begin{split}
\mathcal{E}(t) + \mathcal{D}(t)  &\lesssim  \mathcal{E}(0)+ (\|\nabla_x u^\beta \|_{L^\infty_{t,x} }^2 + 1 + \sqrt{\mathcal{E}(t) } ) \int_0 ^t \mathcal{E} + \kappa  + \sqrt{\mathcal{E}(t) } \mathcal{D}(t)  \\
&+ e^{-\frac{1}{(\e \sqrt{\kappa})^{o(1) } } } \sum_{s \le 2} \| w_s \p^s f \|_{L^\infty ((0, t); L^\infty(\mathbb{T}^2 \times \mathbb{R}^3 ) ) } ^2.
\end{split}
\Ee

\subsubsection{$L^\infty$ control}

From Proposition \ref{prop_infty} and \eqref{Remainder} we obtain the following:
\Be \label{LinfinityRem}
\begin{split}
&\| w_0 f_R \|_{L^\infty ((0, t) ; L^\infty(\mathbb{T}^2 \times \mathbb{R}^3 ) )}
\\
 \lesssim& \  \| w_0 f_{R0} \|_{L^\infty(\mathbb{T}^2 \times \mathbb{R}^3 )} + \frac{1}{\e} \sqrt{\mathcal{E}(t)} + \e \| w_0 f_R \|_{L^\infty ((0, t) ; L^\infty(\mathbb{T}^2 \times \mathbb{R}^3 ) )}  ^2 \\
&+ \e \kappa V(\beta) \| w_0 f_R \|_{L^\infty ((0, t) ; L^\infty(\mathbb{T}^2 \times \mathbb{R}^3 ) )} + \e^3 \kappa V(\beta) + \e \kappa^2 V(\beta)
\end{split}
\Ee
Here we have used Lemma \ref{Gammabound} to bound the right-hand side of \eqref{Remainder}. Proceeding similar argument to \eqref{pRemainder}, for $1 \le s \le 2$, we obtain
\Be\notag
\begin{split}
\| w_s \p^s f_R \|&_{L^\infty ((0, t) ; L^\infty(\mathbb{T}^2 \times \mathbb{R}^3 ) )} \lesssim  \| w_s \p^s f_{R0} \|_{L^\infty(\mathbb{T}^2 \times \mathbb{R}^3 )} + \frac{1}{\e\kappa^{\frac{s}{2}}} \mathcal{E}(t)\\
& + \e \| w_s \p^s f_R \|_{L^\infty ((0, t) ; L^\infty(\mathbb{T}^2 \times \mathbb{R}^3 ) )} \| w_0 f_R \| _{L^\infty ((0, t) ; L^\infty(\mathbb{T}^2 \times \mathbb{R}^3 ) )} \\
& + \e \kappa V(\beta)  \| w_s \p^s f_R \|_{L^\infty ((0, t) ; L^\infty(\mathbb{T}^2 \times \mathbb{R}^3 ) )} \\
& + \e V(\beta) \sum_{s' < s} \| w_{s'} \p^{s'} f_R \|_{L^\infty ((0, t) ; L^\infty(\mathbb{T}^2 \times \mathbb{R}^3 ) )} \\
& + \e V(\beta) \sum_{s_1+s_2 \le s, s_1, s_2 < s} \| w_{s_1} \p^{s_1} f_R \|_{L^\infty ((0, t) ; L^\infty(\mathbb{T}^2 \times \mathbb{R}^3 ) )}\| w_{s_2} \p^{s_2} f_R \|_{L^\infty ((0, t) ; L^\infty(\mathbb{T}^2 \times \mathbb{R}^3 ) )} \\
&+ \e^3 \kappa V(\beta) + \e \kappa^2 V(\beta)
\end{split}
\Ee
Here we have used a pointwise bound $w_0 > \nu^2 w_1 > \nu^4 w_2$ for the third line. Therefore, we have
\Be \label{Linfestimate}
\begin{split}
\mathcal{F}(t) &\lesssim \mathcal{F}(0) + \e^2 + \sqrt{\mathcal{E}(t)} + \mathcal{F}(t)^2, \\
\| w_1 \p^1 f_R \|&_{L^\infty ((0, t) ; L^\infty(\mathbb{T}^2 \times \mathbb{R}^3 ) )} \lesssim  \| w_1 \p^1 f_{R0} \|_{L^\infty(\mathbb{T}^2 \times \mathbb{R}^3 )} + \mathcal{F}(t) \| w_1 \p^1 f_R \|_{L^\infty ((0, t) ; L^\infty(\mathbb{T}^2 \times \mathbb{R}^3 ) )}  \\
&+ \frac{1}{\e \sqrt{\kappa} } \sqrt{\mathcal{E}(t)} + \e V(\beta) \left ( 1 + \frac{\mathcal{F}(t)}{\e} \right )^2, \\
\| w_2 \p^2 f_R \|&_{L^\infty ((0, t) ; L^\infty(\mathbb{T}^2 \times \mathbb{R}^3 ) )} \lesssim  \| w_2 \p^2 f_{R0} \|_{L^\infty(\mathbb{T}^2 \times \mathbb{R}^3 )} + \mathcal{F}(t) \| w_2 \p^2 f_R \|_{L^\infty ((0, t) ; L^\infty(\mathbb{T}^2 \times \mathbb{R}^3 ) )}  \\
&+ \frac{1}{\e \kappa} \sqrt{\mathcal{E}(t)} + \e V(\beta) \left ( 1 + \frac{\mathcal{F}(t)}{\e} +\| w_1 \p^1 f_R \|_{L^\infty ((0, t) ; L^\infty(\mathbb{T}^2 \times \mathbb{R}^3 ) )}   \right )^2
\end{split}
\ee
In particular, giving explicit constants for \eqref{Energy} and \eqref{Linfestimate}, we obtain
\Be\notag
\begin{split}
\mathcal{E}(t) + \mathcal{D}(t) &\le \mathbf{C}_0 \left (\mathcal{E}(0) + \left (\|\nabla_x u^\beta\|_{L^\infty{((0, T) \times \mathbb{T}^2 }) } ^2 + 1 + \sqrt{\mathcal{E}(t) } \right ) \int_0 ^t \mathcal{E} + \kappa + \sqrt{\mathcal{E}(t)} \mathcal{D}(t) \right. \\
&\left. e^{-\frac{1}{(\e \sqrt{\kappa})^{o(1) } } } \sum_{s \le 2} \| w_s \p^s f \|_{L^\infty ((0, t); L^\infty(\mathbb{T}^2 \times \mathbb{R}^3 ) ) } ^2. \right ), \\
\mathcal{F}(t) &\le \mathbf{C}_0 \left ( \mathcal{F}(0) + \e^2 + \sqrt{\mathcal{E}(t)} + \mathcal{F}(t)^2 \right ), \\
\| w_s \p^s f_R \|&_{L^\infty ((0, t) ; L^\infty(\mathbb{T}^2 \times \mathbb{R}^3 ) )} \le \mathbf{C}_0 \big (\| w_s \p^s f_{R0} \|_{L^\infty(\mathbb{T}^2 \times \mathbb{R}^3 )} + \mathcal{F}(t) \| w_s \p^s f_R \|_{L^\infty ((0, t) ; L^\infty(\mathbb{T}^2 \times \mathbb{R}^3 ) )}   \\
&  + \e^{-1} \kappa^{-\frac{s}{2} } \sqrt{\mathcal{E}(t)} + \e V(\beta)  \big ( 1 + \e^{-1} \mathcal{F}(t) + \sum_{1 \le s' < s } \| w_{s'} \p^{s'} f_R \|_{L^\infty ((0, t) ; L^\infty(\mathbb{T}^2 \times \mathbb{R}^3 ) )}  \big)^2  \big )
\end{split}
\Ee
for some constant $\mathbf{C}_0 > 1$.

\subsection{Proof of Theorem \ref{theo:remainder}}

For given any arbitrary positive time $T>0$, choose $T_* \in [0,T]$ such that 
\Be
\begin{split}\label{assumption<T}
T_* =  \sup
\Big\{t>0:
\sqrt{\mathcal{E}(t) } < \frac{1}{10 \mathbf{C}_0}, \mathcal{F}(t) < \frac{1}{10 \mathbf{C}_0}
\Big\}.
\end{split}
\Ee

Then for $t \in [0, T_*]$, 
\Be\notag
\begin{split}
\mathcal{E}(t) + \mathcal{D}(t) &\le 2 \mathbf{C}_0 \mathcal{E}(0) + 2 \mathbf{C}_0 \left (\|\nabla_x u^\beta\|_{L^\infty{((0, T) \times \mathbb{T}^2 }) } ^2 + 2 \right ) \int_0 ^ t \mathcal{E} + 2 \mathbf{C}_0 \kappa \\
& + 2 \mathbf{C}_0 e^{-\frac{1}{(\e \sqrt{\kappa})^{o(1) } } } \sum_{s \le 2} \| w_s \p^s f \|_{L^\infty ((0, t); L^\infty(\mathbb{T}^2 \times \mathbb{R}^3 ) ) } ^2, \\
\mathcal{F}(t) &\le 2 \mathbf{C}_0 \mathcal{F}(0) + 2 \mathbf{C}_0 \e^2 + 2 \mathbf{C}_0 \sqrt{\mathcal{E}(t)},
\end{split}
\Ee
and for $1 \le s \le 2$,
\Be
\begin{split}\notag
\| w_s \p^s f_R \|&_{L^\infty ((0, t) ; L^\infty(\mathbb{T}^2 \times \mathbb{R}^3 ) )} \le 2 \mathbf{C}_0 \big (\| w_s \p^s f_{R0} \|_{L^\infty(\mathbb{T}^2 \times \mathbb{R}^3 )}  + \e^{-1} \kappa^{-\frac{s}{2} }  \\
& + \e V(\beta)  \big ( 1 + \e^{-1}/2 + \sum_{1 \le s' < s } \| w_{s'} \p^{s'} f_R \|_{L^\infty ((0, t) ; L^\infty(\mathbb{T}^2 \times \mathbb{R}^3 ) )}  \big)^2  \big ), \\
\| w_1 \p^1 f_R \|&_{L^\infty ((0, t) ; L^\infty(\mathbb{T}^2 \times \mathbb{R}^3 ) )} \le 2 \mathbf{C}_0  \| w_1 \p^1 f_{R0} \|_{L^\infty(\mathbb{T}^2 \times \mathbb{R}^3 )}  + 4 \mathbf{C}_0 \e^{-1} \kappa^{-\frac{1}{2} }, \\
\| w_2 \p^2 f_R \|&_{L^\infty ((0, t) ; L^\infty(\mathbb{T}^2 \times \mathbb{R}^3 ) )} \le 2\mathbf{C}_0  \| w_2 \p^2 f_{R0} \|_{L^\infty(\mathbb{T}^2 \times \mathbb{R}^3 )} \\
&+ C(\mathbf{C}_0) \left ( \| w_1 \p^1 f_{R0} \|_{L^\infty(\mathbb{T}^2 \times \mathbb{R}^3 )} ^2 + \e^{-1} \kappa^{-1} V(\beta) \right ).
\end{split}
\Ee

Since $\sqrt{\mathcal{E}(0)}, \mathcal{F}(0) < \delta_0 = \delta_0 (\e)$, and $\| w_s \p^s f_{R0} \|_{L^\infty(\mathbb{T}^2 \times \mathbb{R}^3 )} < \delta_s = \delta_s (\e)$ satisfying \eqref{deltas} for $s = 1,2$, we have
\Be
\begin{split}\notag
\| w_s \p^s f_R \|&_{L^\infty ((0, t) ; L^\infty(\mathbb{T}^2 \times \mathbb{R}^3 ) )} \le C(\mathbf{C}_0) (\e^{-1} \kappa^{-1/2} )^s, \\
&\mathcal{E}(t)  + \mathcal{D}(t) \le C( \mathbf{C}_0) (\delta_0^2+ \kappa)  + 2 \mathbf{C}_0 \left (\|\nabla_x u^\beta\|_{L^\infty{((0, T) \times \mathbb{T}^2 }) } ^2 + 2 \right ) \int_0 ^ t \mathcal{E}
\end{split}
\Ee
since $e^{-\frac{1}{(\e \sqrt{\kappa})^{o(1) } } }$ factor decays faster than any algebraic blowups. By Gronwall's lemma, we have
\Be
\begin{split}\notag
&\mathcal{E}(t) + \mathcal{D}(t) \le     C(\mathbf{C}_0)(\delta_0^2 + \kappa) (1+T) \\
& \times  \left ( 2 \mathbf{C}_0 \left (\|\nabla_x u^\beta\|_{L^\infty{((0, T) \times \mathbb{T}^2 }) } ^2 + 2 \right ) \exp \left ( 2 \mathbf{C}_0 \left (\|\nabla_x u^\beta\|_{L^\infty{((0, T) \times \mathbb{T}^2 }) } ^2 + 2 \right ) T \right ) \right ) .
\end{split}
\Ee
Since $\delta_0$ satisfies \eqref{delta0}, we see that for sufficiently small $\e$, $\sqrt{\mathcal{E}(T_*)}, \mathcal{F}(T_*)$ satisfies \eqref{assumption<T}. Therefore, $T_* = T$ and we proved the claim.

\section{Vorticity Convergence of the approximate solutions of Euler}

 \subsection{Stability of regular Lagrangian flow when the vorticity is unbounded}

To study the stability of the regular Lagrangian flow when the vorticities do not belong to $L^\infty$, we adopt the functional used in \cite{ALM,CDe,BC2013}: for $(u^{\beta_i},X^{\beta_i})$ solving \eqref{ODE:X_beta}, 
\Be\label{Lambda}
\Lambda (s;t) =\Lambda^{\beta_1, \beta_2} (s;t): = \int_{\T^2} \log \Big(1 + \frac{|X^{\beta_1}(s;t,x) - X^{\beta_2} (s;t,x)|}{\lambda}  \Big) \dd x,
\Ee
where we again abused the notation 
\Be \label{geodistance}
|X^{\beta_1}(s;t,x) - X^{\beta_2} (s;t,x)| = \mathrm{dist}_{\mathbb{T}^2} (X^{\beta_1}(s;t,x),X^{\beta_2} (s;t,x)), 
\Ee
that is, the geodesic distance between $X^{\beta_1}(s;t,x)$ and $X^{\beta_2} (s;t,x)$. We note that 
\Be\label{Lambda|t}
\Lambda ({t}\color{black};t)=0
\Ee
due to the last condition in both \eqref{dX} and \eqref{ODE:X_beta}. From \eqref{dX} and \eqref{ODE:X_beta},	a direct computation yields that 
\begin{align}
	|	\dot{\Lambda} (s;t) | &\leq \int_{\T^2} \frac{|\dot{X}^{\beta_1} (s)- \dot{X}^{\beta_2}(s)| }{ \lambda + |X^{\beta_1}(s) - X^{\beta_2}(s)|} \dd x \notag\\
	& \leq  \int_{\T^2} \frac{| u^{\beta_1} (s, X^{\beta_1}(s))-  u^{\beta_2}(s, X^{\beta_2} (s)| }{ \lambda + |X^{\beta_1}(s) - X^{\beta_2}(s)|} \dd x\notag\\
	& \leq \int_{\T^2} \frac{| u^{\beta_1} (s, X^{\beta_1}(s))-  u^{\beta_1} (s, X^{\beta_2} (s)| }{ \lambda + |X^{\beta_1}(s) - X^{\beta_2}(s)|} \dd x \label{dL_1}
	\\
	&+ \int_{\T^2} \frac{| u^{\beta_1} (s, X^{\beta_2}(s))-  u^{\beta_2}(s, X^{\beta_2} (s)| }{ \lambda + |X^{\beta_1}(s) - X^{\beta_2}(s)|} \dd x .\label{dL_2}
\end{align}

\begin{proposition}[\cite{CDe,BC2013}]\label{prop_stab}
Let $(u^{\beta_i}, \o^{\beta_i})$ satisfies \eqref{vorticity_beta}, \eqref{BS_beta}, \eqref{Lag_beta}, and $X^{\beta_i}$ be the regular Lagrangian flow of \eqref{ODE:X_beta} for $i=1,2$. Suppose $ \| u^{\beta_1} - u^{\beta_2} \|_{L^1((0,T) ; L^1(\T^2))} \ll 1$. Then 
	\Be\label{stab_rLf}
	\begin{split}
 &	\|  X^{\beta_1}(s;t,\cdot) - X^{\beta_2} (s;t,\cdot) \|_{L^1(\T^2)} \\
  &\lesssim  
	\frac{1+ 	\| \nabla u^{\beta_1}\|_{L^1((0,T) ; L^p(\T^2))} 
}{|\log  \| u ^{\beta_1}- u^{\beta_2} \|_{L^1((0,T) ; L^1(\T^2))}  |} \ \ \ \text{for} \ \ p >1.
\end{split}
	\Ee

For $p=1$, for every $\delta>0$ there exists $C_\delta>0$ such that for every $\gamma>0$
\Be
\begin{split}\label{stab_1}
	&	\mathscr{L}^2 (\{x \in \T^2: |X^{\beta_1}(s;t,x)- X^{\beta_2}(s;t,x)|>\gamma\}) 
	\\
	&\leq  
	\frac{e^{\frac{4C_\delta}{\delta}}}{\frac{4C_\delta}{\delta}} \frac{ \| u^{\beta_1} - u^{\beta_2}\|_{L^1 ((0,T); L^1(\T^2))}}{\gamma}
	+ \e
\end{split}
\Ee
holds.
\end{proposition}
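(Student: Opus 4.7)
The argument follows the Crippa--De Lellis--Bohun--Bouchut strategy: control the logarithmic functional $\Lambda(s;t)$ defined in \eqref{Lambda} and then convert that control into a distance estimate via a Chebyshev-type inequality. Since $\Lambda(t;t)=0$ by \eqref{Lambda|t}, the problem reduces to bounding $\dot\Lambda(s;t)$ and integrating backward from $s=t$, i.e.\ to estimating the two terms \eqref{dL_1} and \eqref{dL_2} that the excerpt has already isolated.

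The term \eqref{dL_2} is immediate: replacing the denominator by $\lambda$ and using the compression property \eqref{compression} along $X^{\beta_2}$ yields
\begin{equation*}
\eqref{dL_2}\leq \frac{1}{\lambda}\int_{\T^2}|u^{\beta_1}(s,X^{\beta_2}(s))-u^{\beta_2}(s,X^{\beta_2}(s))|\,\dd x\leq \frac{\mathfrak{C}}{\lambda}\,\|u^{\beta_1}(s)-u^{\beta_2}(s)\|_{L^1(\T^2)}.
\end{equation*}
For \eqref{dL_1} one invokes the Crippa--De Lellis pointwise inequality: for $u\in W^{1,p}(\T^2)$ with $p>1$,
\begin{equation*}
\frac{|u(x)-u(y)|}{|x-y|}\lesssim M|\nabla u|(x)+M|\nabla u|(y)\qquad\text{a.e.},
\end{equation*}
with $M$ the Hardy--Littlewood maximal operator. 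Bounding the denominator below by $|X^{\beta_1}-X^{\beta_2}|$, applying \eqref{compression} along both flows, and using $L^p$-boundedness of $M$ for $p>1$ gives $\eqref{dL_1}\lesssim \|\nabla u^{\beta_1}(s)\|_{L^p(\T^2)}$. Integrating from $s$ to $t$,
\begin{equation*}
\Lambda(s;t)\lesssim \|\nabla u^{\beta_1}\|_{L^1((0,T);L^p(\T^2))}+\frac{\mathfrak{C}}{\lambda}\,\|u^{\beta_1}-u^{\beta_2}\|_{L^1((0,T);L^1(\T^2))}.
\end{equation*}

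To derive \eqref{stab_rLf}, a Chebyshev-type inequality on the superlevel set yields, for any $\gamma>0$,
\begin{equation*}
\mathscr{L}^2\!\left(\{|X^{\beta_1}(s;t,\cdot)-X^{\beta_2}(s;t,\cdot)|>\gamma\}\right)\leq \frac{\Lambda(s;t)}{\log(1+\gamma/\lambda)};
\end{equation*}
bounding $\|X^{\beta_1}-X^{\beta_2}\|_{L^1}\leq \gamma+\mathrm{diam}(\T^2)\,\mathscr{L}^2(\{|X^{\beta_1}-X^{\beta_2}|>\gamma\})$ and optimizing (e.g.\ $\lambda=\|u^{\beta_1}-u^{\beta_2}\|_{L^1_{t,x}}^{1/2}$ and $\gamma$ a suitable fractional power so that $\log(1+\gamma/\lambda)\simeq|\log\|u^{\beta_1}-u^{\beta_2}\|_{L^1_{t,x}}|$) produces exactly \eqref{stab_rLf}. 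The smallness assumption $\|u^{\beta_1}-u^{\beta_2}\|_{L^1((0,T);L^1)}\ll1$ is used only to guarantee that the logarithm is large enough to make this optimization meaningful.

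The main obstacle is the endpoint $p=1$, where $M$ is only of weak type $(1,1)$ and the above $L^p$ bound fails. To reach \eqref{stab_1} I would replace the maximal function step by the truncation argument of Bohun--Bouchut--Crippa: split $\nabla u^{\beta_1}$ at a level $R=R(\delta)$ so that the high part is small in $L^1$ and the low part enjoys a quasi-Lipschitz estimate in $L^1$-in-time. The low-part contribution to \eqref{dL_1} is handled as in the $p>1$ case and produces the factor $C_\delta/\delta$ in the exponential through the quasi-Lipschitz constant, while the high part is absorbed into an exceptional set of prescribed measure $\e$ (arising from the regularity threshold chosen in the truncation). Combining with the Chebyshev step above yields \eqref{stab_1}. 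The delicate point throughout is ensuring that $C_\delta$ and the exceptional measure depend only on $\delta$ and on quantities controlled uniformly in $\beta_1,\beta_2$, as this uniformity is what makes the stability usable when later passing to the limit $\beta_i\to 0$.
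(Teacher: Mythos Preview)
Your $p>1$ argument is correct and matches the paper's proof essentially line by line (the paper takes $\lambda=\|u^{\beta_1}-u^{\beta_2}\|_{L^1_{t,x}}$ and $\gamma=\sqrt{\lambda}$, which is equivalent to your optimization).

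The $p=1$ sketch, however, has a real gap. You propose to ``split $\nabla u^{\beta_1}$ at a level $R=R(\delta)$ so that the high part is small in $L^1$.'' But when $\omega\in L^1$ only, $\nabla u$ is merely in $L^{1,\infty}$ (it is a Calder\'on--Zygmund singular integral of $\omega$), not in $L^1$; a level-set truncation of $\nabla u$ therefore does \emph{not} produce a tail that is small in $L^1$, and the scheme you describe stalls at this point. The paper (following \cite{BC2013}) does the decomposition at the \emph{vorticity} level: it first proves equi-integrability of $\{\omega^\beta\}$ (this is where uniformity in $\beta$ enters) and then writes $\omega^{\beta_1}=\omega_1+\omega_2$ with $\|\omega_1\|_{L^1}\le\delta$ and $\|\omega_2\|_{L^2}\le C_\delta$. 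The second essential ingredient you are missing is the operator $\tilde M$ of \cite{BC2013}, which sends $\omega\mapsto \tilde M\nabla(\nabla^\perp(-\Delta)^{-1}\omega)$ and is bounded $L^1\to L^{1,\infty}$ and $L^2\to L^2$; this is what replaces the Hardy--Littlewood maximal function and survives the singular integral. The $L^2$ piece $\omega_2$ then contributes $C_\delta$ to $\dot\Lambda$, while the $L^1$-small piece $\omega_1$ contributes a term controlled in $L^{1,\infty}$ by $\delta$ and in $L^{p,\infty}$ (for some $p\in(1,2)$, via fractional integration on $u_1$) by $\delta/\lambda$; a logarithmic interpolation $\|g\|_{L^1}\lesssim \|g\|_{L^{1,\infty}}\big(1+\log_+(\|g\|_{L^{p,\infty}}/\|g\|_{L^{1,\infty}})\big)$ turns this into an $L^1$ bound of size $\delta(1+|\log\lambda|)$. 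The specific choice $\lambda=(e^{4C_\delta/\delta}-1)^{-1}\gamma$ then yields \eqref{stab_1}. None of these three devices---equi-integrability of the vorticity, the composed maximal/singular operator, and the $L^{1,\infty}$--$L^{p,\infty}$ log-interpolation---appears in your outline, and without them the $p=1$ endpoint cannot be closed.
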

For the convenience of the reader we provide a sketch of the argument. The argument follows the line of \cite{CDe} for $p>1$, and that of \cite{BC2013} for $p=1$.
\begin{proof}
	For \eqref{dL_2}, using \eqref{compression}, we have 
	\Be\label{est:dL_2}
	\begin{split}
		\eqref{dL_2} &\leq \frac{1}{\lambda} \int_{\T^2}  | u^{\beta_1} (s, X^{\beta_2}(s;t,x))-  u^{\beta_2}(s, X^{\beta_2} (s;t,x)|   \dd x \\
		& \leq \frac{\mathfrak{C}}{\lambda} \| u^{\beta_1}(s, \cdot) - u^{\beta_2}(s, \cdot) \|_{L^1(\T^2)}
	\end{split}
	\Ee
	with common compressibility bound $\mathfrak{C} = 1$. In the rest of the proof, we estimate \eqref{dL_1}. 
	
\textit{Step 1. The case of $p>1$. } Recall that the maximal function of $u$ is given by
\Be\label{Max_f}
M u(x) = \sup_{\e>0} \fint_{B_\e (x)}  |u(y) | \dd y=  \sup_{\e>0} \frac{1}{ \mathscr{L}^2 (B_\e (x)) } \int_{B_\e (x)}  |u(y) | \dd y.
\Ee
We have the following (e.g. \cite{H1996}, Section 2):
		\begin{align}
		|u(x) - u(y)| &\lesssim |x-y| \{(M\nabla u) (x) + (M \nabla u) (y)\} \ \  \ \text{a.e. } x,y \in \T^2,\label{DQ}\\
			\| Mw \|_{L^p(\T^2)} &\lesssim \| w\|_{L^p (\T^2)} \ \ \ \text{for} \ \ p \in (1,\infty].
		\label{Max_ineq}
		\end{align}
Now we bound \eqref{dL_1} for $p>1$, using \eqref{DQ} and \eqref{Max_ineq}, as
\Be\label{est:dL_1_p}
\begin{split}
\eqref{dL_1} &\leq \int_{\T^2 }\{  {M}  \nabla u^{\beta_1}    (s, X^{\beta_1} (s;t,x))
+ {M}  \nabla u^{\beta_1}    (s, X^{\beta_2}  (s;t,x))\}
 \dd x \\
 & \lesssim  \| \nabla u^{\beta_1} \|_{L^p(\T^2)} \ \ \text{for} \ \ p \in (1, \infty].
\end{split}\Ee

Using the above \eqref{est:dL_1_p}, \eqref{est:dL_2}, together with \eqref{Lambda|t}, we derive that 
\Be
\begin{split}
\label{est:Lambda}
\Lambda(s;t ) 
& \lesssim  \| \nabla u^{\beta_1} \|_{L^1 ((0,T); L^p(\T^2))}\\
& + \frac{1}{\lambda}
\| u^{\beta_1} - u^{\beta_2} \|_{L^1((0,T) ; L^1(\T^2))} \ \ \text{for all }  \  (s, t) \in [0,t] \times [0,T]. 
\end{split}
\Ee
On the other hand, for any $(s, t) \in [0,t] \times [0,T]$
\Be\label{lower_Lambda}
\begin{split}
	\mathbf{1}_{|X^{\beta_1}(s;t,x) - X^{\beta_2} (s;t,x)| \geq \gamma} 	\log \Big(1 + \frac{|X^{\beta_1}(s;t,x) - X^{\beta_2} (s;t,x)|}{\lambda}  \Big) \geq   \log \Big(1+\frac{ \gamma}{ {\lambda}}\Big)
	.  
\end{split}
\Ee
Then \eqref{lower_Lambda} with $\gamma = \sqrt{\lambda}$, together with the definition \eqref{Lambda}, implies that 
\Be
\begin{split}\label{L2_control}
	 \mathscr{L}^2 (\{ x \in \T^2: |X^{\beta_1}(s;t,x) - X^{\beta_2} (s;t,x)|  \geq \sqrt{\lambda} \}) 
  \leq \frac{1}{|\log \sqrt{\lambda}|} \Lambda (s;t)
\end{split}
\Ee
Therefore, by applying \eqref{est:Lambda} to \eqref{L2_control}, together with $\mathscr{L}^2(\T^2) = 1$ and $|x-y| \leq \sqrt{2}$ for $x,y \in \T^2$, we establish the stability:
\Be\notag
\begin{split}
	&	\|  X^{\beta_1}(s;t,\cdot) - X^{\beta_2}(s;t,\cdot) \|_{L^1(\T^2)} = \int_{\T^2} |X^{\beta_1}(s;t,x) - X^{\beta_2} (s;t,x) |\dd x\\
	& = \int_{|X^{\beta_1}(s;t,\cdot) - X^{\beta_2}(s;t,\cdot)| \leq \sqrt{\lambda} } +  \int_{|X^{\beta_1}(s;t,\cdot) - X^{\beta_2} (s;t,\cdot)| \geq \sqrt{\lambda} } \\
	&\leq \sqrt{\lambda} + \frac{\sqrt{2}}{|\log \sqrt{\lambda}|} \Lambda(s;t) \\
	& \lesssim \sqrt{\lambda} +  \frac{1}{|\log \sqrt{\lambda}|}
	\Big\{
	\| \nabla u^{\beta_1} \|_{L^1((0,T) ; L^p(\T^2))} + \frac{1}{\lambda} \| u ^{\beta_1}- u^{\beta_2} \|_{L^1((0,T) ; L^1(\T^2))}
	\Big\}.
\end{split}
\Ee
Choosing 
\Be\label{choice_lambda}
\lambda = \| u^{\beta_1} - u^{\beta_2} \|_{L^1((0,T) ; L^1(\T^2))},
\Ee
 we have that 
\Be
\begin{split}\label{stability_p}
&\|  X^{\beta_1}(s;t,\cdot) - X^{\beta_2}(s;t,\cdot) \|_{L^1(\T^2)} \\
&\lesssim 
\| u^{\beta_1} - u^{\beta_2} \|_{L^1((0,T) ; L^1(\T^2))}^{1/2}+ 
\frac{	\| \nabla u^{\beta_1} \|_{L^1((0,T) ; L^p(\T^2))}}{|\log  \| u^{\beta_1} - u^{\beta_2} \|_{L^1((0,T) ; L^1(\T^2))}  |}.
\end{split}
\Ee
For $\| u^{\beta_1} - u^{\beta_2} \|_{L^1((0,T) ; L^1(\T^2))}  \ll 1$, we prove \eqref{stab_rLf}.

\medskip

\textit{Step 2. The case of $p=1$. }Note that $p=1$ fails \eqref{Max_ineq}, but $\| M u  \|_{L^{1,\infty} (\T^2)} \lesssim \| u \|_{L^1 (\T^2)}$ only holds instead of \eqref{Max_ineq}. Here, we recall the quasi-norm of the Lorentz space $L^{p, q}$:
\Be\label{Lorentz}
\begin{split}
\| u \|_{L^{p,q}(\O, \mathrm{m})} &: = p^{1/q} \| \lambda  \mathscr{L}^2(\{x \in \T^2: |u(x)|>\lambda\})^{1/p} \|_{L^q (\R_+, \frac{\dd \lambda}{\lambda})},\\
\| u \|_{L^{p, \infty}(\T^2)}^p&= 
\| u \|_{L^{p, \infty}(\T^2, \mathscr{L}^2)}^p = \sup_{\lambda>0}
\{\lambda^p \mathscr{L}^2 (\{ x \in \T^2: |u(x)|> \lambda\})\} .
\end{split}\Ee

For $p=1$, there exists a map $\tilde{M}$, defined as in Definition 3.1 of \cite{BC2013} with choice of functions in Proposition 4.2 of \cite{BC2013} such that (Theorem 3.3 of \cite{BC2013}), 
\Be\label{o_to_M}
\tilde{M}: \o \mapsto \tilde M \nabla (\nabla^\perp (\Delta)^{-1} \o) \geq 0 
 \ \ \text{is bounded in $L^2  (\T^2)\rightarrow L^2(\T^2)$ and $L^1(\T^2)\rightarrow L^{1, \infty}(\T^2)$}. 
\Ee
Note that if $(u, \o)$ satisfying \eqref{BS} in the sense of distributions then $\tilde M \nabla (B*\o)=  \tilde M \nabla  u$. The argument follows the line of \cite{BC2013}, with translation to the periodic domain by Proposition \ref{PeriodicKernel}.

\begin{proposition}
	There exists an operator  $ \o \rightarrow U (\o)$, which will be denoted by $\tilde{M} \nabla u$, defined either on $L^1 (\mathbb{T}^2)$ or $L^2 (\mathbb{T}^2 )$, satisfying
	\Be\notag
	\begin{split}
		& U(\o) (x) \ge 0, \\
		&\| U(\o) \|_{L^{1, \infty} (\mathbb{T}^2 )} \lesssim \| \o \|_{L^1 (\mathbb{T}^2 ) }, \\
		& \| U(\o )\|_{L^2 (\mathbb{T}^2 ) } \lesssim \| \o \|_{L^2 (\mathbb{T}^2 ) }.
	\end{split}
	\Ee
	Also, if $\o \in L^1 (\mathbb{T}^2)$, and $u = B*\o$, then there is a Lebesgue measure 0 set $\mathcal{N}$ such that
	\Be\notag
	|u(x) - u(y) | \le |x-y| (U(x) + U(y) ), x, y \in \mathbb{T}^2 \setminus \mathcal{N}.
	\Ee
\end{proposition}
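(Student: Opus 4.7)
The plan is to reduce the statement to the Euclidean construction of Bouchut-Crippa (\cite{BC2013}, Proposition 4.2) by exploiting the kernel decomposition in Proposition \ref{PeriodicKernel}. Fix a smooth cutoff $\chi \in C_c^\infty(\R^2)$ with $\mathrm{supp}\,\chi \subset B_{\mathfrak{r}/2}(0)$ and $\chi \equiv 1$ on $B_{\mathfrak{r}/4}(0)$, and decompose
\begin{equation*}
\mathbf{b}(x) = \chi(x)\,\mathbf{b}^{\mathrm{CZ}}(x) + R(x), \qquad \mathbf{b}^{\mathrm{CZ}}(z) := \frac{1}{2\pi}\frac{z^\perp}{|z|^2},
\end{equation*}
where, by Proposition \ref{PeriodicKernel}, the remainder $R := \mathbf{b} - \chi\,\mathbf{b}^{\mathrm{CZ}}$ extends to a bounded $C^1$ function on $\T^2$. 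Correspondingly split $u = u_s + u_r$ with $u_s := (\chi\,\mathbf{b}^{\mathrm{CZ}}) *_{\R^2} \tilde{\omega}$, where $\tilde{\omega}$ is the periodic extension of $\omega$ to $\R^2$, and $u_r := R * \omega$; because $\mathrm{supp}\,\chi$ sits strictly inside a fundamental cell, $u_s$ is periodic and descends to a well-defined function on $\T^2$.

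For the singular piece, I would invoke \cite{BC2013}, Proposition 4.2, applied to the compactly supported singular integral with kernel $\chi\,\mathbf{b}^{\mathrm{CZ}}$ on $\R^2$: this produces a nonnegative function $U_s$, built from a family of smoothly truncated convolutions of $\nabla(\chi\,\mathbf{b}^{\mathrm{CZ}})$ together with a Hardy-Littlewood maximal function of $\tilde{\omega}$, satisfying per fundamental cell the bounds $\|U_s\|_{L^{1,\infty}} \lesssim \|\omega\|_{L^1(\T^2)}$ and $\|U_s\|_{L^2} \lesssim \|\omega\|_{L^2(\T^2)}$, together with the pointwise Lusin-Lipschitz inequality $|u_s(x) - u_s(y)| \le |x-y|(U_s(x) + U_s(y))$ off a Lebesgue-null set. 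Every object in the construction is periodicity-preserving when applied to the periodic $\tilde\omega$, so $U_s$ is itself periodic and descends to $\T^2$ with the same bounds (using $|\T^2| = 1$).

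For the smooth piece, boundedness of $R$ and $\nabla R$ on $\T^2$ together with Young's inequality give $\|\nabla u_r\|_{L^\infty(\T^2)} \lesssim \|\omega\|_{L^1(\T^2)}$ and $\|\nabla u_r\|_{L^2(\T^2)} \lesssim \|\omega\|_{L^2(\T^2)}$, so the mean value theorem along geodesics yields $|u_r(x) - u_r(y)| \le C \|\omega\|_{L^1(\T^2)} |x-y|$ for all $x, y \in \T^2$, with $|x-y|$ the geodesic distance. I would then set
\begin{equation*}
U(\omega)(x) := U_s(x) + C\,\|\omega\|_{L^1(\T^2)}.
\end{equation*}
Since $|\T^2| = 1$, the constant contribution satisfies the required $L^{1,\infty}$ and $L^2$ bounds; summing gives nonnegativity and the two operator estimates. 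The pointwise Lusin-Lipschitz inequality on $\T^2$ follows by triangle inequality, lifting $x, y \in \T^2$ to Euclidean representatives whose $\R^2$-distance realizes the geodesic distance in $\T^2$ and then combining the estimates for $u_s$ and $u_r$.

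The main technical obstacle is verifying that the Bouchut-Crippa dyadic construction, formulated for full Calderon-Zygmund operators on $\R^n$, transfers faithfully to the truncated kernel $\chi\,\mathbf{b}^{\mathrm{CZ}}$ and that the resulting $U_s$ is genuinely periodic rather than merely bounded on each cell. The first point is essentially bookkeeping: the compact support of $\chi$ only truncates their dyadic sum above scale $\mathfrak{r}$, and no new singularity is introduced. The second follows because both the family of smoothly truncated convolutions and the Hardy-Littlewood maximal function of $\tilde\omega$ commute with the $\mathbb{Z}^2$-translations, so $U_s$ inherits periodicity from $\tilde\omega$.
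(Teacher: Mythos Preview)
Your strategy---reduce to Bouchut--Crippa on $\R^2$ via the kernel decomposition of Proposition \ref{PeriodicKernel}---is exactly the paper's, and your treatment of the smooth remainder $u_r$ is fine. The gap is in the singular piece: you assert that the Bouchut--Crippa construction applied to the periodic extension $\tilde\omega$ yields $U_s$ with $\|U_s\|_{L^{1,\infty}}\lesssim\|\omega\|_{L^1(\T^2)}$ ``per fundamental cell'', justified only by periodicity and $|\T^2|=1$. But Theorem 3.3 of \cite{BC2013} is stated for the full Calder\'on--Zygmund kernel $K_0$ acting on $L^1(\R^2)$ data, and $\tilde\omega\notin L^1(\R^2)$. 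Periodicity of $U_s$ tells you it descends to $\T^2$, not that its weak-$L^1$ norm on a cell is controlled. Nor does the truncation $\chi\,\mathbf b^{\mathrm{CZ}}$ help directly: the resulting kernel is no longer homogeneous of degree $-2$, so the cancellation hypothesis in \cite{BC2013} is not met, and your remark that this is ``essentially bookkeeping'' understates the issue.

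The paper resolves this by making the opposite choice: rather than truncating the kernel and keeping periodic data, it keeps the full $K_0$ and \emph{compactifies the data}, writing $\nabla_x u(x)=D(x)+K_0\star_{\R^2}(\omega\chi_{B_{100}})(x)$ on a dilated cell $[-5/2,5/2]^2$, with $D$ bounded by $\|\omega\|_{L^1(\T^2)}$. Now $\omega\chi_{B_{100}}\in L^1(\R^2)$ with norm $\lesssim\|\omega\|_{L^1(\T^2)}$, so Theorem 3.3 of \cite{BC2013} applies verbatim to the second term, while Young's inequality handles $M_{\{\mathfrak T^{\xi,j}\}}(D)$. The price is that the resulting Euclidean function is not periodic, so to recover a function on $\T^2$ satisfying the Lusin--Lipschitz inequality with respect to geodesic distance, the paper defines $U(x)$ as a sum over the nine lifts of $x$ in $[-3/2,3/2]^2$---this replaces your periodicity argument for passing between Euclidean and geodesic distance. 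Your approach could be repaired along the same lines (localise $\tilde\omega$ to a large ball before invoking \cite{BC2013}), but once you do that you have essentially reproduced the paper's proof.
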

\begin{proof}
	We first identify $x \in \mathbb{T}^2$ with $x \in \left [ -\frac{1}{2}, \frac{1}{2} \right ]^2 \subset \mathbb{R}^2$, denote $K(y) := \nabla_y ^2 G(y) \chi_{\left [ -\frac{1}{2}, \frac{1}{2} \right ]^2} (y), y \in \mathbb{R}^2$, and define
	\Be\notag
	K_0 (y) = \frac{1}{4\pi} \frac{ y \otimes y - \frac{1}{2} |y|^2 \mathbb{I}_2 }{|y|^4}, y \in \mathbb{R}^2.
	\Ee
	Also, we regard $\o$ and $u$ as a $\mathbb{Z}^2$-periodic function in $\mathbb{R}^2$: $\o(x+m) = \o(x)$, $u(x+m) = u(x)$ for $m \in \mathbb{Z}^2$.  Now, for $x \in \left [ -\frac{5}{2}, \frac{5}{2} \right ]^2\subset \mathbb{R}^2$, $\int_{\mathbb{R}^2} K(y) \o (x-y) dy$ is well-defined as it is exactly $(\nabla^2 G *_{\mathbb{T}^2} \o) (x - m)$ for some $m\in \mathbb{Z}^2$ so that $x -m \in \left [ -\frac{1}{2}, \frac{1}{2} \right ]^2$. Then we see that $D(x)$ defined by
	\Be
	\begin{split}\notag
		D(x) &:= \int_{\mathbb{R}^2} K(y) \o (x-y) - K_0 (y) \o(x-y) \chi_{B_{100} (0) } (x-y) dy \\
		&= \int_{\mathbb{R}^2} (K(y) \chi_{\left [ -\frac{1}{2}, \frac{1}{2} \right ]^2} (y) - K_0 (y) \chi_{B_{100} (x) } (y) ) \o (x-y) dy
	\end{split}
	\Ee
	for $x \in \left [ -\frac{5}{2}, \frac{5}{2} \right ]$, and $D(x) := 0$ for $x \notin \left [ -\frac{5}{2}, \frac{5}{2} \right ]$ is bounded. First, since $ B_{\mathfrak{r} } (0) \subset B_{100} (x) \cap \left [ -\frac{1}{2}, \frac{1}{2} \right ]^2$ and thus $(K(y) \chi_{\left [ -\frac{1}{2}, \frac{1}{2} \right ]^2} (y) - K_0 (y) \chi_{B_{100} (x) } (y) )$ is bounded for $y \in B_{\mathfrak{r} } (0)$. For $y \notin B_{\mathfrak{r} }(0)$, $(K(y) \chi_{\left [ -\frac{1}{2}, \frac{1}{2} \right ]^2} (y) - K_0 (y) \chi_{B_{100} (x) } (y) )$ is bounded as well. Finally, $(K(y) \chi_{\left [ -\frac{1}{2}, \frac{1}{2} \right ]^2} (y) - K_0 (y) \chi_{B_{100} (x) } (y) )$ is supported on $B_{100} (x ) $, thus we have
	\Be\notag
	|D(x) | \lesssim \int_{\mathbb{R}^2} \chi_{B_{100} (x) } (y) |\o (x-y) | dy \lesssim C \| \o \|_{L^1 (\mathbb{T}^2)}.
	\Ee
	Furthermore, this implies $|D | \le C\| \o \|_{L^1 (\mathbb{T}^2)} \chi_{\left [ -\frac{5}{2}, \frac{5}{2} \right ]^2}$ so in fact $D \in L^1$ as well. Therefore, we have 
	\Be \label{nabxbound}
	\nabla_x u (x) = D(x) + K_0 \star_{\mathbb{R}^2} (\o \chi_{B_{100} (0) } ) (x), x \in \left [ -\frac{5}{2}, \frac{5}{2} \right ]^2.
	\Ee
	
	Next, we closely follow the argument of Proposition 4.2 of \cite{BC2013}. Let $\bar{h}$ be a smooth, nonnegative function, supported on $B_{\frac{1}{100} } (0)$ with $\int_{\mathbb{R}^2} \bar{h}(y) dy = 1$. Also, we denote $\bar{h}_r (x) = \frac{1}{r^2} \bar{h} \left (\frac{x}{r} \right )$ for $x \in \mathbb{R}^2$ and $r > 0$. Finally, for $\xi \in \mathbb{S}^1$ and $j=1,2$ we define
	\Be\notag
	\mathfrak{T}^{\xi, j} (w) := h (\frac{\xi}{2} - w) w_j,
	\Ee
	and $\mathfrak{T}^{\xi, j}_r$ is similarly defined for $r>0$. Now let $x, y \in \mathbb{T}^2 = \left [ -\frac{1}{2}, \frac{1}{2} \right ]^2$. Then there exists $\tilde{y} \in \left [ -\frac{3}{2}, \frac{3}{2} \right ]$, $\tilde{y} - y \in \mathbb{Z}^2$, such that the projection of line segment of $\tilde{y}$ and $x$ in $\mathbb{R}^2$ is the geodesic connecting $x, y$ in $\mathbb{T}^2$. Then we have
	\Be\notag
	\begin{split}
	&u(x) - u(y) = u(x) - u (\tilde{y} ) \\
	&= \int_{\mathbb{R}^2} \bar{h}_{|x-\tilde{y}|} \left ( z - \frac{x+\tilde{y} }{2} \right ) (u(x) - u(z) ) dz + \int_{\mathbb{R}^2} \bar{h}_{|x-\tilde{y} |} \left ( z - \frac{x+\tilde{y}}{2} \right ) (u(z) - u(y) )dz.
	\end{split}
	\Ee
	We focus on the first term: the other gives similar contribution. Following the argument of Proposition 4.2 of \cite{BC2013}, we have
	\Be\notag
	\int_{\mathbb{R}^2} \bar{h}_{|x-\tilde{y}|} \left ( z - \frac{x+\tilde{y} }{2} \right ) (u(x) - u(z) ) dz = |x-\tilde{y}| \sum_{j=1} ^2 \int_0 ^1 \int_{\mathbb{R}^2} \mathfrak{T}_{s|x-\tilde{y}|} ^{\frac{x-y}{|x-y|}, j } (w) (\partial_j u) (x-w) dw ds.
	\Ee
	Note that $\mathfrak{T}_{s|x-\tilde{y}|} ^{\frac{x-\tilde{y}}{|x-\tilde{y}|}, j }$ is supported on $B_{\frac{1}{100} s|x-\tilde{y} | } \left (\frac{x-y}{2 |x-\tilde{y}|} \right ),$ and $|x - \tilde{y} | \le \frac{\sqrt{2}}{2}$, so if $w \in B_{\frac{1}{100} s|x-\tilde{y} | } \left (\frac{x-\tilde{y}}{2 |x-\tilde{y}|} \right )$, $|w| \le \frac{2}{3}$ and thus $x - w \in \left [ -\frac{5}{2}, \frac{5}{2} \right ]^2$, which implies that \eqref{nabxbound} is satisfied at $x-w$. (Similar consideration shows that at $\tilde{y} - w$ \eqref{nabxbound} is satisfied.) Therefore,
	\Be\notag
	\begin{split}
		& \left | \int_{\mathbb{R}^2} \bar{h}_{|x-\tilde{y}|} \left ( z - \frac{x+\tilde{y} }{2} \right ) (u(x) - u(z) ) dz \right | \le |x - \tilde{y} |\sum_{j=1} ^2 \int_0 ^1 \left | \left ( \mathfrak{T}_{s|x-\tilde{y} | }^{\frac{x-y}{|x-y|}, j } \star_{\mathbb{R}^2} D \right ) (x) \right | ds \\
		& + |x - \tilde{y} | \sum_{j=1} ^2 \int_0 ^1 \left | \left ( \mathfrak{T}_{s|x-\tilde{y} | }^{\frac{x-y}{|x-y|}, j } \star_{\mathbb{R}^2}  \left ( K_0 \star_{\mathbb{R}^2} \o \chi_{B_{100} }  \right ) \right ) (x) \right | ds \\
		& \le |x - \tilde{y} | \sum_{j=1} ^2 \left ( M_{\left \{ \mathfrak{T}^{\xi, j} | \xi \in \mathbb{S}^1 \right \} }  (D) (x) + M_{\left \{ \mathfrak{T}^{\xi, j} | \xi \in \mathbb{S}^1 \right \} }  ( K_0 \star_{\mathbb{R}^2} \o \chi_{B_{100} } ) (x) \right ),
	\end{split}
	\Ee
	where
	\Be\notag
	M_{\left \{ \mathfrak{T}^{\xi, j} | \xi \in \mathbb{S}^1 \right \} }  ( g ) (x) = \sup_{\left \{ \mathfrak{T}^{\xi, j} | \xi \in \mathbb{S}^1 \right \} } \sup_{r>0} \left |  \left ( \mathfrak{T}^{\xi, j} _r \star_{\mathbb{R}^2} g \right ) (x) \right |, x \in \mathbb{R}^2.
	\Ee
	By Theorem 3.3 of \cite{BC2013}, we have 
	\Be\notag
	\| M_{\left \{ \mathfrak{T}^{\xi, j} | \xi \in \mathbb{S}^1 \right \} }  ( K_0 \star_{\mathbb{R}^2} \o \chi_{B_{100} } )  \|_{L^{1, \infty } (\mathbb{R}^2 ) } \le  C \| \o \chi_{B_{100} } \|_{L^1 (\mathbb{R}^2 ) } \le C \| \o \|_{L^1 (\mathbb{T}^2 ) }.
	\Ee
	Also, by Young's inequality, we have
	\Be\notag
	\| M_{\left \{ \mathfrak{T}^{\xi, j} | \xi \in \mathbb{S}^1 \right \} }  (D)  \|_{L^{1, \infty} (\mathbb{R}^2 ) } \le\| M_{\left \{ \mathfrak{T}^{\xi, j} | \xi \in \mathbb{S}^1 \right \} }  (D)  \|_{L^{\infty} (\mathbb{R}^2 ) }  \le  C \| D \|_{L^\infty (\mathbb{R}^2 ) } \le C \| \o \|_{L^1 (\mathbb{T}^2 )}.
	\Ee
	Finally, for $x \in \mathbb{T}^2$ identified with $\left [ \frac{-1}{2}, \frac{1}{2} \right ]^2$, we define 
	\Be\notag
	U(x) := \sum_{\tilde{x} \in \left [ -\frac{3}{2}, \frac{3}{2} \right ], x - \tilde{x} \in \mathbb{Z}^2 } \sum_{j=1} ^2 \left ( M_{\left \{ \mathfrak{T}^{\xi, j} | \xi \in \mathbb{S}^1 \right \} }  (D) (\tilde{x}) + M_{\left \{ \mathfrak{T}^{\xi, j} | \xi \in \mathbb{S}^1 \right \} }  ( K_0 \star_{\mathbb{R}^2} \o \chi_{B_{100} } ) (\tilde{x}) \right ).
	\Ee
	Then obviously for $x, y \in \mathbb{T}^2$
	\Be\notag
	|u(x) - u(y) | \le d_{\mathbb{T}^2} (x,y) (U(x) + U(y) ), 
	\Ee
	and if $U(x) > \lambda$, then for $\tilde{x}_1, \cdots, \tilde{x}_9 \in \left [ -\frac{3}{2}, \frac{3}{2} \right ]^2$ such that $\tilde{x}_j - x \in \mathbb{Z}^2$, at least one of $\tilde{x}_i$ satisfies $$\sum_{j=1} ^2 \left ( M_{\left \{ \mathfrak{T}^{\xi, j} | \xi \in \mathbb{S}^1 \right \} }  (D) (\tilde{x}_i) + M_{\left \{ \mathfrak{T}^{\xi, j} | \xi \in \mathbb{S}^1 \right \} }  ( K_0 \star_{\mathbb{R}^2} \o \chi_{B_{100} } ) (\tilde{x}_i) \right ) > \frac{\lambda}{9},$$
	and therefore
	\Be\notag
	\begin{split}
		&\left \{ x \in \left [ -\frac{1}{2}, \frac{1}{2} \right ]^2 | U(x) > \lambda \right \} \\
		& \subset \bigcup_{m = (a,b), a,b \in \{ -1, 0, 1 \} } \left \{ y \in \left [ -\frac{1}{2}, \frac{1}{2} \right ]^2 + m \big  | M_{\left \{ \mathfrak{T}^{\xi, j} | \xi \in \mathbb{S}^1 \right \} }  (D) (y) + M_{\left \{ \mathfrak{T}^{\xi, j} | \xi \in \mathbb{S}^1 \right \} }  ( K_0 \star_{\mathbb{R}^2} \o \chi_{B_{100} } ) (y) > \frac{\lambda}{9} \right \} \\
		& \subset \left \{ y \in \mathbb{R}^2 | M_{\left \{ \mathfrak{T}^{\xi, j} | \xi \in \mathbb{S}^1 \right \} }  (D) (y) + M_{\left \{ \mathfrak{T}^{\xi, j} | \xi \in \mathbb{S}^1 \right \} }  ( K_0 \star_{\mathbb{R}^2} \o \chi_{B_{100} } ) (y) > \frac{\lambda}{9} \right \} .
	\end{split}
	\Ee
	Therefore, we see that
	\Be\notag
	\| U \|_{L^{1, \infty} (\mathbb{T}^2 ) } \le C \| \o \|_{L^1 (\mathbb{T}^2 ) }
	\Ee
	
	Also, if $\o \in L^2 (\mathbb{T}^2)$, we see that
	\Be\notag
	\begin{split}
		\| U \|_{L^2 (\mathbb{T}^2 ) } & \le C ( \| M_{\left \{ \mathfrak{T}^{\xi, j} | \xi \in \mathbb{S}^1 \right \} }  ( K_0 \star_{\mathbb{R}^2} \o \chi_{B_{100} } )  \|_{L^{2 } (\mathbb{R}^2 ) } + \| M_{\left \{ \mathfrak{T}^{\xi, j} | \xi \in \mathbb{S}^1 \right \} }  (D)  \|_{L^{2} (\mathbb{R}^2 ) } \\
		&\le C (\| \o \chi_{B_{100} } \|_{L^2 (\mathbb{R}^2 ) } + \|D \|_{L^2 (\mathbb{R}^2 ) } \le C \| \o \|_{L^2 (\mathbb{T}^2 ) }
	\end{split}
	\Ee
	by again Theorem 3.3 of \cite{BC2013} and Young's inequality.
\end{proof}

We return to the proof of \eqref{stab_1}. We have (Proposition 4.2 in \cite{BC2013}) 
\Be\label{DQ1}
|u(x) - u(y)| \leq |x-y| \{\tilde{M} \nabla u(x)  + \tilde{M} \nabla u(y)  \}   \ \ \text{a.e.} \ x,y  \in \T^2.
\Ee

Now we check that $\{\o^\beta\}$ of \eqref{Lag_beta} with \eqref{vorticity_beta} is equi-integrable (in the sense of \eqref{o_beta:EI}). Fix any $\e>0$. We choose $\delta>0$ such that
\Be\label{EI_o}
\text{ if $\mathscr{L}^2 (E^\prime)< \delta$ then $ \int_{E^\prime} |\o_0(x)| \dd x< \frac{\e}{2 \mathfrak{C}}$.}
 \Ee 
 From \eqref{vorticity_beta} and \eqref{compression}, for any Borel set $E \subset \T^2$ with $\mathscr{L}^2 (E) < \delta/ \mathfrak{C}$,
\Be
\begin{split}\label{est:o1_1}
&\| \o^\beta (t,x) \|_{L^1 (E)}  =
\| \o_0^\beta (X^\beta (0;t,x)) \|_{L^1 (\{x \in E\})} \\
&\leq  \mathfrak{C}  \int_{X^\beta (t;0,x) \in E} |\o_0^\beta (x) | \dd x\\
& \leq  \mathfrak{C}   \int_{  \R^2}  \Big(\int_{  \T^2}\mathbf{1}_{X^\beta (t;0,x) \in E}
|\o_0(x-y)| \dd x\Big) \varphi^\beta(y)
 \dd y  ,
\end{split}
\Ee 
where $\o_0$ is regarded as a $\mathbb{Z}^2$-periodic function. For $y \in \R^2$, we define 
\Be\notag
\tilde{E}_y:= \{\tilde{x} \in \mathbb{R}^2: X^\beta (t;0,\tilde{x}+y) \in E + \mathbb{Z}^2\} / \mathbb{Z}^2 \subset \T^2. 
\Ee
From \eqref{compression} and the fact that $x \mapsto x-y$ is measure-preserving for fixed $y$, we have 
\Be\label{est:E}
\mathscr{L}^2 (\{\tilde{x} \in \tilde E_y\})= \mathscr{L}^2 (\{x \in \T^2: X^\beta (t;0,x) \in E\})
\leq \mathfrak{C} \mathscr{L}^2 (E)< \delta.
\Ee 
Therefore, applying \eqref{est:E} to \eqref{EI_o}, we have that, from \eqref{est:o1_1}, 
\Be\label{o_beta:EI}
\text{if } \  \mathscr{L}^2 (E )< \delta/\mathfrak{C} \ \text{ then } \ 
\| \o^\beta (t,\cdot) \|_{L^1(E)} 
\leq 
\| \varphi^\beta \|_{L^1 (\R^2)}
 \sup_{y \in \R^2}\mathfrak{C} \int_{\tilde{x} \in \tilde{E}_y} |\o_0 (\tilde{x})| \dd \tilde{x}
< \e. 
\Ee
Since $\o^\beta$ is equi-integrable, for every $\delta>0$ there exists $C_\delta>0$ and a Borel set $A_\delta \subset \T^2$ such that $\o^\beta = \o^\beta_1 + \o^\beta_2$ such that $\| \o_1^\beta \|_{L^1} \leq \delta$ and $\text{supp}(\o_2^\beta) \subset A_\delta$, $\| \o_2^\beta \|_{L^2} \leq C_\delta$ (Lemma 5.8 of \cite{BC2013}, whose proof can be established by noting that equi-integrability with $\sup_\beta \| \o^\beta\|_{L^1} < \infty$ is equivalent to $\lim_{K\rightarrow \infty} \sup_\beta \int_{\{ |\o^\beta| > K \} \cap \T^2}|\o^\beta| dx = 0$). Now apply \eqref{DQ1} to \eqref{dL_1} and use the decomposition of $u^\beta = u^\beta_1 + u^\beta_2$ with $u^\beta_i = \nabla^\perp (-\Delta)^{-1} \o^\beta_i$ to derive that 	\Be\begin{split}\label{def:U_l}
	\eqref{dL_1} \leq &
	\int_{\T^2}U^\lambda_1(s;t,x)  \dd x + 	\int_{\T^2}U^\lambda_2(s;t,x)  \dd x
	,\\
	U^\lambda_i(s;t,x) :=& \min 
	\Big\{ \frac{|u_i^{\beta_1}(s, X^{\beta_1}  (s;t,x))|}{\lambda} +  \frac{|u_i^{\beta_1}(s, X^{\beta_2}(s;t,x))|}{\lambda}
	,\\
	& \ \ \ \ \ \ \ \ 
	\tilde{M} \nabla u_i^{\beta_1} (s, X^{\beta_1}(s;t,x)) + 	\tilde{M} \nabla u_i^{\beta_1} (s, X^{\beta_2}(s;t,x)) 
	\Big\}\geq 0.
\end{split}	\Ee
\hide
\begin{align}
	U^1_\lambda(s;t,x) =& \min 
	\Big\{ \frac{|u(s, X  (s;t,x))|}{\lambda} +  \frac{|u (s, X^\beta (s;t,x))|}{\lambda}
	,\\
	& \ \ \ \ \ \ \ \ 
	\tilde{M} S \o_1 (s, X(s;t,x)) + 	\tilde{M} S \o_1 (s, X^\beta(s;t,x)) 
	\Big\}\\
	U^2_\lambda(s;t,x) =& \min 
	\Big\{ \frac{|u(s, X  (s;t,x))|}{\lambda} +  \frac{|u (s, X^\beta (s;t,x))|}{\lambda}
	,\\
	& \ \ \ \ \ \ \ \ 
	\tilde{M} S \o_2 (s, X(s;t,x)) + 	\tilde{M} S \o_2 (s, X^\beta(s;t,x)) 
	\Big\}
\end{align}
\unhide
For $U_2^\lambda$, we use \eqref{compression} and \eqref{o_to_M} and simply derive that 
\Be\label{estp:U}
\| U_2^\lambda (s;t, \cdot) \|_{L^2(\T^2)}\leq \mathfrak{C} 
\min\Big\{
\frac{
	2\| u^{\beta_1}_2 (s) \|_{L^2(\T^2)} }{\lambda} , \| \o^{\beta_1}_2 \|_{L^2}
\Big\} \leq \mathfrak{C} C_\delta 
\Ee
For $U^\lambda_1$, using \eqref{o_to_M}
\Be\notag
\| U_1^\lambda (s; t, \cdot) \|_{L^{1, \infty}} \lesssim 
\min 
\{ \frac{ \| u_1 ^\beta (s) \|_{L^{1, \infty}}}{\lambda },  \| \o_1 \|_{L^1 (\T^2)} \}  \leq 
\| \o_1 \|_{L^1 (\T^2)}
\leq \delta
\Ee
\Be\notag
\| U^1_\lambda (s; t, \cdot) \|_{L^{p, \infty}} \lesssim \| U^1_\lambda (s; t, \cdot) \|_{L^{p }}  \lesssim 
\min 
\{ \frac{ \| u_1 ^\beta (s) \|_{L^{p}}}{\lambda },  \| \o_1 \|_{L^p (\T^2)} \} \lesssim \frac{ \| u_1 ^\beta(s) \|_{L^{p}}}{\lambda } \lesssim \frac{\delta}{\lambda}.
\Ee
for some $p \in (1,2)$, using fractional integration.

Using the interpolation $
\| g\|_{L^1(\T^2)} \lesssim \| g \|_{L^{1, \infty}}
\Big\{1+ \log \Big( \frac{ \| g \|_{L^{p,\infty}}}{\|g \|_{L^{1, \infty}}}
\Big)
\Big\}$ (Lemma 2.2 of \cite{BC2013}), we end up with 
\Be\label{est1:U}
\| U^1_\lambda (s;t, \cdot) \|_{L^1}\lesssim 
\| U^1_\lambda (s, \cdot) \|_{L^{1, \infty} }
\Big\{1+ \log_+ \Big( \frac{ \|U^1_\lambda (s,\cdot ) \|_{L^{p,\infty}}}{\|U^1_\lambda (s,\cdot ) \|_{L^{1, \infty}}}
\Big)
\Big\}  \lesssim \delta + \delta |\log \lambda|.
\Ee
where we have used that the map $z \rightarrow z(1+\log_+(K/z))$ is nondecreasing for $z \in [0, \infty)$.

 Together with \eqref{est:dL_2}, \eqref{estp:U} and \eqref{est1:U}, we conclude that  
\Be\notag
\begin{split}\notag
&\Lambda(s; t) \le \int_s ^t |\dot{\Lambda} (\tau;t) | \dd \tau \le   \int_0^t\{\eqref{dL_1} + \eqref{dL_2}\} \dd s \\
&\leq \int_0^t\big\{
\| U_\lambda^1 (s;t,\cdot) \|_{L^1(\T^2) }  + \| U_\lambda^2 (s;t,\cdot) \|_{L^2(\T^2)} 
 + \frac{\mathfrak{C}}{\lambda}
  \| u^{\beta_1}  (s, \cdot )- u^{\beta_2} (s, \cdot )\|_{L^1(\T^2)}\big\}
 \dd s
\\
&
 \leq  
 \mathfrak{C}  C_\delta T  + \delta \{1+ |\log \lambda|\} T+
 \frac{\mathfrak{C}}{\lambda}\| u^{\beta_1} - u^{\beta_2} \|_{L^1((0,T);L^1(\T^2))}.
 \end{split}
\Ee
From this inequality, and \eqref{lower_Lambda}, \eqref{Lambda|t}, we derive that 
\Be
\begin{split}\label{X-X_1}
&	\mathscr{L}^2 (\{x \in \T^2: |X^{\beta_1}(s;t,x)- X^{\beta_2}(s;t,x)|>\gamma\}) 
\\
	&\le \frac{\Lambda(s; t)}{\log \left (1 + \frac{\gamma}{\lambda} \right )}\lesssim  \frac{\| u^{\beta_1} - u^{\beta_2}\|_{L^1L^1}}{ \lambda\log (1+ \frac{\gamma}{\lambda})|}  + \frac{C_\delta }{|\log (1+ \frac{\gamma}{\lambda})|}+ \delta
\end{split}
\Ee
for $\lambda, \gamma \in (0, 1/e)$. Here, for the last term, we have used that, for 
$0<\lambda< 1/e$ and $0<\gamma < 1/e$
\Be\notag
\frac{\delta |\log \lambda|}{|\log (1+ \frac{\gamma}{\lambda})|} =  \delta\frac{|\log \lambda|}{ - \log \lambda + \log (\lambda + \gamma)  } =\delta \frac{|\log \lambda|}{|\log \lambda| - |\log(\lambda + \gamma)| } \leq \delta \frac{|\log \lambda|}{|\log \lambda|  } \leq \delta .
\Ee
Choose
\Be\label{choice_lambda1}
\lambda= \lambda_{\delta,    \gamma} =(e^{\frac{ 4C_\delta}{\delta}}-1)^{-1} \gamma  .
\Ee
Note that $\log (1+ \frac{\gamma}{\lambda_{\e, \gamma}}) = 
\log (e^{\frac{ 4C_\delta}{\delta}})= \frac{4C_\delta}{\delta}$. Then \eqref{X-X_1} yields \eqref{stab_1}.
\end{proof}

\subsection{Convergence of Velocity field $u^\beta$}

\begin{lemma}\label{lemma_AL} Let $T>0$. Assume \eqref{BS_beta} holds and  
	\Be\begin{split}\notag
 \sup_{\beta} \| \o^\beta \|_{L^\infty ((0,T); L^1 (\T^2))}<\infty,\ \
   \sup_{\beta} \| u^\beta \|_{L^\infty ((0,T); L^2 (\T^2))}<\infty.
\end{split}	\Ee
Then there exists a subsequence 
	$\{\beta^\prime\} \subset \{\beta\}$ such that $u^{\beta^\prime}$ is Cauchy in $L^1((0,T); L^1(\T^2))$.
\end{lemma}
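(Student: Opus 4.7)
I would run an Aubin--Lions compactness argument tailored to the 2D Euler setting: a uniform spatial modulus of continuity of $u^\beta$ in $L^1_x$ coming from the Biot--Savart representation, combined with a uniform time equicontinuity coming from testing the Euler momentum equation against divergence-free functions, then a diagonal extraction to produce a subsequence Cauchy in $L^1((0,T);L^1(\T^2))$.

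For the spatial step, \eqref{BS_beta} together with Proposition \ref{PeriodicKernel} yields $u^\beta(t,\cdot)=\mathbf{b}*\o^\beta(t,\cdot)$ with the periodic Biot--Savart kernel $\mathbf{b}\in L^1(\T^2)$. Young's convolution inequality then gives, uniformly in $\beta$ and $t\in (0,T)$,
\begin{equation*}
\|u^\beta(t,\cdot+h)-u^\beta(t,\cdot)\|_{L^1(\T^2)}\leq \|\mathbf{b}(\cdot+h)-\mathbf{b}(\cdot)\|_{L^1(\T^2)}\,\|\o^\beta(t)\|_{L^1(\T^2)},
\end{equation*}
whose right-hand side vanishes as $|h|\to 0$ by continuity of translation in $L^1$. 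Since $\T^2$ has finite measure, the uniform $L^\infty_t L^2_x$-bound implies a uniform $L^\infty_t L^1_x$-bound; the Fr\'echet--Kolmogorov criterion then makes $\{u^\beta(t,\cdot)\}_{\beta,\,t\in(0,T)}$ precompact in $L^1(\T^2)$.

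For time equicontinuity, testing \eqref{up} against any smooth divergence-free $\varphi\in C^\infty(\T^2;\R^2)$ eliminates the pressure and gives
\begin{equation*}
\Bigl|\tfrac{d}{dt}\langle u^\beta(t),\varphi\rangle\Bigr|=\Bigl|\int_{\T^2}(u^\beta\otimes u^\beta):\nabla\varphi\,\dd x\Bigr|\leq \|u^\beta(t)\|_{L^2}^2\,\|\nabla\varphi\|_{L^\infty}\lesssim \|\nabla\varphi\|_{L^\infty},
\end{equation*}
so each curve $t\mapsto\langle u^\beta(t),\varphi\rangle$ is uniformly Lipschitz. Picking a countable $C^\infty$-dense family of divergence-free test functions $\{\varphi_\ell\}$ and a countable dense set $\{t_k\}\subset(0,T)$, a diagonal extraction using the spatial precompactness produces a subsequence $u^{\beta'}$ converging strongly in $L^1(\T^2)$ at every $t_k$. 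The time-Lipschitz control propagates the limit weakly to all $t$, and combined with the uniform $L^1_x$-precompactness this upgrades to strong $L^1_x$-convergence at every $t\in(0,T)$; bounded convergence on the finite interval $(0,T)$ finally yields the Cauchy property in $L^1((0,T);L^1(\T^2))$.

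\textbf{Main obstacle.} The delicate point is that the natural Aubin--Lions triple $L^2\hookrightarrow\hookrightarrow L^1\hookrightarrow (W^{1,\infty})^*$ fails at the first inclusion, because $L^2(\T^2)\hookrightarrow L^1(\T^2)$ is continuous but not compact. Fr\'echet--Kolmogorov applied directly to the Biot--Savart representation is the device that bypasses this: the spatial compactness is extracted from the $L^1$-regularity of the kernel $\mathbf{b}$ rather than from any Sobolev-scale input on $u^\beta$. Equivalently, one could work in a fractional Sobolev space using the Calder\'on--Zygmund-type bound $\nabla u^\beta \in L^{1,\infty}$, but the convolution argument above is both sharper and cleaner on the torus.
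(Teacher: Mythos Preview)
Your argument is correct and arrives at the same conclusion, but the route differs from the paper's. The paper invokes the abstract Aubin--Lions lemma with a carefully chosen fractional Sobolev triple: from $L^1(\T^2)\hookrightarrow W^{-3/4,3/2}(\T^2)$ (by duality with $W^{3/4,3}\hookrightarrow L^\infty$) and elliptic regularity for \eqref{BS_beta}, one gets $u^\beta\in L^\infty((0,T);W^{1/4,3/2}(\T^2))$ uniformly; from the Euler equation and the $L^\infty_t L^2_x$ bound on $u^\beta$ one gets $\partial_t u^\beta\in L^\infty((0,T);H^{-5/2}(\T^2))$ uniformly; then Aubin--Lions with $Y=W^{1/4,3/2}\hookrightarrow\hookrightarrow X=L^1\hookrightarrow Z=H^{-5/2}$ gives precompactness in $L^1((0,T);L^1(\T^2))$. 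Your approach instead builds the compactness by hand: the spatial step exploits the convolution structure $u^\beta=\mathbf{b}*\o^\beta$ with $\mathbf{b}\in L^1(\T^2)$ to obtain uniform translation equicontinuity directly, bypassing fractional Sobolev spaces entirely; the time step is the same $\partial_t u^\beta$ control but read off in its weak (tested-against-$\varphi$) form, followed by a diagonal extraction and the precompactness-plus-weak-limit-uniqueness trick. What the paper's route buys is a one-line appeal to a standard lemma once the spaces are set up; what yours buys is an elementary, self-contained argument that uses nothing beyond Young's inequality, Fr\'echet--Kolmogorov, and dominated convergence, and makes transparent exactly where the Biot--Savart kernel's integrability enters.
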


\begin{proof}
	The proof is due to the elliptic regularity, the Frechet-Kolmogorov theorem, which states that $W^{s,p} (\mathbb{T}^2) \xhookrightarrow{}  \xhookrightarrow{}  L^q(\mathbb{T}^2)$ for $s>0$ and $1 \leq q \leq p < \infty$, and the Aubin-Lions lemma, which states that for reflexive Banach spaces $X, Y, Z$ such that $Y  \xhookrightarrow{}  \xhookrightarrow{} X\xhookrightarrow{}  Z,$
	\Be
	\begin{split}\label{Aubin-Lions}
		{ W^{1,r} ((0,T); Z) \cap L^1((0,T); Y) \xhookrightarrow{}  \xhookrightarrow{}  L^1((0,T); X), \ \text{for}  \ r>1.
		} 
	\end{split}
	\Ee

	Note that, from $L^1(\O)  \xhookrightarrow{}  H^{s}(\O)$ for any $s<-1$,
	\Be\notag
	\o^\beta \in  C^0([0,T]; H^s(\O)) \  \text{ uniformly-in-$\beta$ for any $s<-1$.} 
	\Ee
	On the other hand, we have $- \Delta p^\beta = \text{div}(\text{div}(u^\beta \otimes u^\beta))$ with $\fint_{\O} p^\beta=0$. Since $u^\beta  \in L^\infty ( (0,T); L^2)$ uniformly-in-$\beta$, $u^\beta \otimes u^\beta  \in L^\infty((0,T);L^1(\O))$ uniformly-in-$\beta$. Using $L^1 (\O) \xhookrightarrow{}  H^{s}(\O)$ for $s<-1$, an elliptic regularity says $
	L^\infty( (0,T); H^{s-1}(\O))\ni
	\text{div}(	u^\beta  \otimes u^\beta )
	\mapsto \nabla p^\beta  \in  L^\infty( (0,T);H^{s-1}(\O))$ uniformly-in-$\beta$. Therefore from $\p_t u^\beta = - \text{div}(u^\beta \otimes u^\beta)- \nabla p^\beta$ we derive that $\p_t u^\beta \in L^\infty ((0,T);H^{s-1})$ uniformly-in-$\beta$ for any $s<-1$. Therefore we conclude that 
	\Be\label{AL1}
	u^\beta \in W^{1, \infty} ((0,T); H^{-5/2} (\O)) \ \text{uniformly-in-}\beta.
	\Ee

		Next, we note that $L^1(\mathbb{T}^2) \xhookrightarrow{}  W^{- \frac{3}{4}, 3} (\T^2)$. This is a consequence of an embedding $W^{ \frac{3}{4}, 3}(\mathbb{T}^2)\xhookrightarrow{}  L^\infty(\mathbb{T}^2) $ (note $\frac{3}{4}> \frac{2}{3}$) and the duality argument $L^1(\mathbb{T}^2) \xhookrightarrow{}  (L^\infty (\mathbb{T}^2))^* \xhookrightarrow{} (W^{ \frac{3}{4}, 3}(\mathbb{T}^2))^* = W^{- \frac{3}{4},\frac{3}{2}} (\mathbb{T}^2)$. Therefore we derive that $\o^\beta \in L^\infty ((0,T); W^{- \frac{3}{4},\frac{3}{2}} (\mathbb{T}^2) )$. Now applying the elliptic regularity theory to \eqref{BS_beta}, we drive that 
		\Be\label{AL2}
		u^\beta \in L^\infty ((0,T); W^{\frac{1}{4}, \frac{3}{2}} (\mathbb{T}^2) ) \  \text{ uniformly-in-$\beta$.} 
		\Ee
		
		Now we set $Y= W^{\frac{1}{4}, \frac{3}{2}} (\mathbb{T}^2), X=L^1(\mathbb{T}^2)  ,Z= H^{-\frac{5}{2}} (\mathbb{T}^2)$. Using the Frechet-Kolmogorov theorem, we have $Y= W^{\frac{1}{4}, \frac{3}{2}} (\mathbb{T}^2)  \xhookrightarrow{}  \xhookrightarrow{} X=L^1(\mathbb{T}^2)   \xhookrightarrow{}Z= H^{-\frac{5}{2}} (\mathbb{T}^2)$. Finally, we prove Lemma \ref{lemma_AL} using the Aubin-Lions lemma \eqref{Aubin-Lions}.\end{proof}
 
\hide

The Biot-Savart law implies that $(u^{\beta^\prime}, \o^{\beta^\prime})$ satisfying \eqref{vorticity_beta} should hold
\Be\notag
u^{\beta^\prime}(t,x) = \int_{\R^2} \mathbf{b}(x-y) \o^{\beta^\prime} (t,y) \dd y, \ \ \ |\mathbf{b}(x)|\leq \frac{C_{\mathbf{b}}}{|x|}. 
\Ee
We introduce smooth functions $\varphi^i_N{\color{red}:\R^2\rightarrow \R_+}$ with $\|\varphi^i_N \|_{C^2} \leq C_N <\infty $ for $i=1,2$, and $N\gg 1$, such that $\varphi^1_N(y) =1$ for $\frac{1}{N}\geq |y| $ and $\varphi^{\color{red}1}_N(y)=0$ for $\frac{2}{N}\leq |y|$; and $\varphi^2_N(y) =1$ for $|y|\geq N $ and $\varphi^{\color{red}2}_N(y)=0$ for $|y|\leq N/2$; and finally $\varphi_N^3:=(1- \varphi^1_N- \varphi^2_N).$ We decompose $\mathbf{b} = \varphi_N^1 \mathbf{b} +  \varphi_N^2\mathbf{b} +  \varphi^3_N  \mathbf{b}$. {\color{red}We estimate each contributions separately. We first note that, for $r \in [1,\infty]$ and any $\beta^\prime_1, \beta^\prime_2 \in \{\beta^\prime\}$,
	\Be	\begin{split}\label{u^beta:i}
		&\Big\|\int_{\R^2} \varphi_N^i( y) \mathbf{b}( y) \big(\o^{\beta^\prime_1}(t,x-y) - \o^{\beta^\prime_2}(t,x-y)\big) \dd y \Big\|_{L^r( \{x \in \O\})}\\
		&\leq\Big\|\int_{\R^2} \varphi_N^i( y) \mathbf{b}( y) \mathbf{1}_{ |(x-y)+ y| \leq \text{diam} (\O)}\big(\o^{\beta^\prime_1}(t,x-y) - \o^{\beta^\prime_2}(t,x-y)\big) \dd y \Big\|_{L^r( \{x \in \R^2\})}
		\\
		&\leq 2 \| \varphi^i_N  \mathbf{b}  \|_{L^r (\R^2)}   \| \o^{\beta^\prime_1}(t, \cdot ) - \o^{\beta^\prime_2}  (t, \cdot ) \|_{L^1(\O)}  ,
	\end{split}
	\Ee
	where we have used the Young's inequality and the fact that $\o$ is spatially periodic.
}

{\color{red}Using \eqref{u^beta:i} and the Fubini's theorem,}
\Be
\begin{split}\label{u^beta:1}
	&\Big\|\int_{\R^2} \varphi_N^1( y) \mathbf{b}( y) \big(\o^{\beta^\prime_1}(t,x-y) - \o^{\beta^\prime_2}(t,x-y)\big) \dd y \Big\|_{L^1( \{x \in \O\})}\\
	&\leq 2 \| \varphi^1_N  \mathbf{b}  \|_{L^1 (\R^2)}   \| \o^{\beta^\prime_1}(t, \cdot ) - \o^{\beta^\prime_2}  (t, \cdot ) \|_{L^1(\O)}  
	\leq \frac{10 C_ \mathbf{b}}{N} \sup_{\beta} \| \o ^\beta (t, \cdot ) \|_{L^1(\O)}.
\end{split}
\Ee
and, for $r >4$ from $\mathscr{L}^2(\O)=1$,
\Be
\begin{split}\label{u^beta:2}
	& \| \varphi_N^2 \mathbf{b} \star (\o^{\beta^\prime_1} (t)- \o^{\beta^\prime_2}(t)) \|_{L^1 (\{x \in \O\})}
	\\
	&\leq  \| \varphi_N^2 \mathbf{b} \star (\o^{\beta^\prime_1} (t)- \o^{\beta^\prime_2}(t)) \|_{L^r (\{x \in \O\})}
	\\
	&\leq \|  \varphi_N^2 \mathbf{b}   \|_{L^r (\R^2)} N^2 \big\|\o^{\beta^\prime_1}(t,\cdot  ) - \o^{\beta^\prime_2}(t,\cdot )\big\|_{L^1(\O)}  
	\\
	&\leq  \frac{C_{\mathbf{b}}}{N^{r-4}}   \sup_{\beta} \| \o ^\beta (t, \cdot ) \|_{L^1(\O)}.
\end{split}
\Ee 

{\color{red}Finally, the last term is bounded by}
\Be
\begin{split}\label{u^beta:3}
	& \| \varphi_N^3 \mathbf{b} \star (\o^{\beta^\prime_1} (t)- \o^{\beta^\prime_2}(t)) \|_{L^1 (\{x \in \O\})}
	\\
	&\leq\Big\|\int_{\R^2} \varphi_N^3( y) \mathbf{b}( y) {\color{red}\mathbf{1}_{|(x-y) - x| \leq \text{diam} (\O)}}
	\big(\o^{\beta^\prime_1}(t,x-y) - \o^{\beta^\prime_2}(t,x-y)\big) \dd y \Big\|_{L^\infty( \{x \in \O\})}\\ 
	& \leq   \| \varphi_N^3  \mathbf{b} \|_{H^{|s|}(\R^2)}  N^2 \|\o^{\beta^\prime_1}(t,\cdot) - \o^{\beta^\prime_2}(t,\cdot) \|_{H^{s}(\O)} .
\end{split}
\Ee  
For any $\e>0$, we choose large $N=N ( \sup_{\beta} \| \o ^\beta (t, \cdot ) \|_{L^1(\O)})\gg 1$ so that $\eqref{u^beta:1} + \eqref{u^beta:2}< \e/2$. For such $N$, now we choose $\beta^\prime\gg_{N,  \mathbf{b}} 1$ such that $ \eqref{u^beta:3}< \e/2$.\unhide

\hide
\begin{proof}
	The proof is due to the Biot-Savart law (Theorem 10.1 in \cite{BM}); and the Aubin-Lions lemma (Lemma 10.4 in \cite{BM}): $C^{0,1} ([0,T]; H^m) \cap C([0,T]; H^s) \xhookrightarrow{}  \xhookrightarrow{}  C([0,T]; H^r )$ for $m \leq r< s$. Note that, from $L^1(\T^2)  \xhookrightarrow{}  H^{s}(\T^2)$ for any $s<-1$,
	\Be
	\o^\beta \in  C^0([0,T]; H^s(\T^2)) \  \text{ uniformly-in-$\beta$ for any $s<-1$.} 
	\Ee
	On the other hand, we have $- \Delta p^\beta = \text{div}(\text{div}(u^\beta \otimes u^\beta))$ with $\fint_{\T^2} p^\beta=0$. Since $u^\beta  \in L^\infty ( (0,T); L^2)$ uniformly-in-$\beta$, $u^\beta \otimes u^\beta  \in L^\infty((0,T);L^1(\T^2))$ uniformly-in-$\beta$. Using $L^1 (\T^2) \xhookrightarrow{}  H^{s}(\T^2)$ for $s<-1$, an elliptic regularity says $
	L^\infty( (0,T); H^{s-1}(\T^2))\ni
	\text{div}(	u^\beta  \otimes u^\beta )
	\mapsto \nabla p^\beta  \in  L^\infty( (0,T);H^{s-1}(\T^2))$ uniformly-in-$\beta$. Therefore from $\p_t u^\beta = - \text{div}(u^\beta \otimes u^\beta)- \nabla p^\beta$ we derive that $\p_t u^\beta \in L^\infty ((0,T);H^{s-1})$ uniformly-in-$\beta$ for any $s<-1$. Therefore we conclude that $u^\beta \in C^{0,1} ([0,T]; H^{s-1} (\T^2))$ uniformly-in-$\beta$ .
	\Be\label{AL_o}
	\o^{\beta^\prime} \text{ is Cauchy in $C^0([0,T]; H^r)$ for any $r(<s)<-1$.}
	\Ee 
	\color{red}{Joonhyun's correction started}\color{black}{}
	
	Next, we show that $u^\beta$ is uniformly bounded in $L^\infty ((0, T); W^{\frac{1}{2}, \frac{5}{4} } (\mathbb{T}^2))$. We use the fractional Sobolev space: one can refer to, for example, \color{red}{Taylor, Partial Differential Equations, Volume 3, Section 13.6.}\color{black} We note that $L^\infty (\mathbb{T}^2) \subset W^{s, p}(\mathbb{T}^2)$ continuously, provided that $sp > 2$. We pick $s = \frac{1}{2}, p = 5$ for simplicity. Therefore, we have
	\Be
	L^1 (\mathbb{T}^2) \subset (L^\infty (\mathbb{T}^2))^* \subset W^{-\frac{1}{2}, \frac{5}{4} }(\mathbb{T}^2)
	\Ee
	and thus $\o^\beta$ is uniformly bounded in $L^\infty ((0, T); W^{-\frac{1}{2}, \frac{5}{4} }(\mathbb{T}^2))$. Therefore, we have $\nabla_x u^\beta \in L^\infty ((0, T); W^{-\frac{1}{2}, \frac{5}{4} } )$ uniformly as well, and by Poincare inequality, $u^\beta \in L^\infty ((0, T); W^{\frac{1}{2}, \frac{5}{4} } (\mathbb{T}^2))$ uniformly. Then, since
	\Be
	W^{\frac{1}{2}, \frac{5}{4} } (\mathbb{T}^2) \xhookrightarrow{} \xhookrightarrow{} L^1 (\mathbb{T}^2)
	\Ee
by Rellich-Kondrachov, by Aubin-Lions lemma we see that $\{u^\beta\}$ is precompact in $C([0, T]; L^1(\mathbb{T}^2))$.

	\color{red}{Joonhyun's correction ended}\color{black}
	
\hide	
	The Biot-Savart law implies that $(u^{\beta^\prime}, \o^{\beta^\prime})$ satisfying \eqref{vorticity_beta} should hold
	\Be\notag
	u^{\beta^\prime}(t,x) = \int_{\R^2} \mathbf{b}(x-y) \o^{\beta^\prime} (t,y) \dd y, \ \ \ |\mathbf{b}(x)|\leq \frac{C_{\mathbf{b}}}{|x|}. 
	\Ee
	We introduce a smooth functions $\varphi^i_N$ with $\|\varphi^i_N \|_{C^2} \leq C_N <\infty $ for $i=1,2$, such that $\varphi^1_N(y) =1$ for $\frac{1}{N}\geq |y| $ and $\varphi_N(y)=0$ for $\frac{2}{N}\leq |y|$; and $\varphi^2_N(y) =1$ for $|y|\geq N $ and $\varphi_N(y)=0$ for $|y|\leq N/2$; and finally $\varphi_N^3:=(1- \varphi^1_N- \varphi^2_N).$ We decompose $\mathbf{b} = \varphi_N^1 \mathbf{b} +  \varphi_N^2\mathbf{b} +  \varphi^3_N  \mathbf{b}$. Choose any $\beta^\prime_1, \beta^\prime_2 \in \{\beta^\prime\}$. We estimate each contributions separately: from the Funini's theorem 
	\Be
	\begin{split}\label{u^beta:1}
		&\Big\|\int_{\R^2} \varphi_N^1( y) \mathbf{b}( y) \big(\o^{\beta^\prime_1}(t,x-y) - \o^{\beta^\prime_2}(t,x-y)\big) \dd y \Big\|_{L^1( \{x \in \T^2\})}\\
		&\leq  \| \varphi^1_N  \mathbf{b}  \|_{L^1 (\R^2)}   \| \o^{\beta^\prime_1}(t, \cdot ) - \o^{\beta^\prime_2}  (t, \cdot ) \|_{L^1(\T^2)}  
		\leq \frac{10 C_ \mathbf{b}}{N} \sup_{\beta} \| \o ^\beta (t, \cdot ) \|_{L^1(\T^2)};
	\end{split}
	\Ee
	and, for $r >4$ from $\mathscr{L}^2(\T^2)=1$
	\Be
	\begin{split}\label{u^beta:2}
		& \| \varphi_N^2 \mathbf{b} * (\o^{\beta^\prime_1} (t)- \o^{\beta^\prime_2}(t)) \|_{L^1 (\{x \in \T^2\})}
		\leq  \| \varphi_N^2 \mathbf{b} * (\o^{\beta^\prime_1} (t)- \o^{\beta^\prime_2}(t)) \|_{L^r (\{x \in \T^2\})}
		\\
		&\leq \|  \varphi_N^2 \mathbf{b}   \|_{L^r (\R^2)} N^2 \big\|\o^{\beta^\prime_1}(t,\cdot  ) - \o^{\beta^\prime_2}(t,\cdot )\big\|_{L^1(\T^2)}  
		\\
		&\leq  \frac{C_{\mathbf{b}}}{N^{r-4}}   \sup_{\beta} \| \o ^\beta (t, \cdot ) \|_{L^1(\T^2)};
	\end{split}
	\Ee 
	and 
	\Be
	\begin{split}\label{u^beta:3}
		& \| \varphi_N^3 \mathbf{b} * (\o^{\beta^\prime_1} (t)- \o^{\beta^\prime_2}(t)) \|_{L^1 (\{x \in \T^2\})}
		\\
		&\leq\Big\|\int_{\R^2} \varphi_N^3( y) \mathbf{b}( y) \big(\o^{\beta^\prime_1}(t,x-y) - \o^{\beta^\prime_2}(t,x-y)\big) \dd y \Big\|_{L^\infty( \{x \in \T^2\})}\\ 
		& \leq   \| \varphi_N^3  \mathbf{b} \|_{H^{|s|}(\R^2)}  N^2 \|\o^{\beta^\prime_1}(t,\cdot) - \o^{\beta^\prime_2}(t,\cdot) \|_{H^{s}(\T^2)} .
	\end{split}
	\Ee  
	For any $\e>0$, we choose large $N=N ( \sup_{\beta} \| \o ^\beta (t, \cdot ) \|_{L^1(\T^2)})\gg 1$ so that $\eqref{u^beta:1} + \eqref{u^beta:2}< \e/2$. For such $N$, now we choose $\beta^\prime\gg_{N,  \mathbf{b}} 1$ such that $ \eqref{u^beta:3}< \e/2$.
	
\unhide	
	
\end{proof}\unhide

\subsection{Rate of Convergence of $u^\beta$: Localized Yudovich solutions}
We use the following version of the theorem, presented in \cite{CS2021}. The theorem in \cite{CS2021} provided the modulus of continuity for $u$ which we will use, and explicitly stated that the unique solution is regular Lagrangian.

We begin with introducing localized Yudovich class of vorticity. Intuitively, the localized Yudovich class consists of vorticities with moderate growth of $L^p$ norm as $p \rightarrow \infty$. Existence and uniqueness results of Yudovich class of vorticity extends to localized Yudovich class. We refer to \cite{CS2021} and references therein for further details. 
\Be\notag
\| \o \|_{Y_{\mathrm{ul} }^\Theta  (\O) } := \sup_{1 \le \pp < \infty} \frac{ \| \o \|_{L^\pp (\O ) } }{\Theta (\pp) }.
\Ee
In this paper, we focus on the growth function with the following condition, which gives quantitative bounds on the behavior of velocity field $u$; it would be interesting to see if one can generalize the presented results to arbitrary admissible growth functions. We assume that $\Theta: \mathbb{R}_{\ge 0} \rightarrow \mathbb{R}_{\ge 0}$ satisfies the following: there exists $m \in \mathbb{Z}_{>0}$ such that
\Be \label{Thetap}
\Theta (\pp) =  \prod_{k=1} ^m \log_k \pp ,
\Ee
for large $\pp >1$, where $\log_k \pp$ is defined inductively by $\log_1 \pp =  \log \pp$ and 
\Be\notag
\log_{k+1} \pp = \log \log_{k} \pp .
\Ee
Also, we adopt the convention that $\log_0 \pp = 1.$ We remark that we are only interested in the behavior of $\Theta$ for large $\pp$. Also, we denote the inverse function of $\log_m (\pp) $ (defined for large $\pp$) by $e_m$. Finally, we note that
\Be\notag
\int_{e_m(1) } ^\infty \frac{1}{\pp \Theta (\pp ) } = \infty,
\Ee
which turns out to be important in uniqueness of the solution.

\begin{theorem}[\cite{CS2021}] \label{thm:localYudovichwellposedness}
If $\o_0 \in Y_{\mathrm{ul}}^\Theta (\O)$, for every $T>0$ there exists a unique weak solution $\o \in L^\infty ([0, T]; Y_{\mathrm{ul}}^\Theta (\O))$ with $u \in L^\infty ([0, T]; C_b^{0, \varphi_{\Theta} }(\O, \mathbb{R}^2 ) )$, which is regular Lagrangian. Here, the function space $C_b^{0, \varphi_{\Theta} }(\O, \mathbb{R}^2 )$ is defined by
\Be\notag
C_b ^{0, \varphi_{\Theta} } (\O, \mathbb{R}^2 ) = \left \{ v \in L^\infty (\O, \mathbb{R}^2 )  | \sup_{x\ne y}\frac{|v(x) - v(y) | }{\varphi_\Theta (d(x,y)) < \infty} \right \},
\Ee
where $d(x,y)$ is the geodesic distance on the torus $\O = \mathbb{T}^2$, and $\varphi_\Theta$ is defined by
\Be\notag
\varphi_{\Theta} (r) = 
\begin{cases}
0, r=0, \\
r (1- \log r) \Theta( 1 - \log r), r \in (0, e^{-2} ), \\
e^{-2} 3 \Theta(3), r \ge e^{-2}.
\end{cases}
\Ee
Also, $\| \o\|_{L^\infty ([0, T]; Y_{\mathrm{ul}}^\Theta (\O))}$ and $\| u \|_{C_b ^{0, \varphi_\Theta} (\O, \mathbb{R}^2 ) }$ depends only on $\|\o_0 \|_{Y_{\mathrm{ul}}^\Theta (\O)}$ and $T$. The dependence is non-decreasing in both $\|\o_0 \|_{Y_{\mathrm{ul}}^\Theta (\O)}$ and $T$.
\end{theorem}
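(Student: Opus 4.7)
The plan is to adapt Yudovich's classical framework \cite{Y1963,Y1995} to the growth function $\Theta$ in three steps: (i) existence with propagation of the $Y_{\mathrm{ul}}^\Theta$ norm, (ii) a quantitative modulus-of-continuity estimate for $u$ producing exactly $\varphi_\Theta$, and (iii) Osgood-type uniqueness, from which the regular Lagrangian property follows via classical ODE theory. Steps (i) and (iii) are essentially standard once step (ii) is in hand; the bulk of the work is the sharp $\varphi_\Theta$ bound.

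For existence, I would mollify $\o_0^\beta := \varphi^\beta * \o_0$ and solve the smooth Euler equation to obtain $\o^\beta(t,x) = \o_0^\beta(X^\beta(0;t,x))$ as in \eqref{Lag_beta}. Since the smooth flow is measure-preserving and Young's inequality gives $\|\o_0^\beta\|_{L^\pp} \le \|\o_0\|_{L^\pp}$, one has
\Be\notag
\sup_{t \in [0,T]} \|\o^\beta(t)\|_{L^\pp(\O)} \le \|\o_0\|_{L^\pp(\O)} \le \Theta(\pp)\, \|\o_0\|_{Y_{\mathrm{ul}}^\Theta(\O)}, \quad 1 \le \pp < \infty,
\Ee
uniformly in $\beta$. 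Applying Lemma \ref{lemma_AL} extracts a subsequence along which $u^{\beta'} \to u$ in $L^1((0,T); L^1(\O))$, sufficient to pass to the limit in the weak formulation of \eqref{vorticity}--\eqref{BS}. Lower semicontinuity then yields $\o \in L^\infty([0,T]; Y_{\mathrm{ul}}^\Theta(\O))$ with $\|\o\|_{L^\infty_t Y_{\mathrm{ul}}^\Theta} \le \|\o_0\|_{Y_{\mathrm{ul}}^\Theta}$.

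The derivation of $\varphi_\Theta$ is the heart of the argument. For $x \ne y$ with $r := d_{\mathbb{T}^2}(x,y) \in (0, e^{-2})$, I would split the Biot-Savart integral \eqref{BS_periodic} into a near-field $\{|x-z| \le 2r\}$ and a far-field, and bound $u(t,x)-u(t,y)$ separately on each. The near-field contributes at most $\|\o\|_{L^\pp} \|\mathbf{b}\|_{L^{\pp'}(B_{Cr})} \lesssim r^{2/\pp}\|\o\|_{L^\pp}$ by Holder, while on the far-field a mean-value estimate on $\nabla \mathbf{b}$ gives $r(1-\log r)\|\o\|_{L^\pp}$ up to negligible $L^1$ corrections. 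Choosing the optimal exponent $\pp = 1 - \log r$ and invoking $\|\o\|_{L^\pp} \le \Theta(\pp)\|\o_0\|_{Y_{\mathrm{ul}}^\Theta}$ then yields
\Be\notag
|u(t,x)-u(t,y)| \lesssim r(1-\log r)\Theta(1-\log r)\,\|\o_0\|_{Y_{\mathrm{ul}}^\Theta(\O)} = \varphi_\Theta(r)\,\|\o_0\|_{Y_{\mathrm{ul}}^\Theta(\O)};
\Ee
the regime $r \ge e^{-2}$ reduces to $u \in L^\infty$, which follows from the $\pp=2$ case via Sobolev embedding.

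For uniqueness, the $\varphi_\Theta$-continuity of $u$ together with $\int_0^1 dr/\varphi_\Theta(r) = \int^\infty dp/(p\Theta(p)) = \infty$ ensures via Osgood's theorem that the flow $X(s;t,\cdot)$ of \eqref{dX} is unique, hence regular Lagrangian (the compressibility bound $\mathfrak{C}=1$ transferring from the measure-preserving approximations $X^\beta$). Given two weak solutions $\o_1, \o_2$ with common data, I would run the stability argument of Proposition \ref{prop_stab}: the functional $\Lambda(s;t)$ of \eqref{Lambda}, estimated via \eqref{dL_1}, the $\varphi_\Theta$-modulus of $u$, and Jensen's inequality applied to $\log$, satisfies a closed differential inequality of Osgood type $|\dot\Lambda| \lesssim \varphi_\Theta(\Lambda)$ after choosing $\lambda$ appropriately; combined with $\Lambda|_{s=t}=0$ this forces $\Lambda \equiv 0$ and hence $\o_1 \equiv \o_2$. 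The main obstacle will be step (ii): the optimization $\pp = 1-\log r$ must be executed so that implicit constants remain controlled under the nested iterated-logarithm structure of $\Theta$ from \eqref{Thetap}, and the explicit dependence of all bounds on $\|\o_0\|_{Y_{\mathrm{ul}}^\Theta}$ and $T$ must be tracked carefully to produce the non-decreasing dependence asserted in the theorem.
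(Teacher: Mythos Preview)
The paper does not prove this theorem: it is quoted verbatim from \cite{CS2021} (note the citation in the theorem header and the sentence ``We use the following version of the theorem, presented in \cite{CS2021}'' that introduces it), and no proof is given. There is therefore nothing in the paper to compare your proposal against; your sketch is a plausible outline of the Yudovich-type argument that \cite{CS2021} carries out, but for the purposes of this paper the result is simply imported as a black box.
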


In this subsection, we prove the following proposition:
\begin{proposition}
Let $\omega_0 \in Y_{\mathrm{ul}}^{\Theta} (\T^2)$. There exist constants $M$, depending only on $m$ and $\sup_{t \in [0, T]} \|u(t) \|_{L^\infty}$ (and therefore $\|\o_0\|_{L^3}$) (and dimension $d=2$), and $C$($C=2e$ works), which is universal, such that
\Be \label{Rateftn}
\sup_{0 \le t \le T} \| u^\beta (t) - u (t) \|_{L^2 (\O) } ^2 \le \frac{M}{e_m \left ( \left ( \log_m \left (\frac{M}{\beta^2 \| \o_0 \|_{L^2 (\O ) }^2  } \right ) \right )^{e^{-C \| \o_0\|_{Y_{\mathrm{ul}}^\Theta} T } } \right ) } =: \mathrm{Rate}(\o_0 ; \beta).
\Ee
Note that $\lim_{\beta\rightarrow 0^+} \mathrm{Rate}(\o_0 ; \beta) = 0$.
\end{proposition}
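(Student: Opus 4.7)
The plan is to combine a Yudovich-type $L^2$ stability estimate for the difference $v := u^\beta - u$ with an Osgood-lemma integration tuned to the growth function $\Theta(\pp)=\prod_{k=1}^m \log_k \pp$. I would set $e(t) := \|v(t)\|_{L^2(\T^2)}^2$. Since $(u^\beta,p^\beta)$ and $(u,p)$ both solve the 2D Euler system \eqref{up} with Biot--Savart pressure, subtraction gives $\partial_t v + u^\beta\cdot\nabla v + v\cdot\nabla u + \nabla(p^\beta - p) = 0$; pairing with $v$ and using $\nabla\cdot u^\beta = \nabla\cdot v = 0$ kills the transport and pressure terms, leaving $\tfrac12\dot e = -\int v\cdot(v\cdot\nabla u)\,\dd x$. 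For the initial datum, $u_0^\beta - u_0 = -\nabla^\perp(-\Delta)^{-1}(\varphi^\beta*\omega_0 - \omega_0)$ and Plancherel plus the symbol bound $|\widehat{\varphi^\beta}(\xi)-1|\lesssim\min(1,\beta|\xi|)$ yield $e(0) \lesssim \beta^2\|\omega_0\|_{L^2}^2$ (and $\omega_0\in L^2$ is automatic from $\omega_0\in Y_{\mathrm{ul}}^\Theta$).

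To bound the RHS, for any $p\in(2,\infty)$ with dual $p'$ I would use H\"older and the Calder\'on--Zygmund estimate from Proposition \ref{PeriodicKernel}, $\|\nabla u\|_{L^p} \lesssim p\|\omega\|_{L^p}$, to get $|\dot e| \lesssim p\|\omega(t)\|_{L^p}\|v\|_{L^{2p'}}^2$. Because the regular Lagrangian flow of Theorem \ref{thm:localYudovichwellposedness} is measure-preserving, $L^p$ norms of $\omega$ are conserved, hence $\|\omega(t)\|_{L^p}\le \|\omega_0\|_{Y_{\mathrm{ul}}^\Theta}\Theta(p)$. Log-convexity gives $\|v\|_{L^{2p'}}^2 \lesssim \|v\|_{L^\infty}^{2/p}\,e^{1/p'}$, and Theorem \ref{thm:localYudovichwellposedness} applied to both $\omega_0$ and $\omega_0^\beta$ (using $\|\omega_0^\beta\|_{Y_{\mathrm{ul}}^\Theta}\le\|\omega_0\|_{Y_{\mathrm{ul}}^\Theta}$ from $\varphi^\beta\ge 0$, $\int\varphi^\beta=1$) gives a uniform $\mathcal{U}$ with $\|u\|_{L^\infty_{t,x}}+\|u^\beta\|_{L^\infty_{t,x}}\le\mathcal{U}$. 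Rescaling $E := e/\mathcal{U}^2$ makes the $\mathcal{U}$'s cancel, and the choice $p := \log(1/E)$ (valid once $E$ is small enough) forces $E^{-1/p}=e$, producing the Osgood inequality
\begin{equation*}
\dot E \le C\|\omega_0\|_{Y_{\mathrm{ul}}^\Theta}\, E\, \log(1/E)\,\Theta(\log(1/E)),
\end{equation*}
with a universal constant $C$ which a careful tracking of the CZ and interpolation factors pins down at $C=2e$.

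Letting $\zeta := \log(1/E)$ I would then rewrite this as $\dot{\zeta} \ge -C\|\omega_0\|_{Y_{\mathrm{ul}}^\Theta}\zeta\,\Theta(\zeta)$, and a short induction on $m$ using $\int d\zeta/(\zeta\log\zeta) = \log_2\zeta$ identifies the antiderivative $\int d\zeta/(\zeta\,\Theta(\zeta)) = \log_{m+1}\zeta$. Integrating from $0$ to $T$ yields $\log_{m+2}(1/E(T)) \ge \log_{m+2}(1/E(0)) - C\|\omega_0\|_{Y_{\mathrm{ul}}^\Theta}T$, and two successive exponentiations convert this additive inequality first into $\log_{m+1}(1/E(T)) \ge e^{-C\|\omega_0\|_{Y_{\mathrm{ul}}^\Theta}T}\log_{m+1}(1/E(0))$ and then into the power-in-exponent bound $\log_m(1/E(T)) \ge (\log_m(1/E(0)))^{e^{-C\|\omega_0\|_{Y_{\mathrm{ul}}^\Theta}T}}$. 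Applying $e_m$ and unrescaling via $E(0)\lesssim\beta^2\|\omega_0\|_{L^2}^2/\mathcal{U}^2$ from Step 1 produces the claimed rate, with the constant $M$ absorbing $\mathcal{U}^2$ and the $m$-dependent constants.

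The main obstacle will be the careful bookkeeping through the interpolation/optimization needed to actually land on the universal constant $C=2e$: the $\lesssim$ style hides several multiplicative constants coming from Calder\'on--Zygmund, the log-convexity bound, and the choice of $p$, and it is the two successive exponentiations at the end that turn an additive $-CT$ into the power exponent $e^{-CT}$ in the final statement. A minor secondary difficulty is handling the short initial regime in which $E$ has not yet descended below the threshold $e^{-e_m(1)}$ where the antiderivative identity is valid; there a trivial $L^2$-in-$v$, $L^\infty$-in-$\nabla u$-via-modulus estimate from Theorem \ref{thm:localYudovichwellposedness} suffices, with its cost absorbed into $M$.
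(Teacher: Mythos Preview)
Your proposal is correct and follows essentially the same route as the paper: both implement Yudovich's 1995 $L^2$--stability argument, optimizing the H\"older exponent $p=1/\epsilon$ to produce the Osgood inequality $\dot e \lesssim \|\omega_0\|_{Y^\Theta_{\mathrm{ul}}}\, e\,\log(M/e)\,\Theta(\log(M/e))$, and then inverting via the explicit antiderivative $\log_{m+1}$ of $1/(\zeta\Theta(\zeta))$. The only cosmetic difference is that the paper bounds $\|v\|_{L^\infty}$ through Sobolev embedding from $\|\omega_0\|_{L^3}$ (which is why $M$ in the statement depends on $\|\omega_0\|_{L^3}$), whereas you invoke the $C_b^{0,\varphi_\Theta}$ control of Theorem~\ref{thm:localYudovichwellposedness}; both give the required uniform-in-$\beta$ bound, and your index bookkeeping through the two exponentiations is in fact cleaner than the paper's.
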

In particular, the case $m=0$ corresponds to the Yudovich class, with $\mathrm{Rate}(\beta) = \beta^{2e^{-C \| \o_0\|_{Y_{\mathrm{ul}}^\Theta} T }} $. 

\begin{proof}
We follow the proof of \cite{Y1995}. By letting $v =  u^\beta - u$, we have
\Be\notag
\partial_t v + u^\beta \cdot \nabla_x v - v \cdot \nabla_x u + \nabla_x (p^\beta - p) = 0.
\Ee
Noting that $v$ is incompressible and taking $L^2$ norm of $v$, we obtain
\Be\notag
\frac{d}{2dt} \| v \|_{L^2 (\O) } ^2 \le \int_{\O} v \cdot \nabla_x u \cdot v \dd x,
\Ee
or
\Be\notag
\| v(t) \|_{L^2 (\O)}^2 \le \| v(0) \|_{L^2 (\O) } + 2 \int_0 ^t \int_{\O} |\nabla_x u| |v|^2 \dd x
\Ee
Next, we note that by Sobolev embedding
\Be\notag
\| v \|_{L^\infty (\O)}^2 \le  2(\| u \|_{L^\infty (\O)}^2 + \| u^\beta \|_{L^\infty (\O) }^2 ) \le 2 C \| \o_0 \|_{L^3 (\O)} ^2,
\Ee
while energy conservation gives 
\Be\notag
\| v (t) \|_{L^2 (\O)}^2 \le 2 (\| u^\beta (t) \|_{L^2  (\O)}^2 + \| u(t) \|_{L^2 (\O )} ^2 ) \le 4 \| u_0 \|_{L^2 (\O) }^2.
\Ee
Therefore, there exists a constant $M$, explicitly given by
\Be\notag
M:= 1+ 4 \| u_0 \|_{L^2 (\O)}^2  e_{m} (1) + 2C \| \o_0 \|_{L^3} ^2
\Ee
satisfies 
\Be\notag
\frac{M}{\| v(t) \|_{L^2 (\O)}^2} > e_{m} (1),  \|v (t) \|_{L^\infty (\O)}^2 \le M.
\Ee
Then, by the definition of $Y_{\mathrm{ul}}^\Theta$ and the Calderon-Zygmund inequality
\Be\notag
\| \nabla_x u \|_{L^\pp (\O)} \le C \pp \| \o \|_{L^\pp (\O)}
\Ee
for $p \in (1, \infty)$, we have
\Be\notag
\| \nabla_x u \|_{L^\pp (\O) } \le \|\o_0 \|_{Y_{\mathrm{ul}}^\Theta } \pp \Theta(\pp) := \|\o_0 \|_{Y_{\mathrm{ul}}^\Theta }\phi(\pp),
\Ee
where we have used the conservation of $\|\o\|_{L^\pp (\O) }$ for every $1 \le \pp < \infty$. We first treat the case of $m \ge 1$. By H\"{o}lder's inequality, for each $\epsilon \in (0, \frac{1}{e_{m-1} (1)})$ ($\frac{1}{e_{m-1} (1) } \le 1$) we have
\begin{align}
&\int_{\T^2} |\nabla_x u | |v|^2 dx \le \| v \|_{L^\infty (\O ) } ^{2\epsilon } \int |v|^{2 (1-\epsilon ) } |\nabla_x u | \dd x \notag \\
&\le M^{\epsilon} \left (\int_{\O} |v|^2 \dd x \right )^{1-\epsilon}\left ( \int_{\O} |\nabla_x u|^{\frac{1}{\epsilon } } \dd x \right )^\epsilon \notag \\
&\le M^\epsilon \left (\| v \|_{L^2 (\O ) }^2 \right )^{1-\epsilon} \| \o_0 \|_{Y_{\mathrm{ul} }^\Theta } \phi \left (\frac{1}{\epsilon}\right ) = \| \o_0 \|_{Y_{\mathrm{ul} }^\Theta } \| v \|_{L^2 (\O ) }^2 \left (\frac{M}{\|v \|_{L^2 (\O ) }^2 } \right )^\epsilon \phi \left ( \frac{1}{\epsilon } \right ).\notag
\end{align}
Now choose
\Be\notag
\epsilon^* = \frac{1}{\log \frac{M}{\|v(t) \|_{L^2 (\O ) }^2 }}.
\Ee
Then since $\frac{M}{\|v(t) \|_{L^2 (\O ) } ^2 } > e_{m} (1)$,  $\log \left (\frac{M}{\|v(t) \|_{L^2 (\O ) } ^2 } \right ) > \log (e_{m} (1) )  = e_{m-1} (1)$ so $\epsilon^* \in (0, \frac{1}{e_{m-1} (1) } )$. There, we have
\begin{align}
&\left (\frac{M}{\|v \|_{L^2 (\O ) }^2 } \right )^{\epsilon^*} \phi \left ( \frac{1}{{\epsilon^*} } \right )\notag \\
&= e \log \left (\frac{M}{\|v(t) \|_{L^2 (\O ) } ^2 } \right ) \log \left ( \log \left (\frac{M}{\|v(t) \|_{L^2 (\O ) } ^2 } \right )  \right ) \cdots \log_m \left ( \log \left (\frac{M}{\|v(t) \|_{L^2 (\O ) } ^2 } \right )  \right )\notag \\
&= e \Theta \left (\frac{M}{\|v(t) \|_{L^2 (\O ) } ^2 } \right ).\notag
\end{align}
For $m=0$ (Yudovich case), $\epsilon \rightarrow \left (\frac{M}{\|v \|_{L^2 (\O ) }^2 } \right )^\epsilon \phi \left ( \frac{1}{\epsilon } \right ) = \left (\frac{M}{\|v \|_{L^2 (\O ) }^2 } \right )^\epsilon \frac{1}{\epsilon}$ attains its minimum at $\epsilon ^* = \frac{1}{\log \left (\frac{M}{\|v \|_{L^2 (\O ) }^2 } \right )}$, so we choose $M$ such that $\epsilon^* < 1$.

Therefore, we have
\Be\notag
\int_\O |\nabla_x u| |v|^2 \dd x \le e \| \o_0 \|_{Y_{\mathrm{ul} }^\Theta } \| v \|_{L^2 (\O ) }^2  \Theta \left (\frac{M}{\|v \|_{L^2 (\O ) } ^2 } \right ).
\Ee
To sum up, we have
\Be\notag
\| v(t) \|_{L^2 (\O ) }^2 \le \| v_0 \|_{L^2 (\O ) }^2 + \int_0 ^t 2 e \| \o_0 \|_{Y_{\mathrm{ul } }^\Theta } \Psi ( \| v(s) \|_{L^2 (\O ) } ^2 ) \dd s,
\Ee
where 
\Be\notag
\Psi (r) = r \Theta \left (\frac{M}{r} \right ).
\Ee
Then by Osgood's lemma, we have
\Be\notag
-\mathcal{M} (\| v(t) \|_{L^2 (\O ) }^2 ) +\mathcal{M} (\| v_0 \|_{L^2 (\O ) }^2 ) \le 2e\| \o_0 \|_{Y_{\mathrm{ul} }^\Theta} t,
\Ee
where 
\begin{align}
&\mathcal{M} (x) = \int_x ^a \frac{\dd r}{\Psi (r) } = \int_x ^a \frac{\dd r}{r \prod_{k=1} ^m \log_k \left (\frac{M}{r} \right )} \notag\\
& = \int_{\frac{M}{a}} ^{\frac{M}{x} } \frac{\dd z}{z \prod_{k=1} ^m \log_k \left (z \right )} = \int_{\log_m (\frac{M}{a} ) } ^{\log_m (\frac{M}{x} ) } \frac{\dd y}{y} = \log_{m+1} \left (\frac{M}{x} \right ) - \log_{m+1} \left (\frac{M}{a} \right ).\notag
\end{align}
where $a = 2 \| u_0 \|_{L^2(\O)}^2$ and we have used the substitution $z = \frac{M}{r}$ for the third identity and $y = \log_m (z)$ with 
\Be\notag
\frac{\dd y}{\dd z} = \frac{1}{z \prod_{k=1} ^{m-1} \log_k (z) }
\Ee
for the forth identity. In particular, we have
\Be\notag
\begin{split}
&\log_{m+1} \left (\frac{M}{\|v(t) \|_{L^2 (\O )}^2 } \right )\\
& \ge \log_{m+1} \left (\frac{M}{\|v_0 \|_{L^2 (\O )}^2 } \right ) - C \| \o_0 \|_{Y_{\mathrm{ul } } ^\Theta } t  = \log \left ( \log_m\left (\frac{M}{\|v_0 \|_{L^2 (\O )}^2 } \right ) e^{-C t \| \o_0 \|_{Y_{\mathrm{ul } }^\Theta} } \right ),
\end{split}
\Ee
and taking $e_{m+1}$ and reciprocal gives the desired conclusion. Certainly $\mathrm{Rate}(\beta)$ is a continuous function of $\beta$, and it converges to 0 as $\beta \rightarrow 0$ as $\mathcal{M}(0)=\infty$.
\end{proof}

\subsection{Convergence  of $\o^\beta$}



\begin{proposition}\label{theo:convergence}For any fixed $\pp \in [1,\infty]$, suppose $\o_0 \in L^\pp (\T^2)$. Recall the regularization of the initial data $\o_0^\beta$ in \eqref{vorticity_beta}. Let $(u^\beta, \o^\beta)$ and $(u, \omega)$ be Lagrangian solutions of \eqref{vorticity_eqtn}-\eqref{BS_beta} and \eqref{vorticity}-\eqref{BS}, respectively. For any $T>0$ and the subsequence $\{\beta^\prime\} \subset \{\beta\}$ in Lemma \ref{lemma_AL}, we have 
	\Be\label{stab_o_beta}
	\sup_{t \in [0,T]}\|	\o^{\beta^\prime} (t, \cdot)- \o  (t, \cdot) \| _{L^\pp (\T^2)} \rightarrow 0 \ \ \text{as} \ \  \beta^\prime \rightarrow \infty.
	\Ee
\end{proposition}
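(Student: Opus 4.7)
\textbf{Proof proposal for Proposition \ref{theo:convergence}.} The plan is to show that the family $\{\omega^{\beta'}\}$ is Cauchy in $L^\infty_t L^\pp_x$, with limit necessarily equal to the unique Lagrangian solution $\omega$ associated to the limiting velocity field $u$. The main tools are the Lagrangian representation $\omega^\beta(t,x) = \omega_0^\beta(X^\beta(0;t,x))$, the stability of regular Lagrangian flows (Proposition \ref{prop_stab}), the velocity compactness (Lemma \ref{lemma_AL}), the measure-preserving property \eqref{compression} with $\mathfrak{C}=1$, and the mollifier convergence \eqref{mol->1}.

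First, from Lemma \ref{lemma_AL} we fix a subsequence $\{\beta'\}$ such that $u^{\beta'}$ is Cauchy in $L^1((0,T);L^1(\T^2))$ and converges to some $u \in L^\infty((0,T);L^2(\T^2))$; passing to a further subsequence, $\omega^{\beta'}$ converges weakly-$*$ in $L^\infty((0,T);L^\pp(\T^2))$ for $\pp>1$ (or weakly in the sense of measures for $\pp=1$) to some $\omega$ satisfying \eqref{BS}. Applying Proposition \ref{prop_stab} pairwise to $(u^{\beta'_1},u^{\beta'_2})$, the choice \eqref{choice_lambda} (when $\pp>1$ and a uniform $L^1_tL^p_x$ bound on $\nabla u^{\beta'}$ is available for some $p>1$), or the estimate \eqref{stab_1} combined with equi-integrability \eqref{o_beta:EI} (when $\pp=1$), yields that $X^{\beta'_1}(0;t,\cdot) - X^{\beta'_2}(0;t,\cdot) \to 0$ in $L^1(\T^2)$ uniformly in $t \in [0,T]$ as $\beta'_1,\beta'_2 \to 0$; in the $\pp=1$ case only convergence in measure is available, but that will be enough.

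Next I would split
\begin{equation*}
\omega^{\beta'_1}(t,x) - \omega^{\beta'_2}(t,x) = \bigl[\omega_0^{\beta'_1}(X^{\beta'_1}(0;t,x)) - \omega_0^{\beta'_1}(X^{\beta'_2}(0;t,x))\bigr] + \bigl[\omega_0^{\beta'_1}(X^{\beta'_2}(0;t,x)) - \omega_0^{\beta'_2}(X^{\beta'_2}(0;t,x))\bigr].
\end{equation*}
The second term is handled by the change of variables $y = X^{\beta'_2}(0;t,x)$: since $X^{\beta'_2}$ is measure-preserving, its $L^\pp_x$ norm equals $\|\omega_0^{\beta'_1} - \omega_0^{\beta'_2}\|_{L^\pp(\T^2)}$, which tends to $0$ as $\beta'_1,\beta'_2 \to 0$ by \eqref{mol->1} when $\pp < \infty$. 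For the first term, the key step is to approximate $\omega_0$ by a smooth function $\omega_0^\delta \in C^\infty(\T^2)$ with $\|\omega_0 - \omega_0^\delta\|_{L^\pp} < \delta$; writing $\omega_0^{\beta'_1} = \varphi^{\beta'_1}*(\omega_0 - \omega_0^\delta) + \varphi^{\beta'_1}*\omega_0^\delta$ and using Young's inequality together with the Lipschitz bound on $\varphi^{\beta'_1}*\omega_0^\delta$ (whose constant blows up in $\delta$ but is independent of $\beta'_1$), one estimates the first bracket by
\begin{equation*}
2 \mathfrak{C}^{1/\pp}\|\omega_0 - \omega_0^\delta\|_{L^\pp(\T^2)} + C_\delta \|X^{\beta'_1}(0;t,\cdot) - X^{\beta'_2}(0;t,\cdot)\|_{L^\pp(\T^2)}^{\alpha},
\end{equation*}
for some $\alpha>0$, where the last norm is controlled via interpolation between the uniform bound $|X^{\beta'_1}-X^{\beta'_2}| \lesssim \mathrm{diam}(\T^2)$ and the $L^1$ (or convergence-in-measure) decay from Proposition \ref{prop_stab}. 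Choosing $\delta$ small first and then $\beta'_1,\beta'_2$ small yields the Cauchy property in $L^\infty_t L^\pp_x$.

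The main obstacle is the first bracket when $\pp = \infty$ or when only convergence in measure of $X^{\beta'_1}-X^{\beta'_2}$ is available (the $\pp=1$ case): then the splitting by $\omega_0^\delta$ no longer gives a quantitative Lipschitz bound, but it is still rescued by the fact that $\omega_0^\delta \in C(\T^2)$ is uniformly continuous, so that $|\omega_0^\delta(X^{\beta'_1}) - \omega_0^\delta(X^{\beta'_2})|$ tends to $0$ in measure; combined with uniform boundedness of $\omega_0^\delta$, dominated convergence gives convergence in $L^\pp$. Having established that $\omega^{\beta'}$ is Cauchy (hence convergent) in $L^\infty_t L^\pp_x$ to some $\tilde{\omega}$, the corresponding velocities $u^{\beta'}$ converge to $\tilde{u} = -\nabla^\perp(-\Delta)^{-1}\tilde{\omega}$ strongly, and by stability \eqref{stab_rLf}/\eqref{stab_1} the flows $X^{\beta'}$ converge to the regular Lagrangian flow $\tilde{X}$ of $\tilde{u}$, forcing $\tilde{\omega}(t,x) = \omega_0(\tilde{X}(0;t,x))$ by the same splitting argument. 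By uniqueness of the regular Lagrangian flow (DiPerna-Lions, \cite{BC2013}) applied to the incompressible velocity $\tilde{u}$ with $\tilde{\omega} \in L^\pp$, $(\tilde{u},\tilde{\omega})$ coincides with the Lagrangian solution $(u,\omega)$ of the statement, completing the proof of \eqref{stab_o_beta}.
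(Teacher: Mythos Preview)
Your argument is essentially correct and rests on the same core mechanism as the paper: introduce an auxiliary mollification scale ($\omega_0^\delta$ in your notation, $\omega_0^\ell$ in the paper), split into a piece small in $L^\pp$ handled by compressibility \eqref{compression} and a Lipschitz piece controlled by the flow stability of Proposition \ref{prop_stab}, then optimize the auxiliary scale against $\beta'$. The difference is organizational. The paper does not run a Cauchy argument on $\{\omega^{\beta'}\}$; it compares $\omega^{\beta'}$ directly with the given Lagrangian solution $\omega=\omega_0\circ X$ via the three-term decomposition
\[
|\omega_0(X)-\omega_0^\ell(X)|+|\omega_0^\ell(X)-\omega_0^\ell(X^{\beta'})|+|\omega_0^\ell(X^{\beta'})-\omega_0^{\beta'}(X^{\beta'})|,
\]
applying Proposition \ref{prop_stab} to the pair $(u,u^{\beta'})$ rather than to pairs $(u^{\beta'_1},u^{\beta'_2})$, and coupling $\ell=\ell(\beta')\sim|\log\|u-u^{\beta'}\|_{L^1_tL^1_x}|^{-1/10}$. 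Here $u$ is taken to be the $L^1_tL^1_x$ limit of $u^{\beta'}$ furnished by Lemma \ref{lemma_AL}, so the convergence $\|u-u^{\beta'}\|_{L^1_tL^1_x}\to 0$ is immediate. This direct route buys you exactly the step you had to add at the end: your identification of the strong limit $\tilde\omega$ with the prescribed Lagrangian solution $\omega$ (via convergence of flows and uniqueness of the regular Lagrangian flow) becomes automatic once one compares with $X$ from the outset. Your detour through the Cauchy property is sound but longer; the paper's version is the same argument with one fewer layer.
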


\begin{proof}
	For the subsequence $\{\beta^\prime\} \subset \{\beta\}$ in Lemma \ref{lemma_AL},
	\begin{align}
		&|\o (t,x) - \o^{\beta^\prime} (t,x)|\notag\\
		& = |\o_0(X(0;t,x)) - \o^{\beta^\prime} _0 (X^{\beta^\prime} (0;t,x))|\notag\\
		& \leq |\o_0(X(0;t,x)) - \o^\ell_0 (X(0;t,x))| 
		+ |\o_0^\ell (X^{\beta^\prime}  (0;t,x)) - \o_0^{\beta^\prime}  (X^{\beta^\prime}  (0;t,x))|
		\label{diff_o_1}
		\\
		&+ |\o^\ell_0(X(0;t,x)) - \o^\ell_0 (X^{\beta^\prime}  (0;t,x))|. \label{diff_o_2}
	\end{align}
	Using the compressibility \eqref{compression}, we derive that, for $\pp \in [1,\infty]$ 
	\Be\label{est:diff_o_1}
	\| \eqref{diff_o_1}\|_{L^p} \leq 2 \mathfrak{C} \| \o_0 - \o_0^\ell \|_{L^p}.
	\Ee 
	
	For the last term, we need a stability of the Lagrangian flows:
	\Be
	\begin{split}\label{est:diff_o_2}
		\|\eqref{diff_o_2}\|_{L^{\pp} (\T^2)} &\leq \| \nabla \o_0^\ell \|_{L^\infty} \|X(0;t,\cdot) - X^{\beta^\prime} (0;t,\cdot)\|_{L^\pp (\T^2)}\\
		& \leq   \| \nabla  \varphi^\ell \|_{L^\infty} \| \o_0 \|_{L^1} \|X(0;t,\cdot) - X^{\beta^\prime}  (0;t,\cdot)\|_{L^\pp (\T^2)}\\
		& \leq \frac{1}{ \ell^3}  \| \nabla  \varphi \|_{L^\infty (\T^2)}  \| \o_0 \|_{L^1} \|X(0;t,\cdot) - X^{\beta^\prime}  (0;t,\cdot)\|_{L^\pp (\T^2)},
	\end{split}
	\Ee
	where we have used \eqref{growth:Do_ell}.

	For $\pp >1$, we use \eqref{stab_rLf} in Proposition \ref{prop_stab} and Lemma \ref{lemma_AL} to have 
	\Be
	\begin{split}\label{est1:diff_o_2}
		\eqref{est:diff_o_2} & \lesssim  \frac{1}{\ell^3} 	\frac{1+ 	\| \nabla u^{\beta^\prime}\|_{L^1((0,T) ; L^\pp(\T^2))} 
		}{|\log  \| u  - u^{\beta ^\prime} \|_{L^1((0,T) ; L^1(\T^2))}  |} 
	\end{split}	\Ee
	Now we choose 
	\Be\label{choice_ell}
	\ell= \ell(\beta^\prime) \sim |\log  \| u  - u^{\beta ^\prime} \|_{L^1((0,T) ; L^1(\T^2))}  |^{-\frac{1}{10}}
	\ \ \text{for each } \beta^\prime, \Ee
	such that 
	\Be\notag
	\begin{split}
		\ell=	\ell(\beta^\prime) \downarrow 0 \ \ \text{as} \ \beta^\prime \downarrow 0,\\
		\ell^3 |\log  \| u  - u^{\beta ^\prime} \|_{L^1((0,T) ; L^1(\T^2))}  | \rightarrow  \infty \ \ \text{as} \ \beta^\prime \downarrow 0.
	\end{split}	\Ee
	Therefore, for $\pp>1$, we prove $\eqref{est1:diff_o_2} \rightarrow 0$ as $\beta^\prime \downarrow 0$. Combining this with \eqref{est:diff_o_1}, we conclude \eqref{stab_o_beta} for $\pp >1$.

	For $p=1$, 
	there exists $C_\e>0$ for any $\e>0$ such that 
	\Be
	\begin{split}\label{stab_1}
		&	\mathscr{L}^2 (\{x \in \T^2: |X^{\beta_1}(s;t,x)- X^{\beta_2}(s;t,x)|>\gamma\}) 
		\\
		&\leq  
		\frac{e^{\frac{4C_\e}{\e}}}{\frac{4C_\e}{\e}} \frac{ \| u^{\beta_1} - u^{\beta_2}\|_{L^1 ((0,T); L^1(\T^2))}}{\gamma}
		+ \e \ \ \ \text{for any} \ \ \gamma >0.
	\end{split}
	\Ee

	For $\pp =1$, using \eqref{stab_1}, we have
	\Be
	\begin{split}\notag
		&\|X(0;t,\cdot) - X^{\beta^\prime}(0;t,\cdot)\|_{L^1 (\T^2)}\\
		&
		\leq  \int_{|X(0;t,\cdot) - X^{\beta^\prime} (0;t,\cdot)| \leq \gamma} 
		|X(0;t,x) - X^{\beta^\prime} (0;t,x)| \dd x\\
		&
		\  \ +  \int_{|X(0;t,\cdot) - X^{\beta^\prime}(0;t,\cdot)| \geq \gamma} |X(0;t,x) - X^{\beta^\prime} (0;t,x)| \dd x\\
		& \leq \gamma + 
		\frac{e^{\frac{4C_\e}{\e}}}{\frac{4C_\e}{\e}} \frac{ \| u - u^{\beta^\prime}
			\|_{L^1 ((0,T); L^1(\T^2))}}{\gamma} + \e ,
	\end{split}
	\Ee
	and hence
	\Be\label{est:diff_X}
	\eqref{est:diff_o_2}\lesssim \frac{1}{\ell^3}\Big\{
	\gamma + 
	\frac{e^{\frac{4C_\e}{\e}}}{\frac{4C_\e}{\e}} \frac{ \| u - u^{\beta^\prime}
		\|_{L^1 ((0,T); L^1(\T^2))}}{\gamma} + \e
	\Big\}.
	\Ee
	For each $\e>0$, we choose $\gamma=\e$ and $\ell= \e^{\frac{1}{10}}$. And $\beta^\prime\gg_\e1$ such that $\frac{e^{\frac{4C_\e}{\e}}}{\frac{4C_\e}{\e}} \frac{1}{\e^{\frac{13}{10}}} \| u - u^{\beta^\prime}
	\|_{L^1 ((0,T); L^1(\T^2))} \rightarrow 0$. Combining with \eqref{est:diff_o_1}, we conclude \eqref{stab_o_beta} for $\pp=1$.\end{proof}


\subsubsection{When $\o_0$ has no regularity}

If $\o_0 \in Y_{\mathrm{ul}} ^\Theta (\O)$ and no additional regularity is assumed, one cannot expect a convergence rate which is uniform over $\o_0$: the rate crucially depends on how fast $\o_0 ^\beta$ converges to $\o_0$. Suppose that $\o(t)$ is the Lagrangian solution with initial data $\o_0$. Then we have
\begin{align}
&|\o (t,x) - \o^\beta (t,x) | = | \o_0 (X(0; t, x) ) - \o_0 ^\beta (X^\beta (0; t, x)) | \notag\\
&\le |\o_0 (X(0; t, x)) - \o_0 ^\ell (X(0; t, x))| + |\o_0 ^\ell (X^\beta (0; t, x)) - \o_0 ^\beta (X^\beta (0; t, x))| \notag \\
& + |\o_0 ^\ell (X(0; t, x)) - \o_0 ^\ell (X^\beta (0; t, x))|,\notag
\end{align}
where $\o_0 ^\ell$ is the initial data regularization of $\o_0$ with parameter $\ell$. Therefore, by the compression property we have
\begin{align}
&\| \o(t) - \o^\beta (t) \|_{L^\pp (\O ) } \le \mathfrak{C} \| \o_0 - \o_0 ^\ell \|_{L^\pp (\O) } + \| \o_0 ^\ell - \o_0 ^\beta \|_{L^\pp (\O ) } \notag\\
&+ \| \o_0 ^\ell (X(0, t; \cdot ) - \o_0 ^\ell (X^\beta (0; t, \cdot ) ) \|_{L^\pp (\O)}.\notag
\end{align}
Using \eqref{est:diff_o_2}, we can estimate the first two terms:
\Be
\mathfrak{C} \| \o_0 - \o_0 ^\ell \|_{L^\pp (\O) } +( \| \o_0 ^\ell - \o_0 ^\beta \|_{L^\pp (\O ) } \le (\mathfrak{C} +1 ) \| \o_0 - \o_0 ^\ell \|_{L^\pp (\O) } + \| \o_0 ^\beta - \o_0 \|_{L^\pp (\O ) }.\notag
\Ee
The last term is estimated by \eqref{est:diff_o_2} and \eqref{est1:diff_o_2}:
\Be
\| \o_0 ^\ell (X(0, t; \cdot )) - \o_0 ^\ell (X^\beta (0; t, \cdot ) ) \|_{L^\pp (\O)} \le \frac{C (1 + \pp \| \o_0 \|_{L^\pp (\O) } t )}{ \ell^3 | \log \mathrm{Rate}(\o_0; \beta) | }.\notag
\Ee
Choosing $\ell = |\log \mathrm{Rate} (\beta) |^{-\frac{1}{4}}$ gives that for $t \in [0, T]$
\Be \label{Rateftnvort}
\begin{split}
\| \o(t) - \o^\beta (t) \|_{L^\pp (\O) } &\lesssim \| \o_0 ^\beta - \o_0 \|_{L^\pp (\O)} + \| \o_0 - \o_0 ^{|\log \mathrm{Rate} (\o_0; \beta) |^{-\frac{1}{4}} } \|_{L^\pp (\O) } + \frac{1+ \pp \| \o_0 \|_{L^\pp (\O ) } T}{|\log \mathrm{Rate} (\o_0; \beta ) |^{\frac{1}{4} } } \\
& =: \mathrm{Rate}_\o (\o_0; \beta).
\end{split}
\Ee
Since there is no explicit rate for the convergence of $\| \o_0 ^\beta - \o_0 \|_{L^\pp (\O ) }$, the first two terms dominate the rate of convergence in general. 

\subsubsection{When $\o_0$ has some regularity}

An important class of localized Yudovich vorticity functions belong to Besov space of positive regularity index: for example, $f(x) = \log( \log |x| ) \varphi(x) \in Y_{\mathrm{ul} }^\Theta$ with $\Theta (\pp) = \log \pp$, where $\varphi(x)$ is a smooth cutoff function, belongs to $W^{1, r} (\O)$ where $r < 2$, and thus in Besov space $B^s_{2, \infty}$ with $s < 1$. Of course, vortex patches $\chi_D$ with box-counting dimension of the boundary $d_F (\partial D)<2$ belongs to $B^{\frac{2-d_F (\partial D)}{p}}_{p, \infty}$ for $1 \le p <\infty$ (\cite {CW1996}) and thus vortex patch with a mild singularity in the interior of $D$ also belongs to a certain Besov space with positive regularity.

In this subsection, we provide the rate of convergence of vorticity when $\o_0 \in Y_{\mathrm{ul}} ^\Theta (\O) \cap B^s_{2, \infty} (\O)$ or $\o_0 \in L^\infty (\O) \cap B^s_{2, \infty} (\O)$. Unlike Yudovich $\o_0 \in L^\infty (\O)$ case, if $\o_0$ is in localized Yudovich class $Y_{\mathrm{ul} }^\Theta (\O)$, even if initial vorticity has additional Besov regularity, that is, $\o_0 \in Y_{\mathrm{ul}}^\Theta (\O) \cap B^s_{2, \infty} (\O)$ for some $s>0$, the Besov regularity of vorticity $\o(t)$ may not propagate, even in the losing manner. The key obstruction is failure of generalization of propagation of regularity result. We will explain this after proving the result, following the argument of \cite{CDE2019}, \cite{BCD2011}, and \cite{V1999}. 

\begin{proposition}
If $\o_0 \in Y_{\mathrm{ul}}^\Theta (\O) \cap B^s _{2, \infty } (\O)$ for some $s>0$, then we have
\Be \label{vorticityratelocYud}
\begin{split}
&\| \o_0 - \o_0 ^\beta \|_{L^2 (\O ) } \\
&\le C(T, \| \o_0 \|_{L^2 (\O) }, \| \o_0 \|_{B^s_{2, \infty } (\O ) } ) \left (\beta^{\frac{s'}{1+s'}} +  \left ( \frac{1}{|\log \mathrm{Rate} (\o_0; \beta ) | } \right )^{\frac{s'}{3+4s'} } \right ) \\
& =: \mathrm{Rate}_{\o, s, loc-Y} (\beta)
\end{split}
\Ee
for any $s' \in (0, s)$.
Moreover, if $\o_0 \in L^\infty (\O) \cap B^s _{2, \infty } (\O)$, 
\Be \label{vorticityrateYud}
\| \o^\beta (t) - \o(t) \|_{L^2 (\O) } \le C(s, T, \|\o_0\|_{B^s_{2, \infty} (\O ) } ) \beta^{C(s) e^{-C(\|\o_0 \|_{L^\infty (\O ) }) T } } =: \mathrm{Rate}_{\o, s, Y} (\beta).
\Ee
In particular, if $\o_0$ is Yudovich with some Besov regularity, the vorticity converges with an algebraic rate $\beta^\alpha$.
\end{proposition}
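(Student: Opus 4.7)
For both parts the starting point is the three-term decomposition used in the preceding subsection, which for any auxiliary regularization scale $\ell>0$ reads
\begin{equation*}
\|\o(t) - \o^\beta(t)\|_{L^2(\O)} \lesssim \|\o_0 - \o_0^\beta\|_{L^2(\O)} + \|\o_0 - \o_0^\ell\|_{L^2(\O)} + \|\nabla \o_0^\ell\|_{L^\infty(\O)} \, \|X(0;t,\cdot) - X^\beta(0;t,\cdot)\|_{L^2(\O)}.
\end{equation*}
The plan is to leverage the extra regularity $\o_0 \in B^s_{2,\infty}$ to replace the qualitative convergence $\|\o_0 - \o_0^\ell\|_{L^2}\to 0$ by the explicit rate $\ell^{s'}\|\o_0\|_{B^s_{2,\infty}}$ for any $s'<s$ (via the standard characterization $\|\o_0(\cdot - y)-\o_0\|_{L^2}\lesssim |y|^{s'}\|\o_0\|_{B^s_{2,\infty}}$), to use a Bernstein-type bound $\|\nabla \o_0^\ell\|_{L^\infty}\lesssim \ell^{-2}\|\o_0\|_{L^2}$ obtained from Young's inequality, and then to optimize $\ell$ against the flow-stability estimate. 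The difference between the two cases lies in how $\|X - X^\beta\|_{L^2}$ is controlled and in whether Besov regularity propagates.

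For the \emph{localized Yudovich} case, I would insert the stability estimate \eqref{stab_rLf} with any fixed $p>1$, using the localized Yudovich bound $\|\nabla u\|_{L^p}\lesssim p\,\Theta(p)\|\o_0\|_{Y^\Theta_{\mathrm{ul}}}$ to keep $\|\nabla u^\beta\|_{L^1 L^p}$ under control uniformly in $\beta$, and bound $\|u-u^\beta\|_{L^1 L^1}$ in terms of $\sqrt{\mathrm{Rate}(\o_0;\beta)}$ from \eqref{Rateftn}, yielding
\begin{equation*}
\|X(0;t,\cdot)-X^\beta(0;t,\cdot)\|_{L^2}^2 \lesssim \|X-X^\beta\|_{L^1} \lesssim |\log \mathrm{Rate}(\o_0;\beta)|^{-1}.
\end{equation*}
Plugging this and the Besov estimates into the decomposition gives a bound of the form $\beta^{s'} + \ell^{s'} + \ell^{-2}|\log\mathrm{Rate}|^{-1/2}$. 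Optimizing $\ell$ to balance the last two terms and absorbing $\beta^{s'}\leq \beta^{s'/(1+s')}$ for $\beta<1$ produces the asserted $\mathrm{Rate}_{\o,s,loc\text{-}Y}(\beta)$, after identifying the logarithmic exponent through the $\ell^{s'}=\ell^{-2}|\log\mathrm{Rate}|^{-1/2}$ balance.

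For the \emph{Yudovich} case, the key enhancement is that Besov regularity propagates: by Vishik-type estimates, the log-Lipschitz nature of $u$ (coming from $\o_0 \in L^\infty$) allows one to close a Gronwall-type inequality for $\|\o(t)\|_{B^{s(t)}_{2,\infty}}$ with $s(t)=se^{-Ct}$ and $C=C(\|\o_0\|_{L^\infty})$, and the same bound holds for $\o^\beta(t)$ uniformly in $\beta$ since $\|\o_0^\beta\|_{L^\infty}\leq \|\o_0\|_{L^\infty}$ and $\|\o_0^\beta\|_{B^s_{2,\infty}}\lesssim \|\o_0\|_{B^s_{2,\infty}}$. On the flow side, the log-Lipschitz regularity of $u$ upgrades the logarithmic stability to a quasi-Lipschitz one: $\|X-X^\beta\|_{L^\infty}\lesssim \|u-u^\beta\|_{L^1_tL^\infty_x}^{e^{-CT}}$, and interpolation between $L^2$ (controlled by $\mathrm{Rate}(\o_0;\beta)$, which is polynomial in $\beta$ when $m=0$) and the uniformly bounded $L^\infty$-norm yields an algebraic rate in $\beta$. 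Combining the propagated Besov norm with this quasi-Lipschitz flow estimate in the three-term decomposition gives the stated $\beta^{C(s)e^{-C(\|\o_0\|_{L^\infty})T}}$ rate.

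The hardest step is the \emph{localized Yudovich} case, precisely because Besov regularity of $\o(t)$ does not propagate: the modulus of continuity $\varphi_\Theta$ from Theorem~\ref{thm:localYudovichwellposedness} is weaker than log-Lipschitz, so the usual Vishik commutator estimate for transport of $B^s_{2,\infty}$ norms fails to close. This forces one to use only the initial Besov regularity combined with the (much weaker) logarithmic flow stability, and the resulting tradeoff in $\ell$ produces the loss from $s$ to $s'/(1+s')$ and the super-slow logarithmic factor $|\log\mathrm{Rate}|^{-s'/(3+4s')}$. The careful bookkeeping of exponents in this optimization, together with checking that $\|\o_0^\beta\|_{B^s_{2,\infty}}$ and $\|\o_0^\beta\|_{Y^\Theta_{\mathrm{ul}}}$ remain uniformly bounded in $\beta$, constitutes the remaining technical work.
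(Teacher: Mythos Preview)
Your approach to the localized Yudovich estimate \eqref{vorticityratelocYud} follows the same three-term $\ell$-decomposition as the paper and would produce an explicit rate, but not the precise exponents in the statement. The paper bounds $\|\o_0-\o_0^\ell\|_{L^2}$ by interpolating $L^2$ between $H^{-1}$ and $H^{s'}\supset B^s_{2,\infty}$, using $\|\o_0-\o_0^\ell\|_{H^{-1}}\le\|u_0-u_0^\ell\|_{L^2}\lesssim\ell\|\o_0\|_{L^2}$, which gives the rate $\ell^{s'/(1+s')}$ rather than your $\ell^{s'}$; it then pairs this with the $\ell^{-3}|\log\mathrm{Rate}|^{-1}$ bound taken verbatim from \eqref{est:diff_o_2}--\eqref{est1:diff_o_2}, not $\ell^{-2}|\log\mathrm{Rate}|^{-1/2}$. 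Balancing $\ell^{s'/(1+s')}=\ell^{-3}|\log\mathrm{Rate}|^{-1}$ is what produces the specific exponents $\frac{s'}{1+s'}$ and $\frac{s'}{3+4s'}$ in the statement; your balance gives different ones.

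For the Yudovich estimate \eqref{vorticityrateYud} the paper takes a genuinely different route from what you sketch: it does \emph{not} return to the three-term $\ell$-decomposition at all. After showing via Osgood's lemma that the Yudovich flow is H\"older with exponent $\alpha(T)=e^{-(BT+C_0)}$, the paper uses the Triebel--Lizorkin characterization $F^{s_1}_2$ (pointwise quotient by $|x-y|^{s_1}$ against an $L^2$ majorant) to prove that composition with a H\"older-$\alpha(T)$ flow maps $F^{s_1}_2\to F^{s_1\alpha(T)}_2$, hence $\o(t),\o^\beta(t)\in B^{s_1\alpha(T)}_{2,\infty}$ uniformly in $\beta$. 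A single interpolation \emph{at time $t$},
\[
\|\o^\beta(t)-\o(t)\|_{L^2}\le\|\o^\beta(t)-\o(t)\|_{H^{-1}}^{\frac{s_0}{1+s_0}}\,\|\o^\beta(t)-\o(t)\|_{B^{s_1\alpha(T)}_{2,\infty}}^{\frac{1}{1+s_0}},
\]
together with $\|\cdot\|_{H^{-1}}\le\|u^\beta(t)-u(t)\|_{L^2}\lesssim\beta^{\,e^{-C\|\o_0\|_{L^\infty}T}}$, finishes the proof with no auxiliary scale $\ell$. Your plan mentions the right ingredient (propagation of Besov regularity) but then feeds it into the $\ell$-decomposition, where regularity at time $t$ plays no role; and the step ``interpolate $\|u-u^\beta\|_{L^\infty_x}$ between $L^2$ and $L^\infty$'' does not by itself yield smallness---you would need a Gagliardo--Nirenberg step against $\|\nabla(u-u^\beta)\|_{L^p}$ to push that through. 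The paper's direct time-$t$ interpolation between $H^{-1}$ and the propagated Besov norm sidesteps both the auxiliary $\ell$ and any pointwise flow-stability estimate.
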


\begin{proof}
First, we prove the rate for $\o_0 \in Y_{\mathrm{ul}}^\Theta (\O) \cap B^s _{2, \infty } (\O)$. We rely on the above rate:
\Be\notag
\| \o(t) - \o^\beta (t) \|_{L^2 (\O) }  \le C ( \| \o_0 - \o_0 ^\ell \|_{L^2 (\O ) } + \| \o_0 - \o_0 ^\beta \|_{L^2 (\O ) } ) + \frac{C(1+T \| \o_0 \|_{L^2 (\O ) } )}{ \ell^3 | \log \mathrm{Rate}(\beta) | }.
\Ee
Since $\o_0 \in B^s _{2, \infty} (\O)$, we may use the following interpolation: 
\begin{align}
&\| \o_0 - \o_0 ^\beta \|_{L^2 (\O)}\notag \\
&\le \| \o_0 - \o_0 ^\beta \|_{H^{-1} (\O)} ^{\frac{s'}{1+s'} }\| \o_0 - \o_0 ^\beta \|_{H^{s'} (\O)} ^{\frac{1}{1+s'} } \le \| \o_0 - \o_0 ^\beta \|_{H^{-1} (\O)} ^{\frac{s'}{1+s'} }\| \o_0 - \o_0 ^\beta \|_{B^s_{2, \infty} (\O) } ^{\frac{1}{1+s'} },\notag
\end{align}
for arbitrary $s' \in (0, s)$, where we have used that $H^s = B^s_{2,2}$ and $B^s_{p,q} (\O) \subset B^{s'}_{p, q'} (\O)$ for $s'<s$ and arbitrary $q, q'$. (The proof for whole space, which is standard, can be easily translated to periodic domain $\O$.) Since
\Be
\| \o_0 - \o_0 ^\beta \|_{H^{-1} (\O)} \le \| u_0 - u_0 ^\beta \|_{L^2 (\O ) } \le C \beta \| \o_0 \|_{L^2 (\O )}, \notag
\Ee
we have
\Be
\| \o_0 - \o_0 ^\beta \|_{L^2 (\O)} \le C \beta^{\frac{s'}{1+s'}} \| \o_0 \|_{L^2 (\O ) }^{\frac{s'}{1+s'}} \| \o_0 \|_{B^s_{2,\infty} (\O) } ^{\frac{1}{1+s'}},\notag
\Ee
and similarly
\Be
\| \o_0 - \o_0 ^\ell \|_{L^2 (\O)} \le C \ell^{\frac{s'}{1+s'}} \| \o_0 \|_{L^2 (\O ) }^{\frac{s'}{1+s'}} \| \o_0 \|_{B^s_{2,\infty} (\O) } ^{\frac{1}{1+s'}}.\notag
\Ee
Finally, we match $\ell$ and $\beta$ to find a rate of convergence: we match $\ell$ so that
\Be
\frac{1}{\ell^3 | \log \mathrm{Rate}(\beta) | } = \ell^{\frac{s'}{1+s'} }.\notag
\Ee
Then we have
\Be\notag
\ell^{\frac{s'}{1+s'}} = \frac{1}{\ell^3 | \log \mathrm{Rate}(\beta) | } = \left ( \frac{1}{|\log \mathrm{Rate} (\beta ) | } \right )^{\frac{s'}{3+4s'} } \rightarrow 0,
\Ee
as $\beta\rightarrow 0$. To summarize, we have
\Be\notag
\| \o_0 - \o_0 ^\beta \|_{L^2 (\O ) } \le C(T, \| \o_0 \|_{L^2 (\O) }, \| \o_0 \|_{B^s_{2, \infty } (\O ) } ) \left (\beta^{\frac{s'}{1+s'}} +  \left ( \frac{1}{|\log \mathrm{Rate} (\beta ) | } \right )^{\frac{s'}{3+4s'} } \right ),
\Ee
as desired. Note that in the Yudovich class, $\mathrm{Rate}(\beta) = \beta^C$, and thus this rate is dominated by $\frac{1}{|\log \beta |^\alpha}$, which is much slower than algebraic rate $\beta^\alpha$. 

Next, we prove the improved rate for the Yudovich initial data $\o_0 \in L^\infty(\O)$. First, we calculate the rate of distance $d(X^\beta (0; t, x), X^\beta (0; t, y))$ with respect to $d(x,y)$, which is uniform in $\beta$. For the later purpose, we calculate the rate for localized Yudovich class as well: $m=0$ corresponds to $\o_0 \in L^\infty (\O)$.

If $\o_0 \in Y_{\mathrm{ul}}^\Theta (\O) \cap B^s_{2, \infty} (\O)$, then so is $\o_0^\beta \in Y_{\mathrm{ul}}^\Theta (\O) \cap B^s_{2, \infty} (\O)$, with
\Be\notag
\sup_\beta ( \| \o_0^\beta  \|_{Y_{\mathrm{ul}}^\Theta (\O)} + \|\o_0 ^\beta \|_{B^s_{2, \infty} (\O)} ) \le  (\| \o_0  \|_{Y_{\mathrm{ul}}^\Theta (\O)} + \|\o_0  \|_{B^s_{2, \infty} (\O)} ).
\Ee
We first estimate the modulus of continuity for $u^\beta$ with $\o_0 \in Y_{\mathrm{ul}}^\Theta (\O)$, given by Theorem \ref{thm:localYudovichwellposedness}. 
\Be\notag
\varphi_\Theta (r) \le
\begin{cases}
0, r=0, \\
r (1 - \log r)  \prod_{k=1} ^m \log_k (1 - \log r), 0 < r < \frac{1}{e^{e_m(1) -1 } }, \\
C(\Theta), r \ge \frac{1}{e^{e_m(1) -1 } }
\end{cases}
\Ee
where $C(\Theta)$ is a constant depending on $\Theta$.

We have
\begin{align}\notag
&| X^\beta (0; t, x)- X^\beta (0; t, y) | \le |x- y| + \int_0 ^t  \left |\frac{d}{ds} X^\beta(s; t, x)- \frac{d}{ds} X^\beta(s; t, y)\right | \dd s \notag\\
&= |x-y| + \int_0 ^t | u(X^\beta (s; t, x), s) - u(X^\beta(s; t, y) , s) | \dd s \notag\\
& \le |x-y| + \int_0 ^t \varphi_\Theta (|X^\beta(s; t, x)- X^\beta(s; t, y) | ) B \dd s.\notag
\end{align}
Here, by the Theorem \ref{thm:localYudovichwellposedness}, $C$ is uniform in $\beta$. Then by Osgood's lemma, we have
\Be\notag
-\mathcal{M} (\left |X^\beta (0; t, x), X^\beta(0; t, y) \right | ) + \mathcal{M} (|x-y| ) \le Bt, 
\Ee
where 
\Be\notag
\mathcal{M} (x) = \int_x ^1 \frac{\dd r}{\varphi_\Theta (r) } = \begin{cases}
\int_x ^{\exp(\frac{1}{{e_m(1) - 1 } })} \frac{1}{r (1-\log r) \prod_{k=1} ^m \log_k (1-\log r) } \dd r + \int_{\frac{1}{e^{e_m(1) - 1 } }} ^1 \frac{\dd r}{\varphi^\Theta (r) }, x < \exp(\frac{1}{e^{e_m(1) - 1 }}), \\
 \int_{x } ^1 \frac{\dd r}{\varphi_\Theta (r) }, x \ge  \exp(\frac{1}{e^{e_m(1) - 1 }}),
\end{cases}
\Ee
and $B$ is an upper bound for $\|u^\beta \|_{L^\infty ([0, T]; C_b ^{0, \varphi_\Theta} (\O, \mathbb{R}^2 ) ) }$. For future purpose, we take $B$ so that $e^{BT} > e_m (1)$. 
Thus, if $x \ge  \exp(\frac{1}{e^{e_m(1) - 1 }})$, $\mathcal{M} (x) \le C_0$ for some positive constant $C_0$. If $x < \exp(\frac{1}{e^{e_m(1) - 1 }})$, then
\Be\notag
\int_x ^{\exp(\frac{1}{{e_m(1) - 1 } })} \frac{1}{r (1-\log r) \prod_{k=1} ^m \log_k (1-\log r) } \dd r = \log_{m+1} (1- \log x)
\Ee
using the substitution $y = \log_m (1-\log r)$, and thus
\Be\notag
\mathcal{M} (x) \in [\log_{m+1} (1-\log x), \log_{m+1} (1-\log x) + C_0]
\Ee
for a (possibly larger) positive constant $C_0$. Therefore, if $|x-y|$ is sufficiently small so that $\log_{m+1} (1 - \log |x-y|) - BT > C_0$, then since
\Be\notag
\mathcal{M} ( | X^\beta (0; t, x)- X^\beta(0; t, y)  |) \ge \mathcal{M} (|x-y|) - Bt \ge \log_{m+1} (1 - \log |x-y| ) - BT,
\Ee
$|X^\beta (0; t, x)- X^\beta(0; t, y) |<  \exp(\frac{1}{e^{e_m(1) - 1 }})$, and therefore we have
\Be\notag
\log_{m+1} (1 - \log (|X^\beta (0; t, x)- X^\beta(0; t, y) | ) ) \ge \log_{m+1} (1 - \log |x-y| ) - BT - C_0,
\Ee
which gives
\Be\notag
1 - \log (|X^\beta (0; t, x)- X^\beta(0; t, y) | ) \ge e_{m+1} (\log_{m+1} (1 - \log |x-y| ) - BT - C_0),
\Ee
or
\Be\notag
|X^\beta (0; t, x)- X^\beta(0; t, y) |  \le e \exp ( - (e_{m+1} \left (\log_{m+2} (\frac{e}{|x-y|}) - BT - C_0\right )  ) ),
\Ee
which is uniform in $\beta$. 

From now on, we assume $m=0$. We closely follow the proof of \cite{CDE2019} (and \cite{BCD2011}). We rewrite the above as
\Be\notag
|X^\beta (0; t, x)- X^\beta(0; t, y) | \le e \left ( \frac{|x-y| }{e} \right )^{e^{-(BT + C_0 ) }} =: C(T) (|x-y|)^{\alpha(T)},
\Ee
where $\alpha(T) = \exp (-(BT + C_0) )$ which is deteriorating in time and $C(T) = \exp(1 - e^{-(BT+ C_0) } )$ which increases in time.

Next, we introduce the space $F^s_\pp (\O)$, which belongs to the family of Triebel-Lizorkin spaces $F^s_\pp = F^s_{\pp, \infty}$ for $\pp>1$:
\Be \label{TriebelLizorkin}
\begin{split}
 F^s_{\pp} (\O)  
&= \{ f \in L^\pp (\O) | \text{ there exists } g \in L^\pp (\O) \text{ such that for every } x, y \in \O,\\
& \ \ \ \  \frac{|f(x) - f(y) | }{|x-y|^s} \le g(x) + g(y) \},\end{split}
\Ee
and its seminorm $[\cdot ]_{F^s_\pp}$ is defined by
\Be\notag
[f]_{F^s_\pp} :=  \inf_{g \in L^\pp (\O)} \{ \| g \|_{L^\pp (\O)} | |f(x) - f(y) |\le (|x-y|)^s (g(x) + g(y) ), \text{ for every } x, y \in \O \}.
\Ee
The norm on $F^s_{\pp} (\O)$ is naturally defined by $\| \cdot \|_{L^\pp (\O ) } + [\cdot ]_{F^s_\pp}$. 

Now we argue that solution in Yudovich class propagates Besov regularity. First, we use the following embeddings: for $s_3 > s_2 > s_1$, we have continuous embeddings (the proof for whole space, which is standard, can be easily translated to periodic domain $\O$.)
\Be \label{Besovembedding}
B_{\pp, \infty} ^{s_3} (\O) \subset B_{\pp, 1} ^{s_2} (\O) \subset W^{s_2, \pp}  (\O) \subset F_{\pp} ^{s_1} (\O)  \subset B_{\pp, \infty} ^{s_1} (\O).
\Ee
Therefore, since $\o_0 \in B^s_{2, \infty} (\O)$ for some $s>0$, we have $\o_0 \in F^{s_1}_{2} $ for some $s_1 \in (0, s)$, and thus so are $\o_0 ^\beta$s with uniform bounds on $F^{s_1}_2$ norm. Then for any $\beta \ge 0$ (we introduce the convention that $X^0  = X$ and $\o^0 = \o$) we have
\begin{align}
&\frac{|\o^\beta (x,t) - \o^\beta (y, t) |}{(|x-y|)^{s_1 \alpha (T)} } = \frac{| \o_0 ^\beta (X^\beta (0; t, x) ) - \o_0 ^\beta (X^\beta (0; t, y) ) | } { (|x-y|)^{s_1 \alpha (T)}  } \notag\\
&=  \frac{| \o_0 ^\beta (X^\beta (0; t, x) ) - \o_0 ^\beta (X^\beta (0; t, y) ) | } {d(X^\beta (0; t, x), X^\beta (0; t, y) ) ^{s_1} } \frac{(|X^\beta (0; t, x)- X^\beta (0; t, y) | )^{s_1} }{ (|x-y|)^{s_1 \alpha (T)}  } \notag \\
& \le \left (g(X^\beta (0; t, x)) + g(X^\beta (0; t, y) ) \right ) C(T),\notag
\end{align}
for any $g \in L^2 (\O)$ satisfying \eqref{TriebelLizorkin}. Therefore, $C(T) g \circ X^\beta (0; t , \cdot )$ satisfies defining condition for \eqref{TriebelLizorkin} and thus $\o^\beta (t) \in F^{s_1 \alpha(T)}_2$ with 
\Be
\| \o^\beta (t) \|_{F^{s_1 \alpha(T) } _2 } \le C(T)  \| \o_0 \|_{F^{s_1}_2}.\notag
\Ee
Therefore, using \eqref{Besovembedding}, we have
\Be\notag
\| \o^\beta (t) \|_{B^{s_1 \alpha (T) } _{2, \infty } (\O) } \le C \| \o^\beta (t) \|_{F^{s_1 \alpha(T) } _2 (\O) } \le C(T) \| \o_0 \|_{F^{s_1}_2 (\O) } \le C(T) \| \o_0 \|_{B^s_{2, \infty} (\O)}.
\Ee
Now we use the interpolation inequality:
\Be\notag
\| \o^\beta (t) - \o(t) \|_{L^2 (\O) } \le \| \o^\beta (t) - \o(t) \|_{H^{-1} (\O) } ^{\frac{s_0}{1+s_0 } } \| \o^\beta (t) - \o(t) \|_{B^{s_1 \alpha(T) } _{2, \infty } }^{\frac{1}{1+s_0 } }.
\Ee
for some $s_0 < s_1 \alpha(T)$. Therefore, we have
\Be\notag
\| \o^\beta (t) - \o(t) \|_{L^2 (\O) } \le \| u^\beta (t) - u(t) \|_{L^2 (\O ) } ^{\frac{s_0}{1+s_0 } } C(T, \|\o_0\|_{B^s_{2, \infty} (\O ) } ) \le C(T, \|\o_0\|_{B^s_{2, \infty} (\O ) } ) \beta^{C e^{-C(\|\o_0 \|_{L^\infty (\O ) }) T } },
\Ee
by noting that the rate function for Yudovich case is algebraic: that is, $\mathrm{Rate}(\beta) = \beta^{2 e^{-C(\|\o_0 \|_{L^\infty (\O ) }) T }}$.
\end{proof}

\begin{remark}

One may naturally ask if one can obtain faster rate than \eqref{vorticityratelocYud}, analogous to \eqref{vorticityrateYud}. It seems that the argument we presented for \eqref{vorticityrateYud} does not extend to localized Yudovich space.

First, if $m>0$ for the modulus of continuity given by 
\Be \label{modcontinuityX}
\mu (|x-y|, T) = \exp \left ( - e_{m+1} \left ( \log_{m+2} \frac{e}{|x-y|} - (BT + C_0 ) \right ) \right )
\Ee
cannot be bounded by any H\"{o}lder exponent $|x-y|^\alpha$ for any $\alpha \in (0,1)$. Thus, we cannot continue the argument from there. To see this, suppose that there exists a $\alpha>0$ and $C>0$ such that 
\Be\notag
\mu(r, T) \le C r^\alpha,
\Ee
for any $r<1$ very small. This amounts to say that
\Be\notag
\log_{m+2} \frac{e}{r} - \log_{m+2} \frac{1}{Cr^\alpha} \ge BT + C_0.
\Ee
Taking exponential, we have
\Be\notag
\frac{\log_{m+1} \frac{e}{r} }{\log_{m+1} \frac{1}{Cr^\alpha} } \ge e^{BT + C_0}.
\Ee
Since both denominator and numerator diverges as $r \rightarrow 0^+$, we may apply L'Hopital's rule:
\begin{align}
&\frac{d}{dr} \log_{m+1} \frac{e}{r} = \frac{1}{\prod_{k=1} ^m \log_k \frac{e}{r} } \left ( -\frac{1}{r} \right ),\notag \\
&\frac{d}{dr} \log_{m+1} \frac{1}{Cr^\alpha} = \frac{1}{\prod_{k=1} ^m \log_k \frac{1}{Cr^\alpha} } \left ( -\frac{\alpha}{r} \right ).\notag
\end{align}
Inductively, we have
\begin{align}
&\lim_{r\rightarrow 0^+} \frac{\log_{0+1} \frac{e}{r} }{\log_{0+1} \frac{1}{Cr^\alpha} } = \frac{1}{\alpha},\notag \\
&\lim_{r\rightarrow 0^+} \frac{\log_{1+1} \frac{e}{r} }{\log_{1+1} \frac{1}{Cr^\alpha} } = \lim_{r\rightarrow 0^+} \frac{\log_1 \frac{1}{Cr^\alpha} }{\log_1 \frac{e}{r} } \frac{1}{\alpha} = \frac{\alpha}{\alpha} = 1,\notag \\
&\cdots \notag \\
&\lim_{r\rightarrow 0^+} \frac{\log_{m+1} \frac{e}{r} }{\log_{m+1} \frac{1}{Cr^\alpha} } = \prod_{k=1} ^m \frac{\log_k \frac{1}{Cr^\alpha} } {\log_k \frac{e}{r} } \frac{1}{\alpha} = 1.\notag
\end{align}
Therefore, except for $m=0$, where the limit is given by $\frac{1}{\alpha}$, for any $\alpha>0$ and $C>0$, there exists small $r>0$ such that $\mu(r, T) > Cr^\alpha$. Thus, control of vorticity in Triebel-Lizorkin space $F_p ^{s(t)}$ is not available.

There are other methods for propagation of regularity (in losing manner), but it seems that they also suffer from similar issue: flows generated by localized Yudovich class does not propagate enough regularity.

The argument of \cite{BCD2011} does not extend to localized Yudovich class as well: when $\o_0$ is locally Yudovich, the modulus of continuity for $u$ is weaker than log-Lipschitz: it is known that the norm defined by 
\Be\notag
\|u\|_{LL'} = \| u\|_{L^\infty} + \sup_{j \ge 0} \frac{\| \nabla S_j u \|_{L^\infty} }{(j+1)},
\Ee
where $S_j u = \sum_{k=-1} ^j \Delta_k u$, is equivalent to the norm of Log-Lipschitz space (proposition 2.111 of \cite{BCD2011}, which is for the whole case, but can be adopted the periodic domain easily). However, if $\o_0 \in Y_{\mathrm{ul} }^\Theta$, then $u$ has the modulus of continuity $\varphi_\Theta$, and the norm for $C_b ^{0, \varphi_\Theta} (\O)$ is equivalent to
\Be\notag
\|u\|_{L^\infty} + \sup_{j \ge 0} \frac{\| \nabla S_j u \|_{L^\infty} }{\prod_{k=1} ^{m+1} \log_k (e 2^j ) } ,
\Ee
which is less than $\|u\|_{LL'}$. However, the critical growth rate for the denominator in applying the linear loss of regularity result (for example, Theorem 3.28 of \cite{BCD2011}) is $j+1$, which is the rate of Log-Lipschitz norm. Therefore, we cannot rely on the argument of \cite{BCD2011} to conclude that $\o(t)$ has certain Besov regularity.

Finally, a borderline Besov space $B_\Gamma$, introduced by Vishik in \cite{V1999}) has a certain regularity (in the sense that $B_\Gamma$ restricts the rate of growth of frequency components) propagates, but it is not clear how to use this to obtain convergence rate for vorticity. For simplicity, we focus on one particular form of growth function: let 
\Be\notag
\Gamma (r) = (r+2) \frac{\log (r+3)}{\log 2}, \Gamma_1 (r) = \frac{\log (r+3)}{\log 2}
\Ee
for $r \ge -1$ and $\Gamma(r) = \Gamma_1 (r) = 1$ for $r \le -1$. We define the space $B_\Gamma$ by
\Be\notag
B_\Gamma =\left  \{ f | \| f \|_{\Gamma} := \sup_{N \ge -1} \frac{\sum_{j=-1} ^N\| \Delta_j f \|_{L^\infty }}{\Gamma(N)} < \infty \right \}
\Ee
and we define $B_{\Gamma_1}$ in a similar manner. In \cite{V1999}, the following was proved:
\begin{theorem}[\cite{V1999}]
If $\o_0 \in L^{\pp_0} \cap L^{\pp_1} \cap B_{\Gamma_1}$, for $1 < \pp_0 < 2 < \pp_1 < \infty$, then for any $T>0$ there uniquely exists a weak solution $\o(t)$ of Euler equation satisfying
\Be\notag
\| \o(t) \|_{\Gamma} \le \lambda(t),
\Ee
where $\lambda(t)$ depends only on the bounds on $\|\o_0 \|_{L^{\pp_0} \cap L^{\pp_1} \cap B_{\Gamma_1}}$. 
\end{theorem}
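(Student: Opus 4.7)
The plan is to follow Vishik's original strategy in \cite{V1999}: construct the solution as a limit of smooth approximations, establish a uniform bound in the space $B_{\Gamma}$ through a careful Littlewood-Paley estimate on the vorticity equation, and close uniqueness via a log-Lipschitz-type modulus of continuity for $u$. Concretely, I would first mollify the initial datum to obtain $\omega_0^{(n)} \in C^\infty(\T^2)$ with $\omega_0^{(n)} \to \omega_0$ in $L^{\pp_0}\cap L^{\pp_1}$ and with $\|\omega_0^{(n)}\|_{B_{\Gamma_1}}$ uniformly bounded. Standard theory gives smooth global solutions $\omega^{(n)}(t)$ with associated flow $X^{(n)}$, and $\|\omega^{(n)}(t)\|_{L^{\pp}}$ is conserved for every $\pp \in [\pp_0,\pp_1]$ by incompressibility, so the $L^{\pp_0}\cap L^{\pp_1}$ bound passes to the limit for free.

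The central step, and the main technical obstacle, is the uniform bound $\|\omega^{(n)}(t)\|_{\Gamma} \le \lambda(t)$. For this I would apply a Littlewood-Paley block $\Delta_j$ to $\partial_t \omega + u\cdot \nabla \omega=0$ and use Bony's paraproduct decomposition to rewrite $\Delta_j(u\cdot\nabla \omega)$ as $u\cdot\nabla \Delta_j \omega + [\![\Delta_j, u\cdot\nabla]\!]\omega$. The commutator is controlled in $L^\infty$ by a standard estimate of the form
\[
\|[\![\Delta_j, u\cdot \nabla]\!]\omega\|_{L^\infty} \lesssim \sum_{k\le j} 2^{k-j}\|\nabla \Delta_k u\|_{L^\infty}\|\omega\|_{L^\infty_{k}} + (\text{high-high terms}),
\]
where $\|\nabla \Delta_k u\|_{L^\infty}$ is estimated by $\|\Delta_k \omega\|_{L^\infty}$ via the Biot--Savart kernel adapted to $\T^2$ (Proposition \ref{PeriodicKernel}). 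Integrating along the transport and summing over $j\le N$ then produces an inequality of the form
\[
\sum_{j \le N}\|\Delta_j \omega^{(n)}(t)\|_{L^\infty} \lesssim \Gamma(N) + \int_0^t \Gamma(N)\,\|\omega^{(n)}(s)\|_{\Gamma}\,\mathrm{d}s,
\]
where the factor $\Gamma(N)$ on the right arises precisely because the commutator estimate loses one derivative in a logarithmic way, and the particular choice $\Gamma(r)=(r+2)\Gamma_1(r)$ is calibrated so that this loss is exactly absorbed. Dividing by $\Gamma(N)$, taking the supremum over $N$, and invoking Gronwall yields $\|\omega^{(n)}(t)\|_\Gamma \le \lambda(t)$ with $\lambda(t)$ depending only on $\|\omega_0\|_{L^{\pp_0}\cap L^{\pp_1}\cap B_{\Gamma_1}}$ and $t$.

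Having these uniform bounds, existence follows by a compactness argument: $\{u^{(n)}\}$ is equicontinuous in space by a Log-Lipschitz-type estimate derived from $\|\omega^{(n)}\|_{L^{\pp_0}\cap L^{\pp_1}\cap B_\Gamma}$ (via Biot--Savart), and the Aubin--Lions scheme of Lemma \ref{lemma_AL} gives a subsequence converging in $L^1_t L^1_x$. Strong convergence of $u^{(n)}$ and weak convergence of $\omega^{(n)}$ (in a suitable dual of a Besov-type space) suffice to pass to the limit in $\partial_t \omega + \mathrm{div}(u\omega)=0$. For uniqueness, the key observation is that $\|\omega\|_\Gamma<\infty$ implies $u$ has a modulus of continuity of the form $r\,\Gamma(\log(1/r))$, which is integrable in Osgood's sense; one then runs the standard Yudovich-type energy argument on $v = u_1 - u_2$:
\[
\tfrac{\d}{\d t}\|v\|_{L^2}^2 \le 2\int |v|^2|\nabla u_1|\,\mathrm{d}x,
\]
interpolating $|\nabla u_1|$ in $L^p$ against $\|\omega_1\|_\Gamma$ and optimizing $p$ in terms of $\|v\|_{L^2}$ exactly as in the proof of \eqref{Rateftn}. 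The resulting Osgood-type differential inequality forces $v\equiv 0$. The hard part throughout is the commutator-plus-paraproduct analysis in step two, since it is here that the precise choice of the growth function $\Gamma$ relative to $\Gamma_1$ must be exploited to keep the frequency cascade under control.
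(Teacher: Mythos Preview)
The paper does not prove this theorem: it is quoted verbatim as a result of Vishik \cite{V1999} inside a remark, with no accompanying argument. There is therefore no ``paper's own proof'' to compare your proposal against. Your sketch is a reasonable outline of Vishik's original strategy (mollify, uniform $B_\Gamma$ bound via Littlewood--Paley commutator estimates calibrated to the growth function, compactness for existence, Osgood for uniqueness), but since the present paper only cites the statement, any assessment of correctness would have to be made against \cite{V1999} itself rather than against this paper.
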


Therefore, one can prove uniform boundedness of vorticity in $B_{\Gamma}$ space. However, it is not clear how one can interpolate $B_{\Gamma}$ space and the velocity space (where we have rate of convergence) to obtain rate for $L^p$ norm of the vorticity.

Indeed, it was recently shown that if the velocity field is worse than Lipschitz ($u \in W^{1,p}$ for $p<\infty$), then it is possible for smooth data to lose all Sobolev regularity instantaneously from the transport by $u$ (\cite{ACM2019}). Instead, only a logarithm of a derivative can be preserved (see, e.g. \cite{BN2018}), and this loss of regularity prohibits faster convergence.

\end{remark}

\section{Proof of the Main theorems}

\begin{lemma}
	\Be\label{est:dec:F-M}
	\begin{split}
&	\left\| \frac{ F^\e(t) - M_{1, \e u(t) , 1}}{\e 
		\sqrt{M_{1,0,1}}} \right\|_{L^p_xL^2_v} \\
	&
	\lesssim   e^{ \frac{ \e^2}{4}  \| u^\beta \|^2_{ \infty  }} \Big\{ \|u^\beta  (t)- u (t)\|_{L^p _x}  
	e^{\e^2 \|u-u^\beta\|^2_{\infty}}  
	+ \kappa^{ \min \{1, \frac{p+2}{2p}\}} \sqrt{\mathcal{E} (t)}
	+ \e \kappa V(\beta)
	\Big\}.\end{split}
	\Ee
	\Be\label{est:dec:DF-M}
	\begin{split}
	&	\left\| \frac{ \nabla_x (F^\e - M_{1, \e u(t) , 1})}{\e (1+ |v|)
			\sqrt{M_{1,0,1}}} \right\|_{L^p_xL^2_v}  \\
		&
		\lesssim \big\{ \|\nabla_x u^\beta -\nabla_x  u\|_{L^p_x}  +
		\e \|\nabla_x u \|_{L^p_x}  + \e \|\nabla_x u^\beta \|_{L^p_x} \big\} 
		e^{\e^2 \|u-u^\beta\|^2_{\infty}}  e^{\e^2 \| u^\beta\|^2_\infty}\\
		&+  e^{ \frac{ \e^2  \| u^\beta \|^2_{L^\infty (\O)}}{4}} \{\kappa^{ \min \{\frac{1}{p}, \frac{1}{2}\}} \sqrt{\mathcal{E}(t)}+ \e \kappa V(\beta) \}	.
	\end{split}	\Ee
\end{lemma}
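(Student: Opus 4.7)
\medskip
\noindent\textit{Proof proposal.} The plan is to split $F^\e - M_{1, \e u, 1}$ according to the ansatz \eqref{F_e} into three groups: the Maxwellian difference $\mu - M_{1, \e u, 1}$, the algebraic correctors $\e^2 p^\beta \mu - \e^2 \kappa (\nabla_x u^\beta):\mathfrak{A}\sqrt{\mu} + \{\e\kappa \tilde u^\beta\cdot(v-\e u^\beta) + \e^2\kappa \tilde p^\beta\}\mu$, and the remainder $\e f_R\sqrt{\mu}$, and then bound the $L^p_x L^2_v$ (respectively $L^p_x L^2_v$ with $(1+|v|)^{-1}$ weight) norm of each piece after division by $\e\sqrt{M_{1,0,1}}$ (respectively $\e(1+|v|)\sqrt{M_{1,0,1}}$). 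For the Maxwellian difference I would set $u_s := (1-s)u + s u^\beta$ and use the fundamental theorem of calculus together with $\nabla_U M_{1,U,1}(v) = (v-U) M_{1, U, 1}(v)$ to write
\[
\mu - M_{1, \e u, 1} = \e (u^\beta - u) \cdot \int_0^1 (v - \e u_s) M_{1, \e u_s, 1}\, \dd s.
\]
Completing the square in the ratio yields $\frac{M_{1, \e u_s, 1}(v)}{\sqrt{M_{1,0,1}(v)}} = (2\pi)^{-3/4} e^{\e^2|u_s|^2/2} e^{-|v - 2\e u_s|^2/4}$, so the $v$-dependent integrand has $L^2_v$ norm bounded by a constant times $e^{\e^2|u_s|^2/2}$; distributing $|u_s| \leq |u^\beta| + |u^\beta - u|$ pointwise and taking $L^p_x$ of the scalar prefactor $|u^\beta - u|(x)$ produces the $\|u^\beta-u\|_{L^p_x} e^{\e^2\|u-u^\beta\|_\infty^2}$ contribution with the displayed $e^{\e^2\|u^\beta\|_\infty^2/4}$ weight.

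For the corrector group, each term carries an explicit prefactor of $\e$ or $\kappa$; after dividing by $\e\sqrt{M_{1,0,1}}$ and using Lemma \ref{Abound} to absorb $\mathfrak{A}$ and the Gaussian ratio bound above to absorb $\mu/\sqrt{M_{1,0,1}}$ (at the cost of an $e^{\e^2\|u^\beta\|_\infty^2/4}$ factor), the $L^p_x$ norms of $p^\beta$, $\nabla_x u^\beta$, $\tilde u^\beta$, $\tilde p^\beta$ are all majorized by $V(\beta)$ defined in \eqref{Vbeta}, producing the $\kappa V(\beta)$ term on the right of \eqref{est:dec:F-M} (modulo irrelevant $\e$ gains from the subleading correctors). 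For the remainder $\e f_R \sqrt{\mu}$, after cancellation of $\e$ and the pointwise bound on $\sqrt{\mu/M_{1,0,1}}$, the problem reduces to estimating $\|f_R\|_{L^p_x L^2_v}$. For $p\in[1,2]$, direct embedding plus \eqref{ED} gives $\lesssim \kappa\sqrt{\mathcal{E}(t)}$; for $p\in[2,\infty]$, I would interpolate $\|f_R\|_{L^p_x L^2_v} \leq \|f_R\|_{L^2_xL^2_v}^{2/p}\|f_R\|_{L^\infty_xL^2_v}^{1-2/p}$, combine with the anisotropic Agmon bound $\|f_R\|_{L^\infty_x L^2_v}\lesssim \|f_R\|_{L^2_xL^2_v}^{1/2}\|\p^2 f_R\|_{L^2_xL^2_v}^{1/2}$ from Lemma \ref{anint}, and plug in $\|f_R\|_{L^2_x L^2_v} \lesssim \kappa\sqrt{\mathcal{E}(t)}$, $\|\p^2 f_R\|_{L^2_x L^2_v} \lesssim \sqrt{\mathcal{E}(t)}$, giving exactly the exponent $\min\{1, (p+2)/(2p)\}$.

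For the gradient estimate \eqref{est:dec:DF-M} I would differentiate the above decomposition in $x$. From $\nabla_x M_{1,\e U,1} = \e(\nabla_x U)\cdot(v-\e U)M_{1,\e U,1}$ one gets
\[
\nabla_x(\mu - M_{1,\e u,1}) = \e(\nabla_x u^\beta - \nabla_x u)(v-\e u^\beta)\mu + \e\nabla_x u\cdot\bigl[(v-\e u^\beta)(\mu - M_{1,\e u,1}) + \e(u-u^\beta)M_{1,\e u,1}\bigr],
\]
where the $(1+|v|)^{-1}$ denominator absorbs the linear factor $(v-\e u^\beta)$. The first summand yields $\|\nabla_x u^\beta - \nabla_x u\|_{L^p_x}$ times an exponential Gaussian bound, while the bracketed piece is handled by Step~1 together with H\"older to extract $\e\|\nabla_x u\|_{L^p_x}$ and $\e\|\nabla_x u^\beta\|_{L^p_x}$ with the $\|u-u^\beta\|_\infty$-dependence absorbed into the exponential weight. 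Correctors are treated as before with one extra derivative still controlled by $V(\beta)$. Finally $\nabla_x(\e f_R\sqrt{\mu}) = \e\nabla_x f_R\sqrt{\mu} + \frac{\e^2}{2}(\nabla_x u^\beta)\cdot(v-\e u^\beta)f_R\sqrt{\mu}$: the second piece is lower order, and for the leading piece Gagliardo-Nirenberg interpolation in $x$ (applied pointwise in $v$ via Minkowski) gives $\|\nabla_x f_R\|_{L^p_x L^2_v}\lesssim \|\nabla_x f_R\|_{L^2_xL^2_v}^{2/p}\|\p^2 f_R\|_{L^2_xL^2_v}^{1-2/p}\lesssim \kappa^{1/p}\sqrt{\mathcal{E}(t)}$ for $p\in[2,\infty)$, which together with the direct embedding $\kappa^{1/2}\sqrt{\mathcal{E}(t)}$ for $p\in[1,2]$ produces the exponent $\min\{1/p,1/2\}$.

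The main obstacle is the endpoint behaviour of the gradient estimate: a naive attempt to bound $\|\nabla_x f_R\|_{L^\infty_xL^2_v}$ by anisotropic Agmon would require control of $\p^3 f_R$, which is not available from $\mathcal{E}$; the cap $\min\{1/p,1/2\}$ is sharp precisely because the energy stops at $\p^2$. A subsidiary technical point is that the exponential $u^\beta$-weight $e^{\e^2\|u^\beta\|_\infty^2/4}$ must be extracted carefully, since $u_s = u^\beta + (s-1)(u^\beta - u)$ means the $u$-dependence of the Gaussian prefactor must be fully absorbed into the $\|u-u^\beta\|_\infty$ part so that the $\|u^\beta\|_\infty$-dependence appears only once, with the sharp constant $1/4$ inherited from $\sqrt{\mu}/\sqrt{M_{1,0,1}}$.
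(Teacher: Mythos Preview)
Your proposal is correct and follows essentially the same three-piece decomposition as the paper's proof: Maxwellian difference handled by a one-parameter interpolation between $u$ and $u^\beta$, correctors bounded pointwise by $V(\beta)$, and the remainder $f_R$ controlled by Gagliardo--Nirenberg-type interpolation against the scaled energy $\mathcal{E}$. The only cosmetic differences are that the paper parametrizes the Taylor integral by $a\in[0,\e]$ rather than $s\in[0,1]$, differentiates under the integral rather than expanding $\nabla_x(\mu-M_{1,\e u,1})$ by the product rule, and interpolates $\|f_R\|_{L^p_xL^2_v}$ via $\|\nabla_x f_R\|_{L^2}^{(p-2)/p}\|f_R\|_{L^2}^{2/p}$ instead of your $L^2$--$L^\infty$ plus Agmon route; both yield the same exponent $\min\{1,(p+2)/(2p)\}$.
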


\begin{proof} 	\hide
	Recall the notation of a Maxwellian associates to the Lagrangian solutions $u^\beta$ of \eqref{vorticity_eqtn}-\eqref{vorticity_beta}:
	\Be\label{mu_beta}
	\mu =\mu  (t,x,v) = M_{1, \e u^\beta (t,x), 1}  (v).
	\Ee
	For the simplicity, we will use the following notation for the global Maxwellian: 	
	\Be\label{mu_0}
	\mu_0 = \mu_0 (v)= M_{1,0,1} (v). 
	\Ee

	\textit{Step 1. Proof of \eqref{est:dec:F-M}. }	We decompose
	\Be\label{dec:F-M}
	\begin{split}
		&\left\| \frac{ F^\e - M_{1, \e u  , 1}}{\e    \sqrt{M_{1,0 ,1}}} \right\|_{L^p_x L^2_v } \\
		&	\leq \left\| \frac{   M_{1, \e u^\beta , 1} - M_{1, \e u  , 1} }{\e  \sqrt{M_{1,0 ,1}}} \right\|_{L^p_x L^2_v }+
		\left\|
		\sqrt{	\frac{M_{1, \e u^\beta, 1}}{   M_{1,0,1}} }
		\right\|_{L^\infty_{x,v} } 
		\left\| \frac{ F^\e - M_{1, \e u^\beta , 1}}{\e \sqrt{M_{1,\e u^\beta ,1}}} \right\|_{L^p_x L^2_v } 
		\\
		&  = \eqref{dec:F-M}_1 + \eqref{dec:F-M}_2  \eqref{dec:F-M}_3.
	\end{split}
	\Ee
	
	The bound of $\eqref{dec:F-M}_1$ raises the need for consideration of $\sqrt{M_{1, A , 1}/ M_{1,0,1}}$ for $A \in \R^3$:
	\Be\label{M/M_0}
	\sqrt{M_{1, A , 1}/ M_{1,0,1}}  \lesssim
	e^{\frac{-|v-A|^2 + |v|^2}{4}}
	\leq 
	e^{\frac{|A|^2}{4}}
	.
	\Ee
	Using \eqref{M/M_0} and the Taylor expansion, we derive that, 
	\Be\label{taylor}
	\begin{split}
		&\frac{| M_{1, \e u^\beta ,1} -M_{1, \e u , 1} |}{ \e \sqrt{M_{1,0,1}}}\\
		&= \frac{1}{\e}\Big| \int^\e_0
		\big(
		(v-\e u) + a (u- u^\beta )
		\big)
		\cdot (u^\beta - u) 
		\frac{M_{1, \e u - a (u- u^\beta),1}}{\sqrt{M_{1,0,1}}}
		\dd a  \Big|\\
		&\leq |u^\beta - u| \frac{1}{\e} \int^\e_0
		|(v-\e u) + a (u- u^\beta ) | | M_{1, \e u - a (u- u^\beta),1}  |^{\frac{1}{2}}
		e^{\frac{ |\e u -a (u-u^\beta)|^2 }{4 }}
		\dd a \\
		&\lesssim   |u^\beta - u|  e^{\e^2 |u-u^\beta|^2}  e^{\e^2 | u^\beta|^2}
		\frac{1}{\e} \int^\e_0
		| M_{1, \e u - a (u- u^\beta),1 } (v)|^{\frac{1 }{4}}
		\dd a
		,
	\end{split}
	\Ee 
	where we have used $ |(v-\e u) + a (u- u^\beta ) |  |M_{1, \e u - a (u- u^\beta),1}|^{\frac{1 }{2}} \lesssim |M_{1, \e u - a (u- u^\beta),1}|^{\frac{1   }{4}} 
	$ and $|\e u -a (u-u^\beta)| = |(\e -a) u - (\e -a ) u^\beta  + \e u^\beta|
	\leq |\e -a | |u- u^\beta| + \e |u^\beta| \leq \e \{|u- u^\beta|  + |u^\beta|\}$.
	
	Now taking an $L^p_x L^2_v$-norm to \eqref{taylor}, we conclude that 
	\Be\label{est:dec:F-M_1}
	\begin{split}
		\eqref{dec:F-M}_1 &\lesssim \|u^\beta - u\|_{L^p_x}  
		e^{\e^2 \|u-u^\beta\|^2_{\infty} +\e^2 \| u^\beta\|^2_\infty} 
		\frac{1}{\e} \sup_{x \in \O}\Big( \int^\e_0
		\|  M_{1, \e u - a (u- u^\beta),1  } \|_{L^2 (\R^3)}^{\frac{1 }{4}} 
		\dd a\Big)\\
		&\lesssim \|u^\beta - u\|_{L^p _x}  
		e^{\e^2 \|u-u^\beta\|^2_{\infty}}  e^{\e^2 \| u^\beta\|^2_\infty}
		\sup_{x \in \O}  \| M_{1, 0, 1} \|_{L^2(\R^3)}^{1/4}\\
		&\lesssim \|u^\beta - u\|_{L^p _x}  
		e^{\e^2 \|u-u^\beta\|^2_{\infty}}  e^{\e^2 \| u^\beta\|^2_\infty} .
	\end{split}\Ee

	From \eqref{M/M_0}, clearly we have 
	\Be\label{est:dec:F-M_2}
	\eqref{dec:F-M}_2 \lesssim e^{ \frac{ \e^2  \| u^\beta \|^2_{L^\infty (\O)}}{4}}.
	\Ee

	Using the expansion \eqref{F_e}, we can bound $\eqref{dec:F-M}_3$:
	\Be\label{est:dec:F-M_3}
	\eqref{dec:F-M}_3 \lesssim  \| f_R \|_{L^p _x L^2_v } + \e \kappa V(\beta)
	\lesssim \| \nabla_x f_R \|_{L^2_{x, v }}^{\frac{p-2}{p}} \|   f_R \|_{L^2_{x, v }} ^{\frac{2}{p}}+ \e \kappa V(\beta)
	\Ee

	\unhide
	
		We only prove \eqref{est:dec:DF-M}, as the proof of \eqref{est:dec:F-M} is similar and simpler. We decompose
	\Be\label{dec:F-M}
	\begin{split}
		&\left\| \frac{  \nabla_x (F^\e - M_{1, \e u  , 1})}{\e (1+ |v|)   \sqrt{M_{1,0 ,1}}} \right\|_{L^p_x L^2_v } \\
		&	\leq \left\| \frac{   \nabla_x ( M_{1, \e u^\beta , 1} - M_{1, \e u  , 1}) }{\e  (1+ |v|)   \sqrt{M_{1,0 ,1}}} \right\|_{L^p_x L^2_v }+
		\left\|
		\sqrt{	\frac{M_{1, \e u^\beta, 1} ^{1+o(1) }}{   M_{1,0,1}} }
		\right\|_{L^\infty_{x,v} } 
		\left\| \frac{ \nabla_x (F^\e - M_{1, \e u^\beta , 1})}{\e (1+ |v|)  \sqrt{M_{1,\e u^\beta ,1}^{1+o(1) } }} \right\|_{L^p_x L^2_v } 
		\\
		&  = \eqref{dec:F-M}_1 + \eqref{dec:F-M}_2  \eqref{dec:F-M}_3.
	\end{split}
	\Ee
	
	The bound of $\eqref{dec:F-M}_1$ raises the need for consideration of $\sqrt{M_{1, A , 1}^{1+o(1) } / M_{1,0,1}}$ for $A \in \R^3$:
	\Be\label{M/M_0}
	\sqrt{M_{1, A , 1}^{1+o(1) } / M_{1,0,1}}  \lesssim
	e^{\frac{-(1+o(1) )|v-A|^2  + |v|^2}{4}}
	\leq 
	e^{\frac{|A|^2}{4}}
	.
	\Ee
	Using \eqref{M/M_0} and the Taylor expansion, we derive that, 
	\Be\label{taylor}
	\begin{split}
		&\frac{|  \nabla_x (M_{1, \e u^\beta ,1} -M_{1, \e u , 1} )|}{ \e \sqrt{M_{1,0,1}}}\\
		&= \frac{1}{\e}\Big| \int^\e_0
		\nabla_x \Big( \big(
		(v-\e u) + a (u- u^\beta )
		\big)
		\cdot (u^\beta - u) 
		\frac{M_{1, \e u - a (u- u^\beta),1}}{\sqrt{M_{1,0,1}}}\Big)
		\dd a  \Big|\\
		&\lesssim   \{|\nabla_x u^\beta - \nabla_x u| + \e|\nabla_x u| + \e |\nabla_x u^\beta|\} e^{\e^2 |u-u^\beta|^2}  e^{\e^2 | u^\beta|^2}
		\frac{1}{\e} \int^\e_0
		| M_{1, \e u - a (u- u^\beta),1 } (v)|^{\frac{1 }{4}}
		\dd a
		,
	\end{split}
	\Ee 
	where we have used $ |(v-\e u) + a (u- u^\beta ) |  |M_{1, \e u - a (u- u^\beta),1}|^{\frac{1 }{2} - o(1)/2 } \lesssim |M_{1, \e u - a (u- u^\beta),1}|^{\frac{1   }{4}} 
	$ and $|\e u -a (u-u^\beta)| = |(\e -a) u - (\e -a ) u^\beta  + \e u^\beta|
	\leq |\e -a | |u- u^\beta| + \e |u^\beta| \leq \e \{|u- u^\beta|  + |u^\beta|\}$.
	
	Now taking an $L^p_x L^2_v$-norm to \eqref{taylor}, we conclude that 
	\Be\label{est:dec:F-M_1}
	\begin{split}
		\eqref{dec:F-M}_1 
		&\lesssim  \big\{ \|\nabla_x u^\beta -\nabla_x  u\|_{L^p_x}  +
		\e \|\nabla_x u \|_{L^p_x}  + \e \|\nabla_x u^\beta \|_{L^p_x} \big\} 
		e^{\e^2 \|u-u^\beta\|^2_{\infty}}  e^{\e^2 \| u^\beta\|^2_\infty} .
	\end{split}\Ee

	From \eqref{M/M_0}, clearly we have 
	\Be\label{est:dec:F-M_2}
	\eqref{dec:F-M}_2 \lesssim e^{ \frac{ \e^2  \| u^\beta \|^2_{L^\infty (\O)}}{4}}.
	\Ee

	Using the expansion \eqref{F_e}, we can bound $\eqref{dec:F-M}_3$:
	\Be\label{est:dec:F-M_3}
	\begin{split}
		\eqref{dec:F-M}_3&
		\lesssim \| \nabla_x f^\e \|_{L^p_x L^2_x}+ 
		\e \| u^\beta \|_{\infty}\|  f^\e \|_{L^p_x L^2_x}+ \e \kappa V(\beta)
		\\
		& 
		\lesssim 
		\| \nabla_x^2 f_R \|_{L^2_{x, v }}^{\frac{p-2}{p}} \|  \nabla_x f_R \|_{L^2_{x, v }} ^{\frac{2}{p}}+ \e \| u^\beta \|_{\infty}
		\| \nabla_x f_R \|_{L^2_{x, v }}^{\frac{p-2}{p}} \|   f_R \|_{L^2_{x, v }} ^{\frac{2}{p}}+ \e \kappa V(\beta)\\
		& \lesssim \kappa^{ \min \{\frac{1}{p}, \frac{1}{2}\}} \sqrt{\mathcal{E}(t)}+ \e \kappa V(\beta).
	\end{split}
	\Ee
We finish the proof by applying \eqref{est:dec:F-M_1}-\eqref{est:dec:F-M_3} to \eqref{dec:F-M}. 
\end{proof}

We claim that 
\begin{lemma}
	\Be
	\| \o_B^\e(t) - \o(t) \|_{L^\pp (\T^2)} \lesssim  \| \o^\beta (t) - \o(t) \|_{L^\pp (\T^2)}
	+ \kappa^{ \min \{\frac{1}{2}, \frac{1}{\pp}\}} \sqrt{\mathcal{E} (t)} + \e \kappa V(\beta).
	\Ee
\end{lemma}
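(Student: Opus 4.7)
The natural split to use is the triangle inequality
\[
\|\o_B^\e(t) - \o(t)\|_{L^\pp(\T^2)} \le \|\o_B^\e(t) - \o^\beta(t)\|_{L^\pp(\T^2)} + \|\o^\beta(t) - \o(t)\|_{L^\pp(\T^2)},
\]
which produces the first summand of the claim verbatim. The remaining work is to bound $\|\o_B^\e - \o^\beta\|_{L^\pp}$ by $\kappa^{\min\{1/2,1/\pp\}}\sqrt{\mathcal{E}(t)}+\e\kappa V(\beta)$, i.e.\ to reduce it to the Hilbert-remainder estimate already proved in the preceding lemma.

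The key algebraic step is the first-moment cancellation. Since $\int_{\R^3} M_{1,0,1}(v)\,v\,dv = 0$ while $\int_{\R^3} M_{1,\e u^\beta(t,x),1}(v)\,v\,dv = \e(u^\beta(t,x),0)\in\R^3$, the definition \eqref{vorticity_B} rewrites as
\[
u_B^\e(t,x) - u^\beta(t,x) = \frac{1}{\e}\int_{\R^3}\bigl(F^\e - M_{1,\e u^\beta,1}\bigr)\underline{v}\,dv.
\]
Applying the 2D scalar curl $\nabla^\perp\!\cdot$ and pulling $\nabla_x$ inside the integral (recalling that $M_{1,\e u^\beta,1}$ now depends on $x$ through $u^\beta$), a Cauchy--Schwarz in $v$ with weight $(1+|v|)\sqrt{M_{1,0,1}}$ produces the pointwise bound
\[
|\o_B^\e(t,x) - \o^\beta(t,x)| \lesssim \left\|\frac{\nabla_x(F^\e - M_{1,\e u^\beta,1})}{\e(1+|v|)\sqrt{M_{1,0,1}}}\right\|_{L^2_v}\!\!(t,x),
\]
after which taking $L^\pp_x$ gives the same weighted norm in $L^\pp_x L^2_v$.

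To finish, I invoke the proof of the preceding lemma specialized to $u=u^\beta$. In that specialization the Maxwellian-difference piece $\eqref{dec:F-M}_1$ vanishes identically, leaving only the factor $\eqref{dec:F-M}_2\cdot \eqref{dec:F-M}_3$. Inserting the expansion \eqref{F_e}, the remaining norm reduces to the mixed $L^\pp_x L^2_v$ control of $\nabla_x f_R$ carried out in \eqref{est:dec:F-M_3}, which yields exactly $\kappa^{\min\{1/2,1/\pp\}}\sqrt{\mathcal{E}(t)} + \e\kappa V(\beta)$; the exponential prefactor $e^{\e^2\|u^\beta\|_\infty^2/4}$ is $O(1)$ under the smallness assumption \eqref{smallnessubeta}. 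I expect no substantive obstacle here, since the serious analytic work (weight manipulations, $H^2_x L^2_v$ anisotropic interpolation passing from the energy $\mathcal{E}$ to $\nabla_x f_R$ in mixed $L^\pp_x L^2_v$, and the pointwise control of the correctors through $V(\beta)$) has already been performed. The only delicate point is to match moments correctly so that the reference-Maxwellian contribution in $u_B^\e$ cancels precisely against $u^\beta$; this is ensured by recentering the decomposition around $M_{1,\e u^\beta,1}$ rather than around $M_{1,\e u,1}$.
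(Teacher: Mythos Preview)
Your proposal is correct and follows essentially the same route as the paper: both recenter at $M_{1,\e u^\beta,1}$, identify the Maxwellian-difference contribution with $\o^\beta-\o$, and bound the remaining piece (the $f_R$ and corrector moments) via the $H^2_xL^2_v$ energy and $V(\beta)$. The only cosmetic difference is that the paper substitutes the expansion \eqref{F_e} directly into the moment integral and reads off $\o^\beta-\o$ as an exact equality in \eqref{diff:o1}, whereas you first apply the triangle inequality and then invoke the preceding lemma with $u=u^\beta$; the analytic content is the same.
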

\begin{proof}Recall $F^\e$ in \eqref{F_e}. Note that  
	\begin{align} 
		&	\o^\e_B(t,x)- \o (t,x)  = \nabla^\perp \cdot  u^\e_B(t,x)  -  \nabla^\perp \cdot  u (t,x)  \notag\\
		& = \frac{1}{\e}   \int_{\R^3}    \munderbar{v} \cdot  \nabla^\perp (F^\e(t,x,v) - M_{1,\e u ,1} (v)) \dd v\notag \\
		&
		=  \frac{1}{\e}   \int_{\R^3}    \munderbar{v} \cdot  \nabla^\perp (
		M_{1, \e u^\beta,1} (v)
		- M_{1,\e u ,1} (v)) \dd v \ \  \  (=\o^\beta - \o)
		\label{diff:o1}
		\\
		& \ \ 	 +      \int_{\R^3}   \nabla^\perp 	f_R(t,x,v)   \cdot  \munderbar{v}    \sqrt{\mu}
		\dd v 
		\label{diff:o3}\\
		& \ \ +      \nabla^\perp \cdot   \int_{\R^3}   
		\{ \e^2 p^\beta \mu - \e^2 \kappa (\nabla_x u^\beta) : \mathfrak{A} \sqrt{\mu} + \e \kappa \tilde{u}^\beta \cdot (v-\e u^\beta) \mu + \e^2 \kappa \tilde{p}^\beta \mu  \} \dd v .
		\label{diff:o2}
	\end{align} 
	Clearly 
	\Be\notag
	\| \eqref{diff:o1}\|_{L^\pp (\T^2)} =\|  \o^\beta(t) - \o(t)\|_{L^\pp (\T^2)} .
	\Ee
	From Theorem \ref{theo:remainder}, we conclude that 
	\Be\notag
	\|	\eqref{diff:o3} \|_{L^\pp(\T^2)} \lesssim \begin{cases} \| \nabla_x f_R (t) \|_{L^2 (\T^2 \times \R^3)} \lesssim  \sqrt{\kappa} \sqrt{\mathcal{E} (t)} \ \ for \ \pp  \in [1,2],\\
		\| \nabla_x^2 f_R(t) \|_{L^2 (\T^2 \times \R^3) }^{\frac{\pp -2}{\pp}} 	   \| \nabla_x f_R(t) \|_{L^2 (\T^2 \times \R^3) }^{\frac{2}{\pp}} \lesssim \kappa^{\frac{1}{\pp}} \sqrt{\mathcal{E} (t)}
		\ \ for \ \pp  \in (2, \infty),
	\end{cases}
	\Ee
	where we have used (anisotropic) Gagliardo-Nirenberg interpolation for the second, whose proof is analogous to Lemma \ref{anint}.
	
	Using Theorem \ref{Vbound}, we get that $
	\| \eqref{diff:o2} \|_{L^\pp (\O)} \lesssim  \e \kappa V(\beta). $\end{proof}

Equipped with Proposition \ref{thm:Dptu}, Proposition \ref{theo:convergence}, and Proposition \ref{theo:remainder}, we are ready to prove the main theorem of this paper. 

\begin{theorem}\label{main_theorem1} 
	Choose an arbitrary $T \in (0, \infty)$. Suppose $(u_0, \o_0) \in L^2(\O) \times L^\pp (\O)$ for $\pp \in [1,  \infty)$ and $(u,\o)$ be a Lagrangian solution of \eqref{vorticity}-\eqref{BS}-\eqref{vorticity_initial}. Assume the initial data $F_0$ to \eqref{eqtn_F} satisfies conditions in Theorem \ref{theo:remainder}. Then there exists a family of Boltzmann solutions $F^\e(t,x,v)$ to \eqref{eqtn_F} in $[0,T]$ such that 
	\Be 
	\sup_{t \in [0,T]}	\left\| \frac{ F^\e(t) - M_{1, \e u(t) , 1}}{\e 
		\sqrt{M_{1,0,1}}} \right\|_{L^2 (\T^2 \times \R^3)} \rightarrow 0.
	\Ee
	Moreover the Boltzmann vorticity converges to the Lagrangian solution $\o$:
	\Be\label{conv:vorticity_B}
	\sup_{0 \leq t \leq T}	\|\o^\e_B(t,\cdot)- \o (t,\cdot)\|_{L^\pp(\mathbb{T}^2)} \rightarrow 0.
	\Ee
	\end{theorem}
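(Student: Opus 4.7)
The strategy is to assemble the lemmas already established in the paper by making a compatible choice of the three competing scales $\beta(\e), \kappa(\e)$, and the initial remainder size $\delta_s(\e)$. Given $(u_0,\o_0)\in L^2\times L^\pp$, we mollify the vorticity at scale $\beta$ to obtain $\o_0^\beta=\varphi^\beta*\o_0$ and let $(u^\beta,\o^\beta)$ be the unique smooth Lagrangian solution of \eqref{vorticity_eqtn}--\eqref{vorticity_beta}. We then define the approximate Boltzmann solution $F^\e$ through the corrector expansion \eqref{F_e} around the local Maxwellian $\mu=M_{1,\e u^\beta,1}$, with auxiliary fields $(p^\beta,\tilde u^\beta,\tilde p^\beta)$ solving \eqref{up} and \eqref{uptilde} (initialized at $\tilde u_0^\beta\equiv 0$), and the remainder $f_R$ evolving by \eqref{Remainder}.

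The core scheduling issue is choosing $\beta(\e)$ and $\kappa(\e)$ so that simultaneously (i) the scaling assumptions \eqref{ekappabeta} hold, (ii) the smallness condition \eqref{delta0} on $\delta_0(\e)$ can be arranged while dominating $\|\nabla_x u^\beta\|_{L^\infty}$ coming from $\mathfrak{Lip}(\beta,\pp)$ in Theorem \ref{thm:Dptu}, and (iii) the fluid approximation $u^\beta\to u$ holds along the chosen subsequence from Lemma \ref{lemma_AL}, with $\o^\beta\to \o$ in $L^\pp$ via Proposition \ref{theo:convergence}. The plan is to first fix $\beta=\beta(\e)\to 0$ so slowly that the explicit bound $V(\beta)$ from Theorem \ref{Vbound} and the Lipschitz bound $\mathfrak{Lip}(\beta,\pp)$ both grow at a prescribed slow rate; then fix $\kappa=\kappa(\e)\to 0$ so slowly that $\e/\kappa^2\to 0$, $\kappa^{1/4}V(\beta)\to 0$ and $\kappa^{1/2}\exp(2\mathbf{C}_0T\|\nabla_xu^\beta\|_\infty^2)\to 0$; finally choose $\delta_0(\e)\to 0$ slowly enough to satisfy \eqref{delta0}, and $\delta_s(\e)$ satisfying \eqref{deltas}. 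The hypothesis on $F_0$ in the theorem provides initial data for $f_R$ compatible with these choices.

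With the scales fixed, Theorem \ref{theo:remainder} delivers a solution $f_R$ on $[0,T]$ with
\[
\sup_{t\in[0,T]}\bigl(\mathcal{E}(t)+\mathcal{D}(t)+\mathcal{F}(t)\bigr)\longrightarrow 0\quad\text{as }\e\to 0,
\]
since the right-hand side of its conclusion is controlled by $(\delta_0^2+\kappa)$ multiplied by a constant which, by \eqref{delta0} and our choice of $\kappa$, stays bounded after multiplication by the Gronwall factor. For the first convergence, apply \eqref{est:dec:F-M} with $p=2$: the three contributions are $\|u^\beta-u\|_{L^2}$ (vanishing because $u^\beta\to u$ strongly from Lemma \ref{lemma_AL}, after extracting a subsequence and using $L^2$-energy conservation of $u^\beta$), $\kappa\sqrt{\mathcal E(t)}$ (vanishing by the remainder estimate), and $\e\kappa V(\beta)$ (vanishing by the scale choice). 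For the vorticity convergence, apply the vorticity decomposition lemma: $\|\o^\beta(t)-\o(t)\|_{L^\pp}\to 0$ by Proposition \ref{theo:convergence}, $\kappa^{\min\{1/2,1/\pp\}}\sqrt{\mathcal E(t)}\to 0$ by Theorem \ref{theo:remainder}, and $\e\kappa V(\beta)\to 0$ by our scale choice; taking the supremum in $t\in[0,T]$ yields \eqref{conv:vorticity_B}.

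The main obstacle is precisely the scale balancing in step two. The Lipschitz norm $\mathfrak{Lip}(\beta,\pp)$ is double-exponential in $1/\beta$, and $V(\beta)$ is algebraic of high order in $1/\beta$, so $\kappa$ must be chosen to decay much more slowly than any explicit power of $\e$, and $\beta$ must decay even more slowly; conversely, Lemma \ref{lemma_AL} and Proposition \ref{theo:convergence} only guarantee $u^\beta\to u$ and $\o^\beta\to\o$ without a rate, so one must argue that for each prescribed slow rate of $\beta(\e)\to 0$, a further subsequence makes the approximation error small at a rate dominating the accumulation of $\kappa$-losses. Once this diagonal extraction is performed, the three terms in both decompositions simultaneously vanish, completing the proof.
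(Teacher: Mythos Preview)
Your assembly is essentially the paper's own: the two decomposition lemmas immediately preceding Theorem \ref{main_theorem1}, combined with Theorem \ref{theo:remainder} and Proposition \ref{theo:convergence}, and a slow choice of $\beta(\e),\kappa(\e)$ along the subsequence from Lemma \ref{lemma_AL}, are exactly what the paper invokes.

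Two small corrections. First, you have the growth rates reversed: by \eqref{est:Du}, $\mathfrak{Lip}(\beta,\pp)$ is single-exponential in $\beta^{-2/\pp}$, whereas $V(\beta)$ in Theorem \ref{Vbound} involves $e^{TU(\beta,\pp)}$ with $U(\beta,\pp)\sim e^{\mathfrak{Lip}(\beta,\pp)}\beta^{-8}$, so $V(\beta)$ is the (at least) double-exponential one. This is cosmetic, since you only need both to be eventually dominated by your $\kappa$-schedule. Second, your justification for $\sup_t\|u^\beta-u\|_{L^2_x}\to 0$ via ``Lemma \ref{lemma_AL} plus $L^2$-energy conservation'' does not close: Lemma \ref{lemma_AL} only gives $L^1_tL^1_x$ convergence, and a uniform $L^2$ bound does not upgrade that to $L^\infty_tL^2_x$. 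The clean route is to use Proposition \ref{theo:convergence}, which already gives $\sup_t\|\o^\beta-\o\|_{L^\pp}\to 0$, and then the Biot-Savart law \eqref{BS_periodic}: for $\pp>1$ Hardy--Littlewood--Sobolev (or Calder\'on--Zygmund plus Poincar\'e when $\pp\ge 2$) sends this to $\sup_t\|u^\beta-u\|_{L^2_x}\to 0$; for $\pp=1$ one interpolates the resulting $L^{2,\infty}$ (or $L^{2-\epsilon}$) convergence against the uniform $L^2$ energy bound.
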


\begin{theorem}\label{main_theorem2}
Choose an arbitrary $T \in (0, \infty)$. Suppose $\o_0 \in Y_{\mathrm{ul}}^{\Theta} (\T^2)$ for some $\Theta$ in \eqref{Thetap} with $m \in \mathbb{Z}_{\ge 0}$, and let $(u, \o)$ be the unique weak solution of \eqref{vorticity}-\eqref{BS}-\eqref{vorticity_initial}. Assume the initial data $F_0$ to \eqref{eqtn_F} satisfies conditions in Theorem \ref{theo:remainder}. Then there exists a family of Boltzmann solutions $F^\e (t,x,v)$ to \eqref{eqtn_F} in $[0, T]$ such that 
	\Be 
	\sup_{t \in [0,T]}	\left\| \frac{ F^\e(t) - M_{1, \e u(t) , 1}}{\e 
		\sqrt{M_{1,0,1}}} \right\|_{L^2 (\T^2 \times \R^3)} \rightarrow 0.
	\Ee
	Moreover the Boltzmann velocity and vorticity converges to the solution $\o$ with an explicit rate as in \eqref{Rateftn}, \eqref{Rateftnvort}:
	\Be \label{conv:vorticity_B_rate}
	\begin{split}
	\sup_{0 \leq t \leq T} \| u^\e_B (t, \cdot) - u(t, \cdot ) \|_{L^2 (\mathbb{T}^2 ) } &\lesssim \mathrm{Rate} (\beta), \\
	\sup_{0 \leq t \leq T} \| \o^\e_B (t, \cdot) - \o(t, \cdot ) \|_{L^\pp (\mathbb{T}^2 ) } &\lesssim \mathrm{Rate}_\o (\beta).
	\end{split}
	\Ee
Furthermore, if $\o_0 \in Y_{\mathrm{ul}}^{\Theta} (\T^2) \cap B^s_{2, \infty} (\mathbb{T}^2)$ for some $s>0$, Boltzmann vorticity converges to the solution $\o$ with a rate which is uniform in $\o_0$ as in \eqref{vorticityratelocYud}, \eqref{vorticityrateYud}:
	\Be \label{conv:vorticity_B_uniform}
	\begin{split}
	\sup_{0 \leq t \leq T} \| \o^\e_B (t, \cdot) - \o(t, \cdot ) \|_{L^\pp (\mathbb{T}^2 ) } &\lesssim \mathrm{Rate}_{\o,s,loc-Y} (\beta), m>0 \text{ (localized Yudovich)},\\
	\sup_{0 \leq t \leq T} \| \o^\e_B (t, \cdot) - \o(t, \cdot ) \|_{L^\pp (\mathbb{T}^2 ) } &\lesssim \mathrm{Rate}_{\o,s,Y} (\beta),  m=0 \text{ (Yudovich)}.
	\end{split}
	\Ee
\end{theorem}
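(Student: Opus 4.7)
The plan is to combine the solvability of the remainder equation (Theorem \ref{theo:remainder}) with the two asymptotic decomposition lemmas \eqref{est:dec:F-M}, \eqref{est:dec:DF-M} and the explicit fluid-level rates for $u^\beta \to u$ and $\o^\beta \to \o$ established in the preceding subsections. First I would fix scaling functions $\beta = \beta(\e) \downarrow 0$ and $\kappa = \kappa(\e) \downarrow 0$ so as to simultaneously satisfy the scale constraints \eqref{ekappabeta} and the remainder-data smallness assumption \eqref{delta0}. Since $V(\beta)$ (Theorem \ref{Vbound}) and $\|\nabla_x u^\beta\|_{L^\infty}$ (Theorem \ref{thm:Dptu}) blow up only at quantified rates as $\beta \to 0$, one may take $\beta$ a very slowly decaying function of $|\log \e|$, then $\kappa$ a sufficiently small function of $\e$ adapted to $\beta$, and then take $\e$ small enough. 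Under such choices, Theorem \ref{theo:remainder} produces $f_R$ on $[0,T]$ with $\mathcal{E}(t) + \mathcal{F}(t) \to 0$ as $\e \to 0$; substituting $f_R$ into the ansatz \eqref{F_e} defines the Boltzmann solution $F^\e$.

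For the $L^2$ convergence of $(F^\e - M_{1,\e u,1})/(\e \sqrt{M_{1,0,1}})$, I would apply \eqref{est:dec:F-M} with $p = 2$. The three resulting pieces are: (i) the fluid approximation error $\|u^\beta(t) - u(t)\|_{L^2}$, controlled via Proposition \ref{theo:convergence} (qualitative) or \eqref{Rateftn} (quantitative); (ii) the remainder contribution $\kappa \sqrt{\mathcal{E}(t)}$, which vanishes by Theorem \ref{theo:remainder}; and (iii) the corrector contribution $\e \kappa V(\beta)$, which vanishes by the third condition in \eqref{ekappabeta}. Each tends to zero uniformly on $[0,T]$, yielding the $L^2$ convergence in both Theorem \ref{main_theorem1} and Theorem \ref{main_theorem2}.

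For the velocity and vorticity rates \eqref{conv:vorticity_B_rate}--\eqref{conv:vorticity_B_uniform}, I would invoke the $L^\pp$ vorticity decomposition lemma stated just before the main theorems, namely
\[
\|\o^\e_B(t) - \o(t)\|_{L^\pp(\T^2)} \lesssim \|\o^\beta(t) - \o(t)\|_{L^\pp(\T^2)} + \kappa^{\min(1/2,\, 1/\pp)} \sqrt{\mathcal{E}(t)} + \e \kappa V(\beta),
\]
together with an analogous use of \eqref{est:dec:F-M} for the velocity piece, and then substitute the explicit fluid rates: for localized Yudovich $\o_0$ use \eqref{Rateftn} and \eqref{Rateftnvort}; for $\o_0 \in Y_{\mathrm{ul}}^\Theta \cap B^s_{2,\infty}$ with $m \ge 1$ use \eqref{vorticityratelocYud}; and for Yudovich $\o_0$ ($m=0$) with Besov regularity use the algebraic rate \eqref{vorticityrateYud}. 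In each case the remainder and corrector contributions are $o(\mathrm{Rate})$ under our parameter choice, so the final rate is inherited directly from the fluid layer.

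The main obstacle is the compatibility of the third condition in \eqref{ekappabeta}, $\kappa^{1/2} \exp(2\mathbf{C}_0 T \|\nabla_x u^\beta\|^2_{L^\infty}) \to 0$, with retention of a nontrivial fluid-level rate. The Lipschitz bound \eqref{est:Du} gives $\|\nabla_x u^\beta\|_{L^\infty} \lesssim \mathfrak{Lip}(\beta,\pp)$, which grows doubly exponentially in $\beta^{-2/\pp}$, so $\kappa$ must decay extremely fast relative to $\beta$. However, because $\mathrm{Rate}(\o_0;\beta)$ in the localized Yudovich regime decays only super-slowly (iterated-log in $\log(1/\beta)$), one can always pick $\beta$ tending to $0$ slowly enough that the required $\kappa$ still tends to $0$ while also satisfying $\e/\kappa^2 \to 0$ and $\kappa^{1/4} V(\beta) \to 0$; this three-scale hierarchy $\e \ll \kappa \ll \beta^{-\cdot}$ is the one delicate bookkeeping step. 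The ensuing rates in \eqref{conv:vorticity_B_rate}--\eqref{conv:vorticity_B_uniform} then drop out by direct substitution.
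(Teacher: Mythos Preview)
Your proposal is correct and follows essentially the same approach as the paper: the paper itself does not give a separate argument for Theorem \ref{main_theorem2} beyond stating it, since the result is assembled directly from Theorem \ref{theo:remainder}, the decomposition estimates \eqref{est:dec:F-M}--\eqref{est:dec:DF-M}, the vorticity lemma $\|\o_B^\e - \o\|_{L^\pp} \lesssim \|\o^\beta - \o\|_{L^\pp} + \kappa^{\min(1/2,1/\pp)}\sqrt{\mathcal{E}} + \e\kappa V(\beta)$, and the fluid rates \eqref{Rateftn}, \eqref{Rateftnvort}, \eqref{vorticityratelocYud}, \eqref{vorticityrateYud}. Your identification of the three-scale hierarchy $\e \ll \kappa \ll \beta$ and the need to choose $\beta(\e)$ slowly enough that the kinetic contributions are dominated by the (very slowly decaying) fluid rate is exactly the bookkeeping the paper leaves implicit.
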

 
\hide
\begin{proof}[Proof of Corollary \ref{cor:vorticity}]
	Recall the Lagrangian solution $(u^\beta, \o^\beta )$ to \eqref{vorticity_eqtn}-\eqref{vorticity_beta}; and the local Maxwellian  associates to $u^\beta$, $\mu= M_{1, \e u^\beta, 1}$ in \eqref{mu_beta}.
	From \eqref{hydro_limit} and \eqref{F_e}, 
	\Be\begin{split}\label{dec:o-o}
		&	\o^\e_B(t,x)- \o (t,x)  = \nabla^\perp u^\e_B(t,x)  -  \nabla^\perp u (t,x)  \\
		& = \frac{1}{\e} \nabla^\perp \left( \int_{\R^3}    \munderbar{v}  (F^\e(t,x,v) - M_{1,\e u ,1} (v)) \dd v\right) \\
		& = \frac{1}{\e} \nabla^\perp \left( \int_{\R^3}    \munderbar{v}  (F^\e(t,x,v)- M_{1, \e u^\beta, 1}) \dd v\right)  + \frac{1}{\e} \nabla^\perp \left(  \int_{\R^3}    \munderbar{v}   (M_{1, \e u^\beta, 1} - M_{1,\e u ,1} (v)) \dd v\right) \\ 
		& =  \nabla^\perp \left( \int_{\R^3}    \munderbar{v}  (\e  f_2   + \delta   f _R  )\sqrt{\mu } \dd v\right) + 
		(	\o^\beta - \o ) \\
		& = 
		\Big(
		\e  \nabla^\perp  u_2 + 
		\delta  \nabla^\perp  \int_{\R^3}    \munderbar{v}    f_R  \sqrt{\mu } \dd v 
		\Big)+ 
		(	\o^\beta - \o ).
	\end{split}
	\Ee
	We derive the convergence of the later term $(	\o^\beta - \o )$ using \eqref{stab_o_beta} in Proposition \ref{theo:convergence}. For the $\e \nabla^\perp u_2$, we use (??) and easily derive that $\|\e \nabla^\perp u_2\|_{L^\pp}  \rightarrow 0$. 
	
	For the remainder $f_R$, we express that 
	\Be
	\begin{split}
		&\delta \left\|	 \nabla^\perp  \int_{\R^3}    \munderbar{v}    f _R \sqrt{\mu } \dd v \right\|_{L^\pp}\\
		&
		\lesssim
		\delta \left\|	   \int_{\R^3}   \{- v_1 \p_{x_2} f_R+ v_2 \p_{x_1}f_R\} \sqrt{\mu } \dd v \right\|_{L^\pp}
		+ \e \delta \left\|	   \int_{\R^3}     f_R   \o^\beta  |v- u^\beta|  \sqrt{\mu } \dd v \right\|_{L^\pp}
		\\
		&\lesssim
		\delta \| \nabla_x f _R |  \munderbar{v}   | \sqrt{\mu} \|_{L^\pp}
		+ \e \delta  \| f_R \|_{L^\infty} \| \o^\beta \|_{L^\pp} .
	\end{split}
	\Ee

\end{proof}\unhide

\appendix


\hide	=&0, 
\end{align*}
where we have used that $B_i$ is odd in $v_i$. Hence we check (\ref{Bous_ell}) for $v\cdot \nabla_x L^{-1} (\mathbf{I} - \mathbf{P}) (v\cdot \nabla_x u_\kappa \cdot v \sqrt{\mu})$ contribution. \unhide

%
%
\hide
From $L^{-1} \Gamma (\mathbf{P}f, \mathbf{P}f)= \frac{( \mathbf{P} f)^2}{2\sqrt{\mu}}$, we derive that 
\begin{align}\notag
	L^{-1} \Gamma (u_\kappa \cdot v \sqrt{\mu}, u_\kappa \cdot v \sqrt{\mu})  
	= (\mathbf{I} - \mathbf{P}) ( \frac{1}{2 \sqrt{\mu}} \{u_\kappa \cdot v \sqrt{\mu}\}^2) 
	=  \frac{1}{2}(\mathbf{I} - \mathbf{P})
	\left(  \sum_{i,j} u_{\kappa,i} u_{\kappa, j} v_i v_j \sqrt{\mu} \right) 
	=  \frac{1}{2} 
	\sum_{i,j} u_{\kappa,i} u_{\kappa, j} \{v_i v_j - \frac{|v|^2}{3} \delta_{ij} \}\sqrt{\mu}  ,
\end{align}
where we have used   
\[
(\mathbf{I} - \mathbf{P})( v_i v_j \sqrt{\mu} ) 
=  \Big\{
v_iv_j - \frac{|v|^2}{3}  \delta_{ij}
\Big\} \sqrt{\mu}.
\]

Clearly from oddness of the functions we derive that 
\begin{align*}
	\left\langle
	\sqrt{\mu}, v\cdot \nabla_x L^{-1} (\Gamma (u_\kappa\cdot v \sqrt{\mu},u_\kappa\cdot v \sqrt{\mu} ))
	\right\rangle 
	= \left\langle
	\sqrt{\mu}, v\cdot \nabla_x   \frac{1}{2} 
	\left(  \sum_{i,j} u_{\kappa,i} u_{\kappa, j} \{v_i v_j - \frac{|v|^2}{3} \delta_{ij} \} \sqrt{\mu} \right)\right\rangle 
	=  0,\\
	\left\langle
	|v|^2 \sqrt{\mu}, v\cdot \nabla_x L^{-1} (\Gamma (u_\kappa\cdot v \sqrt{\mu},u_\kappa\cdot v \sqrt{\mu} ))
	\right\rangle 
	= \left\langle
	|v|^2  \sqrt{\mu}, v\cdot \nabla_x   \frac{1}{2} 
	\left(  \sum_{i,j} u_{\kappa,i} u_{\kappa, j} \{v_i v_j - \frac{|v|^2}{3} \delta_{ij} \} \sqrt{\mu} \right)\right\rangle 
	=  0,
\end{align*}
and hence we check (\ref{Bous_ell}) for $v\cdot \nabla_x   \frac{1}{2} 
\left(  \sum_{i,j} u_{\kappa,i} u_{\kappa, j} \{v_i v_j - \frac{|v|^2}{3} \delta_{ij} \} \sqrt{\mu} \right) $ contribution.

Now we compute 
\begin{align} 
	\left\langle
	v_\ell \sqrt{\mu}, \sum_k v_k \p_k L^{-1} (\Gamma (u_\kappa\cdot v \sqrt{\mu},u_\kappa\cdot v \sqrt{\mu} ))
	\right\rangle  &= 
	\sum_k  \left\langle v_\ell v_k \sqrt{\mu}
	,  
	(\mathbf{I} - \mathbf{P})
	\left(  \sum_{i,j}  \p_k u_{\kappa,i} u_{\kappa, j} v_i v_j \sqrt{\mu} \right) 
	\right\rangle\notag \\
	&=  \sum_k  \left\langle
	\{v_\ell v_k  - \frac{|v|^2}{3}
	\delta_{\ell k}
	\}\sqrt{\mu}
	,  
	\sum_{i,j}  \p_k u_{\kappa,i} u_{\kappa, j} \{v_i v_j - \frac{|v|^2}{3} \delta_{ij}
	\}
	\sqrt{\mu} 
	\right\rangle\notag \\
	& = \sum_{i,j,k} 
	\left\langle
	\{v_\ell v_k  - \frac{|v|^2}{3}
	\delta_{\ell k}
	\}\sqrt{\mu}
	,    \{v_i v_j - \frac{|v|^2}{3} \delta_{ij}
	\}
	\sqrt{\mu} 
	\right\rangle  \p_k u_{\kappa,i} u_{\kappa, j}
	\label{vLG}
	.
\end{align}
From direct computations and $\int_{\R} \frac{e^{- {(v_1)^2}/{2}}}{(2\pi )^{1/2}} \dd v_1=1=\int_{\R}  (v_1)^2\frac{e^{- {(v_1)^2}/{2}}}{(2\pi )^{1/2}} \dd v_1,$ and $\int_{\R}  (v_1)^4\frac{e^{- {(v_1)^2}/{2}}}{(2\pi )^{1/2}} \dd v_1=3$, we have  
\begin{align}\notag
	& \left\langle
	\{v_\ell v_k  - \frac{|v|^2}{3}
	\delta_{\ell k}
	\}\sqrt{\mu}
	,    \{v_i v_j - \frac{|v|^2}{3} \delta_{ij}
	\}
	\sqrt{\mu} 
	\right\rangle = \frac{4}{3} \delta_{\ell k } \delta_{i j} \delta_{i\ell}   - \frac{2}{3} \delta_{\ell k} \delta_{ij} (1- \delta_{i \ell})
	+  \delta_{\ell i} \delta_{k j} (1- \delta_{\ell k}) 
	+   \delta_{\ell j} \delta_{ki}(1- \delta_{\ell k}) .
\end{align}
Then we derive 
\Be\begin{split}
	(\ref{vLG}) &=
	\frac{4}{3} \p_\ell u_{\kappa, \ell} u_{\kappa, \ell} - \frac{2}{3} \sum_{j (\neq \ell)} \p_ \ell u_{\kappa, j} u_{\kappa, j} + \sum_{j (\neq \ell)} \p_j u_{\kappa, \ell} u_{\kappa, j} 
	+ u_\ell \sum_{i (\neq \ell) } \p_i u_{\kappa, i}
	\\
	&=(\frac{4}{3} -2 ) \p_\ell u_{\kappa, \ell} u_{\kappa, \ell}- \frac{2}{3} \sum_{j (\neq \ell)} \p_ \ell u_{\kappa, j} u_{\kappa, j}
	+ \sum_{j  } \p_j u_{\kappa, \ell} u_{\kappa, j} 
	+ u_\ell \sum_{i   } \p_i u_{\kappa, i}
	\\
	&= - \frac{2}{3} \sum_{j  } \p_ \ell u_{\kappa, j} u_{\kappa, j}
	+ \sum_{j  } \p_j u_{\kappa, \ell} u_{\kappa, j} 
	+ u_\ell \sum_{i   } \p_i u_{\kappa, i}
	\\
	&= - \frac{1}{3} \p_\ell|u_\kappa|^2
	+  u_\kappa \cdot \nabla_x u_{\kappa,\ell}  +  (\nabla \cdot u_\kappa )u_{\kappa, \ell}.\label{vLG1}
\end{split}\Ee\unhide
%
%
\hide From we end up with reduced equation 
\Be
\begin{split}
	\p_t f_R + \frac{1}{\e} v\cdot \nabla_x f_R+ \frac{1}{\e^2 \kappa } Lf_R 
	\\
	=
	\frac{  \delta }{\e\kappa}\Gamma(f_R,f_R)
	- \frac{\e }{\delta} \p_t f_2  - \frac{\kappa}{\delta} (\mathbf{I} - \mathbf{P}) ( v\cdot \nabla_x L^{-1} (v\cdot \nabla_x u_\kappa \cdot v \sqrt{\mu}) ) \\
	+
	\frac{1}{\delta\kappa} \Gamma(f_1,f_2) + \frac{  1}{\e\kappa} \Gamma(f_1,f_R) 
	+    \frac{\e }{\delta\kappa} \Gamma({f_2}, f_2) +  \frac{ 1}{\kappa} \Gamma({f_2}, f_R).
\end{split}
\Ee\unhide
\hide For $(\mathbf{I} - \mathbf{P}) \eqref{eqtn_f_3}$, from \eqref{f_2}, it is bounded by the sum of the two terms:
\Be\begin{split}
	\frac{1}{\delta}(\mathbf{I} - \mathbf{P}) \big( (v-\e u) \cdot \nabla_x \mathbf{P} f_2\big)&=  O\Big(\frac{1}{\delta} \Big)\{|(  \nabla\tilde{\rho},\nabla \tilde{u},\nabla \tilde{\theta})| \\
	& \ \  + \e |\nabla_x u|  |(\tilde{\rho}
	, \tilde{u}, \tilde{\theta})| \} \langle v-\e u \rangle^4  \mu^{\frac{1}{2}}, \label{r2}
\end{split}\Ee
\begin{align}
	\frac{1}{\delta} (\mathbf{I} - \mathbf{P}) \big( (v-\e u) \cdot \nabla_x  (\mathbf{I} - \mathbf{P})f_2\big)
	=  O\Big(\frac{\kappa}{\delta}\Big)\{ |\nabla^2 u|  + \e |\nabla u|^2  \} \langle v-\e u \rangle^4 \mu^{\frac{1}{2}}.\label{r3}
\end{align}

\textit{Expansion of (\ref{eqtn_f_4}):} From (\ref{f_2}) 
\Be
\begin{split}
	|(\ref{eqtn_f_4})| \lesssim& \frac{\e}{\delta} \Big\{
	|\p_t ( \tilde{\rho}, \tilde{u}, \tilde{\theta})| + |u|  |\nabla( \tilde{\rho}, \tilde{u}, \tilde{\theta})|
	+ \e |  ( \tilde{\rho}, \tilde{u}, \tilde{\theta})|( |\p_t u| + |u| |\nabla u|)\\
	& \ \ \ 
	+ \kappa (|\p_t \nabla u| + \e |\nabla u| |\p_t u|)
	+ \kappa |u| (|  \nabla^2 u| + \e |\nabla u| ^2)
	\Big\} \langle v-\e u \rangle^6 \sqrt{\mu}  .\label{r_4}
\end{split}
\Ee

\textit{Expansion of (\ref{eqtn_f_5}):} Directly we compute and bound
\Be
\begin{split}\label{r_5}
	|\eqref{eqtn_f_5}| & \lesssim 
	\Big\{
	\frac{\e^2}{\delta} |\p_t u| + \frac{\e}{\delta} |\nabla_x u|
	\Big\} \langle v-\e u \rangle^2  |f_2|\\
	&\lesssim \frac{\e}{\delta } \Big[
	\e|   \p_t u  |  + |   \nabla  u  | 
	\Big]
	\Big[
	\kappa  |    \nabla u| + |(\tilde{\rho}, \tilde{u}, \tilde{\theta})|
	\Big]  \langle v-\e u \rangle^4 \sqrt{\mu}.
\end{split} \Ee

\textit{Expansion of (\ref{eqtn_f_6}):} From \eqref{f_2} and \eqref{est:f_2}, 
\Be\label{est:eqtn_f_6} 
\begin{split}
	|	\eqref{eqtn_f_6}|= 
	|(\mathbf{I} - \mathbf{P}) \eqref{eqtn_f_6}| \lesssim 
	\frac{\e}{\delta \kappa } (\kappa^2 |\nabla u|^2 + |(\tilde{\rho}, \tilde{u}, \tilde{\theta})|^2)\langle v-\e u \rangle ^2 \sqrt{\mu} .
\end{split}
\Ee

\textit{Expansion of \eqref{eqtn_f_1}:} Note that, for $\eqref{basis}$, 
\Be\label{vDmu}
\e^{-1} (v- \e u )\cdot \nabla_x \mu = \sum_{\ell, m=1}^3 \varphi_\ell \p_\ell u_m \varphi_m  \mu.
\Ee
This implies $\frac{\e^{-1} (v-\e u) \cdot \nabla_x \mu^{1/2}}{\sqrt{\mu}}= \frac{1}{2} \sum_{\ell, m=1}^3 \varphi_\ell  \varphi_m \p_\ell u_m$
\Be\begin{split}\label{IF_energy}
	&	\frac{ [ \p_t  + \e^{-1} v\cdot \nabla_x] \sqrt{\mu}}{\sqrt{\mu}} f_R    \\
	&
	=\frac{\e^{-1} (v-\e u)\cdot \nabla_x \sqrt{\mu}}{\sqrt{\mu}} f_R   + \frac{[ \p_t  + u\cdot \nabla_x ]\sqrt{\mu}}{\sqrt{\mu}} f_R   \\
	&=\frac{1}{2} \sum_{\ell, m=1}^3 \varphi_\ell \varphi_m \p_\ell u_m f_R  
	+ \frac{\e}{2} \sum_{\ell, m=1}^3 (\p_t u_m + u_\ell \p_\ell u_m) \varphi_m f_R ,
\end{split}\Ee
where we have used the computation $u \cdot \nabla_x \sqrt{\mu}=\frac{\e}{2} \sum_{\ell,m=1}^3 u_\ell \p_\ell u_m \varphi_m \sqrt{\mu}$.

We note that from \eqref{est:f_2}
\Be\label{est:Gf2fR}
\frac{2}{\kappa} \Gamma(f_2, f_R) =  \frac{2}{\kappa}  (\mathbf{I} - \mathbf{P})\Gamma(f_2, f_R) =
O\Big(\frac{1}{\kappa}\Big)(\kappa |\nabla u| + |(\tilde{\rho}, \tilde{u}, \tilde{\theta})| ) |f_R|.
\Ee

\end{proof}\unhide

\bibliographystyle{abbrv}
\bibliography{Bibliography} 

\begin{thebibliography}{10}

\bibitem{ACM2019}
G.~Alberti, G.~Crippa, and A.~L. Mazzucato.
\newblock Loss of regularity for the continuity equation with non-{L}ipschitz
  velocity field.
\newblock {\em Ann. PDE}, 5(1):Paper No. 9, 19, 2019.

\bibitem{ALM}
L.~Ambrosio, M.~Lecumberry, and S.~Maniglia.
\newblock Lipschitz regularity and approximate differentiability of the
  {D}i{P}erna-{L}ions flow.
\newblock {\em Rend. Sem. Mat. Univ. Padova}, 114:29--50 (2006), 2005.

\bibitem{BCD2011}
H.~Bahouri, J.-Y. Chemin, and R.~Danchin.
\newblock {\em Fourier analysis and nonlinear partial differential equations},
  volume 343 of {\em Grundlehren der mathematischen Wissenschaften [Fundamental
  Principles of Mathematical Sciences]}.
\newblock Springer, Heidelberg, 2011.

\bibitem{BGL2}
C.~Bardos, F.~Golse, and C.~D. Levermore.
\newblock Fluid dynamic limits of kinetic equations. {II}. {C}onvergence proofs
  for the {B}oltzmann equation.
\newblock {\em Comm. Pure Appl. Math.}, 46(5):667--753, 1993.

\bibitem{BGL1}
C.~Bardos, F.~Golse, and D.~Levermore.
\newblock Fluid dynamic limits of kinetic equations. {I}. {F}ormal derivations.
\newblock {\em J. Statist. Phys.}, 63(1-2):323--344, 1991.

\bibitem{BKM1984}
J.~T. Beale, T.~Kato, and A.~Majda.
\newblock Remarks on the breakdown of smooth solutions for the {$3$}-{D}
  {E}uler equations.
\newblock {\em Comm. Math. Phys.}, 94(1):61--66, 1984.

\bibitem{BBC}
A.~Bohun, F.~Bouchut, and G.~Crippa.
\newblock Lagrangian solutions to the 2{D} {E}uler system with {$L^1$}
  vorticity and infinite energy.
\newblock {\em Nonlinear Anal.}, 132:160--172, 2016.

\bibitem{BC2013}
F.~Bouchut and G.~Crippa.
\newblock Lagrangian flows for vector fields with gradient given by a singular
  integral.
\newblock {\em J. Hyperbolic Differ. Equ.}, 10(2):235--282, 2013.

\bibitem{BN2018}
E.~Bru\`e and Q.-H. Nguyen.
\newblock Sharp regularity estimates for solutions of the continuity equation
  drifted by {S}obolev vector fields.
\newblock {\em Anal. PDE}, 14(8):2539--2559, 2021.

\bibitem{CKL2019}
Y.~Cao, C.~Kim, and D.~Lee.
\newblock Global strong solutions of the {V}lasov-{P}oisson-{B}oltzmann system
  in bounded domains.
\newblock {\em Arch. Ration. Mech. Anal.}, 233(3):1027--1130, 2019.

\bibitem{CIP}
C.~Cercignani, R.~Illner, and M.~Pulvirenti.
\newblock {\em The mathematical theory of dilute gases}, volume 106 of {\em
  Applied Mathematical Sciences}.
\newblock Springer-Verlag, New York, 1994.

\bibitem{CO2007}
X.~Chen and Y.~Oshita.
\newblock An application of the modular function in nonlocal variational
  problems.
\newblock {\em Arch. Ration. Mech. Anal.}, 186(1):109--132, 2007.

\bibitem{CDE2019}
P.~Constantin, T.~D. Drivas, and T.~M. Elgindi.
\newblock Inviscid limit of vorticity distributions in the {Y}udovich class.
\newblock {\em Comm. Pure Appl. Math.}, 75(1):60--82, 2022.

\bibitem{CW1996}
P.~Constantin and J.~Wu.
\newblock The inviscid limit for non-smooth vorticity.
\newblock {\em Indiana Univ. Math. J.}, 45(1):67--81, 1996.

\bibitem{CDe}
G.~Crippa and C.~De~Lellis.
\newblock Estimates and regularity results for the {D}i{P}erna-{L}ions flow.
\newblock {\em J. Reine Angew. Math.}, 616:15--46, 2008.

\bibitem{CS2021}
G.~Crippa and G.~Stefani.
\newblock An elementary proof of existence and uniqueness for the euler flow in
  localized yudovich spaces.
\newblock {\em arXiv preprint}, 2021.

\bibitem{DEL}
A.~De~Masi, R.~Esposito, and J.~L. Lebowitz.
\newblock Incompressible {N}avier-{S}tokes and {E}uler limits of the
  {B}oltzmann equation.
\newblock {\em Comm. Pure Appl. Math.}, 42(8):1189--1214, 1989.

\bibitem{Delort}
J.-M. Delort.
\newblock Existence de nappes de tourbillon en dimension deux.
\newblock {\em J. Amer. Math. Soc.}, 4(3):553--586, 1991.

\bibitem{DiL_B}
R.~J. DiPerna and P.-L. Lions.
\newblock On the {C}auchy problem for {B}oltzmann equations: global existence
  and weak stability.
\newblock {\em Ann. of Math. (2)}, 130(2):321--366, 1989.

\bibitem{DiL_ODE}
R.~J. DiPerna and P.-L. Lions.
\newblock Ordinary differential equations, transport theory and {S}obolev
  spaces.
\newblock {\em Invent. Math.}, 98(3):511--547, 1989.

\bibitem{DipMa}
R.~J. DiPerna and A.~J. Majda.
\newblock Concentrations in regularizations for {$2$}-{D} incompressible flow.
\newblock {\em Comm. Pure Appl. Math.}, 40(3):301--345, 1987.

\bibitem{EGKM}
R.~Esposito, Y.~Guo, C.~Kim, and R.~Marra.
\newblock Non-isothermal boundary in the {B}oltzmann theory and {F}ourier law.
\newblock {\em Comm. Math. Phys.}, 323(1):177--239, 2013.

\bibitem{EGKM2}
R.~Esposito, Y.~Guo, C.~Kim, and R.~Marra.
\newblock Stationary solutions to the {B}oltzmann equation in the hydrodynamic
  limit.
\newblock {\em Ann. PDE}, 4(1):Paper No. 1, 119, 2018.

\bibitem{glassey}
R.~T. Glassey.
\newblock {\em The {C}auchy problem in kinetic theory}.
\newblock Society for Industrial and Applied Mathematics (SIAM), Philadelphia,
  PA, 1996.

\bibitem{Golse}
F.~Golse.
\newblock The {B}oltzmann equation and its hydrodynamic limits.
\newblock In {\em Evolutionary equations. {V}ol. {II}}, Handb. Differ. Equ.,
  pages 159--301. Elsevier/North-Holland, Amsterdam, 2005.

\bibitem{GS2004}
F.~Golse and L.~Saint-Raymond.
\newblock The {N}avier-{S}tokes limit of the {B}oltzmann equation for bounded
  collision kernels.
\newblock {\em Invent. Math.}, 155(1):81--161, 2004.

\bibitem{G1963}
H.~Grad.
\newblock Asymptotic theory of the {B}oltzmann equation. {II}.
\newblock In {\em Rarefied {G}as {D}ynamics ({P}roc. 3rd {I}nternat. {S}ympos.,
  {P}alais de l'{UNESCO}, {P}aris, 1962), {V}ol. {I}}, pages 26--59. Academic
  Press, New York, 1963.

\bibitem{GrSt}
P.~T. Gressman and R.~M. Strain.
\newblock Global classical solutions of the {B}oltzmann equation without
  angular cut-off.
\newblock {\em J. Amer. Math. Soc.}, 24(3):771--847, 2011.

\bibitem{Guo2002}
Y.~Guo.
\newblock The {V}lasov-{P}oisson-{B}oltzmann system near {M}axwellians.
\newblock {\em Comm. Pure Appl. Math.}, 55(9):1104--1135, 2002.

\bibitem{Guo2003}
Y.~Guo.
\newblock The {V}lasov-{M}axwell-{B}oltzmann system near {M}axwellians.
\newblock {\em Invent. Math.}, 153(3):593--630, 2003.

\bibitem{Guo2006}
Y.~Guo.
\newblock Boltzmann diffusive limit beyond the {N}avier-{S}tokes approximation.
\newblock {\em Comm. Pure Appl. Math.}, 59(5):626--687, 2006.

\bibitem{Guo2010}
Y.~Guo.
\newblock Decay and continuity of the {B}oltzmann equation in bounded domains.
\newblock {\em Arch. Ration. Mech. Anal.}, 197(3):713--809, 2010.

\bibitem{GKTT2017}
Y.~Guo, C.~Kim, D.~Tonon, and A.~Trescases.
\newblock Regularity of the {B}oltzmann equation in convex domains.
\newblock {\em Invent. Math.}, 207(1):115--290, 2017.

\bibitem{H1996}
P.~Hajlasz.
\newblock Sobolev spaces on an arbitrary metric space.
\newblock {\em Potential Anal.}, 5(4):403--415, 1996.

\bibitem{JK2020}
J.~Jang and C.~Kim.
\newblock Incompressible {E}uler limit from {B}oltzmann equation with diffuse
  boundary condition for analytic data.
\newblock {\em Ann. PDE}, 7(2):Paper No. 22, 103, 2021.

\bibitem{Y1963}
V.~I. Judovi\v{c}.
\newblock Non-stationary flows of an ideal incompressible fluid.
\newblock {\em \v{Z}. Vy\v{c}isl. Mat i Mat. Fiz.}, 3:1032--1066, 1963.

\bibitem{K2011}
C.~Kim.
\newblock Formation and propagation of discontinuity for {B}oltzmann equation
  in non-convex domains.
\newblock {\em Comm. Math. Phys.}, 308(3):641--701, 2011.

\bibitem{KL2018_1}
C.~Kim and D.~Lee.
\newblock The {B}oltzmann equation with specular boundary condition in convex
  domains.
\newblock {\em Comm. Pure Appl. Math.}, 71(3):411--504, 2018.

\bibitem{KL2018_2}
C.~Kim and D.~Lee.
\newblock Decay of the {B}oltzmann equation with the specular boundary
  condition in non-convex cylindrical domains.
\newblock {\em Arch. Ration. Mech. Anal.}, 230(1):49--123, 2018.

\bibitem{Rein}
G.~Rein.
\newblock Collisionless kinetic equations from astrophysics---the
  {V}lasov-{P}oisson system.
\newblock In {\em Handbook of differential equations: evolutionary equations.
  {V}ol. {III}}, Handb. Differ. Equ., pages 383--476. Elsevier/North-Holland,
  Amsterdam, 2007.

\bibitem{RCA}
W.~Rudin.
\newblock {\em Real and complex analysis}.
\newblock McGraw-Hill Book Co., New York, third edition, 1987.

\bibitem{SR}
L.~Saint-Raymond.
\newblock Convergence of solutions to the {B}oltzmann equation in the
  incompressible {E}uler limit.
\newblock {\em Arch. Ration. Mech. Anal.}, 166(1):47--80, 2003.

\bibitem{VW}
I.~Vecchi and S.~J. Wu.
\newblock On {$L^1$}-vorticity for {$2$}-{D} incompressible flow.
\newblock {\em Manuscripta Math.}, 78(4):403--412, 1993.

\bibitem{V1999}
M.~Vishik.
\newblock Incompressible flows of an ideal fluid with vorticity in borderline
  spaces of {B}esov type.
\newblock {\em Ann. Sci. \'{E}cole Norm. Sup. (4)}, 32(6):769--812, 1999.

\bibitem{Y1995}
V.~I. Yudovich.
\newblock Uniqueness theorem for the basic nonstationary problem in the
  dynamics of an ideal incompressible fluid.
\newblock {\em Math. Res. Lett.}, 2(1):27--38, 1995.

\end{thebibliography}


\begin{thebibliography}{9} 
	
	
\bibitem{ALM} Ambrosio, Luigi; Lecumberry, Myriam; Maniglia, Stefania. Lipschitz regularity and approximate differentiability of the Diperna-Lions flow. Rendiconti del Seminario Matematico della Università di Padova, Tome 114 (2005) , pp. 29--50.  

\bibitem{DEL} A. de Masi, R. Esposito, J. L. Lebowitz, Incompressible Navier-Stokes and Euler Limits of the Boltzmann Equation, CPAM, 1189-1214, 1989 



\bibitem{Guo2006} Y. Guo: Boltzmann diffusive limit beyond the Navier-Stokes approximation. Commun. Pure Appl. Math. 59, 626--687 (2006)

\bibitem{CDE2019} P. Constantin, T. Drivas, T. Elgindi: Inviscid limit of vorticity distributions in Yudovich class, preprint (2019)
\bibitem{CW1996} P. Constantin, J. Wu: The inviscid limit for non-smooth vorticity, Indiana university mathematics journal (1996)
\bibitem{BKM1984} J. Beale, T. Kato, A. Majda: Remarks on the breakdown of smooth solutions for the 3--D Euler equations, CMP (1984)
\bibitem{BCD2011} H. Bahouri, J-Y. Chemin, R. Danchin: Fourier analysis and nonlinear partial differential equations. Grundlehren der Mathematischen Wissenschaften 343. Springer, Heidelberg (2011)
\bibitem{Y1963} V.I. Yudovich: Non-stationary flow of an ideal incompressible liquid. Zh. Vych. Mat. 3, 1032-1066 (1966) (in Russian)


\bibitem{BM} Bertozzi, Majda
 
	
\bibitem{CDe} Crippa, G.; De Lellis, C. Estimates and regularity results for the DiPerna-Lions flow, \textit{J. Reine Angew. Math.} 616, 15--46 (2008)





\bibitem{CO2007} X. Chen.; Y. Oshita.: An application of the modular function in nonlocal variational problems. \textit{Arch. Rational Mech. Anal.} 186 109-132 (2007).

\bibitem{BC2013} F. Bouchut.; G. Crippa.: Lagrangian flows for vector fields with gradient given by a singular integral. \textit{Journal of hyperbolic differential equations}, Vol. 10, No. 02, pp. 235-282 (2013).



                                                                                         
	
\bibitem{DiL_ODE} DiPerna, R.; Lions, P. L. Ordinary differential equations, transport theory and Sobolev spaces. \textit{Invent. Math.} 98, 511--547 (1989)

	\bibitem{BBC} Bohun, A.; Bouchut, F.; Crippa G.: Lagrangian solutions to the 2D Euler system with $L^1$ vorticity and infinite energy, \textit{Nonlinear Analysis}, 132, 160--172 (2016)
	
	\bibitem{glassey} Glassey, R.: Cauchy problem


\bibitem{H1996} P. Hajlasz.:Sobolev spaces on an arbitrary metric space. \textit{Potential Analysis}, 5:403--415(1996).

\bibitem{CS2021} G. Crippa.; G. Stefani.: An elementary proof of existence and uniqueness for the Euler flow in localized Yudovich spaces, preprint. 

\bibitem{Y1995} V. I. Yudovich.: Uniqueness theorem for the basic nonstationary problem in the dynamics of an ideal incompressible fluid. \textit{Mathematical Research Letters}, 2:27-38 (1995).


\bibitem{CIP} Cercignani,C.; Illner, R.; Pulvirenti, M.: \textit{The mathematical theory of dilute gases}, Applied Mathematical Sciences, 106. Springer-Verlag, New York (1994)
 

\bibitem{BGL1} Bardos, C., Golse, F., Levermore, D. Fluid dynamic limits of kinetic equations. I. Formal derivations. J Stat Phys 63, 323–344 (1991).


\bibitem{V1999} M. Vishik.: Incompressible flows of an ideal fluid with vorticity in borderline spaces of Besov type. \textit{Annales scientifiques de l'\'{E}.N.S. 4e s\'{e}rie}, tome 32, 6:769-812 (1999).

\bibitem{G1963} H. Grad.: Asymptotic theory of the Boltzmann equation, II, \textit{In: Rarefied Gas Dynamics (Vol. 1), (ed. J. Laurman)}, Academic Press, N. Y., 1963.

\bibitem{JK2020} J. Jang.; C. Kim.: Incompressible Euler limit from Boltzmann equation with diffuse boundary condition for analytic data. 


















\bibitem{Rein} Rein, Gerhard. “Chapter 5 - Collisionless Kinetic Equations from Astrophysics – The Vlasov–Poisson System.” in Handbook of Differential Equations: Evolutionary Equations edited by C.M. Dafermos, E. Feireisl, North-Holland,
Volume 3 Pages 383-476, (2007). https://doi.org/10.1016/S1874-5717(07)80008-9




\bibitem{EGKM2} R.Esposito, Y. Guo, C. Kim, R.Marra: Stationary solutions to the Boltzmann equation in the Hydrodynamic limit, Annals of PDE, 4:1, 1-119 (2018)








\bibitem{RCA} W. Rudin.: Real and Complex Analysis.

\bibitem{ACM2019} G. Alberti, G. Crippa, A. L. Mazzucato.: Loss of regularity for the continuity equation with non-Lipschitz velocity field. Ann. PDE, 5(1): No. 9, 19, 2019


\bibitem{BN2018} Brue, Nguyen, Sharp regularity estimates for solutions of the continuity equation drifted by Sobolev vector fields

\bibitem{Guo2002} Y. Guo.: The Vlasov-Poisson-Boltzmann System near Maxwellians, CPAM (2002).

\bibitem{Golse} F. Golse: Chapter 3. The Boltzmann Equation and Its Hydrodynamic Limits in \textit{HANDBOOK OF DIFFERENTIAL EQUATIONS: Evolutionary Equations}, Vol. II, Edited by C.M. Dafermos and E. Feireisl, Elsevier/North-Holland, Amsterdam (2005)

\bibitem{SR} L. Saint-Raymond: Convergence of Solutions to the Boltzmann Equation in the Incompressible Euler Limit. \textit{Arch. Rational Mech. Anal.} 166, 47--80 (2003).

\bibitem{Delort} Delort, J. Existence de nappes de tourbillon en dimension deux, J. Amer. Math. Soc. 4 (1991), 553–586.

\bibitem{DipMa} DiPerna, R.; Majda, A. Concentrations in regularizations for 2-D incompressible flow, Commun. Pure Appl. Math. 40 (1987), 301–345.


\bibitem{VW} Vecchi, I.; Wu, S. On L1 -Vorticity for 2-D Incompressible Flow. Manuscripta Math. 78 (1993), 403--412.

\end{thebibliography}

\hide

\unhide
\end{document}